\pdfoutput=1
\documentclass[11pt, reqno]{amsart}
\usepackage[foot]{amsaddr}
\usepackage[T1]{fontenc}
\usepackage[utf8]{inputenc}
\usepackage{amssymb,amsmath,amsthm,mathrsfs,array,graphicx,bbm,enumerate,wasysym, dsfont,xcolor,trimclip, mathtools, upgreek, enumitem, tablefootnote, threeparttable}
\usepackage{tikz}
\usetikzlibrary{trees}
\usepackage{todonotes}
\usepackage{algpseudocode}
\usepackage[]{algorithm2e}
\usepackage{setspace}
\usepackage[skip=3pt, indent]{parskip}
\usepackage{caption}
\usepackage{subcaption}
\usepackage[most]{tcolorbox}
\usepackage[backend=bibtex, sorting=nty, style=alphabetic, maxnames=10, maxalphanames=10, sortcites=true]{biblatex}
\DeclareFieldFormat[article]{title}{#1}
\renewbibmacro*{volume+number+eid}{%
  \printfield{volume}%
  \setunit*{\addcomma\space}%
  \printfield{number}%
  \setunit*{\addcomma\space}%
  \printfield{eid}%
}

\DeclareFieldFormat{pages}{#1}
\renewbibmacro{in:}{
  \ifentrytype{article}
    {}
    {\bibstring{in}
     \printunit{\intitlepunct}}
     }
\DeclareFieldFormat[article]{number}{\mkbibparens{#1}}

\usepackage[all]{xy}
\usepackage{bbm}
\usepackage{fullpage}
\usepackage{float}
\usepackage{verbatim}
\usepackage[dvipsnames]{xcolor}
\usepackage{hyperref}

\newtheorem{thm}{Theorem}[section]
\newtheorem{lemma}[thm]{Lemma}
\newtheorem{rem}[thm]{Remark}
\newtheorem{defn}[thm]{Definition}
\newtheorem{prop}[thm]{Proposition}
\newtheorem{cor}[thm]{Corollary}

\newtheorem{con}[thm]{Conjecture}

\numberwithin{equation}{section}

\newcommand{\Z}{\mathbb Z}
\newcommand{\R}{\mathbb R}
\newcommand{\C}{\mathbb C}

\newcommand{\N}{\mathbb N}
\newcommand{\de}{\delta}
\newcommand{\D}{\mathbb D}
\newcommand{\E}{\mathbb E}

\renewcommand{\P}{\mathbb P}
\newcommand{\La}{\mathcal L}
\renewcommand{\1}{\mathbf 1}
\newcommand{\A}{\mathds A}

\renewcommand{\L}{\mathcal L}
\newcommand{\F}{\mathcal F}

\renewcommand{\epsilon}{\varepsilon}
\newcommand{\norm}[1]{\left\lVert#1\right\rVert}

\newcommand{\eps}{\epsilon}

\newcommand{\CLE}{\mathrm{CLE}}
\newcommand{\SLE}{\mathrm{SLE}}

\newcommand{\Int}{\mathrm{Int}}

\newcommand{\Var}{\mathrm{Var}}
\newcommand{\Leb}{\mathrm{Leb}}
\newcommand{\curr}{\mathrm{curr}}
\newcommand{\DRC}{\mathrm{DRC}}
\newcommand{\nb}{\mathbf{n}}

\newcommand{\odd}{\mathrm{odd}}

\newcommand{\CR}{\mathrm{CR}} 

\renewcommand{\O}{\mathcal{O}}

\newcommand{\Hs}[2]{\hyperref[#1]{#2}}

\newcommand{\loc}{{loc}}

\newcommand{\acts}{\curvearrowright}
\newcommand\restr[2]{\ensuremath{\left.#1\right|_{#2}}}
\newcommand{\barcos}{\overline{\mathrm{cos}}}
\newcommand{\Pb}{\mathbf{P}}

\newcommand{\field}[1]{:\! #1 \!: }

\makeatletter
\DeclareRobustCommand{\lasymp}{\lg@asymp{<}}
\DeclareRobustCommand{\gasymp}{\lg@asymp{>}}

\newcommand{\under@asymp}[1]{\clipbox{0pt 0pt 0pt {0.5\height}}{$\m@th#1\asymp$}}
\newcommand{\lg@asymp}[1]{\mathrel{\mathpalette\lg@asymp@{#1}}}
\newcommand{\lg@asymp@}[2]{%
  \vcenter{%
    \offinterlineskip
    \m@th
    \ialign{%
      \hfil##\hfil\cr
      $#1#2$\cr
      \under@asymp{#1}\cr
    }%
  }%
}
\makeatother

\mathtoolsset{showonlyrefs} 

\begin{document}
\title{Excursion decomposition of the XOR-Ising model}
\author{Tom{\'a}s Alcalde~L{\'o}pez$^\dagger$}
\address {$\dagger$ Technische Universität Wien, Institut für Stochastik und Wirtschaftsmathematik, Wiedner Hauptstraße 8-10, Vienna, Austria}
\email{tomas.alcalde@tuwien.ac.at}
\author{Avelio Sep{\'u}lveda$^*$}
\address{$^*$ Departamento de Ingenier\'ia Matem\'atica and Centro de Modelamiento Matem\'atico (IRL-CNRS 2807)\\
  Universidad de Chile\\
  Av. Beauchef 851, Torre Norte, Piso 5\\
  Santiago\\
  Chile}
\email{lsepulveda@dim.uchile.cl}

\begin{abstract}
We study the excursion decomposition of the two-dimensional critical XOR-Ising model with either $+$ or free boundary conditions. In the first part, we construct the decomposition directly in the continuum. This construction relies on the identification of the XOR-Ising field with the cosine or sine of a Gaussian free field (GFF) $\phi$ multiplied by $\alpha = 1/\sqrt{2}$, and is obtained by an appropriate exploration of two-valued level sets of the GFF. More generally, the same construction applies to the fields $:\! \cos(\alpha \phi) \!:$ and $:\! \sin(\alpha \phi) \!:$ for any $\alpha \in (0,1)$.

In the second part, we show that the continuum excursion decomposition arises as the scaling limit of the double random current decomposition of the critical XOR-Ising model on the square lattice. To this end, we exploit the rich Markovian structure of the discrete decomposition and strengthen the convergence of the double random current height function to the continuum GFF by establishing joint convergence with its cosine and sine. We conjecture that for $\alpha \in [1/2,\sqrt{3}/2)$ the continuum excursion decompositions arise as the scaling limit of those of the Ashkin-Teller polarisation field along its critical line.
\end{abstract} 

\maketitle

\tableofcontents


\section{Introduction}
The XOR-Ising model is a particular instance of the Ashkin-Teller model \cite{AT} and can be obtained as the product of two independent Ising models. In the early 2010s, numerical simulations suggested that certain geometric properties of the critical XOR-Ising model are related to specific level sets of the Gaussian free field (GFF) \cite{Wil}. More recently, inspired by the bosonisation framework in physics, it was shown that its correlation functions are related to those of observables of the GFF \cite{cont-boson,IGMC}. In this paper, we show that this correspondence holds also at the level of excursion decompositions in the continuum.

Excursion decompositions are ubiquitous in the study of statistical physics models. They provide a way to uncover independent structure in dependent systems and, at the same time, allow one to reformulate questions about correlation functions as connectivity problems in suitable percolation-type models. 

We say that a random (generalized) function $\psi$ admits an \emph{excursion decomposition} if there exist positive measures $(\mu_k)_{k\geq1}$ and independent random signs $(\xi_k)_{k\geq1}$ such that
\begin{enumerate}[label=(\roman*)]
    \item The excursions $C_k$, that is to say the supports of the measures $\mu_k$, are connected and pairwise disjoint.
    \item The field can be recovered as
    \[
        \psi = \sum_{k\geq1} \xi_k \mu_k,
    \]
    where the sum is understood in an appropriate distributional sense.
\end{enumerate}
A comparison of several known excursion decompositions and their properties is given in Table~\ref{table:exc}.

\begin{table}[htbp]
\centering
\renewcommand{\arraystretch}{1.3}
\begin{tabular}{|l|c|c|c|}
  \hline
  \textbf{} & \textbf{Prop 1} & \textbf{Prop 2} & \textbf{Prop 3} \\ \hline
  \textbf{Brownian motion (\cite{ito}}) & No & Yes & No \\ \hline
  \textbf{Ising and FK--clusters (\cite{CME,  mag-field, ES, FK})} & Yes & No\tablefootnote{Known in the discrete. In the continuum, it is shown in \cite{CGS}.} & Yes \\ \hline
    \textbf{Ising and CDRC--clusters (\cite{ ising-GFF,curr-prob})} & Yes & No\tablefootnote{Known in the discrete. In the continuum, it is also expected to fail.} & Yes \\ \hline
  \textbf{Continuum GFF (\cite{excursion-GFF})} & Yes & Yes & No \\ \hline
  \textbf{Metric graph GFF (\cite{Lupu})} & No & Yes & No \\ \hline
   \textbf{Discrete GFF (\cite{Lupu})} & No & No & No \\ \hline
    \textbf{Percolation spin field (\cite{camia-excur})} & Yes & No & Yes \\ \hline
    \textbf{Loop-soup field (\cite{fields-loo-soups})} & Yes & Not known & No \\ \hline
  \textbf{XOR-Ising and DRC-clusters (\cite{DRC-1}, This Paper)} & Yes & No\tablefootnote{However, the excursions are a function of the underlying GFF when considering the pair of coupled XOR-Ising fields.} & No \\ \hline
\end{tabular}\vspace{5mm}
\caption{Some excursion decompositions in statistical physics, and their properties. \textbf{Prop 1:} Excursion measures are a function of the excursions. \textbf{Prop 2:} Excursions are a function of the field. \textbf{Prop 3:} Macroscopic excursions touch each other.}
\label{table:exc}
\end{table}

The first excursion decomposition appearing in the literature was constructed by It\^o \cite{ito} to describe the Brownian motion. About a decade later, the so-called Edwards--Sokal coupling \cite{FK, ES} provided an excursion decomposition of the Ising model by coupling it to the Fortuin-Kasteleyn (FK) random cluster model \cite{FK}. Over the last decade, many new excursion decompositions have been discovered, both for discrete and continuum models.

On the discrete side, three notable examples of models whose excursion decompositions were described in recent years are the discrete Gaussian free field \cite{Lupu}, the XOR-Ising model \cite{spin-perc-height, DRC-1}, and the Ising model \cite{curr-prob,ising-GFF}. Importantly, the latter two decompositions are based on a double random current (DRC) representation and can be viewed as dual to one another. 

On the continuum side, it has been shown that scaling limits of discrete excursion decompositions yield continuum ones. In \cite{mag-field, CME}, it was proved that, in two dimensions at criticality, the FK--Ising coupling converges to an excursion decomposition of the continuum magnetisation field. In \cite{ising-GFF}, the convergence of the DRC-based decomposition of the Ising model was established, defining a natural coupling between the magnetisation field and the GFF. Furthermore, in \cite{excursion-GFF}, the excursion decomposition of the continuum GFF was built and shown to be the scaling limit of the excursion decomposition of the metric graph GFF. 

The continuum decompositions in \cite{excursion-GFF} and \cite{ising-GFF} are closely related to the theory of level sets of the Gaussian free field \cite{BTLS,ALS1,ALS2, TVS}. Namely, these decompositions can be described through suitable iterative explorations of (two-valued) level sets of the Gaussian free field, and the resulting measures can be identified as the conformal Minkowski contents of their supports.

In the present paper, we focus on constructing the excursion decomposition of the scaling limit of the critical XOR-Ising model in two dimensions. Our starting point is the bosonisation of the Ising model, i.e. the identification of the continuum XOR-Ising field with a trigonometric function of the Gaussian free field \cite{boson-phys, dimer-boson, nesting-field, XOR-dimer, IGMC}. More generally, we construct a decomposition for the fields $\field{\cos(\alpha\phi)}$ and $\field{\sin(\alpha \phi)}$ for any $\alpha \in (0,1)$. We subsequently show that the discrete decomposition of the critical XOR-Ising model converges to its continuum counterpart.

We present our results in two parts. First, we state the excursion decompositions of the continuum fields. Then, we explain the scaling limit results obtained. We also devote a short section to discuss the Ashkin-Teller polarisation field.

\subsection{Continuum results} The following result states the excursion decomposition of the XOR-Ising model in the continuum. Figure \ref{fig:sets-decomp} gives a schematic drawing for the supports of the measures, yet we defer the detailed outline of the construction to Section \ref{sec:decomp}.

\begin{thm}\label{thm:cont-XOR}
    Let $D\subset\C$ be a bounded, simply connected domain. Let $\tau$ be the continuum XOR-Ising model with free/free boundary conditions in $D$. Then, there exists a collection $(\mu_k)_{k=1}^\infty$  of measures supported on $(C_k)_{k=1}^\infty$ such that 
    \begin{align*}
        \tau = \lim_{N\to \infty}\ \sum_{k=1}^{N} \xi_{k} \mu_{k},
    \end{align*}
    where $(\xi_k)_{k=1}^\infty$ are i.i.d. symmetric signs. The sum converges almost surely in the Sobolev space $H^{s}(\C)$ for $s<-1$, under \emph{any ordering} that is independent of the collection of signs. Moreover, 
    \begin{enumerate}[label=(\roman*)]
        \item The excursions $C_k$ are connected and pairwise disjoint almost surely.
        \item Each measure $\mu_k$ is a deterministic function of the respective excursion $C_k$.
        \item The set of excursions $(C_k)_{k=1}^\infty$ is locally finite. That is, almost surely for any $\rho>0$ there are  finitely many $C_k$ with diameter greater than $\rho$. 
        \item For any continuous, bounded function $f:\C\to\R$, 
        \begin{align}\label{eq:L^2_estimate}
        \E\left[\sum_{k=1}^\infty(\mu_k,f)^2\right ]<\infty,
        \end{align} 
        and thus 
        \begin{equation}
           (\tau,f) = \lim_{N\to\infty} \sum_{k=1}^N \xi_k(\mu_k,f) \quad\text{in}\quad L^2(\P).
        \end{equation}
    \end{enumerate}
    The same statement holds for\footnote{In this case, one needs to be careful to work in a domain $D$ whose boundary has small enough dimension (or, instead, restrict to the local Sobolev space $H^s_{loc}(D)$).} the XOR-Ising model with $+/+$ boundary conditions in $D$ by adding a symmetry-breaking (positive) measure $\mu_0$.
\end{thm}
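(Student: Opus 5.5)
We identify the continuum XOR-Ising field $\tau$ with free/free boundary conditions as $\field{\sin(\alpha\phi)}$ (up to a multiplicative constant), and the $+/+$ one as $\field{\cos(\alpha\phi)}$, with $\alpha=1/\sqrt2$ and $\phi$ a Dirichlet GFF in $D$ (bosonisation). It then suffices to decompose $\field{\sin(\alpha\phi)}$ and $\field{\cos(\alpha\phi)}$ for general $\alpha\in(0,1)$ via an iterative exploration of two-valued level sets. Set $\lambda=\pi/2$ in the normalisation where $2\lambda$ is the height gap of $\mathrm{ALE}$/$\mathrm{CLE}_4$. The key observation is that $\sin(\alpha\cdot)$ and $\cos(\alpha\cdot)$ change sign under the shifts $u\mapsto -u$ and $u\mapsto 2\pi/\alpha-u$.

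\emph{Step 1 (exploration).} Start from $\phi$ with boundary value $0$ and explore the two-valued set $\mathbb A_{-a,a}$ with $a=\pi/\alpha$ (this requires $2a\ge 2\lambda$, which holds for $\alpha<1$). In each complementary component $O$ the restricted field is $\phi_O\pm a$, with signs independent given the set. By the odd symmetry of the law of $\phi$ under $\phi\mapsto-\phi$ and of $\sin(\alpha(\phi_O\pm a))=-\sin(\alpha\phi_O)$, each component carries an independent sign. Iterating inside each $O$ (now starting from boundary value $\pm a$, i.e.\ exploring $\mathbb A_{0,2a}$ shifted) produces a nested tree of sets. The excursions $C_k$ are then defined as the closures of the connected components of the ``parity regions'': the clusters of the explored two-valued sets glued across levels where the sign of $\sin(\alpha\phi)$ does not flip. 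The resulting clusters are connected and disjoint, giving (i). The measure $\mu_k$ is defined as $\field{\sin(\alpha\phi)}$ restricted to the cluster, multiplied by the appropriate sign. We expect, as in \cite{excursion-GFF,ising-GFF}, that $\mu_k$ is positive and equals a constant times the conformal Minkowski content (in gauge $d=2-\alpha^2/2$) of $C_k$; this gives (ii). Positivity follows because on each local set the imaginary chaos $\field{e^{i\alpha\phi}}$ decomposes via the local set Markov property into $e^{i\alpha h_A}\field{e^{i\alpha\phi^A}}$, and since $h_A$ is constant $\pm a$ on components, the complement contributes mean zero.

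\emph{Step 2 (convergence and $L^2$ bound).} Conditionally on the exploration, the partial sums $\sum_{k\le N}\xi_k\mu_k$ form a martingale in the signs. We bound the second moment
$$\E\Big[\sum_k(\mu_k,f)^2\Big]\le \|f\|_\infty^2\,\E\Big[\sum_k \mu_k(D)^2\Big]\lesssim \iint_{D\times D} \E\big[\mathbf 1_{x\leftrightarrow y}\big]\,|x-y|^{-\alpha^2}\,dx\,dy.$$
This is dominated by the two-point function of $\field{\cos}/\field{\sin}$ and is integrable since $\alpha^2<2$. Orthogonality of the signs then gives $L^2$ convergence under any order independent of the signs, proving (iv). For almost sure convergence in $H^s$, $s<-1$, we use $\E\|\mu_k\|_{H^s}^2\lesssim \E\iint \mu_k(dx)\mu_k(dy)\,G_s(x,y)$ with a bounded kernel $G_s$ for $s<-1$. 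Summability then follows, and a.s.\ convergence follows from the martingale convergence theorem in the Hilbert space $H^s$ (Itô–Nisio). The limit identifies with $\tau$ because the exploration exhausts $D$: the components of the $n$-th level have vanishing size, so the residual chaos mass tends to $0$ in $L^2$.

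\emph{Step 3 (local finiteness).} Property (iii) follows from local finiteness of the two-valued sets $\mathbb A_{-a,a}$ (finitely many loops of diameter $>\rho$) together with the fact that the number of nesting levels reaching scale $\rho$ is a.s.\ finite. For $+/+$ boundary conditions we use $\cos$; the cluster touching $\partial D$ has a deterministic sign and gives $\mu_0$.

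The main obstacle is Step 1: showing that the gluing of nested level sets produces genuinely connected, disjoint clusters on which the restricted chaos is a positive measure that is a function of the cluster alone. This requires the fine Minkowski-content identification for chaos measures supported on two-valued sets, and we would first attempt to prove it by approximating with $\field{e^{i\alpha\phi_\epsilon}}$ and using the Markov decomposition at each level.
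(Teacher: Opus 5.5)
\textbf{There is a genuine gap in Step~1.} Your reduction of $\tau$ to a constant multiple of $\field{\sin(\alpha\phi)}$ (resp.\ $\field{\cos(\alpha\phi)}$) with $\alpha=1/\sqrt2$ is the same as the paper's, but the exploration in Step~1 cannot produce an excursion decomposition.

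\emph{No random signs.} Your height $a=\pi/\alpha=2\lambda/\alpha$ is twice the critical height $a_c=\lambda/\alpha$, and at this height the labels drop out: $\sin(\alpha(\phi_O\pm\pi/\alpha))=-\sin(\alpha\phi_O)$ for both labels. Iterating, the sign at level $n$ is the deterministic $(-1)^n$, so no random signs are generated and your ``parity clusters'' are not well defined. Besides, among the sets $\A_{-a,a}$ only $\A_{-2\lambda,2\lambda}$ has i.i.d.\ symmetric labels. The global symmetry $\phi\mapsto-\phi$ gives a single sign, not one per component.

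\emph{No measure on the explored sets.} $\A_{-\pi/\alpha,\pi/\alpha}$ is supercritical, since $\alpha>\alpha_c(\pi/\alpha)=\alpha/2$, so Proposition~\ref{prop:TVS-meas} gives no measure on it. The chaos nevertheless charges these sets. If $\field{\sin(\alpha\phi)}$ did not charge any of your nested sets, iterating would give $\field{\sin(\alpha\phi)}=(-1)^n\sum_O\field{\sin(\alpha\phi^O)}$ over the level-$n$ components. Its $L^2$ norm tends to $0$ by \eqref{eq:sin-mon-leb}, which would force $\field{\sin(\alpha\phi)}=0$. Nothing in your argument shows that this charge has the form $\pm\mu$ with $\mu\ge0$ a function of the set. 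Given the set alone (which is invariant under $\phi\mapsto-\phi$), it has conditional mean zero. The remark that ``the complement contributes mean zero'' says nothing about the sign of the mass on the set. So (i)--(ii) are not established, and the Minkowski-content identification you invoke for (ii) is not known to hold.

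\textbf{The two missing mechanisms.} First, the signs must come from $\A_{-2\lambda,2\lambda}$. Inside its loops one has $\field{\sin(\alpha(\phi^{\A}_k+\xi_k\pi))}=\xi_k\field{\cos(\alpha(\phi^{\A}_k+\xi_k(\pi-a_c)))}$. This step needs the $L^2$-thinness of Lemma~\ref{lem:CLE4-thin}. For $\alpha\in[1/2,1)$ that is not a dimension count, because the $\CLE_4$ carpet has dimension $15/8\ge 2-\alpha^2/2$; it genuinely uses the independence of the labels.

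Second, the measures come from exploring inside each loop the \emph{critical} set $\A_{-2\lambda,2a_c-2\lambda}$ (or its mirror). On its loops the shifted cosine sees heights $\pm a_c$, where $\cos(\alpha a_c)=0$. Propositions~\ref{prop:TVS-meas} and~\ref{prop:cos-TVS} then identify the restriction of the cosine with $\mu=\lim_{\eta\to0}\eta\,\CR(z,D\setminus\A)^{-(\alpha_c-\eta)^2/2}\,dz$. This measure is positive and measurable with respect to the set by construction. The renewed field is again $\pm\field{\sin(\alpha\phi^{\A})}$, so the procedure iterates. For $+/+$ boundary conditions, $\mu_0$ is the measure of a first critical set $\A_{-a_c,a_c}$.

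\textbf{Step~2 and (iii).} Your Step~2 bound via $\E[\mathbf 1_{x\leftrightarrow y}]|x-y|^{-\alpha^2}$ is not available, since the $\mu_k$ are singular. The paper gets (iv) from conditional orthogonality,
$\sum_k\E[(\mu_k,f)^2]=\E[(\field{\sin(\alpha\phi)},f)^2]-\E[(\field{\sin(\alpha\phi^{A^{(N+1)}})},f)^2]$, followed by Proposition~\ref{prop:sign-permute}. Finally, for (iii), termination of the iteration at scale $\rho$ requires the uniform-in-domain bound of Lemma~\ref{lem:uniform_Bound_diam}, which you only assert.
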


As already stated, the XOR-Ising model can be obtained as a trigonometric function of a GFF times $\alpha=1/\sqrt{2}$.  In particular, Theorem \ref{thm:cont-XOR} is a direct consequence of the analogue result for the Wick sine and cosine of a GFF. We refer to Section \ref{sec:prelim-cont} for all the relevant definitions. 

\begin{thm}\label{thm:cont-general}
    Let $D\subset\C$ be a bounded, simply connected domain. Let $\phi$ be a Gaussian free field with zero boundary conditions in $D$. Then, for any $\alpha\in(0,1)$ there exists a collection $(\mu_k)_{k=1}^\infty$ of measures supported on $(C_k)_{k=1}^\infty$ such that 
            \begin{align*}
            \field{\sin(\alpha \phi )}\  = \lim_{N\to \infty}\ \sum_{k=1}^{N} \xi_{k} \mu_{k},
            \end{align*}
    where $(\xi_k)_{k=1}^\infty$ are i.i.d. symmetric signs. 
    The sum converges almost surely in the Sobolev space $H^s(\C)$ for $s<-1$, under \emph{any ordering} that is independent of the collection of signs. Moreover, properties (i)-(iv) of Theorem \ref{thm:cont-XOR} hold, along with
    \begin{enumerate}[label=(\roman*)]\setcounter{enumi}{4}
        \item The signs $(\xi_k)_{k=1}^\infty$, the measures $(\mu_k)_{k=1}^\infty$ and the excursions $(C_k)_{k=1}^\infty$ are measurable with respect to $\phi$.
    \end{enumerate}
    The same statement holds for\footnote{Under the same caveat as before.} the field $\field{\cos(\alpha\phi+u)}$ for any $u\in (-\pi/2,\pi/2)$ by adding a symmetry-breaking (positive) measure $\mu_0$.
\end{thm}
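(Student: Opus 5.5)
The construction will come from an iterative exploration of two-valued local sets (TVS) of $\phi$. By Theorem~\ref{thm:cont-general} we need to decompose $\field{\sin(\alpha\phi)}$. The basic observation is that $\sin$ vanishes exactly at $\pi\Z$. Let $\lambda=\pi/2$ be the height gap of TVS, normalized so that $\A_{-a,b}$ exists iff $a+b\ge 2\lambda$. Then the set $\A_{-\pi/\alpha,\pi/\alpha}$ exists, since $2\pi/\alpha\ge 2\lambda$ for every $\alpha<1$ and indeed $\alpha<4$. Together with the nested ``first-passage'' sets $\A_{-k\pi/\alpha,\,k\pi/\alpha}$ and the sets $\A_{-\pi/\alpha,\pi/\alpha}$ iterated inside complementary components, this gives a partition of $D$.

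\textbf{Step 1: the local structure.} Explore $A=\A_{-\pi/\alpha,\pi/\alpha}$. The harmonic extension $h_A$ then takes values in $\{\pm\pi/\alpha\}$ on the components $O$ of $D\setminus A$. On each $O$ we have $\phi=h_A+\phi^O$, and $\sin(\alpha h_A)=0$ there, so
\[
\field{\sin(\alpha\phi)}|_O=\cos(\alpha h_A)\field{\sin(\alpha\phi^O)}=-\field{\sin(\alpha\phi^O)}.
\]
This holds with the Wick normalization given by the conformal radius, because $\field{e^{i\alpha\phi}}$ restricts multiplicatively: $\field{e^{i\alpha\phi}}=e^{i\alpha h_A}\field{e^{i\alpha\phi^O}}$ on $O$, up to the Minkowski-content part on $A$.

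We will use the key fact from the theory of imaginary chaos on local sets: for $\alpha<1$, $\field{e^{i\alpha\phi}}$ charges thin local sets with a nontrivial measure exactly when $A$ has dimension $>2-\alpha^2/2$. The set $\A_{-a,b}$ has dimension $2-2\lambda^2/(a+b)^2$. I would therefore define the excursion measures not as restrictions of the chaos to $A$, but through the \emph{inside-out martingale}. Concretely, $\field{\sin(\alpha\phi)}$ equals the weak limit of $\sum_O \field{\sin(\alpha\phi)}|_O$ together with a remainder supported on $A$. That remainder is identified as the Minkowski-type measure $\nu_A=\lim_{\epsilon\to0}\epsilon^{-\alpha^2/2}\,\sin(\alpha h_{A_\epsilon})\,dz$ restricted near $A$.

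\textbf{Step 2: iterate.} In each $O$, the field $\pm\field{\sin(\alpha\phi^O)}$ is again of the same form, so we repeat. After $n$ steps, $\field{\sin(\alpha\phi)}$ equals a sum of signed boundary measures plus $\sum_{O\in\mathcal O_n}\pm\field{\sin(\alpha\phi^O)}$. The remainder tends to $0$ in $L^2(\P)$ when tested against $f$, since $\E[(\field{\sin(\alpha\phi^O)},f)^2]\le \|f\|_\infty^2\int_{O^2}\CR\text{-weighted}\,|z-w|^{-\alpha^2}$, which is $O(\mathrm{diam}(O)^{4-\alpha^2})$. The sum of this over $\mathcal O_n$ goes to $0$ because the diameters shrink uniformly.

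The boundary measures decompose further over the connected pieces of $A$. Here the excursions $C_k$ are defined as the connected components of the union of boundary arcs on which $h_A$ on the two sides differs in sign. The signs $\xi_k$ are $\cos(\alpha h)$ on the component, which is $\pm1$, and by the symmetry $\phi\mapsto-\phi$ applied within each component (via the strong Markov property) they are i.i.d. symmetric and $\phi$-measurable. This gives (v). Properties (ii) and (iii) follow respectively from the Minkowski-content characterization and from local finiteness of iterated TVS. Property (iv) is the orthogonality $\E[\xi_j\xi_k]=\delta_{jk}$ combined with the $L^2$ bound from the two-point function. Convergence in $H^s$ for $s<-1$ then follows, under any sign-independent ordering, by Kolmogorov/Itô–Nisio arguments: $\sum_k\E\|\mu_k\|_{H^s}^2<\infty$, since $\|\mu\|_{H^s}\lesssim\mu(\C)$ for $s<-1$.

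\textbf{Step 3: the cosine case.} For $\field{\cos(\alpha\phi+u)}$ we use levels at the zeros of $\cos(\alpha\cdot+u)$, namely $\A_{-(\pi/2+u)/\alpha,(\pi/2-u)/\alpha}$. The first boundary layer, which touches $\partial D$, carries a deterministic sign and gives $\mu_0$. Inside it we proceed as before.

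\textbf{Main obstacle.} The hardest step is showing that the boundary remainder of Step 1 is a genuine positive measure supported on $A$, splitting over connected pieces, rather than a distribution. It must also have finite mass with the $L^2$ control. For this I would compute second moments of $\epsilon$-approximations using explicit TVS first-moment formulas and the conformal-radius law of $\mathrm{CR}(z,D\setminus A)$. The point is that $\alpha<1$ is exactly the range where these moments are finite.
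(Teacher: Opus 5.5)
\textbf{There is a genuine gap:} your construction fails at both of its load-bearing points, the origin of the i.i.d.\ signs and the origin of a positive measure.

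\textbf{The signs.} On $\A_{-\pi/\alpha,\pi/\alpha}$ you have $\alpha h_A=\pm\pi$, so $\cos(\alpha h_A)\equiv-1$, as you yourself use in Step 1. On the nested sets $\A_{-k\pi/\alpha,k\pi/\alpha}$ it equals $(-1)^k$. So your proposed $\xi_k=\cos(\alpha h)$ are deterministic. If you meant the labels $c(\ell)$ instead, these are not conditionally independent given the set for $\A_{-a,a}$ with $a\neq 2\lambda$. Flipping $\phi$ inside a single component is not a symmetry of the law of $(A,h_A)$, because that component's boundary value is part of $\F_A$. The Markov property only makes the renewed fields $\phi^O$ conditionally independent, not the labels. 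The only two-valued set with i.i.d.\ symmetric labels is $\A_{-2\lambda,2\lambda}=\CLE_4$, the unique one with zero boundary values outside its loops. This is why the paper has to start the sine decomposition there.

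\textbf{The measures.} $\A_{-\pi/\alpha,\pi/\alpha}$ has $a=\pi/\alpha=2a_c$, so by your own formula its dimension is $2-\alpha^2/8>2-\alpha^2/2$: it is supercritical. Your ``key fact'' conflates the regimes. Subcritical two-valued sets are thin (Proposition \ref{prop:thin-chaos}). The mechanism that produces a finite positive measure is the \emph{critical} set $\A^u_{-a_c,a_c}$ for the \emph{cosine}, where $\cos(\alpha a_c)=0$ exactly offsets the non-integrability of $\CR(z,D\setminus\A)^{-\alpha^2/2}$ (Propositions \ref{prop:TVS-meas} and \ref{prop:cos-TVS}). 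On your set the conditional mean of the sine inside the components vanishes since $\sin(\alpha h_A)=0$, so no such compensation is available. What sits on $A$ is, in the paper's picture, the signed superposition of all excursions born before the field reaches $\pm\pi/\alpha$: infinitely many, at every scale, with both signs. That is not a positive measure splitting over connected pieces. Your $\nu_A$ (with $A_\epsilon$ never defined) does not make sense.

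The root issue is that level $0$ is itself a zero of $\sin(\alpha\,\cdot)$. Exploring from $0$ to the neighbouring zeros $\pm\pi/\alpha$ therefore has gap $4a_c$, twice the critical gap.

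\textbf{How the paper handles it.} It first explores $\CLE_4$, moving the field to $\xi\pi$ with i.i.d.\ $\xi$. It then rewrites $\sin(\alpha(\phi^{A}+\xi\pi))=\xi\cos(\alpha(\phi^{A}+\xi(\pi-a_c)))$. Inside each loop it samples $\A_{-2\lambda,2a_c-2\lambda}$ (or its mirror), whose absolute levels $0$ and $\xi\pi/\alpha$ are consecutive zeros of the sine at the critical gap $2a_c$. Proposition \ref{prop:cos-TVS} then yields $\mu$ plus a renewed sine field.

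The ingredient you are missing entirely is Lemma \ref{lem:CLE4-thin}: $\CLE_4$ is thin for the sine in the strong $L^2(\P)$ sense needed to reorder the sums. For $\alpha\in[1/2,1)$, where $\CLE_4$ is no longer subcritical, this is not a dimension argument. It requires the grid approximation $\A^\eps$ together with the domain monotonicity of Theorem \ref{thm:domain-mon}.

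Your Step 3 first layer does coincide, up to normalizing the shift, with the paper's critical set. Your claim that ``diameters shrink uniformly'' in Step 2 is unjustified and should be replaced by the diameter cutoff plus Proposition \ref{prop:local-finite-tree}.
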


\begin{figure}[h]
\centering
\begin{subfigure}{.45\textwidth}
  \centering
  \includegraphics[width=.92\linewidth]{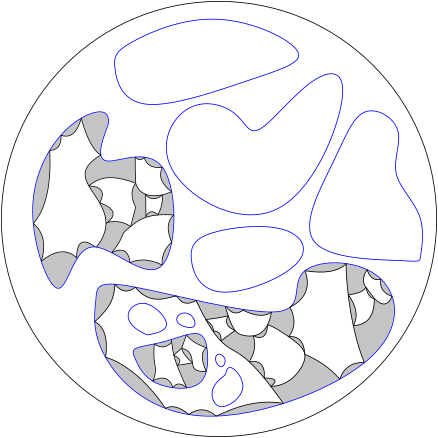}
  \label{fig:free}
\end{subfigure}%
\begin{subfigure}{.45\textwidth}
  \centering
  \includegraphics[width=.92\linewidth]{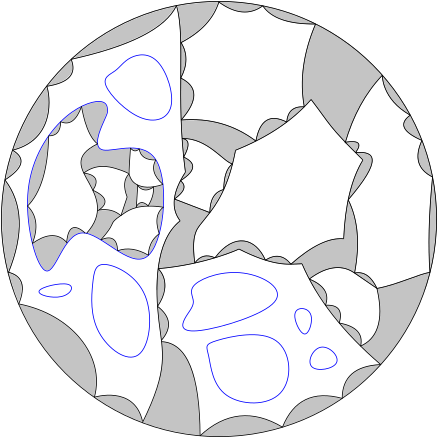}
  \label{fig:+}
\end{subfigure}
\caption{Excursions for the XOR-Ising model, with free/free and $+/+$ boundary conditions respectively, shaded in gray. Iterations are only showed inside certain loops, but should be done within all loops.}
\label{fig:sets-decomp}
\end{figure}

The starting point for the proof of Theorem \ref{thm:cont-general} is the interplay between imaginary chaos and two-valued sets of the GFF, introduced in \cite{dim-TVS}\footnote{Questions concerning the XOR-Ising model were teased to appear in a reference therein, and expected to be written by the second author of this paper. Any curious enough reader can check that this paper has never appeared in the literature (nor will it ever appear). This paper is the spiritual incarnation of the one promised there.}. In particular, each measure in Theorem \ref{thm:cont-XOR} and Theorem \ref{thm:cont-general} is constructed from the ones in {\cite[Proposition 6.1]{dim-TVS}}, and their supports are given by a certain iterative sampling of two-valued sets -- see Section \ref{sec:decomp} for this iteration. Let us give some further comments about these results:

\begin{enumerate}[label=(\roman*)]
    \item By {\cite[Theorem 1.2]{IGMC}}, the fields considered are a.s. not signed measures (i.e. the total variation of the sum of measures is infinite) and the compensation induced by the signs is vital. Hence the existence of such a decomposition is already non-trivial. 
    \item The signs appearing in the decompositions correspond to the labels of the two-valued set $\A_{-2\lambda, 2\lambda}$. As part of the proof, we must show that $\field{\sin(\alpha\phi)}$ does not charge this set with mass, and for $\alpha\in [1/2,1)$ one cannot use classical arguments. Instead, we apply a new technique that will appear in the  upcoming work \cite{CGS}.
    \item While the sum is not absolutely convergent, we prove that any rearrangement of the terms yields the same limit as long as this rearrangement is independent of the signs. This follows from the $L^2(\P)$ convergence in \eqref{eq:L^2_estimate}, as it was explained to us by Ofer Zeitouni. The same argument (see Proposition \ref{prop:sign-permute}) applies to the decompositions for the GFF \cite{excursion-GFF} and the Ising model \cite{mag-field, CME, ising-GFF}. Surprisingly, even though this argument is elementary, it was not noted before in this context.
    \item It follows from {\cite[Proposition 6.2]{dim-TVS}}, that if the conformal Minkowski content of the support of the measures $\mu_k$ exists, then it is almost surely equal to an explicit (deterministic) constant times $\mu_k$.
    \item Theorem \ref{thm:cont-general} gives a decomposition for both the real and the imaginary part of $\field{e^{i\alpha\phi}}$, but the supports of the measures $\mu_k$ in each decomposition do not pair well with one another. We discuss the challenges of proving a \emph{joint} decomposition of $\field{e^{i\alpha\phi}}$ in Remark \ref{rem:joint-decomp}. This is related to the upcoming work of Aru and Lupu \cite{AL26}. 
    \item We expect that analogous decompositions hold for $\alpha=1$, which we discuss in Remark \ref{rem:ALE-decomp}.
    \item On the other hand, we conjecture that no excursion decomposition holds in the range $\alpha\in(1, \sqrt{2})$. Indeed, this is related to the non-existence of two-valued sets with small height gaps {\cite[Proposition 2]{BTLS}}. 
    \item Note that the discrete XOR-Ising model lacks any simple Markov property. This makes the existence of an excursion decomposition even more surprising, as usually these decompositions are chosen from the zero-level set of the field and are proved using the Markov property. Another natural example of a non-Markovian model with such an excursion decomposition is the field built in \cite{fields-loo-soups} out of the (discrete) loop soup.
\end{enumerate}

To prove Theorem \ref{thm:cont-general}, it is key to understand correlation inequalities for $\field{\sin(\alpha\phi)}$ and $\field{\cos(\alpha\phi)}$. This is done in Section \ref{sec:corr-mon}. Said inequalities are the continuum analogue of classical correlation inequalities for the (XOR-)Ising model. The proof of many of them is a direct consequence of similar inequalities for the Green's function. However, it is surprisingly difficult to prove monotonicity in the domain (Theorem \ref{thm:domain-mon}): as expected, the two-point function of $\field{\sin(\alpha \phi)}$ is increasing in the domain but that of $\field{\cos(\alpha \phi)}$ is decreasing, yet they have very similar values for points in the bulk. The key input for this inequality is the classical Hadamard's variational formula for the Green's function, formulated as in \cite{LK-hadamard}.

\subsection{Convergence results}\label{sec:intro-conv}
The excursion decomposition of the continuum XOR-Ising model has a discrete analogue. As stated in {\cite[Proposition 1.2, Corollary 2.7]{spin-perc-height}} and later in {\cite[Corollary 3.3]{DRC-1}}, one can sample a (not necessarily critical) discrete XOR-Ising model by assigning signs symmetrically and independently to each cluster of a double random current (DRC) model. We refer to Section \ref{sec:prelim-discrete} for background on these models. This readily gives an excursion decomposition at the discrete level, where the measures correspond to the area measures of the DRC clusters. Let us restate this result here for completion. 

\begin{thm}[{\cite[Proposition 1.2, Corollary 2.7]{spin-perc-height}}, {\cite[Corollary 3.3]{DRC-1}}]\label{thm:ES-XOR}
    Let $D$ be a Jordan domain with discrete domain approximation $D_\delta\subset \delta \Z^2$. The XOR-Ising model $\tau_\delta$ with free/free boundary conditions in $D_\delta$ can be written as 
    \begin{align*}
        \tau_\delta = \sum_{k=1}^{N} \xi_{k}^\delta \mu_{k}^\delta,
    \end{align*}
    where $(\mu_k^\delta)_{k=1}^N$ is a collection of measures with pairwise disjoint supports $(C_k^\delta)_{k=1}^N$, and $(\xi_k)_{k=1}^N$ are i.i.d. symmetric signs. 
    
    \noindent The same statement holds under $+/+$ boundary conditions by adding a symmetry-breaking (positive) measure $\mu_0^\de$.
\end{thm}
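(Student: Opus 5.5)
The plan is to derive the statement from the double random current (DRC) representation of the product of two independent Ising models, via the switching lemma. Let $\sigma,\sigma'$ be two independent Ising models on $D_\delta$ at the same inverse temperature, with free boundary conditions, and set $\tau_\delta(x)=\sigma_x\sigma'_x$. The law of $\tau_\delta$ is determined by its correlation functions $\E[\prod_{x\in A}\tau_x]=\langle\sigma_A\rangle^2$ for finite sets $A$. Since $\tau$ takes values in $\{\pm1\}$, these moments characterise its law.

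The first step is to express these moments through currents. By the random current expansion,
\[
\langle \sigma_A\rangle^2=\frac{Z^{\partial\mathbf n_1=A}Z^{\partial \mathbf n_2=A}}{(Z^{\emptyset})^2}.
\]
The switching lemma transforms the numerator into $Z^{\emptyset}Z^{\emptyset}$ restricted to the event that every cluster of the sum current $\mathbf n_1+\mathbf n_2$ contains an even number of points of $A$. Hence $\langle\sigma_A\rangle^2=\P^{\emptyset,\emptyset}[\text{each cluster of }\mathbf n_1+\mathbf n_2 \text{ meets }A\text{ evenly}]$. This is exactly the form one gets from a cluster-sign model: if the clusters $C_1,\dots,C_N$ of the DRC (sourceless double current) receive i.i.d. symmetric signs $\xi_k$, and we set $\tilde\tau_x=\xi_k$ for $x\in C_k$, then
\[
\E\Bigl[\prod_{x\in A}\tilde\tau_x\Bigr]=\E\Bigl[\prod_k \xi_k^{|A\cap C_k|}\Bigr]=\P[\text{all }|A\cap C_k|\text{ even}].
\]
Matching moments gives $\tilde\tau\overset{d}{=}\tau_\delta$. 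Taking $\mu_k^\delta=\delta^2\sum_{x\in C_k}\delta_x$, the area measures of the clusters, viewed as distributions, yields $\tau_\delta=\sum_k\xi_k\mu_k^\delta$, with disjoint supports $C_k^\delta$.

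For $+/+$ boundary conditions, we add a ghost vertex $\mathfrak g$ connected to the boundary, so that $\langle\sigma_A\rangle^+=\langle\sigma_A\sigma_{\mathfrak g}^{|A|}\rangle$ in the free model on the augmented graph. Running the same switching argument puts $\mathfrak g$ in some cluster. The cluster containing $\mathfrak g$ gets the deterministic sign $+$, because fixing $\sigma_{\mathfrak g}=\sigma'_{\mathfrak g}=+$ forces it. Its area measure (excluding $\mathfrak g$) is $\mu_0^\delta$, and all other clusters receive i.i.d. signs as before.

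The main obstacle is applying the switching lemma correctly with the ghost vertex, so that the parity constraint in the $+$ case becomes "even in every cluster not containing $\mathfrak g$." This works because $|A|+|A|$ is even, which automatically balances the parity at $\mathfrak g$.
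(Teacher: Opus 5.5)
Your argument is correct and follows essentially the route the paper indicates (it cites Lis and Duminil-Copin--Lis--Qian, and sketches the argument after Proposition~\ref{prop:switching}). The switching lemma turns $\langle\sigma_A\rangle^2$ into the probability that every DRC cluster meets $A$ an even number of times; for $+$ boundary conditions the constraint is on every cluster not containing the ghost vertex. This is exactly the correlation function of the model with i.i.d.\ cluster signs and the boundary cluster fixed to $+$, and $\{\pm1\}$-valued laws are determined by these correlations. The only cosmetic differences are that the paper builds the ghost vertex by identifying the vertices of $\partial D_\delta$ (wired currents), which is the reading under which your identity $\langle\sigma_A\rangle^+=\langle\sigma_A\sigma_{\mathfrak g}^{|A|}\rangle$ holds, and that it uses area measures of glued $\delta$-squares rather than $\delta^2$-weighted point masses; neither affects the argument.
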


In what follows, we consider only the critical models. Using the main results of \cite{DRC-1, DRC-2}, we show that the decomposition of the (appropriately renormalised) XOR-Ising field converges to the continuum decomposition described in Theorem \ref{thm:cont-XOR}. For the sake of simplicity, we have stated here a somewhat weaker version of the complete statement that will be proved in Section \ref{sec:conv}. We refer to Sections \ref{sec:prelim-discrete}--\ref{sec:conv} for the topologies of convergence.

\begin{thm}\label{thm:discrete-conv}
   Let $\tau_\de$ be a critical XOR-Ising model in the setup of Theorem \ref{thm:ES-XOR}. As $\delta\to 0$, 
    \begin{equation}
        \big(\delta^{-1/4}\tau_\delta,\ (\delta^{-1/4}\mu_k^\delta)_{k\geq1},\ (C_k^\delta)_{k\geq1},\ (\xi_k^\delta)_{k\geq1}\big) \overset{(d)}{\longrightarrow} \big(\tau,\ (\mu_k)_{k\geq1},\ (C_k)_{k\geq1},\ (\xi_k)_{k\geq1}\big),
    \end{equation}
    where the limiting law is that of Theorem \ref{thm:cont-XOR}. Under the obvious modifications, the same statement holds for $+/+$ boundary conditions.
\end{thm}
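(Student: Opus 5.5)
The proof proceeds by tightness plus identification of every subsequential limit, using the DRC height function as the bridge between the discrete and continuum pictures. By \cite{DRC-1, DRC-2}, the (suitably normalised) DRC height function $h_\delta$ on $D_\delta$ converges in law to $\phi/\sqrt{2}$-type multiples of a GFF, i.e.\ to a GFF with the normalisation in which $\tau = \field{\cos(\alpha\phi)}$ (free/free, after the appropriate identification) and $\alpha = 1/\sqrt{2}$. Moreover, the DRC cluster boundaries converge to the corresponding two-valued level-set loops ($\A_{-\lambda,\lambda}$ and its iterations). The first step is to strengthen this to \emph{joint} convergence
\[
\big(h_\delta,\ \delta^{-1/4}\cos(\alpha' h_\delta),\ \delta^{-1/4}\sin(\alpha' h_\delta),\ \text{cluster interfaces}\big) \to \big(\phi,\ \field{\cos(\alpha\phi)},\ \field{\sin(\alpha\phi)},\ \text{TVS loops}\big).
\]
Here $\delta^{-1/4}\tau_\delta$ is expressed through the parity of the height function. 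This is done by tightness in $H^{s}$ (second moments are controlled by the known convergence of XOR two-point functions $\delta^{-1/2}\langle\tau_x\tau_y\rangle \to c\,|x-y|^{-1/2}$-type asymptotics) together with identification via mixed correlations. Any subsequential limit of the cosine field, conditioned on the outer explored loops, must have conditional expectations given by the local-set Markov property of $\phi$. Combining this with matching second moments forces the limit to be $\field{\cos(\alpha\phi)}$ measurable with respect to $\phi$.

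Next, I pass the decomposition to the limit. In the discrete setting, the measure $\mu_k^\delta$ is the restriction of $|\tau_\delta|$ (area measure) to cluster $C_k^\delta$, and the sign $\xi_k^\delta$ is the parity/sign of the height inside. Using the exploration structure of the DRC (the discrete analogue of the iteration in Section \ref{sec:decomp}), the clusters and their signs converge jointly to the continuum $C_k$ and the labels of $\A_{-2\lambda,2\lambda}$. I then need to show that $\delta^{-1/4}\mu_k^\delta$ converges to $\mu_k$. By tightness, $\delta^{-1/4}\mu_k^\delta$ has subsequential limits $\tilde\mu_k$ supported in $C_k$. Restricting $\delta^{-1/4}\tau_\delta$ to a neighbourhood of $C_k$ and using disjointness and convergence of supports, I get that $\sum_k \xi_k \tilde\mu_k = \tau$. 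The continuum decomposition of Theorem \ref{thm:cont-XOR} localises $\tau$ in the same way: $\xi_k\mu_k$ is obtained from $\tau$ by testing against functions supported near $C_k$ but away from the other excursions, which is possible thanks to local finiteness (iii) and the fact that $\tau$ does not charge the two-valued set (comment (ii)). Hence $\tilde\mu_k = \mu_k$, and the whole limit law is identified.

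The main obstacle is this localisation step. The discrete measures must not concentrate mass on the boundaries of clusters or on microscopic clusters accumulating near macroscopic ones, and the limit mass must not escape onto the limiting level set. I would try to control this uniformly through a discrete version of the $L^2$ bound \eqref{eq:L^2_estimate}: $\E\big[\sum_k (\delta^{-1/4}\mu_k^\delta, f)^2\big] = \delta^{-1/2}\sum_{x,y} f(x)f(y)\,\P[x\leftrightarrow y]$ in the DRC. Using the identity $\P[x\leftrightarrow y]=\langle\tau_x\tau_y\rangle$, which follows from sign symmetry, this converges to the continuum quantity. The small clusters contribute a vanishing amount by comparing with the continuum estimate and applying Proposition \ref{prop:sign-permute}-type $L^2$ arguments. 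The $+/+$ case follows identically, with $\mu_0^\delta$ defined as the boundary cluster.
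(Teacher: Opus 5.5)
Your overall architecture (tightness, joint convergence with the height function, then identification of the limiting measures) is the right one. However, the step that carries essentially all the difficulty is not established by what you describe: showing that the base field of the limiting chaos \emph{is} the limit $h$ of $h_\de$. Convergence of $h_\de$ \cite{DRC-1} and of all joint moments of $(\de^{-1/4}\tau_\de,\de^{-1/4}\tau_\de^\dagger)$ (bosonisation plus \cite{CHI-2}) only identifies marginals. A subsequential limit $(h,\tau^*,\tau^{\dagger,*})$ need not satisfy $\tau^*=\field{\cos(h/\sqrt2)}$, because measurability is not preserved under weak limits, and no mixed spin/height correlations are available.

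Your substitute is to compute $\E[\tau^*\mid\F_A]$ on explored loops from the discrete Markov property and combine it with matching second moments. The logic is sound along a filtration generating $\sigma(h)$, but the execution is circular. The first explored set with a clean Markov property is the boundary cluster of $\hat\nb_\de$. Its limit is the \emph{critical} set $\A_{-\sqrt2\lambda,\sqrt2\lambda}$, not $\A_{-\lambda,\lambda}$. Given this cluster, $\de^{-1/4}\tau_\de$ equals $\mu_0^\de$ on the cluster and $-c(\ell)$ times mean-zero free XOR-Ising fields inside the loops, so $\E[\de^{-1/4}\tau_\de\mid\text{cluster}]=\mu_0^\de$. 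Its limit is the unidentified $\mu_0^*$, and matching it with $\mu_0=\E[\field{\cos(h/\sqrt2)}\mid\F_A]$ is exactly the measure identification you postpone to your second step. Moreover, the limiting renewed fields inside the loops are chaos fields of \emph{some} GFFs, not known to be the renewed fields $h^A$ of $h$. So invoking ``the local-set Markov property of $\phi$'' presupposes that $A$ is a local set for the base field of the limiting chaos, which is what has to be proved.

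The missing idea is Theorem~\ref{thm:local}. From the discrete Markov property (Lemmas~\ref{lem:local-set-1}--\ref{lem:local-set-2}), only three things pass to the limit: the factorisation $\field{e^{i\alpha\phi}}=e^{i\alpha h_A}\field{e^{i\alpha\phi^A}}$ off $A$, thinness of the loops, and conditional independence. The local reconstruction of $\nabla\phi$ from $\field{e^{i\alpha\phi}}$ \cite{GFF-IGMC} then forces $A$ to be a local set of $\phi$ with boundary values $(c+4k)\sqrt2\lambda$. Proposition~\ref{prop:TVS-unique} plus equality in law gives $\A_{-\sqrt2\lambda,\sqrt2\lambda}(\phi)=A$, and iterating through the nesting yields $h=\phi$ (Theorem~\ref{thm:full-joint}).

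Your measure-identification step also needs repair. You cannot test against functions supported near $C_k$ but away from all other excursions. Infinitely many smaller excursions, inside the holes of $C_k$ and in neighbouring $\CLE_4$ loops, accumulate on $C_k$; local finiteness only controls the large ones. Also, for $\alpha=1/\sqrt2>1/2$ the $\CLE_4$ carpet is no longer thin in the classical sense, only in the $L^2$ sense of Lemma~\ref{lem:CLE4-thin}. And the critical sets $C_k$ are precisely where $\tau$ \emph{does} carry mass, so comment (ii) does not help you localise near them. Once $h=\phi$ is known, a localisation via $[C_k]_n$, Proposition~\ref{prop:cos-TVS} and orthogonality of the limiting signs could work. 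The paper instead identifies $\mu_k^*=\mu_k$ term by term, matching conditional expectations and conditional variances given $(\xi_1,C_1)$. It uses property~\ref{MP-free} of Theorem~\ref{thm:master} carried to the limit, and the fact that $\mu_k$ is a function of $C_k$.
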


Let us be more precise now about certain aspects of the joint convergence above. The discrete coupling presented should be seen as a projection of the \emph{master coupling} in {\cite[Theorem 3.1]{DRC-1}}, which we now briefly explain. Let $D_\de^\dagger$ be the \emph{weak} dual graph of $D_\de$. The key idea is that one can define a discrete height function $h_\de$ in $D_\de\cup D_\delta^\dagger$ such that 
\begin{equation}\label{eq:XOR-height-intro}
    \tau_\de^{+}:=\cos(\pi h_\delta) \quad\text{and}\quad \tau_\delta^\dagger:=i\sin(\pi h_\delta)
\end{equation}
are XOR-Ising models with $+/+$ boundary conditions in $D_\de$ and free/free boundary conditions~in~$D_\de^\dagger$, respectively. Moreover, the excursions of these spin fields (i.e. the clusters of the DRC) correspond to the certain level lines of the height function. 

On the one hand, it is known that the height function $h_\de$ converges in distribution to $1/(\pi\sqrt{2})$ times a GFF $h$ {\cite[Theorem 1.4]{DRC-1}}. On the other hand, a simple joint moment computation using {\cite[Theorem 1.1]{CHI-2}} yields that
\begin{equation}
    \big(\delta^{-1/4}\tau_\de,\ \delta^{-1/4}\tau_\delta^{\dagger}\big) \overset{(d)}{\longrightarrow} \big( \field{\cos((1/\sqrt{2})h)},\ \field{\sin((1/\sqrt{2})h)}\big),
\end{equation}
where again the law of $h$ is that of a GFF with zero boundary conditions. In order for Theorem \ref{thm:discrete-conv} to follow readily from Theorem \ref{thm:cont-XOR} and the convergence statements in \cite{DRC-1, DRC-2}, one must first show that\footnote{Note that the correct parameter $\alpha=1/\sqrt{2}$ can be immediately deduced from the variance of the limiting GFF.}
\begin{equation}\label{eq:XOR-joint}
    (h_\de,\ \delta^{-1/4}\tau_\de,\ \delta^{-1/4}\tau_\delta^{\dagger}) \overset{(d)}{\longrightarrow} (1/(\pi\sqrt{2})h,\ \field{\cos((1/\sqrt{2})h)},\ \field{\sin((1/\sqrt{2})h)}).
\end{equation}
That is, the base field of the continuum chaos is precisely the limit of the discrete height function. This may look obvious at first glance, however it is not at all trivial that measurability should be preserved upon taking limits and thus that the limits commute with the the trigonometric functions.

 We prove \eqref{eq:XOR-joint} using a result of independent interest: any ``reasonable'' local set coupling of the imaginary multiplicative chaos must also be a local set coupling of the underlying GFF. We defer the precise statement to Section \ref{sec:local}. Since our arguments for the joint convergence in \eqref{eq:XOR-joint} do not rely on computing the joint moments of the triple, it can readily be applied to couplings with other fields. 
 
 In {\cite{ising-GFF}}, the master coupling above is extended in such a way that the \emph{same} height function~$h_\delta$ can be coupled to four Ising models
\begin{equation}
    (\sigma_\de, \tilde\sigma_\de, \sigma_\de^\dagger, \tilde\sigma_\de^\dagger),
\end{equation}
where $(\sigma_\de, \tilde\sigma_\de)$ are independent Ising models with $+$ boundary conditions in $D_\delta$ and $(\sigma_\de^\dagger, \tilde\sigma_\de^\dagger)$ are independent Ising models with free boundary conditions in $D_\delta^\dagger$. In fact, one defines
\begin{equation}
    \tilde\sigma_\de := \sigma_\de\tau_\de,\quad \tilde\sigma_\de^\dagger := \sigma_\de^\dagger\tau_\de^\dagger,
\end{equation}
where $(\tau_\de, \tau_\de^\dagger)$ are as in \eqref{eq:XOR-height-intro}. In \cite{ising-GFF} it is shown that, as $\de\to0$,
\begin{equation}\label{eq:ising-gff}
    \big(h_\de,\ \delta^{-1/8}\sigma_\de,\ \delta^{-1/8}\tilde\sigma_\de,\ \delta^{-1/8}\sigma_\de^\dagger,\ \delta^{-1/8}\tilde\sigma_\de^\dagger\big) \overset{(d)}{\longrightarrow} \big(1/(\pi\sqrt{2})h,\ \sigma,\ \tilde\sigma,\ \sigma^\dagger,\ \tilde\sigma^\dagger\big),
\end{equation}
where e.g. $\sigma$ denotes a (continuum) Ising magnetisation field, first defined in \cite{mag-field}. The results in this paper can be combined with those of \cite{ising-GFF} to obtain the following result.

\begin{cor}\label{cor:joint-conv}
    In the above setup, the joint law
    \begin{equation}
        \big(h_\de,\ \delta^{-1/8}\sigma_\de,\ \delta^{-1/8}\tilde\sigma_\de,\ \delta^{-1/4}\tau_\delta,\ \delta^{-1/8}\sigma_\de^\dagger,\ \delta^{-1/8}\tilde\sigma_\de^\dagger,\ \delta^{-1/4}\tau_\delta^\dagger\big)
    \end{equation}
    converges as $\delta\to0$ to
    \begin{equation}
        \big(1/(\pi\sqrt{2})h,\ \sigma,\ \tilde\sigma,\ \field{\cos((1/\sqrt{2})h)},\ \sigma^\dagger,\ \tilde\sigma^\dagger,\ \field{\sin((1/\sqrt{2})h)}\big),
    \end{equation}
    where we recall $h$ is a GFF with zero boundary conditions. Moreover, 
    \begin{equation}\label{eq:wick-prod}
        \field{\sigma\tilde\sigma}\ =\ \field{\cos(h/\sqrt{2})}\quad \text{and}\quad \field{\sigma^\dagger\tilde \sigma^\dagger}\ =\ \field{\sin(h/\sqrt{2})}.
    \end{equation}
   Here $\field{\sigma\tilde\sigma}$ denotes the Wick product of two Ising fields given by
   \begin{equation}
       (\field{\sigma\tilde\sigma}, f) = \lim_{\eps\to0}(\sigma^\eps\tilde\sigma^\eps, f),
   \end{equation}
   where $\sigma^\eps, \tilde\sigma^\eps$ are smooth $\eps$-mollifications of $\sigma, \tilde\sigma$ respectively.
\end{cor}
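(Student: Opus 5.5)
We combine the joint convergence \eqref{eq:ising-gff} from \cite{ising-GFF} with the joint convergence \eqref{eq:XOR-joint} proved in this paper, and then identify the Wick products.

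\textbf{Step 1: tightness and subsequential limits.} Each coordinate of the seven-tuple is tight in the appropriate space. For $h_\de$ this follows from \cite[Theorem 1.4]{DRC-1}, for the four Ising fields from \cite{mag-field}, and for $\delta^{-1/4}\tau_\de$, $\delta^{-1/4}\tau_\de^\dagger$ from the $L^2$ moment bounds of \cite{CHI-2}. Hence the tuple is tight, and we fix a subsequential limit $(\frac{1}{\pi\sqrt2}h,\sigma,\tilde\sigma,\tau,\sigma^\dagger,\tilde\sigma^\dagger,\tau^\dagger)$.

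\textbf{Step 2: identifying the joint law.} By \eqref{eq:ising-gff}, the marginal on the Ising coordinates together with $h$ is the one of \cite{ising-GFF}. By \eqref{eq:XOR-joint}, $\tau=\field{\cos(h/\sqrt2)}$ and $\tau^\dagger=\field{\sin(h/\sqrt2)}$ almost surely, where the $h$ is the same one appearing in the first coordinate. Thus $\tau$ and $\tau^\dagger$ are measurable functions of $h$. The full joint law is therefore determined by the law of $(h,\sigma,\tilde\sigma,\sigma^\dagger,\tilde\sigma^\dagger)$, which is unique. So the limit does not depend on the subsequence, and the convergence claim follows.

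\textbf{Step 3: the Wick product identity.} This is the main obstacle. In the discrete setting $\tilde\sigma_\de=\sigma_\de\tau_\de$, so $\sigma_\de\tilde\sigma_\de=\tau_\de$ exactly (spins square to one). We cannot simply pass the product to the limit, since the fields are distributions. The plan is to mollify at a fixed scale $\eps$ and compare two quantities in $L^2$, uniformly in $\de$:
\[
(\sigma_\de^\eps\tilde\sigma_\de^\eps,f)\quad\text{and}\quad \delta^{-1/4}(\tau_\de,f), \qquad \text{both computed with the normalisation }\delta^{-1/4}.
\]
Expanding the squared difference gives an integral of discrete four-point correlations of the form $\langle\sigma_x\sigma_y\tilde\sigma_{x'}\tilde\sigma_{y'}\rangle$ with $x,y$ mollified near $x'$, $y'$. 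These reduce to products of two-point and four-point Ising correlations of the independent pair $(\sigma_\de,\sigma_\de\tau_\de)$. Note that $\sigma_\de$ and $\tilde\sigma_\de$ are independent, and each is an Ising model.

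Using the scaling limits of these correlations (\cite{CHI-2}) together with uniform bounds, the squared difference should equal
\[
\iint f(x)f(y)\Big[\E^\eps\ \text{(mollified 4-point)} - 2\,\text{cross term} + \langle\tau_x\tau_y\rangle\Big]\,dx\,dy .
\]
This tends to $0$ as $\eps\to0$ uniformly in $\de$, by the OPE $\langle\sigma_x\sigma_{x'}\rangle\sim|x-x'|^{-1/4}$, which exactly cancels the renormalisation. Here we exploit $\sigma_x\tilde\sigma_x=\tau_x$ at coincident points.

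Passing to the limit $\de\to0$ along the joint convergence of Step 2 (for fixed $\eps$, the product of mollified fields is a continuous functional) gives $(\sigma^\eps\tilde\sigma^\eps,f)\to(\tau,f)$ in $L^2$. Combined with Step 2, this yields $\field{\sigma\tilde\sigma}=\field{\cos(h/\sqrt2)}$. The dual identity for $\sigma^\dagger$, $\tilde\sigma^\dagger$ and $\tau^\dagger$ is proved the same way.

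If the uniform-in-$\de$ control of the error proves awkward, an alternative is to work directly in the continuum. One would use that $\sigma$ and $\tilde\sigma$ are independent with known correlations, show that $\sigma^\eps\tilde\sigma^\eps$ is Cauchy in $L^2$, and identify the limit with $\tau$ through its joint moments with the discrete-limit coupling.
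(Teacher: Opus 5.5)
Your proposal is correct, but both halves differ from the paper's route.

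\textbf{Joint convergence (Step~2).} The paper re-runs ``the same local set argument as in Theorem~\ref{thm:full-joint}'' on subsequential limits of the enlarged coupling. Your argument is more direct and suffices. Theorem~\ref{thm:full-joint} says $(h,\tau,\tau^\dagger)$ has the law of $(h,F(h),G(h))$. Hence any subsequential limit of the seven-tuple has $\tau=F(h)$ and $\tau^\dagger=G(h)$ almost surely, and \eqref{eq:ising-gff} fixes the law of the remaining coordinates.

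\textbf{Wick identity (Step~3).} The paper proceeds in three stages:
\begin{enumerate}
\item it checks in the continuum that $(\sigma^\eps\tilde\sigma^\eps,f)$ is Cauchy in $L^2$, so $\field{\sigma\tilde\sigma}$ exists and is measurable in $(\sigma,\tilde\sigma)$;
\item it shows by joint moments that $(\sigma_\de,\tilde\sigma_\de,\tau_\de)\to(\sigma,\tilde\sigma,\field{\sigma\tilde\sigma})$ in law;
\item it deduces almost sure equality from $(\sigma,\tilde\sigma,\field{\cos(h/\sqrt2)})\overset{(d)}{=}(\sigma,\tilde\sigma,\field{\sigma\tilde\sigma})$ plus that measurability.
\end{enumerate}
You replace all of this with a single second-moment bound,
\[
\lim_{\eps\to0}\limsup_{\de\to0}\E_\de\big[\big((\sigma^\eps_\de\tilde\sigma^\eps_\de,f)-\de^{-1/4}(\tau_\de,f)\big)^2\big]=0,
\]
which you transfer to the Step~2 limit coupling. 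Make that transfer explicit: it is Fatou/Portmanteau for the continuous non-negative function $(x,y)\mapsto(x-y)^2$. This gives existence of $\field{\sigma\tilde\sigma}$ and the identity at once, using only two-point functions. It also avoids convergence of all joint moments and the moment-determinacy of the triple on which the paper's second stage implicitly relies.

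\textbf{Justification of the key estimate.} Fix how you justify it. Because $\sigma_\de$ and $\tilde\sigma_\de$ are independent and identically distributed, the squared difference is exactly
\[
\iint f(z)f(w)\big[K_\de^{\eps\eps}(z,w)^2-2K_\de^{\eps 0}(z,w)^2+K_\de^{00}(z,w)^2\big]\,dz\,dw .
\]
Here $K_\de^{\eps\eps}$, $K_\de^{\eps0}$ and $K_\de^{00}$ are the doubly, singly and non-mollified normalised Ising two-point functions.
\begin{itemize}
\item No four-point functions appear.
\item No OPE between $\sigma$ and $\tilde\sigma$ is involved. The asymptotic $\langle\sigma_x\sigma_{x'}\rangle\sim|x-x'|^{-1/4}$ would matter for a field multiplied by itself. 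Here the factors are independent, so there is nothing to renormalise, which is why the paper can define the Wick product as a plain limit.
\item The vanishing is just dominated convergence. Off the diagonal the mollified kernels converge to $K^{00}$. Near it, $K^2\lesssim|z-w|^{-1/2}$ is locally integrable, together with the boundary factors of \eqref{eq:onsager-discrete} and the dimension assumption on $\partial D$.
\item You only need the $\lim_\eps\limsup_\de$ form above, computed through the continuum limits of \cite{CHI}. Uniformity over all $\de$ would additionally require equicontinuity of discrete two-point functions down to scale $\de$.
\end{itemize}
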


\noindent As was pointed out to us by Marcin Lis, the identifications in \eqref{eq:wick-prod} can be combined with \cite[Theorem 1]{GFF-IGMC} to recover the law of a GFF as a measurable function of four Ising models appropriately coupled. We refer to Remark \ref{rem:meas-relation} for further discussion on the very strict analogies between the discrete and continuum couplings in this regard.

\begin{rem}
    The interplay between the decomposition of $\tau$ and the decompositions of the dual fields $\sigma^\dagger$ and $\tilde\sigma^\dagger$ is easy to describe. Namely, the excursions of the latter pair are supported precisely in the unshaded areas of Figure \ref{fig:sets-decomp}.
\end{rem}

\subsection{A short note on the AT polarisation}
On a final note, let us briefly discuss potential generalizations to the Ashkin-Teller (AT) polarisation field, in the spirit of \cite[Section 1.3]{ising-GFF}. We recall that the Ashkin-Teller model, say with free boundary conditions in $D_\de$, is given by the probability measure
\begin{equation}
    \mathbf{P}^{AT}_{J,U}(\sigma_\de, \tilde\sigma_\de) = \frac{1}{Z_{J,U}^{AT}}\exp\left\{ \sum_{x\sim y}\left(J\big(\sigma_\de(x)\sigma_\de(y) + \tilde\sigma_\de(x)\tilde\sigma_\de(y)\big)+U\sigma_\de(x)\sigma_\de(y)\tilde\sigma_\de(x)\tilde\sigma_\de(y)\right) \right\}
\end{equation}
on pairs of interacting spin configurations $(\sigma_\de, \tilde\sigma_\de)\in\{\pm1\}^{D_\de} \times \{\pm1\}^{D_\de}$. The AT polarisation is defined as $$\tau_\de=\sigma_\de\tilde\sigma_\de,$$ which at the decoupling point $U=0$ is precisely the XOR-Ising model. Along the critical line \cite{AT-crit} given by
\begin{equation}
    \sinh(2J) = e^{-2U} \quad\text{and} \quad U\leq J,
\end{equation}
it has been predicted in the physics literature \cite{kada-brown, nienhuis, c1} that the scaling limit of the polarisation field is given by $\field{\cos(\alpha\phi)}$ and $\field{\sin(\alpha\phi)}$ for an explicit value $$\alpha=\alpha(U)\in[1/2, \sqrt{3}/2).$$ We refer to {\cite[Conjecture 1.1]{ising-GFF}} for a restatement of these conjectures in the current setup. The discrete counterpart of Theorem \ref{thm:ES-XOR} can be found implicitly in {\cite[Proposition 13]{AT-curr}}, using the Ashkin-Teller currents introduced in that same paper, and an alternative proof can be found in \cite[Section 2.3]{ising-GFF}. Moreover, the conjectural scaling limit of the Ashkin-Teller currents as stated in {\cite[Conjecture 1.2]{ising-GFF}} matches the excursions in our decompositions. Thus, one arrives at the following natural conjecture:
\begin{con}\label{conj:AT}
    The excursion decomposition of the continuum Ashkin-Teller polarisation field with parameter $U$ is given by those in Theorem \ref{thm:cont-general} for the appropriate value $\alpha=\alpha(U)$.
\end{con}

\bigskip
\smallskip

\noindent \textbf{Outline:} We begin by discussing preliminaries concerning the continuum models in Section \ref{sec:prelim-cont}. We then give a detailed description of the decomposition in Section \ref{sec:decomp}, explaining the key mechanisms that we rigorously derive later throughout Sections \ref{sec:thin}-\ref{sec:cos-TVS}. The concluding proofs of Theorems \ref{thm:cont-XOR}-\ref{thm:cont-general} are in Section \ref{sec:main-proofs}. We then  make a small aside in Section \ref{sec:local} to prove that local sets of imaginary chaos give local sets of the GFF. Finally, we devote Section \ref{sec:prelim-discrete} to preliminaries concerning the discrete models, and give the proof of Theorem \ref{thm:discrete-conv} and Corollary \ref{cor:joint-conv} in Section \ref{sec:conv}. 

\noindent \textbf{Acknowledgments:} We are grateful to Marcin Lis for getting us started on this project and many helpful discussions. We thank Ofer Zeitouni for pointing out the very helpful observation of Proposition \ref{prop:sign-permute}. We thank Fredrik Viklund for directing us to Theorem \ref{thm:hadamard}. Finally, we thank Juhan Aru for many insightful discussions and sharing with us part of the upcoming work \cite{AL26}.

\noindent This project was started during the workshop \emph{Quantum fields and probability II} at Institut Mittag-Leffler. Part of the work was carried out during and supported by the program \emph{Probabilistic methods in quantum field theory} at the Hausdorff Institute of Mathematics funded by the Deutsche Forschungsgemeinschaft (DFG, German Research Foundation) under Germany's Excellence Strategy – EXC-2047/1 – 390685813.

\noindent T.A.L. was funded by the Austrian Science Fund (FWF) grant ``Spins, loops and fields'' P36298, and by a grant of the Vienna School of Mathematics (VSM). The research of A.S. was supported by Centro de Modelamiento Matem\'{a}tico Basal Funds FB210005 from ANID-Chile, by Fondecyt Grant 1240884, and by ERC 101043450 Vortex. 


\section{Preliminaries in the continuum} \label{sec:prelim-cont}

\subsection{Gaussian free field} Let us start by defining the two-dimensional Gaussian free field (GFF) and introducing the basic properties we make use of. For further background we refer the reader to \cite{She,notes-GFF,BP}.

Let $D \subset \C$ be a simply connected domain. The Gaussian free field  $\phi$ with zero boundary conditions in $D$ is the centred Gaussian process with covariance given by the Green's function $G_D$ of the Laplacian with zero boundary conditions in $D$. Recall that
\[
G_D(z,w) = - \log |z-w| + \log \CR(w,D) + o(1)
\qquad \text{as } z \to w,
\]
where $\CR(w,D)$ denotes the conformal radius of $D$ seen from $w$ (see e.g. \cite[Theorem~1.23]{BP}). Since the Green's function is singular on the diagonal, the GFF cannot be defined pointwise and is instead a random distribution (generalised function). More precisely, it can be defined as a centred Gaussian process $\{ (\phi,f) : f \in C_c^\infty(D)\}$ 
with covariance given by
\[
\E\left[(\phi,f)(\phi,g)\right ]
= \iint_{D \times D} f(z) G_D(z,w) g(w) dz  dw.
\]
It is known (see e.g. \cite[Theorem 1.45]{BP}) that for every $s<0$ the GFF admits a version in the Sobolev space $H^{s}(D)$. If $\rho_{z,\eps}$ denotes the uniform probability measure on the circle $\partial B(z, \eps)$ of radius $\eps$ around $z$, then the circle average process
\[
\phi_\eps(z) := (\phi, \rho_{z,\eps})
\]
is well defined and continuous in $(z,\eps)$ (see \cite[Section 1.12]{BP}). Moreover, $\{\phi_\eps(z):z\in D, \eps>0\}$ is a centred Gaussian process with covariance given by
\[
\E \left [\phi_\eps(z)\phi_{\eps'}(z')\right ]\ 
\begin{cases}
= G_D(z,z') ,
& \text{if } B(z,\eps), B(z',\eps') \subset D \text{ and } |z-z'| \geq \eps+\eps',\\[0.2cm]
= -\log(\eps\vee \eps') + \log \CR(z,D) ,
& \text{if } z=z' \text{ and } B(z,\eps) \subset D,\\[0.2cm]
\leq G_D(z,z') ,
& \text{otherwise.}
\end{cases}
\]

\subsection{Imaginary multiplicative chaos}\label{ss.IMC}
We now briefly recall the construction and basic properties of imaginary multiplicative chaos, referring to \cite{IGMC, IGMC2} for more details. 

Let $D\subset\C$ be a bounded, simply connected domain. Let $\phi$ be a GFF with zero boundary conditions on $D$. For any $\epsilon>0$, consider the renormalized approximations\footnote{This normalization choice differs from that in \cite{IGMC, IGMC2} by a (domain-dependant) factor of $\CR(z,D)^{-\alpha^2/2}$.}
\begin{equation}\label{eq:IGMC-def}
\epsilon^{-\alpha^2/2}e^{i\alpha\phi_\epsilon(z)},
\end{equation}
which we can view as (well-defined) functions supported in $D$. For $0<\alpha<\sqrt{2}$, by {\cite[Proposition 3.6, Theorem 3.12]{IGMC}}, we have that, as $\eps\to0$,
\begin{equation}
    (\epsilon^{-\alpha^2/2}e^{i\alpha\phi_\epsilon(z)}, f):=\int_{D}\epsilon^{-\alpha^2/2}e^{i\alpha\phi_\epsilon(z)}f(z)dz \longrightarrow (\field{e^{i\alpha\phi}}, f)
\end{equation}
exists as a limit in $L^p(\P)$ for any $p\in[1, \infty)$ and any test function $f\in C^\infty_c(D)$. Moreover, the moments of $(\field{e^{i\alpha\phi}}, f)$ grow slow enough to uniquely characterize its law \cite[Theorem 3.12]{IGMC}, and thus one can define the imaginary multiplicative chaos field, denoted by  $\field{e^{i\alpha\phi}}$ as the unique distribution in the Sobolev space $H^{s}_{loc}(D)$ for any $s<-1$. The same renormalization procedure can be used to define the fields $\field{\cos(\alpha\phi)}$ and $\field{\sin(\alpha\phi)}$,
corresponding to the real and imaginary parts of $\field{e^{i\alpha\phi}}$ respectively. 

\begin{rem}
It is crucial that, above, all test functions are compactly supported in $D$. Indeed, the expectation
\begin{equation}\label{eq:int-blowup}
    \E[(\field{e^{i\alpha\phi}}, f)] = \int_{D}\CR(z, D)^{-\alpha^2/2}f(z)dz
\end{equation}
features a (potential) boundary blow-up that is only controlled by the compactness of the support of~$f$. Later, however, we must deal with test function that may not be compactly supported, and understand precisely which conditions on the regularity of $\partial D$ ensure the integrability of the fields along it. For details see Section \ref{sec:thin}.
\end{rem}

Let us also highlight that the underlying Gaussian structure allows one to explicitly compute all $k$-point correlation functions. For instance, 
\begin{equation}
    \E[(\field{e^{i\alpha\phi}}, f)^2] = \int_D \int_D \CR(z, D)^{-\alpha^2/2}\CR(w, D)^{-\alpha^2/2}\exp(-\alpha^2G(z,w))f(z)f(w)dzdw,
\end{equation}
\begin{equation}
    \E[|(\field{e^{i\alpha\phi}}, f)|^2] = \int_D \int_D \CR(z, D)^{-\alpha^2/2}\CR(w, D)^{-\alpha^2/2}\exp(\alpha^2G(z,w))f(z)f(w)dzdw.
\end{equation}
The ``kernel'' two-point functions in the integrands are denoted by $$\langle\field{e^{i\alpha\phi(z)}}\field{e^{i\alpha\phi(w)}}\rangle, \quad {\langle \field{e^{i\alpha\phi(z)}}\field{e^{-i\alpha\phi(w)}}\rangle}$$ respectively. Similarly, we have that
\begin{equation}\label{eq:cos-2pt}
    \langle \field{\cos(\alpha\phi(z))}\field{\cos(\alpha\phi(w))}\rangle = \CR(z, D)^{-\alpha^2/2}\CR(w, D)^{-\alpha^2/2}\cosh(\alpha^2G(z,w)),
\end{equation}
\begin{equation}\label{eq:sin-2pt}
    \langle \field{\sin(\alpha\phi(z))}\field{\sin(\alpha\phi(w))}\rangle = \CR(z, D)^{-\alpha^2/2}\CR(w, D)^{-\alpha^2/2}\sinh(\alpha^2G(z,w)).
\end{equation}

\subsection{Level sets of the GFF} We recall now the construction of the \emph{two-valued sets} of the GFF first defined in \cite{BTLS}. First, we present the definition of a local set coupling of the GFF as introduced in \cite{SchSh}.

\begin{defn}[Local sets]
A coupling $(\phi,A,\phi_A)$ between a GFF $\phi$ in $D$, a random closed set $A \subset \bar{D}$ and a distribution $\phi_A$ is called a \emph{local set (coupling)} if
\begin{itemize}
    \item the restriction of $\phi_A$ to $D\setminus A$ is almost surely harmonic,
    \item conditionally on $\F_A := \sigma(A,\phi_A)$, the field
    \[
        \phi^A := \phi - \phi_A
    \]
    is a GFF with zero boundary conditions in $D \setminus A$.
\end{itemize}
\end{defn}

Let us remark that, as explained in e.g. \cite{BTLS}, the property of being a local set depends only on the joint law of $(\phi,A)$ since $\phi_A$ is almost surely determined by $(\phi,A)$. Local sets are natural analogues of stopping times for Brownian motion and, in settings where filtrations are available, they are closely related to stopping sets -- see \cite{Aru,ASZ} for further discussion. One of the most important families of local sets arising in the study of the geometry of the GFF are the \emph{two-valued sets}.

\begin{defn}[Two-valued sets]
Let $a,b\in \R$ and let $u$ be a harmonic function in $D$. Let $\phi$ be a GFF with boundary condition $u$ in $D$.  We say that a random closed set $\A_{-a,b}^u$ is a two-valued set with boundary condition $u$ if
\begin{itemize}
    \item $(\phi,\A_{-a,b}^u)$ is a local set,
    \item there exists a random function $h_{\A_{-a,b}^u}$ such that 
    \[
        (\phi_{\A_{-a,b}^u},f)=\int_{D\backslash \A_{-a,b}^u} h_{\A_{-a,b}^u}(z) f(z)dz
        \quad \text{a.s.}
    \]
    for all $f \in C_c^\infty(D)$,
    \item the function $h_{\A_{-a,b}^u}-u$ takes values in the set $\{-a,b\}$.
\end{itemize}
\end{defn}

\noindent In this work, we only need to consider boundary conditions $u$ that are constant on each connected component\footnote{For this part, it is useful to consider $D$ that are not connected. In this case, the GFF are just independent GFFs in each connected component of $D$.} of $ D$. Observe that, if $D$ is connected and $u$ is constant, then $$\A_{-a,b}^u = \A_{-a-u,b-u}.$$

The following theorem ensures the existence and uniqueness of two-valued sets. The case of a simply connected domain and zero boundary conditions is proved in \cite[Proposition 2]{TVS}, while the general case of multiply connected domains and non-constant boundary conditions is treated in \cite[Theorem 3.21]{ALS2}. 

\begin{thm}[Existence and uniqueness of TVS]\label{thm.E!TVS}
Let $a,b>0$ satisfy $a+b \geq 2\lambda = \pi$, and let $u$ be a harmonic function in $D$ whose boundary values lie in $[-a,b]$. Let $\phi$ be a GFF with boundary condition $u$ in $D$. Then,  almost surely there exists a unique two-valued set $\A_{-a,b}^u$. Moreover, $\A_{-a,b}^u$ is a measurable function of $\phi$.
\end{thm}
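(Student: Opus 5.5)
Theorem \ref{thm.E!TVS} is quoted from \cite[Proposition 2]{TVS} and \cite[Theorem 3.21]{ALS2}. If I were to prove it myself, I would follow the standard route: build the set by iterating SLE$_4$-type level lines, check it is a local set with the right harmonic function, and derive uniqueness and measurability from a monotonicity argument for local sets with bounded harmonic functions.

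\emph{Existence.} I would start with the simply connected case, zero boundary data and $a+b=2\lambda$, with $\lambda=\pi/2$. Here $\A_{-\lambda,\lambda}$ is the ALE, constructed by exploring the level lines of $\phi$, which are SLE$_4(-1;-1)$ curves coupled as local sets, with heights $\pm\lambda$ on either side. The harmonic function in each complementary component then takes boundary values in $\{-\lambda,\lambda\}$ plus the previously accumulated values. One iterates inside every component whose boundary values are not yet constant and equal to $-a$ or $b$.

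For general $a+b\ge 2\lambda$, the construction has two stages.
\begin{enumerate}[label=(\arabic*)]
\item Explore level lines with heights shifted so that every component has constant boundary value in $\{-a,b\}$, or in some intermediate value $c\in(-a,b)$.
\item Restart in each component with value $c$, with the new gap $(-a-c,b-c)$. Since $a+b\ge2\lambda$, this gap still admits a level line separating $-a$ from $b$.
\end{enumerate}
For a general harmonic $u$ with values in $[-a,b]$ on a multiply connected domain, the same exploration works using SLE$_4(\rho)$ processes with force points and the theory of level lines with general boundary data.

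\emph{Local set property.} This follows from two standard facts: level lines are local sets, and a countable nested union of local sets, obtained by exploring inside conditionally independent GFFs, is again a local set. Closedness follows from the local finiteness estimates discussed next.

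\emph{Local finiteness: the main obstacle.} One must show that the iteration terminates locally, i.e.\ that the union is a closed set and that the harmonic function takes only the values $-a$ and $b$ off a Lebesgue-null set. I would track the circle-average process $\phi_{A_n}(z)$ at a fixed point $z$ as a function of $-\log\CR(z,D\setminus A_n)$. This process is a Brownian motion, run until it hits $\{-a,b\}$, and changes in its value happen at discrete exploration steps. Because the Brownian motion exits a bounded interval in finite time, each point is swallowed after finitely many steps almost surely. Fubini then gives that the two-valued property holds almost everywhere. Exponential tails for the exit time control the conformal radius, which gives local finiteness.

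\emph{Uniqueness.} I would use the comparison lemma of \cite{BTLS,ASW}. If $A$ and $A'$ are both two-valued sets for $\{-a,b\}$ with thin local set couplings, take a conditionally independent union $A\cup A'$. Then compare $h_{A\cup A'}$ with $h_A$ on components of $D\setminus A$. Since $h_A\in\{-a,b\}$ and $h_{A\cup A'}$ must stay in $[-a,b]$, the harmonic function inside $D\setminus A$ would have to exceed the bounds on positive measure unless $A'\subset A$. By symmetry $A=A'$. Thinness, meaning the set carries no GFF mass, is needed for this and would be proved via the Minkowski-dimension estimate $\dim<2$.

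\emph{Measurability.} Finally, uniqueness applied to two conditionally independent copies given $\phi$ shows they coincide, so $A$ is determined by $\phi$.
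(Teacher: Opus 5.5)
The paper does not prove this statement; it imports it from \cite[Proposition 2]{TVS} and \cite[Theorem 3.21]{ALS2}. Your existence outline follows the level-line route of those references. Your uniqueness step, however, has a genuine gap.

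\textbf{The gap.} Your whole argument hinges on the claim that $h_{A\cup A'}$ stays in $[-a,b]$, and nothing you invoke gives it.
For conditionally independent local sets you only know the martingale relations $\E[h_{A\cup A'}(z)\mid\F_A]=h_A(z)$, and the analogous one for $A'$. Beyond that:
\begin{itemize}
\item $h_A$ and $h_{A'}$ have no a priori boundary values along the fractal sets $A$ and $A'$, so there is nothing to apply a maximum principle to.
\item There is no control at all near $A\cap A'$.
\item Even boundedness of $h_{A\cup A'}$, which you also need for thinness of $A\cup A'$, is not automatic.
\end{itemize}
In fact, given thinness, the bound $h_{A\cup A'}\in[-a,b]$ for two arbitrary two-valued sets is essentially equivalent to the uniqueness you are trying to prove. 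If $A=A'$ the bound is trivial, and conversely the bound plus your mean-zero argument gives $A=A'$. So it cannot be quoted as an off-the-shelf comparison lemma.

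\textbf{How the cited proofs avoid it.} They never compare two abstract two-valued sets. They use the explicit construction of $\A_{-a,b}$ from level lines with boundary values in $[-a,b]$. Such level lines cannot enter a component of $D\setminus A'$ on which $h_{A'}\notin(-a,b)$ \cite[Lemma 16]{TVS}. This gives the one-sided inclusion $\A_{-a,b}\subseteq A'$, which is exactly how Proposition \ref{prop:TVS-unique} is justified in the paper.

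Your mean-zero idea then finishes the job with the roles fixed. On a component $O$ of $D\setminus\A_{-a,b}$ where $h_{\A_{-a,b}}=b$, the function $h_{A'}-b\leq 0$ has conditional mean zero given $\F_{\A_{-a,b}}$, hence vanishes. Thinness of $A'$ plus a variance comparison then prevents $A'$ from entering $O$. That thinness is built into the definition; a Minkowski-dimension bound is available only for the constructed set, not for an arbitrary competitor.

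Measurability then comes for free, since the constructed set is a function of $\phi$ through its level lines. Your argument with two conditionally independent copies is also fine, but only once uniqueness is established.

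\textbf{A smaller issue in the existence part.} Finiteness of the Brownian exit time at a fixed $z$ gives $\CR(z,D\setminus A)>0$, so $z\notin A$ almost surely. By Fubini this yields $\Leb(A)=0$ and $h_A\in\{-a,b\}$ almost everywhere, which is what you need. It does not give local finiteness. That is a separate, later result (\cite[Proposition 4.17]{ALS1}, Proposition \ref{prop:TVS-local-finite}) and is not needed for existence. Nor does closedness come from local finiteness: you define $A:=\overline{\bigcup_n A_n}$. What you must check is that this closure is still a local set whose harmonic function is the limit of the $h_{A_n}$.
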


\noindent The boundaries of the connected components of $D\setminus \A_{-a,b}^u$ define a countable collection of simple loops, i.e. non-self-touching closed curves. We often denote this collection by $\La_{-a,b}^u$. The following result ensures that there are only finitely many macroscopic loops.

\begin{prop}[{\cite[Proposition 4.17]{ALS1}}]\label{prop:TVS-local-finite}
The collection of loops $\La_{-a,b}^u$ is locally finite. That is, almost surely for every $\rho>0$ there are finitely many connected components of $D\backslash \A_{-a,b}^u$ with diameter greater than or equal to $\rho$.
\end{prop}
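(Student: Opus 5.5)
The strategy is to reduce local finiteness to a quantitative bound on the number of macroscopic loops, and to obtain that bound from the probability that a single fixed annulus is crossed by a loop. Fix $\rho>0$. Every connected component $O$ of $D\setminus\A_{-a,b}^u$ with $\mathrm{diam}(O)\ge\rho$ has boundary loop $\ell\in\La_{-a,b}^u$ of diameter at least $\rho$. Cover $\bar D$ by finitely many balls $B(z_j,\rho/8)$ with $z_j$ in a fixed grid. Any such loop must cross one of the annuli $B(z_j,\rho/4)\setminus B(z_j,\rho/8)$. Since the loops are boundaries of disjoint open components, it suffices to show the following: almost surely, only finitely many distinct components of $D\setminus\A_{-a,b}^u$ intersect both $\partial B(z_j,\rho/8)$ and $\partial B(z_j,\rho/4)$. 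Taking $\rho=1/n$ and a countable union over $n$ then gives the almost sure statement for every $\rho$.

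For a fixed annulus I would argue by contradiction using the continuity of $\A_{-a,b}^u$ in the Hausdorff sense, together with the fact that the set is thin. Suppose infinitely many components $O_m$ cross the annulus. Each crossing yields a path in $O_m$ from the inner to the outer circle. These paths are pairwise disjoint and live in a compact region, so they accumulate. Between two consecutive crossing components there must be a crossing of the annulus by $\A_{-a,b}^u$ itself, and in the limit these pieces of $\A$ become arbitrarily close to one another. One then gets infinitely many disjoint ``strips'' of $D\setminus\A$ of width tending to zero but length at least $\rho/8$. The harmonic function $h_{\A}$ takes the constant value $-a$ or $b$ on each strip. Testing the GFF against a smooth bump $f$ supported in the annulus, and using the local set decomposition $\phi=\phi_\A+\phi^\A$, one would show this is incompatible with the fact that $\A_{-a,b}^u$ is a local set with $\phi_\A\in\{-a,b\}+u$. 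Concretely, the loops of $\La_{-a,b}^u$ are SLE$_4$-type level lines. Thin strips force the conditional GFF $\phi^\A$ inside each strip to be nearly zero. A level line of height gap $a+b\ge2\lambda$ cannot be nested this densely: the law of a component crossing a thin rectangle of aspect ratio $N$ decays like $e^{-cN}$ by conformal invariance and the Markov property.

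This suggests a cleaner quantitative route, which I would actually follow. Explore $\A_{-a,b}^u$ via level lines started from the boundary (the construction of \cite{TVS,ALS2}). Show that the expected number of loops of $\La_{-a,b}^u$ crossing a fixed annulus $B(z,2r)\setminus B(z,r)$ is finite. Each new crossing loop is discovered after exploring the previous ones. Conditionally on the past, the unexplored domain contains an annulus crossing, and the probability that the next loop crosses again is bounded by $1-p$ for a uniform $p>0$. This bound comes from comparing with a fixed annulus via the Markov property and boundary values bounded in $[-a,b]$; by monotonicity of TVS in boundary data one may take extreme boundary values. The count of crossing loops is then dominated by a geometric variable, hence finite almost surely.

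The main obstacle is the uniform bound $p>0$. It must hold uniformly over the shape of the remaining domain and over its piecewise constant boundary data in $[-a,b]$. I would try to get it from absolute continuity of the GFF restricted to a slightly smaller annulus, combined with the positive probability that $\A$ contains a loop surrounding the inner disc.
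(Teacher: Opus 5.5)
The paper does not prove this statement; it imports it from \cite[Proposition 4.17]{ALS1}. Judged on its own, your argument has a genuine gap.

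Your reduction to finitely many fixed annuli is fine. The ``thin strips'' paragraph, however, is only a heuristic. An $e^{-cN}$ bound for a component crossing one fixed rectangle of aspect ratio $N$ says nothing about infinitely many random thin components. For that you would also need a summable union bound over their possible positions and scales, and you do not give one. So everything rests on the geometric domination in your third paragraph. Its only non-trivial input, the uniform $p>0$, is left open, and that bound is essentially the whole content of the proposition.

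The routes you suggest for obtaining $p$ also do not work, for three reasons.

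First, the event you want to use is that $\A^u_{-a,b}$ contains a loop inside the annulus surrounding the inner disc. That event is incompatible with the existence of even one crossing component, so once the first crossing loop is discovered its conditional probability is $0$, not at least $p$. What you would really need is a lower bound, for every still-unexplored component $U$ meeting both circles, on the probability that the two-valued set inside $U$ cuts $U$ transversally.

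Second, no such bound holds uniformly over boundary data in $[-a,b]$. Two-valued sets have no monotonicity in $u$ that lets you pass to extreme data; extreme data are in fact the degenerate case, since for constant data $-a$ or $b$ the set is just $\partial U$. Concretely, take $u\equiv b-\eta$ on $U$. Then $\A^u_{-a,b}(U)=\A_{-(a+b-\eta),\eta}(U)$, and the drop of $\log\CR(z,\cdot)$ at a fixed $z$ is the exit time of a Brownian motion from $(-(a+b-\eta),\eta)$, which tends to $0$ as $\eta\to0$. By the standard comparison with the radially slit disc, the component of $z$ then contains any fixed compact subset of $U$ with probability tending to one. So it crosses the annulus, and the ``next loop crosses'' with probability close to $1$. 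Similarly, absolute continuity of the GFF on a smaller annulus requires $U$ to contain a fixed neighbourhood of it, which gives no uniformity over the shape of $U$.

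Third, there is no sequence of trials producing one crossing loop at a time. A single level-line step of the construction in \cite{TVS,ALS2} already creates infinitely many complementary components, and the construction needs countably many iterations. To make your scheme work you would need two things:
\begin{enumerate}[label=(\roman*)]
\item local finiteness of each step, for instance from continuity of the level lines together with the classical fact that the complementary components of a Peano continuum form a null sequence;
\item almost sure finiteness of the number of iterations affecting components of diameter at least $\rho$.
\end{enumerate}
Point (ii) requires a uniform estimate of the type of Lemma \ref{lem:uniform_Bound_diam}. In the paper that estimate is available only for $\CLE_4$, precisely because of its monotonicity in the domain coming from the loop-soup construction. You supply no analogue for general boundary data.
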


\noindent Every loop $\ell\in\La_{-a,b}^u$ can be associated a \emph{label} depending on the value taken by the harmonic function inside the loop. To be precise, denote by $\mathcal{O}(\ell)$ the domain encircled by $\ell$ and define its label to be
\begin{equation}
    c(\ell) := \begin{cases}
        +1\quad \ \text{if}\quad h_{\A_{-a,b}^u}-u=b\quad \text{in}\quad \mathcal{O}(\ell), \\
         -1\quad \ \text{if}\quad h_{\A_{-a,b}^u}-u=-a \quad \text{in}\quad \mathcal{O}(\ell).
    \end{cases}
\end{equation}

In this work, as in many others, the two-valued set $\A_{-2\lambda,2\lambda}$ plays a special role due to the symmetry of its labels. The following result was originally announced by Miller and Sheffield, and first written down in \cite[Section 3]{TVS}.

\begin{thm} Let $D$ be a simply connected domain and let $\phi$ be GFF with zero boundary conditions in $D$. The law of $\A_{-2\lambda,2\lambda}$ is that of the carpet of a $\CLE_4$. Moreover, conditionally on $\A_{-2\lambda,2\lambda}$, the labels are independent and symmetric.
\end{thm}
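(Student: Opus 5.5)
The plan is to \emph{construct} a local set with the required properties from a $\CLE_4$ together with independent fair coins, and then to identify it with $\A_{-2\lambda,2\lambda}$ using the uniqueness part of Theorem \ref{thm.E!TVS}. Concretely, let $\Gamma$ be a nested-free (non-nested) $\CLE_4$ in $D$, built through the Miller–Sheffield branching exploration by $\SLE_4(-2)$ processes (equivalently, the uniform exploration of Sheffield/Werner–Wu). Conditionally on $\Gamma$, assign to each loop $\ell\in\Gamma$ an independent symmetric sign $c(\ell)$. Then define
\[
\phi := \sum_{\ell\in\Gamma} \bigl( 2\lambda\, c(\ell)\,\1_{\mathcal O(\ell)} + \phi^{\ell} \bigr),
\]
where the $\phi^\ell$ are independent zero-boundary GFFs in the domains $\mathcal O(\ell)$, independent of everything else.

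\textbf{Step 1: $\phi$ is a GFF.} This is where the exploration is used. Along each branch of the $\SLE_4(-2)$ exploration tree, the coupling of $\SLE_4(\rho)$ with the GFF (Schramm–Sheffield, Wang–Wu level lines) makes the explored set at any stopping time a local set. The boundary values on it are $\pm\lambda$ on the two sides of the exploration path. When a loop closes, the boundary data seen from inside is $\pm\lambda$ plus a jump of $\pm\lambda$ determined by the side of closure. Adding one more level line of height $\pm\lambda$, or equivalently using the coin to decide the orientation in which the loop was traced, produces the constant value $\pm 2\lambda$ inside, with a symmetric and independent sign. Passing to the limit of the exploration, by continuity of local sets under increasing unions, shows that $(\phi,\overline{\bigcup_\ell \ell})$ is a local set of a genuine GFF $\phi$. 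Its harmonic function is $2\lambda\, c(\ell)$ on $\mathcal O(\ell)$.

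\textbf{Step 2: identification with $\A_{-2\lambda,2\lambda}$.} The set $A:=\overline{\bigcup_\ell \ell}$ is the $\CLE_4$ carpet. To check the integral representation $(\phi_A,f)=\int_{D\setminus A} h_A f$, one needs $A$ to be thin, i.e. $\phi_A$ does not charge $A$. This follows because the carpet has Lebesgue measure zero and dimension $15/8<2$, combined with the standard criterion for thin local sets (e.g. via \cite{Aru,ASZ}-type arguments, or the $\CLE_4$ loop-size estimates giving integrability of the harmonic function). Hence $A$ is a two-valued set with values $\{-2\lambda,2\lambda\}$. Since $4\lambda\ge 2\lambda$, Theorem \ref{thm.E!TVS} gives $A=\A_{-2\lambda,2\lambda}$ almost surely, and it is determined by $\phi$. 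Therefore the law of $\A_{-2\lambda,2\lambda}$ is that of the $\CLE_4$ carpet. Its labels are the signs $c(\ell)$, which by construction are, conditionally on the set, independent and symmetric. Symmetry can also be read off directly from $\phi\mapsto-\phi$ and uniqueness.

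\textbf{Main obstacle.} I expect Step 1 to be the delicate point: showing that the label inside each loop is independent of the loop's geometry and of the other labels. In the exploration, the side on which a loop closes is a priori correlated with the past. My first attempt would be to use the $\SLE_4(-2)$ target-invariance together with the resampling of the orientation of each discovered loop, as in \cite[Section 3]{TVS}. Conditionally on the loop, reversing the sign of the field inside it preserves the local-set coupling, because the law of a zero-boundary GFF inside is symmetric. This symmetry should yield exactly the fair, independent coins.
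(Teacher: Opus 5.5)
The paper does not prove this statement itself; it quotes it from Miller--Sheffield as written down in \cite[Section 3]{TVS}. Your architecture is the standard one: produce a thin local set whose set is a $\CLE_4$ carpet and whose harmonic function is $\pm 2\lambda$, then identify it with $\A_{-2\lambda,2\lambda}$ through the uniqueness in Theorem \ref{thm.E!TVS}. Step 2 is fine. Thinness follows from $|h_A|\le 2\lambda$ and $\dim_M<2$ via the criterion of \cite{thin-GFF}; the references \cite{Aru,ASZ} are not the relevant ones. In fact, for the first half of the statement you do not need independent coins at all. Once the level-line coupling of Step 1 is granted, with whatever labels it produces, Step 2 already identifies the carpet with $\A_{-2\lambda,2\lambda}$.

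The gap is in the second half. You define $\phi$ with coins $c(\ell)$ that are independent of $\Gamma$ by fiat, and then assert that $\phi$ is a GFF. This assumes exactly what is to be proved. The exploration gives a GFF $\Phi$ whose loops carry labels $c'(\ell)$ determined by $\Phi$. Your $\phi$ has the law of $\Phi$ if and only if, conditionally on $\Gamma$, the $c'(\ell)$ are i.i.d.\ fair signs. Nothing in your description of the exploration controls the joint law of a completed loop and the side on which it closes. Moreover, ``adding one more level line of height $\pm\lambda$'' would change the set, so you would no longer be looking at the $\CLE_4$ carpet.

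The fix in your last paragraph is circular. Reversing the field inside $\mathcal O(\ell)$ replaces $2\lambda c(\ell)+\phi^\ell$ by $-2\lambda c(\ell)-\phi^\ell$. The symmetry of the zero-boundary GFF $\phi^\ell$ only handles the second term. The map preserves the law of $\phi$ precisely when $c(\ell)$ is already known to be symmetric and independent of $\ell$ and of the field outside $\mathcal O(\ell)$. Likewise, $\phi\mapsto-\phi$ plus uniqueness only shows that flipping \emph{all} labels simultaneously preserves the law.

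The symmetry that actually produces i.i.d.\ coins is that of the zero-boundary GFF in the region \emph{not yet explored} at the moment a loop is discovered. This is where the vanishing of the harmonic function outside every loop enters (cf.\ the remark after Lemma \ref{lem:CLE4-thin}). You need an exploration that discovers loops one at a time, for example the Werner--Wu uniform exploration inside the Miller--Sheffield coupling. It must satisfy two conditions at each discovery: the conditional field on the unexplored region is a zero-boundary GFF, and the discovery rule commutes with $\psi\mapsto-\psi$. Flipping that conditional field then fixes the past and the geometry of the new loop while reversing its label, so the label is a fair coin given both. Since the boundary data outside the discovered loop is again zero, the Markov property makes that label independent of everything discovered afterwards.

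Without an argument of this kind, or an explicit black-box appeal to the Miller--Sheffield coupling with i.i.d.\ loop orientations independent of the loops, the label statement is not established.
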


We will also use the following fact, which can be seen as a monotonicity statement for local sets. 

\begin{prop}\label{prop:TVS-unique}
Let $(\phi,A)$ be a local set such that almost surely
\[
|\phi_A(z)| \geq a 
\]
for all $z \in D \setminus A$. Then,
\[
\A_{-a,a} \subseteq A \quad \text{a.s.}
\]
\end{prop}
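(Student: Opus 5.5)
We compare $A$ with the two-valued set $B:=\A_{-a,a}$ inside each connected component of $D\setminus A$, and show that $B$ cannot enter any such component.

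\textbf{Step 1 (coupling and union).} By Theorem \ref{thm.E!TVS}, $B$ is a measurable function of $\phi$. Hence $A$ and $B$ are conditionally independent given $\phi$, and the standard union lemma for local sets applies. So $C:=A\cup B$ is a local set, with the consistency relations
\[
\phi_C=\phi_A+(\phi^A)_{C\cap(D\setminus A)} \quad\text{and}\quad \phi_C=\phi_B+(\phi^B)_{C\cap(D\setminus B)},
\]
each holding on the relevant component.

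Fix a connected component $O$ of $D\setminus A$. Since $\phi_A$ is harmonic on $O$, it is continuous there. The hypothesis $|\phi_A|\ge a$ then forces $\phi_A$ to have constant sign on $O$; by symmetry assume $\phi_A\ge a$ on $O$. Conditionally on $\F_A$, the restriction $\phi|_O$ is a GFF in $O$ with boundary data $u:=\phi_A|_O\ge a$.

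\textbf{Step 2 (identify $B\cap O$).} We show that $B\cap O$ is a two-valued set of $\phi|_O$ with levels $\{-a,a\}$ and boundary data $u$. It is local for $\phi|_O$ given $\F_A$, by the union property. On each component of $O\setminus B$, the harmonic function $\phi_C$ equals $\phi_B\in\{-a,a\}$ plus a zero-boundary GFF correction. The points that still need justification are the thinness of $B\cap O$ and the absence of extra harmonic contribution coming from $\partial O\subset A$. Both should follow from the thinness of $B$ and of $A$ near $B$, which is the standard argument of \cite{ALS1, ALS2}.

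\textbf{Step 3 (emptiness via a martingale argument).} For $z\in O$, conditionally on $\F_A$ the value $\phi_{C}(z)$ is the evaluation of a local-set harmonic function of a GFF with boundary data $u$. Its conditional expectation is therefore
\[
\E\big[\phi_C(z)\mid \F_A\big]=u(z)\ge a .
\]
If $z\notin B$, then $\phi_C(z)$ is obtained from $\phi_B(z)\in\{-a,a\}$ by exploring only further pieces of $A$. A cleaner route is to use the identification of Step 2 directly: the two-valued harmonic function $h_{B\cap O}$ takes values in $\{-a,a\}$ and has conditional mean $u\ge a$. Hence $h_{B\cap O}\equiv a$, and $u\equiv a$ on the part of $O$ reached by $B$.

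Consequently every loop of $B$ inside $O$ carries the label $+a$, the same as the boundary value. By the uniqueness part of Theorem \ref{thm.E!TVS} (applied in $O$, where $u\equiv a$ gives $\A^{a}_{-a,a}=\A_{-2a,0}$), such a set must be empty, because $\emptyset$ trivially satisfies the defining properties. Thus $B\cap O=\emptyset$ for every component $O$ of $D\setminus A$, and so $B\subseteq A$.

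\textbf{Main obstacle.} I expect the main difficulty to be Step 2, namely justifying rigorously that $B\cap O$ is exactly a two-valued set for the boundary data $\phi_A$. This requires thinness of $B$ and control of $\phi_C$ along $A\cap\overline B$. I would handle it with the approximation and thinness tools of \cite{ALS1}, using that two-valued sets have zero Lebesgue measure and that $\phi_C$ is determined by the boundary values on $C$.
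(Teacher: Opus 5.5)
\textbf{There is a genuine gap in Step 2, and it is not just the technical thinness issue you flag: the identification you aim for is false.}

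Take $A=\A_{-2a,2a}$. It satisfies the hypothesis with $u=\pm 2a$, and the conclusion $\A_{-a,a}\subseteq A$ is true, so $B\cap O=\emptyset$. Seen from $\phi|_O$, the harmonic function of $B\cap O$ is then $u=2a\notin\{-a,a\}$. So $B\cap O$ is not a two-valued set with values $\{-a,a\}$. Theorem \ref{thm.E!TVS} would not apply anyway, since it requires boundary data in $[-a,a]$.

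Your Step 3 fails on the same example. It gives $u(z)=a$ for a fixed $z\in O$, hence $u\equiv a$ on all of $O$ by the minimum principle, which contradicts $u=2a$.

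What is true in general is the following. On a component $W$ of $O\setminus B$, $\phi_C$ is the harmonic function with boundary values $u$ on $\partial W\cap\partial O$ and a constant $\pm a$ on $\partial W\cap B$. It lies in $\{-a,a\}$ only on components that do not reach $\partial O$. You cannot localise the identity $\E[\phi_C(z)\mid\F_A]=u(z)$ to the random event that $z$ lies in such a component.

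\textbf{The missing idea is a mechanism that stops $B$ from entering $O$.} The paper gets it from the uniqueness proof in \cite{TVS}. There, $\A_{-a,a}$ is built using only level lines with boundary values in $[-a,a]$. By \cite[Lemma 16]{TVS}, such level lines cannot enter a component of $D\setminus A$ on which $|\phi_A|\ge a$. So the whole construction stays inside $A$.

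If you want to keep your martingale route, replace ``$h_{B\cap O}\in\{-a,a\}$'' by the correct description above and argue by comparison instead:
\begin{itemize}
\item Since $u\ge a$ on $O$, the maximum principle gives $\phi_C\le u$ on $O\setminus B$.
\item The inequality is strict at $z$ whenever $\partial W\cap B$ has positive harmonic measure from $z$. The exception is $u\equiv a$ with all adjacent labels $+$, which uniqueness of $\A_{-2a,0}=\emptyset$ handles.
\item Combine this with $\E[\phi_C(z)\mid\F_A]=u(z)$, which needs an integrability argument, and with connectedness of $B\cup\partial D$. Together these would force $B\cap O=\emptyset$.
\end{itemize}
This repair still requires making rigorous the boundary values of $\phi_C$ along $B\cap O$ and near $A\cap\overline{B}$. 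That is exactly the local input the level-line lemma supplies, so as written the proposal does not yet contain a proof.
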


\begin{proof}
This follows directly from the proof of uniqueness in \cite[Section 6]{TVS}. Indeed, the set $\A_{-a,a}$ can be constructed using only level lines with boundary values in $[-a,a]$, and such level lines cannot enter any connected component of $D \setminus A$ by \cite[Lemma 16]{TVS}.
\end{proof}

On a final note, for any $a\geq\lambda$, we define
\[
\alpha_c = \alpha_c(a) := \frac{\lambda}{a} \in (0,1].
\]
This is a crucial quantity that will be used to control the fractal dimension of two-valued sets, and plays the role of a critical parameter in the relationship between the imaginary chaos and two-valued sets (which we introduce in the next subsection).

\begin{prop}[\cite{BTLS,dim-TVS}] \label{prop:TVS-one-point}
Let $a \geq \lambda$ and $u \in \mathbb{R}$. The Minkowski dimension of the two-valued set $\A_{-a,a}^u$ satisfies
\[
\dim_M(\A_{-a,a}^u)
\leq 2 - \frac{\alpha_c(a)^2}{2}
\quad \text{a.s.}.
\]
\end{prop}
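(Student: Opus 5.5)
The plan is to bound the Minkowski dimension through a first-moment estimate on the expected number of $\eps$-balls, or equivalently on the expected area of the $\eps$-neighbourhood of $\A_{-a,a}^u$. By Markov's inequality and a Borel--Cantelli argument along a dyadic sequence $\eps_n=2^{-n}$, it suffices to show that for every compact $K\subset D$ and every $\eta>0$,
\[
\P\big(\mathrm{dist}(z,\A_{-a,a}^u)\le \eps\big)\ \le\ C_{K,\eta}\,\eps^{\alpha_c^2/2-\eta}\qquad\text{uniformly in } z\in K,
\]
since integrating over $K$ shows that the expected area of the $\eps$-neighbourhood of the set inside $K$ is $O(\eps^{\alpha_c^2/2-\eta})$. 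The statement is about $\A_{-a,a}^u$ in the interior, so we work locally and exhaust $D$ by compacts. Near $\partial D$ the set accumulates at the boundary, but the boundary is a curve (or has small dimension, as in the footnote caveat), so it does not affect the bound once the dimension is interpreted locally.

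For the one-point estimate we use the conformal-radius description of two-valued sets. Let $O_z$ be the component of $D\setminus\A_{-a,a}^u$ containing $z$. By Koebe's quarter theorem, $\mathrm{dist}(z,\A)\asymp \CR(z,O_z)$, so we need $\P(\CR(z,O_z)\le\eps)$. The standard fact from \cite{BTLS,ALS1} is that $-\log\CR(z,O_z)+\log\CR(z,D)$ has the law of the exit time $T$ of a standard Brownian motion from the interval $(-a-u,\,a-u)$, started at $0$. This comes from exploring the set via level lines and tracking the circle-average or harmonic extension at $z$, which is a Brownian motion in the log-conformal-radius time parametrisation, stopped upon hitting $\{-a,a\}$ relative to $u$. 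For $u$ constant on the component, the interval has length $2a$, and the exit-time tail satisfies $\P(T>t)\le C e^{-\pi^2 t/(8a^2)}$. Substituting $t=\log(1/\eps)+O(1)$ gives $\P(\CR\le\eps)\le C\eps^{\pi^2/(8a^2)}$. Since $\lambda=\pi/2$, we have $\pi^2/(8a^2)=\lambda^2/(2a^2)=\alpha_c^2/2$, which is exactly the exponent required, and the bound holds uniformly for $z\in K$ because $\CR(z,D)$ is bounded above and below there. For $u$ with values in $[-a,a]$ the exit interval still has length $2a$, so the rate is the same.

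The main obstacle is the exact identification of the law of $\CR(z,O_z)$ with a Brownian exit time, since it requires the construction of $\A_{-a,a}$ by iterating $\SLE_4$-type level lines and the martingale property of $\phi_{A_t}(z)$. This identification is established in \cite{BTLS,ALS1}, and we would cite it rather than reprove it. The rest is routine: a union bound over a grid of $\eps$-spaced points gives an expected box count of $O(\eps^{-2+\alpha_c^2/2-\eta})$, and Borel--Cantelli along dyadic scales then gives the almost sure upper Minkowski dimension bound of $2-\alpha_c^2/2$.
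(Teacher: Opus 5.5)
Your proposal is correct and is essentially the standard first-moment argument behind the bound in \cite[Section 6.3]{BTLS}, which is all the paper relies on here (it gives no separate proof). The ingredients are the same: the exit-time law of $\log\CR(z,D)-\log\CR(z,D\setminus\A_{-a,a}^u)$ for a Brownian motion in an interval of length $2a$, the tail $e^{-\pi^2t/(8a^2)}=e^{-\alpha_c^2t/2}$, Koebe's quarter theorem, and Borel--Cantelli along dyadic scales. The one caveat is that exhausting $D$ by compacts bounds each $\A_{-a,a}^u\cap K$ but not the whole set, since Minkowski dimension is not countably stable, whereas the paper later applies the bound up to $\partial D$ in valid domains (Proposition~\ref{prop:thin-chaos}); for that, integrate your one-point bound $C(\eps/d(z,\partial D))^{\alpha_c^2/2}$ over all of $D$ and count boxes within distance $\eps$ of $\partial D$ separately, which gives the global bound $\max\{2-\alpha_c^2/2,\dim_M(\partial D)\}$.
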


\noindent We note that almost sure equality for the Hausdorff dimension was established in \cite{dim-TVS}, but for our purposes the  upper bound above, already present \cite[Section 6.3]{BTLS}, is sufficient.

\subsection{TVS and the imaginary chaos}
The main reason to define the parameter $\alpha_c$ comes from studying the conditional law of the imaginary chaos $\field{e^{i\alpha\phi}}$ given a two-valued set $\A_{-a,a}$. Let us first note that the conditional expectation is itself a well-defined random variable in the space $H^{s}_\loc(D)$, since
\begin{equation}
    \lvert\lvert\E[\field{e^{i\alpha\phi}}\mid\F]\rvert\rvert_{H^s_\loc(D)} \leq \E[\lvert\lvert{\field{e^{i\alpha\phi}}}\rvert\rvert_{H^s_\loc(D)} \mid\F]<\infty.
\end{equation}
Moreover, it is easy to check (see e.g. {\cite[Lemma 3.5]{miller-schoug}}) that
\begin{equation}
    \E[(\field{e^{i\alpha\phi}}, f)\mid\F] = (\E[\field{e^{i\alpha\phi}}\mid\F], f)\quad \text{a.s.}
\end{equation}
for any $\F$-measurable test function $f\in C_c^\infty(D)$. A direct computation of the conditional expectation given a \emph{subcritical} two-valued set gives the next result. 

\begin{prop}[{\cite[Proposition 3.1]{dim-TVS}}] \label{prop:subcrit-CE} 
    Let $a\geq\lambda$ and $\alpha<\alpha_c(a)$. Then, for any $f\in C_c^\infty(D)$, 
    \begin{align*}
    & \E[(\field{e^{i\alpha\phi}},f)\mid\F_{\A_{-a,a}}] = \int_D  e^{i\alpha h_{\A_{-a,a}}(z)}\CR(z,D\setminus\A_{-a,a})^{-\alpha^2/2}f(z)dz.
    \end{align*}
\end{prop}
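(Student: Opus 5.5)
Throughout, write $A:=\A_{-a,a}$. The plan is to approximate the chaos by its circle-average regularisation and condition on $\F_A$ term by term. Fix $f\in C_c^\infty(D)$ and write
\[
(\field{e^{i\alpha\phi}},f)=\lim_{\eps\to0}\int_D \eps^{-\alpha^2/2}e^{i\alpha\phi_\eps(z)}f(z)\,dz,
\]
with the limit in $L^2(\P)$. Conditional expectation is an $L^2$-contraction, so
\[
\E[(\field{e^{i\alpha\phi}},f)\mid\F_A]=\lim_{\eps\to0}\int_D \eps^{-\alpha^2/2}\,\E\bigl[e^{i\alpha\phi_\eps(z)}\mid\F_A\bigr]f(z)\,dz
\]
in $L^2$. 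Conditional Fubini applies because the integrand is bounded by $\eps^{-\alpha^2/2}|f|$.

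Next, compute $\E[e^{i\alpha\phi_\eps(z)}\mid\F_A]$ using the local set property. Write $\phi=\phi_A+\phi^A$, where $\phi^A$ is, conditionally on $\F_A$, a zero-boundary GFF in $D\setminus A$. On the $\F_A$-measurable event $\{\mathrm{dist}(z,A)>\eps\}$, the function $\phi_A$ is harmonic on $\overline{B(z,\eps)}$ and equals $h_A(z)$ there. Then $(\phi_A,\rho_{z,\eps})=h_A(z)$, and $(\phi^A,\rho_{z,\eps})$ is conditionally centred Gaussian with variance $-\log\eps+\log\CR(z,D\setminus A)$. The Gaussian characteristic function gives
\[
\eps^{-\alpha^2/2}\,\E[e^{i\alpha\phi_\eps(z)}\mid\F_A]=e^{i\alpha h_A(z)}\CR(z,D\setminus A)^{-\alpha^2/2}.
\]
This already has the right form. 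Since $A$ has Lebesgue measure zero (by Proposition~\ref{prop:TVS-one-point} its dimension is $<2$), the set $\{\mathrm{dist}(z,A)>\eps\}$ increases to almost every $z$. It therefore remains to show that the contribution of the $\eps$-neighbourhood $A_\eps$ of $A$ vanishes, and that the limiting integral is finite.

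The main obstacle is the near-$A$ region, and this is exactly where $\alpha<\alpha_c(a)$ enters. For $z\in A_\eps$ the conditional expectation is not explicit, but it is bounded by $\eps^{-\alpha^2/2}$ in modulus. So it suffices to show $\E\bigl[\eps^{-\alpha^2/2}\Leb(A_\eps\cap\mathrm{supp} f)\bigr]\to0$. I would bound this by $\eps^{-\alpha^2/2}\int_{\mathrm{supp} f}\P(\mathrm{dist}(z,A)\le\eps)\,dz$. The one-point estimate behind Proposition~\ref{prop:TVS-one-point} (from \cite{BTLS,dim-TVS}) gives $\P(\mathrm{dist}(z,A)\le\eps)\lesssim\eps^{\alpha_c^2/2-o(1)}$, uniformly for $z$ in compacts. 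This yields a bound $\eps^{(\alpha_c^2-\alpha^2)/2-o(1)}\to0$, and the $L^1$ convergence identifies the $L^2$ limit.

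The same estimate shows integrability of the limit. Since $\CR(z,D\setminus A)\asymp\mathrm{dist}(z,A)$ by Koebe's quarter theorem, a layer-cake computation gives
\[
\E\int_{\mathrm{supp} f}\CR(z,D\setminus A)^{-\alpha^2/2}\,dz<\infty
\]
whenever $\alpha<\alpha_c$. Dominated convergence then passes from the $\{\mathrm{dist}(z,A)>\eps\}$ integral to the full integral over $D$, which proves the claimed formula.
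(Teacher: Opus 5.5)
Your proof is correct. It is essentially the direct computation the paper defers to \cite[Proposition 3.1]{dim-TVS}: you regularise by circle averages, use the local set decomposition to get the exact value $e^{i\alpha h_{\A}(z)}\CR(z,D\setminus\A)^{-\alpha^2/2}$ of the conditioned regularisation on $\{\mathrm{dist}(z,\A)>\eps\}$, and kill the $\eps$-neighbourhood of $\A$ with the one-point estimate. That estimate in fact holds with no $o(1)$ loss, because $\log\CR(z,D)-\log\CR(z,D\setminus\A_{-a,a})$ is distributed as the exit time of a standard Brownian motion from $[-a,a]$. This gives $\P(\mathrm{dist}(z,\A)\le\eps)\lesssim\eps^{\alpha_c^2/2}$ on compacts, and also the exact integrability $\E[\CR(z,D\setminus\A)^{-\alpha^2/2}]=\CR(z,D)^{-\alpha^2/2}/\cos(\alpha a)<\infty$ for $\alpha<\alpha_c$.
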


\noindent Already at this stage, one can identify a very drastic change in the behaviour between the cosine and the sine, which will be recurrent in the sequel. First, note that we can equivalently integrate over $D\setminus\A_{-a,a}$ since $\Leb(\A_{-a,a})=0$ almost surely. Moreover, $h_{\A_{-a,a}}(z)\in\{\pm a\}$ for any $z\in D\setminus\A_{-a,a}$ and taking the real part in Proposition \ref{prop:subcrit-CE} yields
\begin{equation}\label{eq:cos-condTVS}
    \E[(\field{\cos(\alpha\phi)},f)\mid\F_{\A_{-a,a}}] = \cos(\alpha a)\int_D \CR(z,D\setminus\A_{-a,a})^{-\alpha^2/2}f(z)dz,
\end{equation}
where we have used the fact that $\cos(\pm x)=\cos(x)$. In contrast, when taking the imaginary part, since $\sin(\pm x)=\pm \sin(x)$, one cannot reduce the expression further than 
\begin{equation}
    \E[(\field{\sin(\alpha\phi)},f)\mid\F_{\A_{-a,a}}] = \sum_{\ell\in\mathcal L_{-a,a}} c(\ell)\sin(\alpha a)\int_{\mathcal O(\ell)}\CR(z, D\setminus\Int(\ell))^{-\alpha^2/2}f(z)dz,
\end{equation}

Finally, we introduce the measures we use to build our excursion decomposition. Such measures were first defined in \cite{dim-TVS} and correspond to the conditional expectation of $\field{\cos(\alpha\phi)}$ given the \emph{critical} two-valued set $\A_{-a_c, a_c}$. In particular, they correspond to the limit $\alpha\nearrow\alpha_c$ in \eqref{eq:cos-condTVS}. When taking such limit, the blow-up of the integral is prevented by the fact that $\cos(\alpha a)\to0$ as $\alpha\to\alpha_c$, which is essentially the content of the following result. We highlight, however, that the existence of the analogous limit for $\field{\sin(\alpha\phi)}$ is not known, as the necessary cancellations should be induced by the (non-i.i.d.) labels of the two-valued set.

\begin{prop}[{\cite[Proposition 6.1]{dim-TVS}}] \label{prop:TVS-meas}
    Fix $a\geq\lambda$, $u\in\R$. Denote $\A=\A_{-a,a}^u$. For $\eta>0$, define the random measures
    \begin{equation}
        \mu_\eta(dz) = \eta\CR(z, D\setminus\A)^{-(\alpha_c-\eta)^2/2}dz.
    \end{equation}
    Then, for any $f\in C_c^\infty(D)$, 
    \begin{equation}
        \lim_{\eta\to0}(\mu_\eta, f) = (\mu, f)\in[0, \infty)
    \end{equation}
   exists as a limit in $L^1(\P)$ and
    \begin{equation}
       a(\mu, f) = \E[(\field{\cos(\alpha_c\phi)}, f)\mid\F_\A].
   \end{equation}
   In particular, $(\mu_\eta)_{\eta>0}$ converge in law with respect to the weak topology of measures to a random measure $\mu$, which is measurable with respect to $\A$ and supported in $\A$. 
\end{prop}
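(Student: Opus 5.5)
The plan is to get the measure $\mu$ as the limit of conditional expectations of the \emph{subcritical} chaos, and to use the factor $\cos(\alpha a)$ to cancel the blow-up of $\CR^{-\alpha^2/2}$ at $\alpha=\alpha_c$. Throughout, $\phi$ denotes the zero-boundary GFF and $\A=\A^u_{-a,a}$ is the two-valued set of $\phi+u$. Then $\phi_\A=h_\A-u$ takes values in $\{\pm a\}$ on $D\setminus\A$, whatever the value of $u$.

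\textbf{Step 1: subcritical conditional expectation.} First I would extend Proposition \ref{prop:subcrit-CE} to $\A^u_{-a,a}$ for every $\alpha<\alpha_c(a)$:
\begin{equation*}
\E[(\field{e^{i\alpha\phi}},f)\mid\F_\A]=\int_D e^{i\alpha\phi_\A(z)}\CR(z,D\setminus\A)^{-\alpha^2/2}f(z)\,dz .
\end{equation*}
\begin{itemize}
\item The local set property gives $\phi=\phi_\A+\phi^\A$, with $\phi^\A$ a zero-boundary GFF in $D\setminus\A$ given $\F_\A$.
\item Circle averages then factorize: $\eps^{-\alpha^2/2}e^{i\alpha\phi_\eps(z)}=e^{i\alpha(\phi_\A)_\eps(z)}\,\eps^{-\alpha^2/2}e^{i\alpha\phi^\A_\eps(z)}$.
\item Conditional on $\F_\A$, the second factor has mean $\CR(z,D\setminus\A)^{-\alpha^2/2}$ once $B(z,\eps)\subset D\setminus\A$.
\item To pass to the limit $\eps\to0$ I need the domination $\E\int_{\mathrm{supp} f}\CR(z,D\setminus\A)^{-\alpha^2/2}dz<\infty$.
\end{itemize}
For the domination I would use the standard description of $\A$ seen from a point $z$. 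The quantity $T=\log\CR(z,D)-\log\CR(z,D\setminus\A)$ is the exit time of a Brownian motion from an interval of length $2a$. The starting point depends on $u$ (it is the position of $-u$ in $(-a-u,a-u)$, after the shift). For any starting point, $\E[e^{\beta T}]<\infty$ exactly when $\beta<\pi^2/(8a^2)=\alpha_c^2/2$. Moreover, $\E[e^{\beta T}]\le C/(\alpha_c^2/2-\beta)$ uniformly in $z$ on compacts. This uniformity in the starting point is the only place where $u$ enters, and it is harmless.

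\textbf{Step 2: the identity with $\mu_\eta$.} Take real parts in Step 1. Since $\phi_\A\in\{\pm a\}$ and the cosine is even, we get $\E[(\field{\cos(\alpha\phi)},f)\mid\F_\A]=\cos(\alpha a)\int_D\CR(z,D\setminus\A)^{-\alpha^2/2}f$. Write $\alpha=\alpha_c-\eta$ and use $\alpha_c a=\lambda=\pi/2$. Then $\cos(\alpha a)=\sin(\eta a)$, so
\begin{equation*}
\E[(\field{\cos((\alpha_c-\eta)\phi)},f)\mid\F_\A]=\frac{\sin(\eta a)}{\eta}\,(\mu_\eta,f).
\end{equation*}

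\textbf{Step 3: the limit $\eta\to0$.} I would show that $\alpha\mapsto(\field{\cos(\alpha\phi)},f)$ is continuous in $L^2(\P)$ at $\alpha_c$, which is $\le1<\sqrt2$.
\begin{itemize}
\item Expand $\E[((\field{\cos(\alpha\phi)}-\field{\cos(\alpha'\phi)}),f)^2]$ using the explicit mixed two-point functions. For example, the correlation of $\field{e^{i\alpha\phi(z)}}$ with $\field{e^{\pm i\alpha'\phi(w)}}$ is $\CR^{-\alpha^2/2}\CR^{-\alpha'^2/2}e^{\mp\alpha\alpha' G(z,w)}$.
\item For $f$ compactly supported, dominated convergence applies because $e^{\alpha\alpha'G}\lesssim|z-w|^{-\alpha\alpha'}$ with $\alpha\alpha'<2$ is integrable.
\end{itemize}
Conditional expectation is an $L^2$ contraction, so the left side of Step 2 converges in $L^2$ to $\E[(\field{\cos(\alpha_c\phi)},f)\mid\F_\A]$. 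Since $\sin(\eta a)/\eta\to a$, the quantity $(\mu_\eta,f)$ converges in $L^2$, hence in $L^1$, to a limit $(\mu,f)$ satisfying $a(\mu,f)=\E[(\field{\cos(\alpha_c\phi)},f)\mid\F_\A]$. For $f\ge0$ the limit is nonnegative.

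\textbf{Step 4: the random measure.} Apply Step 3 to a countable dense family of test functions. The total masses $(\mu_\eta,K)$ on compacts $K\subset D$ are bounded in $L^1$, by the Laplace bound of Step 1 applied with $\beta=(\alpha_c-\eta)^2/2$, which gives order $\eta\cdot\eta^{-1}$. This yields tightness, and every subsequential weak limit is determined by the values $(\mu,f)$, so $\mu_\eta\to\mu$ in law for the weak topology. The limit is $\A$-measurable because each $\mu_\eta$ is a function of $\A$ and the convergence holds in $L^1$. It is supported on $\A$: if $\mathrm{supp} f$ lies at positive distance from $\A$, then $\CR(z,D\setminus\A)$ is bounded below on $\mathrm{supp} f$, so $(\mu_\eta,f)=O(\eta)\to0$.

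\textbf{Main obstacle.} I expect the main obstacle to be the uniform integrability in Step 1 and Step 4, namely the one-point Laplace estimate for $T$, with its precise $(\alpha_c^2/2-\beta)^{-1}$ blow-up and uniformity in $z$ and in the starting point induced by $u$. Everything else is soft.
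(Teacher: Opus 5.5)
Your proposal is correct and matches the route the paper indicates for this cited result. You take the $\alpha\nearrow\alpha_c$ limit in the subcritical formula of Proposition \ref{prop:subcrit-CE}, with $\cos((\alpha_c-\eta)a)=\sin(\eta a)$ compensating the blow-up of $\CR(z,D\setminus\A)^{-\alpha^2/2}$, and your $L^2$-continuity of $\alpha\mapsto(\field{\cos(\alpha\phi)},f)$ even upgrades the convergence to $L^2(\P)$. Two minor remarks follow.

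\begin{itemize}
\item Your Step 3 already bounds $\E[(\mu_\eta,f)]$ for small $\eta$ and any $f\geq\mathbf{1}_K$. So the sharp Laplace estimate you single out as the main obstacle is unnecessary; Step 1 only needs non-sharp subcritical exponential moments of the exit time, or can simply be cited.
\item With the convention the paper actually uses, $\A^u_{-a,a}=\A_{-a-u,a-u}$, one has $\phi_\A\in\{\pm a-u\}$ rather than $\{\pm a\}$. The identity should then be read with $\field{\cos(\alpha_c(\phi+u))}$, and your argument goes through verbatim. This also reconciles your Step 1 remark that the exit time's starting point depends on $u$.
\end{itemize}
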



\section{Construction of the decomposition} \label{sec:decomp}

The purpose of this section is to describe the excursion decompositions considered throughout the paper. The key idea is to understand how a given two-valued set ``acts'' on  the fields $\field{\cos(\alpha\phi)}$ or $\field{\sin(\alpha\phi)}$ by describing the conditional law of the field (or, equivalently, the restriction of the field to the two-valued set). For the sake of exposition, we will write for example
\begin{equation}\label{eq:TVS-act}
    \A_{-a,a}\acts\ \field{\cos(\alpha\phi)}
\end{equation}
to denote the ``action'' of exploring the values of $\field{\cos(\alpha\phi)}$ on top of $\A_{-a,a}$. To keep track of the signs appearing after such actions, 
we reserve the notation $\xi(\ell)$ for the labels of the loops $\ell\in\L_{-2\lambda, 2\lambda}$ to emphasize that they are i.i.d. coin tosses. Slightly abusing notation, we write any of the following interchangeably
\begin{equation*}
    \field{\cos(\alpha(\phi^\A\pm a))}\ \equiv\ \field{\cos(\alpha(\phi^\A+c(\ell) a))}\ \equiv \sum_{\ell\in\mathcal{L}_{-a,a}}\field{\cos(\alpha(\phi^{\ell}+c(\ell)a))},
\end{equation*}
where $\phi^{\ell}$ are independent GFFs with zero boundary conditions in the domain $\mathcal O(\ell)$ encircled by $\ell$. 

\subsection{Properties of the \textit{action}}
Let us describe the key features (and challenges) presented by actions of the form \eqref{eq:TVS-act}, and how to use them in order to derive an excursion decomposition for the fields. These properties will be proved throughout Sections \ref{sec:thin}-\ref{sec:cos-TVS}, so the following can be thought of as a checklist to be completed in the upcoming sections. Without loss of generality, we will only consider the case $u=0$ in Theorems \ref{thm:cont-general}.

\begin{enumerate}[label=(\textbf{\Roman*})]
    \item If $a<a_c(\alpha)=\lambda/\alpha$, the action $\A_{-a, a}\acts\ \field{\cos(\alpha\phi)}$ is given by
    \begin{align}
        &\field{\cos(\alpha\phi)}\ =\ \field{\cos(\alpha(\phi^\A+c(\ell) a))},
    \end{align}
   and similarly for the action $\A_{-a, a}\acts\ \field{\sin(\alpha\phi)}$.
   This statement should be understood as saying that any subcritical two-valued set is thin for either field, i.e. too small to charge them with mass. Rigorously, we will show -- see Proposition \ref{prop:thin-chaos} -- that
    \begin{equation}
        (\field{\cos(\alpha\phi)}, f\1_{[\A_{-a,a}]_n})\overset{\P}{\longrightarrow}0 \quad\text{as}\quad n\to\infty,
    \end{equation}
    where $[\A]_n$ denotes the union of all dyadic boxes of level $n$ that intersect $\A$.
    \item If $a=a_c(\alpha)=\lambda/\alpha$, the action $\A_{-a_c, a_c}\acts\ \field{\cos(\alpha\phi)}$ is given by
    \begin{equation}
    \field{\cos(\alpha\phi)}\ = \mu-c(\ell)\field{\sin(\alpha\phi^\A)},
    \end{equation}
    where $\mu\equiv\mu({a_c})$ is the measure supported on $\A_{-a_c, a_c}$ defined in Proposition \ref{prop:TVS-meas}. More generally, for any $u\in(-\pi/2, \pi/2)$ the action $\A^u_{-a_c, a_c}\acts\ \field{\cos(\alpha(\phi+u))}$ is given by 
    \begin{equation}
    \field{\cos(\alpha(\phi+u))}\ = \mu-c(\ell)\field{\sin(\alpha\phi^\A)}.
    \end{equation}
    Rigorously, we will show -- see Proposition \ref{prop:cos-TVS} -- that
    \begin{equation}\label{eq:ii}
        (\field{\cos(\alpha(\phi+u))}, f\1_{\left [\A^u_{-a_c,a_c}\right ]_n})\overset{\P}{\longrightarrow}(\mu,f) \quad\text{as}\quad n\to\infty.
    \end{equation}
    This statement should understood as saying that one can ``take the conditional expectation out'' and view the measure as the restriction of the field to the (critical) two-valued set. 
    \item For any $\alpha\in(0,1)$, even if $a_c \geq 2\lambda$, the action $\A_{-2\lambda, 2\lambda}\acts\ \field{\sin(\alpha\phi)}$ is given by 
    \begin{equation}
        \field{\sin(\alpha\phi)}\ =\lim_{N\to \infty} \sum_{k=1}^N\xi_k\field{\cos(\alpha(\phi^\A+\xi_k(\pi-a_c))},
    \end{equation}
    where the limit is taken in $L^2(\P)$ and the order of the sum is indpendent of the labels. The rigorous statement is deferred to Section \ref{sec:thin} -- see Lemma \ref{lem:CLE4-thin}. For the case $\alpha \in [1/2,1)$, this statement cannot be obtained using the classical techniques of \cite{thin-GFF}. Instead, we adapt a new technique developed in \cite{CGS}. 
\end{enumerate}

At first glance, the reader might be surprised by the absence of the action $\A_{-a_c, a_c}\acts\ \field{\sin(\alpha\phi)}$ analogous to (\textbf{II}). To see why, we note that \eqref{eq:ii} crucially relies on the fact that the renewed field $\field{\sin(\alpha\phi^\A)}$ can be defined regardless of the dimension of the boundary of the domain. This is roughly the content of Proposition \ref{prop:bdy-sin}. However, for the action $\A_{-a_c, a_c}\acts\ \field{\sin(\alpha\phi)}$ the renewed field would become $\field{\cos(\alpha\phi^\A)}$, which can only be defined when the two-valued set is subcritical. 

In fact, the correct counterpart to (\textbf{II}) should correspond to a thinness statement. That is, the restriction of $\field{\sin(\alpha\phi)}$ to the critical two-valued set should \emph{not} yield a measure. To prove such a statement, one should rely on sign cancellations coming from the labels $c(\ell)$ of the critical-two valued set (in other words, the renewed field \emph{does} have mass inside the microscopic loops of the two-valued set, is simply ought to cancel out). Property (\textbf{III}) is precisely a statement of this kind, but its proof heavily realies on the fact that the labels of $\A_{-2\lambda, 2\lambda}$ are i.i.d. and symmetric.

\begin{rem}
    The discussion above suggests a natural distinction between \emph{absolutely thin sets} and \emph{conditionally thin sets}, which is already hinted in \cite{thin-GFF}. This distinction makes reference to whether for a field $\psi$ and a set $A$ either
    \begin{align*}
    (\psi, f\1_{[A]_n}) \to 0 \ \ \text{ as }n\to \infty,
    \end{align*}
    for the case of conditionally thin sets, or
    \begin{align*}
\sum_{\square\subset [A]_n} |(\psi, f\1_{\square})| \1_{\{A\cap \square\neq \emptyset\}} \to 0 \ \ \text{ as }n\to \infty,
    \end{align*}
    for the case of absolutely thin sets.  
    
    \noindent The  reason why this distinction is useful is that basic operations between thin sets (such as union, intersection or containment) do not always result in thin sets. However, if one of the sets is absolutely thin, then everything works as expected. We stress that all known proofs of thinness starting from an assumption on the dimension prove absolute thinness.
\end{rem}

\subsection{Sine decomposition for $\alpha\in(0,1)$}\label{sec:sin1}
The key idea of all decompositions is to obtain the i.i.d. signs from the labels of properly chosen $\A_{-2\lambda, 2\lambda}$. Regardless of the value $\alpha\in(0,1)$, this can be done by directly sampling $\A_{-2\lambda, 2\lambda}$ thanks to $(\textbf{III})$, and summing up all measures in the next step using $(\textbf{II})$. The schematic diagram of the iteration procedure is given in Figure \ref{fig:sin-CLE-decomp}. The algorithm for the decomposition, initialized at $n=1$ and under the convention that $A_0\equiv \partial D$, is as follows: 

\SetKw{KwGet}{to get}
\SetKw{KwDo}{do}
\SetKw{KwRepeat}{repeat}
\begin{algorithm}[h!]
    \TitleOfAlgo{\textbf{SineDecomposition}($\alpha$)}\vspace{2mm}
    $A_n= A_{n-1}\cup \A_{-2\lambda,2\lambda}(\phi^{A_{n-1}})$ \;
    \KwDo $A_{n}\acts\ \field{\sin(\alpha\phi^{A_{n-1}})}$ \KwGet $\field{\sin(\alpha(\phi^{A_{n}}+\xi(\ell)\pi))}\ \equiv\xi(\ell)\field{\cos\left (\alpha(\phi^{A_n}+\xi(\ell)(\pi-a_c))\right )}$  \; 
    $n=n+1$ \;
    \For{all loops $\ell$ with $\xi(\ell)=+1$}{
        $A_n= A_{n-1}\cup\A_{-2\lambda,2a_c- 2\lambda}(\phi^{A_{n-1}})$ \;
        \KwDo $A_n\acts\ \field{\cos(\alpha(\phi^{A_{n-1}}+(\pi-a_c)))}$ \KwGet $\xi(\ell)\left( \mu - c(\tilde \ell) \field{\sin(\alpha\phi^{A_{n}})}\right )$\tcp*{Inside $\ell$}
    }
    \For{all loops $\ell$ with $\xi(\ell)=-1$}{
        $A_n=A_{n-1}\cup \A_{2\lambda-2a_c, 2\lambda}(\phi^{A_{n-1}})$ \;
        \KwDo $A_{n}\acts-\field{\cos(\alpha(\phi^{A_{n-1}}-(\pi-a_c)))}$ \KwGet $ \xi(\ell)\left ( \mu -c(\tilde \ell)\field{\sin(\alpha\phi^{A_{n}})}\right )$\tcp*{Inside $\ell$}
    }
    $n=n+1$ \;
    \KwRepeat inside each loop of $A_{n-1}$ with diameter bigger than some fixed cutoff $\rho>0$.
\end{algorithm}

\begin{figure}[h]
    \centering
    \includegraphics[width=110mm]{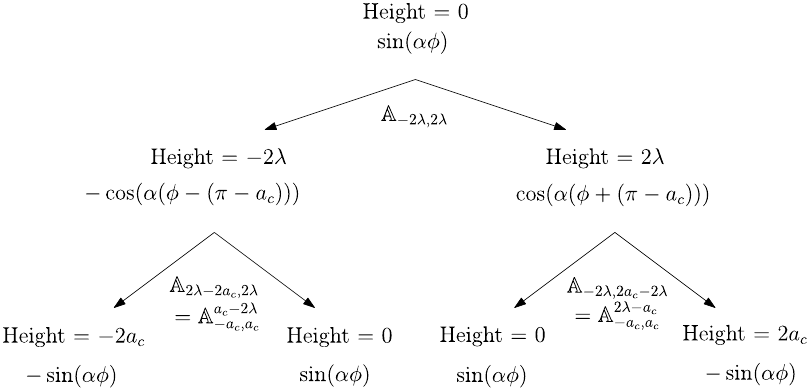}
    \caption{Schematic diagram of the excursion decomposition for $\field{\sin(\alpha\phi)}$. Within all loops of macroscopic diameter (i.e. bigger than some fixed $\rho>0$), we iterate the same procedure.}
    \label{fig:sin-CLE-decomp}
\end{figure}

\noindent Note that the signs appearing in front of the measure $\mu$ are indeed the labels $\xi(\ell)$ of $\A_{-2\lambda, 2\lambda}$. Observe also that, while subsequent layers carry over the labels $c(\tilde\ell)$, this does not break the claimed independence in the overall decomposition since the signs picked in each layer are independent of all those picked in the previous ones, and symmetry is preserved. 

\subsection{Cosine decomposition for $\alpha\in(0, 1)$}

To treat the decomposition of the cosine, it suffices to invoke (\textbf{II}) and continue as above. The schematic diagram is in Figure \ref{fig:cos-decomp}. The precise steps, initializing from $n=1$, are as follows: 

\SetKw{KwUse}{use}
\begin{algorithm}[h!]
    \TitleOfAlgo{\textbf{CosineDecomposition}($\alpha$)}\vspace{2mm}
    $A_n = \A_{-a_c, a_c}(\phi^{A_{n-1}})$\;
    \KwDo $A_n\acts\ \field{\cos(\alpha\phi^{A_{n-1}})}$ \KwGet $\mu - c(\ell)\field{\sin(\alpha\phi^{A_n})}$ \;
    \KwUse\textbf{SineDecomposition($\alpha$)}.
\end{algorithm}

\begin{figure}[h]
    \centering
    \includegraphics[width=70mm]{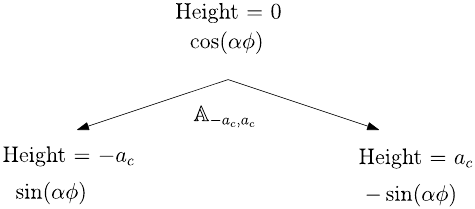}
    \caption{Schematic diagram of the excursion decomposition for $\field{\cos(\alpha\phi)}$. Within all loops of macroscopic diameter (i.e. bigger than some fixed $\rho>0$), we iterate the procedure in Figure \ref{fig:sin-CLE-decomp}.}
    \label{fig:cos-decomp}
\end{figure}

\begin{rem}\label{rem:joint-decomp}
        We stress that the excursion sets used in the decompositions for $\field{\sin(\alpha\phi)}$ and $\field{\cos(\alpha\phi)}$ are different. To obtain a joint decomposition of $\field{e^{i\alpha\phi}}$, the same excursions would be needed. However, this requires proving the analogue of (\textbf{II}) for $\field{\sin(\alpha\phi)}$, as already discussed. Note that for $\alpha=1/2$, since $a_c=2\lambda$, the joint decomposition does hold and is given by always iterating $\A_{-2\lambda, 2\lambda}$. This is closely related to the upcoming work of Aru and Lupu \cite{AL26}.
\end{rem}
\begin{rem}\label{rem:ALE-decomp}
        For $\alpha=1$, since the smallest two-valued set $\A_{-\lambda, \lambda}$ is already critical and (\textbf{I}) does not hold for the boundary of the loops of $\A_{-2\lambda,2\lambda}$, we are not able to prove (\textbf{III}). Nonetheless, we expect Theorem \ref{thm:cont-general} to also hold for $\alpha=1$, but instead one should obtain thinness of the $\SLE_4$-type lines using the equality between the left- and right-sided (conformal) Minkowski contents of $\SLE_4$ \cite{quant-SLE}.
\end{rem}
    

\section{Subcritical TVS and $\CLE_4$: Thinness} \label{sec:thin}

The goal of this section is to rigorously prove the thinness results (\textbf{I}) and (\textbf{III}) from Section \ref{sec:decomp}. 

\subsection{Correlation inequalities}\label{sec:corr-mon}

We begin by stating some straightforward correlation inequalities that we use extensively. For convenience, we write
\begin{equation}
    \field{\barcos(\alpha\phi)}\ =\  \field{\cos(\alpha\phi)}-\langle\field{\cos(\alpha\phi)}\rangle
\end{equation}
for the ``truncated'' cosine. 

\begin{prop}\label{prop:funct-ineq}
Let $\alpha\in(0,\sqrt{2})$.  
    \begin{enumerate}[label=(\roman*)]
        \item\label{prop-1} For any $n\in\N$ and any $z_1\ldots z_n\in D$, $$\left\langle \prod_{k=1}^n\field{\sin(\alpha\phi(z_k))}\right\rangle\leq\left\langle \prod_{k=1}^n\field{\cos(\alpha\phi(z_k))}\right\rangle.$$
        \item\label{prop-2} For any $z,w\in D$,
        \begin{align*}
            \langle\field{\barcos(\alpha\phi(z))}\field{\barcos(\alpha\phi(w))}\rangle\leq\langle\field{\sin(\alpha\phi(z))}\field{\sin(\alpha\phi(w))}\rangle.
        \end{align*}
        \item\label{prop-3} For any $u\in[-\pi/2,\pi/2]$ and any $z,w\in D$, 
        \begin{align*}
            \langle\field{\barcos(\alpha\phi(z)+u)}\field{\barcos(\alpha\phi(w)+u)}\rangle \leq\langle\field{\sin(\alpha\phi(z))}\field{\sin(\alpha\phi(w))}\rangle.
        \end{align*}
    \end{enumerate}
\end{prop}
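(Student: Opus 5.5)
The plan is to write every correlation function explicitly as a Gaussian computation and reduce each inequality to elementary facts about $\cosh$, $\sinh$ and the positivity of the Green's function. Write $c_k := \CR(z_k,D)^{-\alpha^2/2}$ and $G_{jk} := G_D(z_j,z_k) \geq 0$. From the defining limit of the chaos, with sign vectors $\epsilon\in\{\pm1\}^n$, we have
\[
\Big\langle \prod_k \field{e^{i\epsilon_k\alpha\phi(z_k)}}\Big\rangle = \prod_k c_k \exp\Big(-\alpha^2\sum_{j<k}\epsilon_j\epsilon_k G_{jk}\Big).
\]
Expanding $\cos x=\frac12(e^{ix}+e^{-ix})$ and $\sin x=\frac1{2i}(e^{ix}-e^{-ix})$ gives
\[
\Big\langle\prod\field{\cos}\Big\rangle = 2^{-n}\prod c_k\sum_{\epsilon}E(\epsilon),\qquad \Big\langle\prod\field{\sin}\Big\rangle = (2i)^{-n}\prod c_k\sum_\epsilon\Big(\prod_k\epsilon_k\Big)E(\epsilon),
\]
where $E(\epsilon)=\exp(-\alpha^2\sum_{j<k}\epsilon_j\epsilon_kG_{jk})>0$. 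For (i), since the sine correlation is real, its absolute value is at most $2^{-n}\prod c_k\sum_\epsilon E(\epsilon)$ by the triangle inequality, which is exactly the cosine correlation. Hence the sine correlation is bounded by the cosine one (and for odd $n$ it vanishes by the symmetry $\epsilon\mapsto-\epsilon$).

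For (ii), I use \eqref{eq:cos-2pt} and \eqref{eq:sin-2pt}. Since $\langle\field{\cos(\alpha\phi(z))}\rangle = c_z$, the truncated two-point function is $c_zc_w(\cosh(\alpha^2G)-1)$, while the sine one is $c_zc_w\sinh(\alpha^2G)$. The inequality then reduces to $\cosh x-1\leq\sinh x$ for $x\geq0$, that is, $1-e^{-x}\geq0$. This holds because $G\geq0$.

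For (iii), I first compute the general two-point function with a phase $u$. Writing $\cos(\alpha\phi+u)=\cos u\cos(\alpha\phi)-\sin u\sin(\alpha\phi)$, the mixed terms $\langle\field{\cos}\field{\sin}\rangle$ vanish by the symmetry $\phi\mapsto-\phi$, and $\langle\field{\cos(\alpha\phi+u)}\rangle=c\cos u$. This gives
\[
\langle\field{\barcos(\alpha\phi(z)+u)}\field{\barcos(\alpha\phi(w)+u)}\rangle = c_zc_w\big(\cos^2u(\cosh x-1)+\sin^2u\sinh x\big),\quad x=\alpha^2G(z,w)\geq0.
\]
This is a convex combination of $\cosh x-1$ and $\sinh x$, each bounded by $\sinh x$ as in (ii), which concludes the argument. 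I do not expect any step to be a genuine obstacle. The only points needing care are the normalisation conventions (the factors $\CR^{-\alpha^2/2}$ must match on both sides, which they do since they factor out identically) and the nonnegativity of $G_D$, which is where the sign of $x$ is used.
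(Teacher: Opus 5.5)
Your proof is correct and follows essentially the same route as the paper. You expand $\field{\cos}$ and $\field{\sin}$ into imaginary exponentials with the explicit Gaussian kernels (your triangle-inequality step for (i) even gives the absolute-value version), reduce (ii) to $\cosh x-1\leq\sinh x$ for $x=\alpha^2G_D(z,w)\geq0$, and get (iii) from $\cos^2u(\cosh x-1)+\sin^2u\sinh x$ together with (ii), which just makes explicit the ``direct computations'' the paper leaves to the reader.
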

\begin{proof}\
    \begin{enumerate}[label=(\roman*)]
        \item For odd $n$ the left-hand side vanishes by symmetry. Otherwise, we can write the sine (resp. cosine) as a difference (resp. sum) of imaginary exponentials, yielding the claim. Note that the case $n=2$ corresponds to the fact that $\sinh(x)\leq\cosh(x)$ for all $x\in\R$.
        \item This follows from a direct computation and the fact that $\cosh(x)-1\leq\sinh(x)$ for all $x\geq0$.
        \item Again, this follows from a direct computation using $(ii)$ and $\cos(u)^2+\sin(u)^2=1$.
    \end{enumerate}
\end{proof}

\begin{rem}
    For $\alpha=1/\sqrt{2}$, the second (resp. third) inequality states that the truncated two-point function of XOR-Ising with $+$ (resp. ``intermediate'') boundary conditions is dominated by the two-point function of XOR-Ising with free boundary conditions. In the discrete, the analogous statement for the Ising model (which is stronger) follows from the GHS inequality, or the more general \cite{DSS}.
\end{rem}

We now turn to study a different feature of the two-point correlation functions: monotonicity in the domain. Surprisingly, proving this monotonicity is subtle, needing to first study the behaviour of the two-point functions under infinitesimal deformations of the domain. This monotonicity will be a crucial input in the sequel. In particular, \eqref{eq:ineq_sin} allows us to bound the two-point correlation function of $\field{\sin(\alpha\phi)}$ in an arbitrary domain $D$ by that on the whole plane $\C$. As a consequence, $\field{\sin(\alpha\phi)}$ can be defined in any domain (see Proposition \ref{prop:bdy-sin}), and its $L^2(\P)$-norm can be uniformly bounded.

\begin{thm}\label{thm:domain-mon}
    Let $D, \tilde D\subseteq\C$ be two simply connected domains such that $D\subseteq\tilde D$. Let $\phi$ and $\tilde\phi$ be two GFFs in the respective domains. We have that:
    \begin{enumerate}[label=(\roman*)]
        \item For any $\alpha\in(0,\sqrt{2})$ and any $z,w\in D$, 
        \begin{equation}
             \langle \field{\cos(\alpha\phi(z))}\field{\cos(\alpha\phi(w))}\rangle \geq \langle \field{\cos(\alpha\tilde\phi(z))}\field{\cos(\alpha\tilde\phi(w))}\rangle. 
        \end{equation}
        \item For any $\alpha\in(0,1]$ and any $z,w\in D$, 
        \begin{equation}\label{eq:ineq_sin}
             \langle \field{\sin(\alpha\phi(z))}\field{\sin(\alpha\phi(w))}\rangle \leq \langle \field{\sin(\alpha\tilde\phi(z))}\field{\sin(\alpha\tilde\phi(w))}\rangle.
        \end{equation}
    \end{enumerate}
\end{thm}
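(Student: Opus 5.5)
The plan is to reduce both inequalities to an infinitesimal computation via Hadamard's variational formula (Theorem~\ref{thm:hadamard} in the form of \cite{LK-hadamard}). Write $\beta=\alpha^2$ and
\[
E_\pm^D(z,w):=\CR(z,D)^{-\beta/2}\CR(w,D)^{-\beta/2}e^{\pm\beta G_D(z,w)}.
\]
By \eqref{eq:cos-2pt}--\eqref{eq:sin-2pt}, the two-point functions are $\tfrac12(E_+^D+E_-^D)$ for the cosine and $\tfrac12(E_+^D-E_-^D)$ for the sine. So (i) says $E_++E_-$ is non-increasing as the domain grows, and (ii) says $E_+-E_-$ is non-decreasing.

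\emph{Step 1: reduction.} First take $D\subseteq\tilde D$ bounded with smooth (analytic) boundaries. Connect them by a smooth increasing family $(D_t)_{t\in[0,1]}$ whose boundaries move outward with normal velocity $v_t\ge0$. For instance, conformally map $\tilde D$ to the unit disk, interpolate by smooth domains there, and pull back. The general case then follows by exhausting $D$ and $\tilde D$ by smooth domains, using Carathéodory convergence: $G$ and $\CR$ are continuous under it for fixed interior points $z\neq w$. Unbounded $\tilde D$ (such as $\C$) is handled by a monotone limit.

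\emph{Step 2: variation.} Let $P_z(\zeta)=\partial_n G_{D_t}(z,\zeta)\ge0$ be the normal derivative on $\partial D_t$. Hadamard's formula gives
\[
\partial_t G_{D_t}(z,w)=c\int_{\partial D_t}P_zP_w\,v_t\,|d\zeta|
\]
for a universal constant $c>0$. Since $\log\CR(z,D)=\lim_{w\to z}\big(G_D(z,w)+\log|z-w|\big)$, it also gives
\[
\partial_t\log\CR(z,D_t)=c\int P_z^2\,v_t\,|d\zeta|.
\]
Combining these,
\[
\partial_t\log E_\pm=-\tfrac{c\beta}{2}\int (P_z\mp P_w)^2\,v_t\,|d\zeta|.
\]
Both $E_+$ and $E_-$ are therefore non-increasing, which proves (i) immediately and for every $\beta<2$. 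For (ii) we need
\[
\int\Big[E_-(P_z+P_w)^2-E_+(P_z-P_w)^2\Big]v_t\,|d\zeta|\ \ge 0 .
\]
I will prove the stronger pointwise inequality
\[
e^{-\beta G_{D_t}(z,w)}\ \ge\ \frac{|P_z(\zeta)-P_w(\zeta)|}{P_z(\zeta)+P_w(\zeta)}\qquad\text{for all }\zeta\in\partial D_t .
\]
This is where $\alpha\le1$ enters.

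\emph{Step 3: the key pointwise inequality.} Both sides are conformally invariant. $G$ is invariant, and the Poisson kernels transform by the same factor $|\varphi'(\zeta)|$, so their ratio is invariant. We may therefore map to the unit disk with $z=0$ and $w=r\in(0,1)$. Then $G=-\log r$, $P_0\equiv\mathrm{const}$, and $P_r/P_0=(1-r^2)/|\zeta-r|^2$, which ranges over $[\frac{1-r}{1+r},\frac{1+r}{1-r}]$. The function $x\mapsto|1-x|/(1+x)$ on this interval has maximum $r$, attained at the endpoints. Since $\beta\le1$ and $r<1$, we get $e^{-\beta G}=r^{\beta}\ge r$, which proves the inequality, and hence (ii) after integrating in $t$.

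I expect the main obstacle to be Step 1 rather than the computation. One must justify Hadamard's formula along a genuine smooth monotone family, including the differentiation of $\log\CR$, which requires exchanging the limit $w\to z$ with the $t$-derivative. One must also pass to rough domains, for which I would rely on the continuity of $G_D$ and $\CR(\cdot,D)$ under kernel convergence.
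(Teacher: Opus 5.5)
Your proposal is correct and is essentially the paper's argument. Hadamard's variational formula reduces (ii) to the pointwise boundary inequality $e^{-\alpha^2 G}\ge |P_z-P_w|/(P_z+P_w)$, which is equivalent to the paper's condition $\cosh(\alpha^2 G)\ge R_t\sinh(\alpha^2 G)$, and your explicit disk computation gives exactly the bound the paper obtains from $|B_\zeta(z)-B_\zeta(w)|\le \mathfrak{d}(z,w)$ together with $\tanh(\mathfrak{d}/2)=e^{-G}$. The differences are only technical: you use smooth outward normal flows instead of radial Loewner slit chains (after reducing to $\overline{D}\subset\tilde D$, where level curves of harmonic measure in the annulus $\tilde D\setminus\overline{D}$ supply the interpolating family), and your $E_\pm$ splitting makes (i) a bit more transparent by showing $E_+$ and $E_-$ are each non-increasing.
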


Some comments about this result are due. For $\alpha=1/\sqrt{2}$, the statements directly correspond to the known monotonicity of the Ising two-point function: it is decreasing (resp. increasing) under $+$ (resp. free) boundary conditions. In the continuum, however, this statement is purely a statement about the Green's function. Namely, the two-point function of the cosine (resp. sine) consists of a decreasing term, coming from the conformal radii, times an increasing term, given by $\cosh(\alpha^2G(z,w))$ (resp. $\sinh(\alpha^2G(z,w))$). Thus the monotonicity is determined by whether the increasing term wins over the decreasing term, and the subtlety of the statement becomes apparent: for $x,y\in D$ close to each other, $\cosh(\alpha^2G(z,w))\sim\sinh(\alpha^2G(z,w))$. Finally, we highlight the (rather surprising) range of values of $\alpha$ for which the statement holds, needing to restrict to $\alpha\in(0,1]$ for $(ii)$ to be true. 

The following general result concerning variations of the Green's function is the central piece in the proof of Theorem \ref{thm:domain-mon}. Under smoothness assumptions and less general growth of the domain, the result is classical, see e.g. {\cite[Chapter 6, Problem 15]{evans}}. For our purposes, we use (a special case of) the statement as presented in {\cite[Lemma 3.1]{LK-hadamard}}, see also {\cite[Theorem 4]{other-hadamard}}. To do so, we briefly recall the definition of a radial Loewner chain. We refer to \cite{lawler, ber-SLE} for further background.

Let $w_t:[0, \infty)\to \R$ be a continuous function and set $\zeta_t=e^{iw_t}:[0, \infty)\to\partial\D$. For each $z\in\D$, the radial Loewner differential equation driven by $\zeta_t$ is 
\begin{equation} \label{eq:lowner}
    \partial_tf_t(z) = -zf'_t(z)\frac{\zeta_t+z}{\zeta_t-z}, \quad f_0(z)=z.
\end{equation}
The unique solution to \eqref{eq:lowner} gives a collection $(f_t)_{t\geq0}$ of conformal transformations $f_t:\D\to f(\D)\equiv D_t$ such that $f_t(0)=0$ and $f'_t(0)=e^{-t}>0$. In particular, the family $(D_t)_{t\geq0}$ is decreasing. For each $z\in\D$, let $\tau(z)=\sup\{t\geq0: z\in D_t\}$.

\begin{thm}[Hadamard's Variational Formula -- {\cite[Lemma 3.1]{LK-hadamard}}]\label{thm:hadamard}
    Consider the radial Loewner equation driven by $\zeta_t$. For a.e. $t<\tau(z)\wedge\tau(w)$ and all $z,w\in D_t$, the map $t\mapsto G_t(z,w)$ is absolutely continuous and 
    \begin{equation}
       -\partial_t G_t(z,w) = P_{\D}(g_t(z), \zeta_t)P_\D(g_t(w), \zeta_t),
    \end{equation} 
    where $g_t=f^{-1}_t:D_t\to\D$ and $P_\D(z, \zeta)=\frac{1-|z|^2}{|z-\zeta|^2}$ is the Poisson kernel in $\D$.
\end{thm}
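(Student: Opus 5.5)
The plan is to reduce the statement to an explicit computation on the unit disc, using conformal invariance of the Green's function together with the radial Loewner flow of $g_t=f_t^{-1}$.

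\textbf{Step 1 (reduction to the disc).} For $z,w\in D_t$ we have $G_t(z,w)=G_\D(g_t(z),g_t(w))$. In the disc the Green's function is explicit:
\begin{equation}
G_\D(x,y)=\log\left|\frac{1-x\bar y}{x-y}\right| = \mathrm{Re}\,\log\frac{1-x\bar y}{x-y}.
\end{equation}
Since $f_t$ solves \eqref{eq:lowner}, differentiating the identity $f_t(g_t(z))=z$ in $t$ shows that, for fixed $z$ and $t<\tau(z)$, $x_t:=g_t(z)$ solves the ODE
\begin{equation}
\partial_t x_t=V_t(x_t),\qquad V_t(x)=x\,\frac{\zeta_t+x}{\zeta_t-x}.
\end{equation}
Because $\zeta_t$ is continuous and $x_t$ stays away from $\zeta_t$ before $\tau(z)$, the right-hand side is continuous in $t$. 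Hence $t\mapsto g_t(z)$ is $C^1$ on $[0,\tau(z))$, and $t\mapsto G_t(z,w)$ is $C^1$, in particular absolutely continuous, on $[0,\tau(z)\wedge\tau(w))$. So the "a.e. $t$" conclusion holds even for every $t$. The remaining task is a pointwise identity: for fixed $\zeta\in\partial\D$ and $x,y\in\D$,
\begin{equation}
\mathrm{Re}\left[\frac{-V(x)\bar y-x\overline{V(y)}}{1-x\bar y}-\frac{V(x)-V(y)}{x-y}\right]=-P_\D(x,\zeta)P_\D(y,\zeta).
\end{equation}

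\textbf{Step 2 (algebra).} First, rotation invariance of $G_\D$ and of $P_\D$ lets us assume $\zeta=1$. Next, use the partial-fraction form
\begin{equation}
V(x)=-x-2\zeta+\frac{2\zeta^2}{\zeta-x}.
\end{equation}
This gives directly
\begin{equation}
\frac{V(x)-V(y)}{x-y}=-1+\frac{2\zeta^2}{(\zeta-x)(\zeta-y)}.
\end{equation}
For the first term, use $\bar\zeta=1/\zeta$ to write $\overline{V(y)}$ as a rational function of $\bar y$, and put everything over the common denominator $(1-x\bar y)(\zeta-x)(\bar\zeta-\bar y)$. One also has the identity $\mathrm{Re}\,(V(x)/x)=P_\D(x,\zeta)$, since $\mathrm{Re}\,\frac{\zeta+x}{\zeta-x}=\frac{1-|x|^2}{|\zeta-x|^2}$. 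The target is therefore
\begin{equation}
-\frac{(1-|x|^2)(1-|y|^2)}{|\zeta-x|^2|\zeta-y|^2},
\end{equation}
and the real parts should collapse to this once the constant $-1$ terms cancel.

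As a sanity check, take $y=0$. Then $G_\D(x,0)=-\log|x|$, whose derivative along the flow is $-\mathrm{Re}(V(x)/x)=-P_\D(x,\zeta)$, and $P_\D(0,\zeta)=1$. This matches the claimed formula.

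\textbf{Main obstacle.} The only real difficulty is carrying out the rational-function simplification in Step 2 cleanly. If the direct expansion becomes unwieldy, I would try a different route. Observe that $\partial_t G_t(\cdot,w)$ is harmonic in $z$ away from $w$, including at $z=w$, since the logarithmic singularity $-\log|z-w|$ does not depend on $t$. This harmonic function has boundary values on $\partial D_t$ concentrated at the tip $f_t(\zeta_t)$. By uniqueness for the Dirichlet problem it must then be a multiple of $P_\D(g_t(\cdot),\zeta_t)$, and the multiple is fixed by symmetry in $(z,w)$ together with the case $w=f_t(0)=0$ computed above. The caveat is that this alternative requires some care with the boundary behaviour at the tip.
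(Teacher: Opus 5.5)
Your proof is correct. The rational-function simplification you flagged as the main obstacle does close, so the Dirichlet-problem fallback is unnecessary. Set $p=\frac{\zeta+x}{\zeta-x}$ and $q=\frac{\zeta+y}{\zeta-y}$, so that $V(x)=xp$ and $V(y)=yq$. The first numerator is then $-x\bar y(p+\bar q)$. Using $\bar\zeta=1/\zeta$ one gets $p+\bar q=\frac{2\zeta(1-x\bar y)}{(\zeta-x)(1-\zeta\bar y)}$, which cancels the denominator $1-x\bar y$. Now use $\frac{2x}{\zeta-x}=p-1$, $\frac{2\zeta}{\zeta-x}=p+1$, $\frac{\zeta}{\zeta-y}=\frac{q+1}{2}$ and $\frac{\zeta\bar y}{1-\zeta\bar y}=\frac{\bar q-1}{2}$. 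The two terms become $-\tfrac12(p-1)(\bar q-1)$ and $1-\tfrac12(p+1)(q+1)$. Their sum is $-p\,\mathrm{Re}\,q-i\,\mathrm{Im}\,q$, whose real part is $-\mathrm{Re}\,p\,\mathrm{Re}\,q=-P_\D(x,\zeta)P_\D(y,\zeta)$.

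One small point of rigour: differentiating $f_t(g_t(z))=z$ presupposes that $t\mapsto g_t(z)$ is differentiable. It is cleaner to define $x_t$ as the solution of $\dot x=V_t(x)$ with $x_0=z$, and then check from the PDE for $f_t$ that $\frac{d}{dt}f_t(x_t)=0$.

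The paper gives no proof of this statement. It imports it from Viklund--Wang (and Roth--Schippers), where it is formulated for Loewner--Kufarev chains whose driving measure $\rho_t$ is only measurable in $t$. That generality is why the statement is phrased with absolute continuity and ``a.e.\ $t$''.

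Your route is a self-contained verification in the special case the paper actually uses. The proof of Theorem~\ref{thm:domain-mon} only needs slit domains, hence radial chains with a continuous driving function. In that case your argument gives more than the cited statement: $t\mapsto G_t(z,w)$ is $C^1$, and the formula holds for every $t<\tau(z)\wedge\tau(w)$.

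Your pointwise identity is also real-linear in the vector field. Integrating it against $\rho_t(d\zeta)$ therefore recovers the general Loewner--Kufarev formula, with Carath\'eodory ODE theory replacing your $C^1$ argument and producing the a.e.\ qualifier. So the citation mainly buys brevity, not a genuinely stronger input.
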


Inverting the setup of the previous discussion is also standard, and will be very useful in the proof of Theorem \ref{thm:domain-mon}. Suppose now we are given a simple curve $\gamma:[0, \infty)\to\bar\D$ with $\gamma(0)\in\partial\D$ and $\gamma(t)\in\D$ for all $t>0$. Setting $D_t=\D\setminus\gamma([0, t])$, under an appropriate time-reparametrization of $\gamma$, there exists a driving function $\zeta$ and maps $(f_t)_{t\geq0}$ as above that satisfy the radial Loewner equation \eqref{eq:lowner}. We recall that the same claim holds for more general classes of decreasing domains, and that not every Loewner chain is generated by a curve, yet this shall suffice for our purposes.

\begin{proof}[Proof of Theorem \ref{thm:domain-mon}] \textbf{Step 0 -- Full-plane:} The case $\tilde D=\C$ follows from that of bounded domains by fixing $z, w\in D$, comparing to the ball $B(0, R)$ once it contains both points, and taking $R\nearrow\infty$.

\noindent \textbf{Step 1 -- Reduction to Loewner chains:} Up to a conformal mapping, it suffices to let $\tilde D=\D$ and let $D\subset \D$ be any simply connected domain containing the origin. By (the proof of) {\cite[Lemma 4.23]{lawler}}, the space of such domains under the Carathéodory topology has a dense subset given by \emph{slit domains}. That is, domains of the form $\D\setminus\gamma([0,t])$ for a curve curve $\gamma:[0,\infty)\to\bar\D$ with $\gamma(0)=1$, $\gamma(\infty)=0$ and $0\notin\gamma((0,\infty))\subset\D$. Let $\zeta$ be the driving function associated to the curve $\gamma$ used to approximate $D$. Since the Green's function is continuous under the convergence of domains in the Carathéodory topology\footnote{For us, since the sequence of domains is decreasing, this follows from {\cite[Chapter VII, Section 6]{doob}}. The general claim may be verified directly by combining Harnack's inequality for harmonic functions with a Beurling estimate. For further references see {\cite[Lemma 4.2]{ALS2}}, {\cite[Section 3]{CS}}, \cite{Cara}.}, it suffices to show that, in the same setup of Theorem \ref{thm:hadamard},
\begin{align}
     & -\partial_t C^{\alpha}_t(z,w)\leq0, \\
     & -\partial_t S^{\alpha}_t(z,w)\geq0,
\end{align}
where $C^{\alpha}_t(z,w), S^{\alpha}_t(z,w)$ are the two-point functions of the cosine and the sine, respectively. 

\noindent\textbf{Step 2-- Compute the time-derivatives:} We write $H_t(z,w)$ for the regular part of the Green's function, i.e.
\begin{equation}
    G_t(z, w) = -\log|z-w| + H_t (z,w).
\end{equation}
A direct computation gives that
\begin{align}\label{eq:cos-mon}
    &-\partial _t C^\alpha_t(z,w) = Z_t^\alpha(z,w)\left[X_t(z,w)\sinh(\alpha^2G_t(z,w))-Y_t(z,w)\cosh(\alpha^2G_t(z,w))\right], \\ \label{eq:sin-mon}
    & -\partial_t S^\alpha_t(z,w) = Z_t^\alpha(z,w)\left[X_t(z,w)\cosh(\alpha^2G_t(z,w))-Y_t(z,w)\sinh(\alpha^2G_t(z,w))\right],
\end{align}
where 
\begin{align}
    & X_t(z, w) = -\partial_t G_t(z,w)\geq0, \\
    & Y_t(z,w) = -\frac{1}{2}(\partial_t H_t(z,z) + \partial_t H_t(w,w))\geq0, \\
    & Z_t^\alpha(z,w) = \alpha^2\exp\left(-\alpha^2/2(H_t(z,z) + H_t(w,w))\right)\geq0.
\end{align}
Since $\partial_t G_t(z,w) = \partial_t H_t(z,w)$, Theorem \ref{thm:hadamard} holds whenever $z=w$ and
\begin{align}
    R_t(z,w) &:= Y_t(z,w)/X_t(z,w) \\
    & = \frac{1}{2}\frac{\partial_tH_t(z,z) + \partial_tH_t(w,w)}{\partial_tH_t(z,w)} \\
    & = \frac{1}{2}\frac{P_{\D}(g_t(z),\zeta_t)^2+P_\D(g_t(w), \zeta_t)^2}{P_{\D}(g_t(z), \zeta_t)P_\D(g_t(w), \zeta_t)}.
\end{align}
We immediately see that $R_t(z,w)\geq1$ for all $z,w\in D_t$, and plugging this back into \eqref{eq:cos-mon} yields that
\begin{equation}
    -\partial_t C_t^\alpha(z,w) \leq Z^\alpha_t(z,w)X_t(z,w)(\sinh(\alpha^2G(z,w))-\cosh(\alpha^2G(z,w))) \leq 0.
\end{equation}
Note that the value of $\alpha$ has not yet played a role. 

\noindent To show that $-\partial_tS^\alpha_t(z,w)\geq0$, it suffices to show that
\begin{equation}
    \cosh(\alpha^2G_t(z,w))-R_t(z,w)\sinh(\alpha^2G_t(z,w))\geq0.
\end{equation}
Equivalently, noting that $G_t(z,w)=G_\D(g_t(z), g_t(w=)$, it suffices to show that
\begin{equation}\label{eq:sin-mon-2}
    \cosh(\alpha^2G_\D(g_t(z),g_t(w)))-\frac{1}{2}\left(\frac{P_\D(g_t(z), \zeta_t)}{P_\D(g_t(w), \zeta_t)}+\frac{P_\D(g_t(w), \zeta_t)}{P_\D(g_t(z), \zeta_t)}\right)\sinh(\alpha^2G_\D(g_t(z),g_t(w)))\geq0.
\end{equation}
In particular, this is now a statement purely about explicit functions on $\D$. To prove \eqref{eq:sin-mon-2}, we use the hyperbolic metric $\mathfrak d$ on $\D$. First, recall that
\begin{equation}\label{eq:metric-hyper}
    \tanh(\mathfrak d(z,w)/2) = \exp(-G_\D(z,w)).
\end{equation}
Moreover, recall that the Busemann function is given by
\begin{equation}
    B_\zeta(z) := \lim_{s\to\infty} (\mathfrak{d}(\eta(s), z)-s) = -\log(P_\D(z, \zeta)),
\end{equation}
where $\eta$ is the hyperbolic geodesic from the origin to $\zeta\in\partial\D$. In particular,
\begin{equation}\label{eq:busemann}
    B_\zeta(z)-B_\zeta(w) = -\log\left(\frac{P_\D(z,\zeta)}{P_\D(w,\zeta)}\right).
\end{equation}
Substituting \eqref{eq:metric-hyper} and \eqref{eq:busemann} into \eqref{eq:sin-mon-2}, and rearranging, gives the equivalent statement 
\begin{equation}
    \tanh(\mathfrak{d}(z,w)/2)^{2\alpha^2}\geq\frac{\cosh(B_\zeta(z)-B_\zeta(w))+1}{\cosh( B_\zeta(z)-B_\zeta(w))-1}=\tanh^2(B_\zeta(z)-B_\zeta(w)).
\end{equation}
By the triangle inequality, $B_\zeta(z)-B_\zeta(w)\leq\mathfrak{d}(z,w)$. And the claim follows precisely when $\alpha\leq1$, noting that $\tanh(x)\leq1$ for all $x\in\R$. \smallskip
\end{proof}

\subsection{Boundary integrability}
The main goal for this section is to identify how small a local set must be to ensure it is thin for the imaginary chaos. Our approach towards thinness statements will be to consider the limits of dyadic approximations for the relevant quantities. This approach matches the one in e.g. \cite{thin-GFF, notes-GFF}. 

In what follows, for any given closed set $A\subset \D$, we use $[A]_n$ to denote the union of all dyadic squares of level $n$ that intersect $A$. A nice upshot of such approximations is that, as they can take only finitely many values, we can define the random variables $(\phi, f\1_{[A]_n})$ or $(\phi, f\1_{D\setminus[A]_n})$. Indeed, one can simultaneously define $(\phi, f\1_{a})$ for any possible value $a$ of $[A]_n$ and see that 
\begin{equation}\label{eq:def-restr-gff}
    (\phi, f\1_{[A]_n}) = \sum_a(\phi, f\1_{a})\1_{\{[A]_n=a\}} \quad \text{a.s.}
\end{equation}
This is especially useful for \emph{random} closed sets, possibly with Lebesgue measure zero. While this choice of approximation scheme is very practical (especially to derive thinness criteria in terms of the size of $A$), it is somewhat arbitrary. 
We refer to {\cite[Section 5]{thin-GFF}} for further discussion on discrete approximation schemes of the same flavour. For our dyadic approximation scheme, as in \cite{thin-GFF}, we consider the following definition. 

\begin{defn}
    A set $A$ is a thin for the field $\field{e^{i\alpha\phi}}$ if for any continuous, bounded function $f:\C\to\R$, 
    \begin{equation*}
        \lim_{n\to\infty}(\field{e^{i\alpha\phi}}, f\1_{[A]_n})= 0,
    \end{equation*}
    where the limit is in probability. The same definition applies to the fields $\field{\cos(\alpha\phi)}$ and $\field{\sin(\alpha\phi)}$.
\end{defn}

The first step to identify thin (two-valued) local sets for $\field{e^{i\alpha\phi}}$ is to study the integrability of the field along a deterministic fractal boundary. This is why, as already discussed in Section \ref{sec:prelim-cont}, it is crucial for us to be able to test our fields against functions that intersect the boundary of the domain. It will also be necessary when proving convergence of the discrete XOR-Ising decomposition in Section \ref{sec:conv}. We start with a key consequence of the monotonicity in Theorem \ref{thm:domain-mon}.

\begin{prop}[Boundary Mass -- Sine] \label{prop:bdy-sin}
Let $\alpha\in(0,1]$. Let $D$ be a bounded, simply connected domain. Fix any $A\subseteq \partial D$ to be a (deterministic) closed subset of the boundary such that $\Leb(A)=0$. For any continuous, bounded  function $f:\C\to\R$,
\begin{align}
    \lim_{n\to \infty} (\field{\sin(\alpha \phi)}, f\1_{[A]_n}) = 0, 
\end{align}
where the limit is almost sure and in $L^2(\P)$.
In particular, for any $u\in[-\pi/2,\pi/2]$,
\begin{align}\label{eq:correlacion_cos_thin}
    \lim_{n\to \infty} (\field{\barcos(\alpha\phi+u)}, f\1_{[A]_n}) = 0,
\end{align}
where the limit is, again, almost sure and in $L^2(\P)$.
\end{prop}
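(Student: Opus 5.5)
The plan is to reduce everything to a second-moment computation, using Theorem \ref{thm:domain-mon}(ii) to control the sine two-point function uniformly near the boundary. First I give a meaning to $(\field{\sin(\alpha\phi)}, f\1_{[A]_n})$. Since $f\1_{[A]_n}$ is not compactly supported in $D$, I define it as the $L^2(\P)$-limit of $(\field{\sin(\alpha\phi)}, f\1_{[A]_n}\chi_m)$, where $\chi_m\in C_c^\infty(D)$ increase to $\1_D$ with $0\le\chi_m\le 1$.

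Existence of this limit follows from the second-moment identity
\begin{equation*}
\E\big[(\field{\sin(\alpha\phi)},g)^2\big]=\iint_{D\times D} g(z)g(w)\,\langle \field{\sin(\alpha\phi(z))}\field{\sin(\alpha\phi(w))}\rangle\,dz\,dw .
\end{equation*}
By \eqref{eq:ineq_sin} the integrand is bounded by the two-point function in a large ball $B\supset D$, with Step 0 applied when $B=\C$. Using \eqref{eq:sin-2pt} and the bound $\sinh(\alpha^2 G_B(z,w))\le C|z-w|^{-\alpha^2}$ for $z,w\in D$, this is at most $K(z,w):=C|z-w|^{-\alpha^2}$. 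The constant $C$ depends only on $\mathrm{diam}(D)$, because $\CR(\cdot,B)$ is bounded below on $D$.

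Since $\alpha^2\le 1<2$, the kernel $K$ is locally integrable in $\R^2\times\R^2$. Hence $\iint |g(z)g(w)|K(z,w)\,dz\,dw\le C\|g\|_\infty^2\iint_{\mathrm{supp}\, g\times\mathrm{supp}\, g}|z-w|^{-\alpha^2}$ for any bounded $g$ supported in $\bar D$. Applying this to $g=f\1_{[A]_n}(\chi_m-\chi_{m'})$ and using dominated convergence shows that the sequence is Cauchy in $m$. The same bound then gives
\begin{equation*}
\E\big[(\field{\sin(\alpha\phi)}, f\1_{[A]_n})^2\big]\le C\|f\|_\infty^2\iint_{([A]_n\cap D)^2}|z-w|^{-\alpha^2}\,dz\,dw .
\end{equation*}
Because $A$ is closed, $[A]_n$ decreases to a set contained in $A$ up to the dyadic boundaries, so $\Leb([A]_n)\to\Leb(A)=0$. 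Absolute continuity of the integral of the integrable kernel then gives $L^2(\P)$ convergence to $0$.

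For almost sure convergence I would make the decay quantitative and use Borel--Cantelli. For any set $E\subset\R^2$, $\sup_z\int_E|z-w|^{-\alpha^2}dw\le c\,\Leb(E)^{1-\alpha^2/2}$, so the second moment is $O(\Leb([A]_n)^{2-\alpha^2/2})$. Summability is not automatic, since $\Leb([A]_n)$ may decay slowly, and this is the main obstacle. I would first try the following remedy. Write $(\field{\sin(\alpha\phi)}, f\1_{[A]_n})$ as the limit of $(\field{\sin(\alpha\phi)}, f\1_{[A]_{n_k}})$ along a subsequence where $\Leb([A]_{n_k})\le 2^{-k}$; this gives a.s. convergence along the subsequence. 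Between $n_k$ and $n_{k+1}$, the differences $f(\1_{[A]_n}-\1_{[A]_{n_{k+1}}})$ are nested, so I would control the maximal fluctuation with a chaining/dyadic maximal inequality over the finitely many intermediate sets. Failing that, I would settle for a.s. convergence along a subsequence, or interpret ``almost sure'' via the monotone nesting of the sets.

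Finally, for \eqref{eq:correlacion_cos_thin}, Proposition \ref{prop:funct-ineq}\ref{prop-3} gives the pointwise bound $\langle\field{\barcos(\alpha\phi(z)+u)}\field{\barcos(\alpha\phi(w)+u)}\rangle\le \langle\field{\sin(\alpha\phi(z))}\field{\sin(\alpha\phi(w))}\rangle\le K(z,w)$. Every step above used only this kernel bound, so the same argument applies verbatim to the truncated cosine.
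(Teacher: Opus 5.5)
Your $L^2$ argument, and the reduction of the truncated cosine to the sine via Proposition \ref{prop:funct-ineq}\ref{prop-3}, follow the paper's proof exactly: domain monotonicity, the kernel bound $|z-w|^{-\alpha^2}$, and the estimate by $\Leb([A]_n)^{2-\alpha^2/2}$. Defining the pairing as an $L^2$ limit of compactly supported cut-offs is a harmless extra step that the paper leaves implicit.

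The gap is the almost sure convergence, which the statement asserts and you leave open. The paper handles it by ``summability of the sequence of variances''. You are right that the displayed bound is summable only when $\Leb([A]_n)$ decays fast enough (e.g.\ geometrically, as when $\dim_M(A)<2$), whereas $\Leb(A)=0$ alone allows arbitrarily slow decay. But none of your fallbacks proves the claim:
\begin{itemize}
\item convergence along a subsequence is a weaker statement;
\item nesting of the sets gives no monotonicity of the signed variables $X_n:=(\field{\sin(\alpha\phi)},f\1_{[A]_n})$;
\item a dyadic maximal inequality in the index $n$ between $n_k$ and $n_{k+1}$ loses a factor of order $\log(n_{k+1}-n_k)$, which is unbounded exactly in the slowly decaying case you are worried about.
\end{itemize}

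The missing idea is to chain in area rather than in $n$. Set $t_n:=\Leb([A]_n\cap D)\downarrow 0$. Since the sets are nested, $\Leb(([A]_m\setminus[A]_n)\cap D)=t_m-t_n$ for $m<n$, so your own estimate gives
\begin{equation*}
\E\big[(X_m-X_n)^2\big]\le C\norm{f}_\infty^2\,(t_m-t_n)^{\gamma},\qquad \gamma:=2-\alpha^2/2>1 .
\end{equation*}
This is a Kolmogorov--Chentsov condition for $t_n\mapsto X_n$ on the countable set $\{t_n\}_{n\ge m}\subset[0,t_m]$. Chain through $2^j$ cells of length $2^{-j}t_m$. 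At level $j$, bound the expected maximum over the at most $2^{j+1}$ links by the square root of the sum of their second moments. This yields
\begin{equation*}
\E\Big[\sup_{n\ge m}|X_n-X_m|\Big]\lesssim \norm{f}_\infty\, t_m^{\gamma/2}\sum_{j\ge0}2^{-j(\gamma-1)/2}\longrightarrow 0 .
\end{equation*}
Now $\sup_{n,n'\ge m}|X_n-X_{n'}|$ is non-increasing in $m$ and tends to $0$ in probability, so it tends to $0$ almost surely. Hence $(X_n)$ converges a.s., necessarily to its $L^2$ limit $0$. The argument uses only the kernel bound, so it applies verbatim to $\field{\barcos(\alpha\phi+u)}$.
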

\begin{proof}
    The first claim follows by Theorem \ref{thm:domain-mon}. Indeed, the two-point function is dominated by its full-plane version, thus removing any boundary issues. In particular, we have the following very useful estimate
    \begin{align}
        \int_{[A]_n}\int_{[A]_n}\langle\field{\sin(\alpha\phi(z))}\field{\sin(\alpha\phi(w))}\rangle dzdw & \leq \int_{[A]_n}\int_{[A]_n} \sinh(\alpha^2 G_\C(z,w))dzdw\\
        & \leq \int_{[A]_n}\int_{[A]_n} |z-w|^{-\alpha^2}dzdw \\ \label{eq:sin-mon-leb}
        & \lesssim \Leb([A]_n)^{2-\alpha^2/2} \longrightarrow0.
    \end{align}
    Finally, \eqref{eq:correlacion_cos_thin} follows from Proposition \ref{prop:funct-ineq}\ref{prop-3}. In both cases, a.s. convergence follows from the summability of the sequence of variances.
\end{proof}

Given the result above, one immediately identifies the expectation of $\field{e^{i\alpha\phi}}$ as the \emph{only} problematic factor when integrating along the boundary. In other words, vanishing expectation implies vanishing second moment for all of our fields. While the next result holds for any $\alpha\in(0, \sqrt{2})$, we prefer to give a succinct proof using a trick that will be exploited extensively in later proofs, yet relies on the domain monotonicity of $\alpha\in(0,1]$.

\begin{prop}\label{prop:bdy-expect}
    Let $\alpha\in(0,1]$. Let $D$ be a bounded, simply connected domain. Fix any $A\subseteq \partial D$ to be a (deterministic) closed subset of the boundary such that $\dim_M(A)<2-\alpha^2/2$. 
    For any continuous, bounded  function $f:\C\to\R$,
    \begin{equation}\label{eq:expec-IGMC}
        \lim_{n\to\infty}\E[(\field{e^{i\alpha\phi}}, f\1_{[A]_n})]=0.
    \end{equation}
    As a consequence,
    \begin{equation}\label{eq:L2-IGMC}
        \lim_{n\to\infty}(\field{e^{i\alpha\phi}}, f\1_{[A]_n})=0,
    \end{equation}
    where the limit is almost sure and in $L^2(\P)$.
\end{prop}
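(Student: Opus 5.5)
The plan has two parts: first prove the vanishing of the expectation \eqref{eq:expec-IGMC}, then upgrade it to \eqref{eq:L2-IGMC} by controlling second moments. Both parts use the decomposition $\field{e^{i\alpha\phi}} = \field{\cos(\alpha\phi)} + i\field{\sin(\alpha\phi)}$ together with Proposition \ref{prop:bdy-sin}, which already handles everything except the mean.

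\textbf{The expectation.} By \eqref{eq:int-blowup}, which extends to test functions $f\1_{[A]_n}$ by monotone approximation once we show finiteness,
\begin{equation*}
\big|\E[(\field{e^{i\alpha\phi}}, f\1_{[A]_n})]\big| \leq \|f\|_\infty\int_{[A]_n\cap D}\CR(z,D)^{-\alpha^2/2}\,dz .
\end{equation*}
Koebe's quarter theorem gives $\CR(z,D)\geq d(z,\partial D)$, so it suffices to bound $\int_{[A]_n\cap D} d(z,\partial D)^{-\alpha^2/2}dz$. Two cases arise.
\begin{itemize}
\item For points $z\in[A]_n$ with $d(z,\partial D)\geq d(z,A)$, the integrand is at most $d(z,A)^{-\alpha^2/2}$.
\item For points closer to $\partial D\setminus A$, I would rather avoid this case altogether.
\end{itemize}
The cleaner route is the "trick" mentioned before the statement: use domain monotonicity of the one-point function in the form $\CR(z,D)\geq \CR(z,D')$, where $D'\subset D$ is a smaller domain, or equivalently compare with a domain whose boundary near $[A]_n$ is essentially $A$.

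Concretely, I would bound the integral by a layer-cake decomposition over the Whitney-type shells $\{z\in[A]_n: 2^{-k-1}\leq d(z,\partial D)<2^{-k}\}$. This leaves the shells near $\partial D\setminus A$ inside $[A]_n$. Those points are within $\sqrt2\, 2^{-n}$ of $A$, so every point of $[A]_n$ has $d(z,A)\lesssim 2^{-n}$. The points of $[A]_n$ at distance $\le r$ from $\partial D$ must then be covered by boxes around $A$, and their area is bounded by $\Leb(A^{r})$ with $r\geq 2^{-n}$.

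Choosing $\beta$ with $\dim_M(A)<\beta<2-\alpha^2/2$, the Minkowski bound gives $\Leb(\{z: d(z,A)\le r\})\lesssim r^{2-\beta}$. Summing shells gives
\begin{equation*}
\sum_{k\ge n-1} 2^{k\alpha^2/2}\,2^{-k(2-\beta)} + 2^{n\alpha^2/2}\Leb([A]_n) \lesssim 2^{-n(2-\beta-\alpha^2/2)} \to 0 .
\end{equation*}
The main obstacle is precisely this geometric step: shells where $d(z,\partial D)$ is small while $d(z,A)\sim 2^{-n}$. I expect to bound the area of $\{z\in[A]_n : d(z,\partial D)<r\}$ by $\min(\Leb([A]_n), C r^{2-\beta})$. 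Getting the second bound from the structure of $\partial D$ near $A$ might require dimension information on $\partial D$ rather than only on $A$. If it fails, I would instead compare with the complement of $A$ itself: apply the monotonicity $\CR(z,D)\ge \CR(z,\C\setminus \hat A)$ locally, which is only available where $\partial D$ coincides with $A$.

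\textbf{Upgrading to second moments.} Write $\field{\cos(\alpha\phi)}=\field{\barcos(\alpha\phi)}+\langle\field{\cos(\alpha\phi)}\rangle$. By Proposition \ref{prop:bdy-sin} (with $u=0$ and $\Leb(A)=0$, which follows from $\dim_M(A)<2$), both $(\field{\sin(\alpha\phi)},f\1_{[A]_n})$ and $(\field{\barcos(\alpha\phi)},f\1_{[A]_n})$ tend to $0$ almost surely and in $L^2$. The remaining term is deterministic and equals the real part of the expectation just shown to vanish.

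For almost sure convergence, the variance bound \eqref{eq:sin-mon-leb} is summable because $\Leb([A]_n)$ decays geometrically under the Minkowski assumption. The deterministic part converges since it is a limit of numbers tending to zero. Hence \eqref{eq:L2-IGMC} follows.
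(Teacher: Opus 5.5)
\textbf{Verdict: there is a genuine gap, in the bound on the mean.} Your upgrade from the mean to \eqref{eq:L2-IGMC} matches the paper: you add and subtract $\langle\field{\cos(\alpha\phi)}\rangle$ and invoke Proposition~\ref{prop:bdy-sin} for $\field{\sin(\alpha\phi)}$ and $\field{\barcos(\alpha\phi)}$. That is also the ``trick relying on domain monotonicity'' announced before the statement. The monotonicity there is that of the sine two-point function (Theorem~\ref{thm:domain-mon}), not that of $\CR$.

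The gap sits exactly at the step you flag, and none of your fallbacks closes it.
\begin{itemize}
\item The inequality $\CR(z,D)\ge\CR(z,D')$ you quote holds only for $D'\subseteq D$. Then $d(z,\partial D')\le d(z,\partial D)$, so $D'$ is no nicer than $D$ near $\partial D\setminus A$.
\item A domain whose boundary near $[A]_n$ is essentially $A$ would have to contain $D$ locally, which gives the opposite inequality. For the same reason, $\CR(z,D)\ge\CR(z,\C\setminus\hat A)$ goes the wrong way.
\item Your inclusion of $\{z\in[A]_n: d(z,\partial D)\le r\}$ in the $r$-neighbourhood of $A$ is false.
\end{itemize}
More fundamentally, the mean is not controlled by $\dim_M(A)$ at all. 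By Koebe, $d(z,\partial D)\le\CR(z,D)\le 4\,d(z,\partial D)$, so for $f\equiv1$ the mean is comparable to $\int_{[A]_n\cap D}d(z,\partial D)^{-\alpha^2/2}\,dz$. Take $A=\{x\}$, where $x$ is a boundary point of a von Koch-type snowflake domain whose boundary has dimension $d>2-\alpha^2/2$. Then $[A]_n$ is a neighbourhood of $x$, and $\Leb\{z\in D\cap[A]_n: d(z,\partial D)<r\}\gtrsim_n r^{2-d}$. So this integral is $+\infty$ for every $n$, even though $\dim_M(A)=0$. Under the stated hypothesis alone, the first claim cannot be proved.

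\textbf{What is missing is a hypothesis on $\partial D$ near $A$.} For example, require $\dim_M(\partial D\cap U)<2-\alpha^2/2$ for some neighbourhood $U$ of $A$. This holds when $A=\partial D$, and for valid domains (Definition~\ref{def:valid}), which is the setting in which the proposition is used later. The paper's proof uses such a hypothesis tacitly. Its bound $d(z,\partial D)^{-\alpha^2/2}\lesssim\epsilon_k^{-\alpha^2/2}$ on $[A]_k\setminus[A]_{k+1}$ presumes $d(z,\partial D)\gtrsim 2^{-k}$ there, i.e.\ that near $A$ the boundary $\partial D$ is no closer to $z$ than $A$ is.

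Under that hypothesis, your own shells in $d(z,\partial D)$ finish the argument. They are also cleaner than the paper's dyadic layers, on which $d(z,A)$ has no pointwise lower bound. The steps are:
\begin{itemize}
\item On $[A]_n$ one has $d(z,\partial D)\le d(z,A)\le\sqrt2\,2^{-n}$, so only the shells with $k\ge n-1$ contribute.
\item Fix $\beta\in(\dim_M(\partial D\cap U),2-\alpha^2/2)$. Each shell then has area $\lesssim 2^{-k(2-\beta)}$.
\item Summing over $k\ge n-1$ gives a bound $\lesssim 2^{-n(2-\beta-\alpha^2/2)}\to0$.
\end{itemize}
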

\begin{proof}
    We have that 
    \begin{align}
        \E[(\field{e^{i\alpha\phi}}, f\1_{[A]_n})] = \int_{[A]_n}\CR(z, D)^{-\alpha^2/2}f(z)dz \leq \norm{f}_\infty\int_{[A]_n} d(z, \partial D)^{-\alpha^2/2}dz.
    \end{align}
    Our assumption on the dimension of $A$ now guarantees that this integral vanishes in the limit (i.e. we have ensured that there are not too many boxes to sum over). To be precise, letting $\epsilon_n=2^{-n}$, we see that
    \begin{align}
        \int_{[A]_n} d(z, \partial D)^{-\alpha^2/2}dz &\leq \sum_{k=n}^\infty\int_{[A]_k\setminus[A]_{k+1}}d(z,\partial D)^{-\alpha^2/2}dz\\
        &\lesssim \sum_{k=n}^\infty\int_{[A]_k\setminus[A]_{k+1}}\epsilon_k^{-\alpha^2/2}dz\\
        &\lesssim\sum_{k=n}^\infty\epsilon_k^{-\alpha^2/2}\epsilon_k^{2-\dim_M(A)}\\
        &\lesssim \epsilon_n^{2-\alpha^2/2-\dim_M(A)}\longrightarrow0.
    \end{align}
    To prove \eqref{eq:L2-IGMC}, we add and subtract the expectation to get 
    \begin{align}
        &\E[|(\field{e^{i\alpha\phi}}, f\1_{[A]_n})|^2] = \E[(\field{\barcos(\alpha\phi)}, f\1_{[A]_n})^2] + \E[(\field{\sin(\alpha\phi)}, f\1_{[A]_n})^2]        + \E[(\field{e^{i\alpha\phi}}, f\1_{[A]_n})]^2,
    \end{align}
    thus the claim follows from Proposition \ref{prop:bdy-sin} and \eqref{eq:expec-IGMC}. 
\end{proof}

\begin{rem}
    Almost sure convergence is a consequence of the choice of a dyadic approximation. For general approximation schemes, one can only hope for convergence in probability. See \cite{thin-GFF}.
\end{rem}

Considering only the second moments above is sufficient for the proofs of our continuum statements Theorem \ref{thm:cont-XOR}-\ref{thm:cont-general}. However, we require control of all higher moments, particularly inside fractal domains, to prove the convergence statement Theorem \ref{thm:discrete-conv} in Section \ref{sec:conv}. This is precisely the content of the next two results. The proofs consist simply on revisiting the statements in \cite{IGMC} without the assumption that the test function is compactly supported inside the domain. 

\begin{prop}\label{prop:onsager}
    Let $D$ be a bounded, simply connected domain. For any continuous, bounded  function $f:\C\to\R$,
    \begin{equation}\label{eq:onsager}
        \E[|(\field{e^{i\alpha\phi}}, f)|^{2k}]\leq\norm{f}_\infty^{2k}C^{2k}\int_{D^{2k}}\exp\left( \frac{\alpha^2 }{2 }\sum_{n=1}^{2k}\log(1/r(z_n))\right) dz_1\cdots dz_{2k},
    \end{equation}
    where 
    \begin{equation}\label{eq:min-onsager}
        r(z_n) := \frac{1}{2}(\min_{m\neq n}|z_n-z_m|\wedge d(z_n, \partial D)),
    \end{equation}
    and $C<\infty$ is some constant that only depends on the domain $D$.
\end{prop}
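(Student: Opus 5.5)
This is the Onsager-type moment bound of \cite{IGMC}, re-run without assuming $\operatorname{supp} f\Subset D$. We first regularise. We bound $\E[|(\epsilon^{-\alpha^2/2}e^{i\alpha\phi_\epsilon},f)|^{2k}]$ uniformly in $\epsilon$, and then pass to the limit by Fatou (or by $L^{2k}$ convergence). Writing $|X|^{2k}=X^k\bar X^k$ and using Fubini, we get
\begin{equation*}
\E\big[|(\epsilon^{-\alpha^2/2}e^{i\alpha\phi_\epsilon},f)|^{2k}\big]\le \norm{f}_\infty^{2k}\int_{D^{2k}}\Big|\E\Big[\prod_{n=1}^{2k}\epsilon^{-\alpha^2/2}e^{i\alpha\sigma_n\phi_\epsilon(z_n)}\Big]\Big|\,dz_1\cdots dz_{2k},
\end{equation*}
where $\sigma_n=+1$ for $n\le k$ and $\sigma_n=-1$ otherwise. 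The Gaussian computation gives the integrand exactly:
\begin{equation*}
\exp\Big(-\tfrac{\alpha^2}{2}\sum_n\big(\Var\phi_\epsilon(z_n)+\log\epsilon\big)-\alpha^2\sum_{n<m}\sigma_n\sigma_m\E[\phi_\epsilon(z_n)\phi_\epsilon(z_m)]\Big).
\end{equation*}
This equals $\exp\big(-\tfrac{\alpha^2}{2}\E[(\sum_n\sigma_n\phi_\epsilon(z_n))^2]\big)\epsilon^{-k\alpha^2}$. It is nonnegative and has no oscillation, so the absolute value is harmless.

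The core step is the electrostatic (Onsager) inequality for the charge configuration $(\sigma_n,z_n)$. Fix $\epsilon$. For each $n$ set $r_n:=r(z_n)\vee\epsilon$, say. The key point is that the covariance of $\phi_\epsilon$ is dominated by that of circle averages at the larger scale $r_n$. We write
\begin{equation*}
\sum_n\sigma_n\phi_\epsilon(z_n)=\sum_n\sigma_n\phi_{r_n}(z_n)+\sum_n\sigma_n\big(\phi_\epsilon(z_n)-\phi_{r_n}(z_n)\big).
\end{equation*}
The balls $B(z_n,r_n)$ are disjoint and contained in $D$ by the definition of $r$, so the increments $\phi_\epsilon(z_n)-\phi_{r_n}(z_n)$ are independent of each other and of the family $(\phi_{r_m}(z_m))_m$, by the Markov property on disjoint balls. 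Each increment has variance $\log(r_n/\epsilon)$ when $\epsilon\le r_n$. Dropping the nonnegative variance of $\sum_n\sigma_n\phi_{r_n}(z_n)$, we obtain
\begin{equation*}
\tfrac{\alpha^2}{2}\E\Big[\Big(\sum_n\sigma_n\phi_\epsilon(z_n)\Big)^2\Big]\ge\tfrac{\alpha^2}{2}\sum_n\log(r_n/\epsilon).
\end{equation*}
Multiplying by $\epsilon^{-k\alpha^2}=\prod_n\epsilon^{-\alpha^2/2}$ gives an integrand at most $\prod_n r_n^{-\alpha^2/2}\le\prod_n r(z_n)^{-\alpha^2/2}$, which is exactly the exponential in \eqref{eq:onsager}.

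Two loose ends remain, and they are the main obstacle. The first is near the boundary. When $d(z_n,\partial D)$ is comparable to $\epsilon$, the circle $\partial B(z_n,\epsilon)$ leaves $D$, and the variance formula becomes only an inequality. I would handle this by replacing $\phi_\epsilon$ with the regularisation that is zero outside $D$, noting that its variance is at most $-\log\epsilon+\log\CR(z,D)$ up to constants. Alternatively, one can use the normalisation $\epsilon^{-\alpha^2/2}$ together with the bound $\CR(z,D)\le 4d(z,\partial D)\vee\operatorname{diam}D$. These constant factors are what get absorbed into $C^{2k}$, and $C$ depends on $D$ through $\operatorname{diam} D$. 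The second is the case $r(z_n)<\epsilon$. There the points are within $O(\epsilon)$ of each other or of the boundary, and I would bound the integrand crudely by $\epsilon^{-\alpha^2/2}\lesssim r(z_n)^{-\alpha^2/2}$ after noting that the covariance term is bounded as before. With this uniform bound in hand, Fatou's lemma transfers it to the limit field. If the right-hand side is finite, $L^{2k}$ convergence of the approximations on possibly boundary-touching $f$ follows by the same computation applied to differences.
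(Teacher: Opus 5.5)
Your argument is correct in substance. It follows the same two-step scheme as the paper: an Onsager-type electrostatic bound, then the explicit Gaussian computation of the $2k$-point kernel. The difference is in how you obtain the electrostatic bound. The paper revisits \cite[Proposition 3.6]{IGMC}, which is proved for general log-correlated fields under a compact-support assumption. It asserts that one still has $-\sum_{n<m}q_nq_mG_D(z_n,z_m)\le\frac12\sum_n\big(\log(1/r(z_n))+\log\CR(z_n,D)\big)+Ck$ for points up to the boundary. You instead derive it from the domain Markov property on the disjoint balls $B(z_n,r(z_n))\subset D$, which is Newton's theorem in probabilistic form. For any $\epsilon<\min_n r(z_n)$, expand $\Var\big(\sum_n\sigma_n\phi_\epsilon(z_n)\big)\ge\sum_n\log(r(z_n)/\epsilon)$ using the exact circle-average covariances. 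This gives precisely that inequality with $C=0$, hence the proposition with $C=1$. Your route is self-contained and specific to the Dirichlet GFF. It makes transparent why $d(z_n,\partial D)$ enters $r$: the Markov balls must fit inside $D$. It also works at the regularised level. The citation route is more robust, since it covers any log-correlated covariance, but it leaves the boundary adaptation to the reader.

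The loose ends are milder than you suggest.
\begin{itemize}
\item Your claim that the balls $B(z_n,r(z_n)\vee\epsilon)$ are disjoint and inside $D$ does fail when some $r(z_n)<\epsilon$. The fix is to attach balls only to the good indices, those with $r(z_n)\ge\epsilon$. For good $n$ and any $m\neq n$ one has $|z_m-z_n|\ge 2r(z_n)\ge r(z_n)+\epsilon$. So every bad circle $\partial B(z_m,\epsilon)$, with $\phi$ extended by zero outside $D$, stays outside the good balls. Its average is therefore a function of the field outside them, and it is independent of the good increments. This gives $\Var\big(\sum_n\sigma_n\phi_\epsilon(z_n)\big)\ge\sum_{n\text{ good}}\log(r(z_n)/\epsilon)$. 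Then $\epsilon^{-\alpha^2/2}\le r(z_m)^{-\alpha^2/2}$ for bad $m$ closes the bound with $C=1$, uniformly in $\epsilon$.
\item There is no separate boundary issue, since good points satisfy $d(z_n,\partial D)\ge 2\epsilon$. Your proposed boundary fix, an upper bound on $\Var\phi_\epsilon(z)$ via Koebe, points the wrong way: what you need is a lower bound on the variance of the sum.
\item Your ``differences'' step for $f$ touching $\partial D$ involves kernels with mixed scales $\epsilon,\epsilon'$, which the single-scale estimate does not directly control. It is simpler to use the pointwise kernel bound at distinct points. Then define the pairing as the $L^{2k}$-limit of $(\field{e^{i\alpha\phi}},f\chi_j)$ for compactly supported cutoffs $\chi_j\uparrow\1_D$, which exists by dominated convergence.
\end{itemize}
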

\begin{proof}
    Revisiting {\cite[Proposition 3.6]{IGMC}} without the assumption of compact support, one obtains that for any $k\geq1$, any $q_1\ldots,q_k\in\{\pm1\}$, and any $z_1,\ldots,z_k\in D$,
    \begin{equation}
        -\sum_{1\leq n,m\leq k}q_nq_mG(z_n, z_m) \leq \frac{1}{2}\sum_{n=1}^k\left(\log(1/r(z_n))+\log\CR(z_n, D)\right) + Ck,
    \end{equation}
    for some constant $C<\infty$ that only depends on $D$. A direct Gaussian computation as in {\cite[Theorem 3.12]{IGMC}}, yet minding our different normalization, gives the claim. 
\end{proof}

As explained in \cite{IGMC}, the main purpose of Proposition \ref{prop:onsager} is to argue that the moments of ${(\field{e^{i\alpha\phi}}, f)}$ uniquely characterize its law. To this end, it suffices to show that the right-hand  side of \eqref{eq:onsager} grows slow enough in $k$, which remains true without the compactness assumption on the support, provided the dimension of the boundary meets the usual requirement.

\begin{prop}\label{prop:unique}
    Let $\alpha\in(0, \sqrt{2})$. Let $D$ be a bounded, simply connected domain such that $\dim_M(D)<2-\alpha^2/2$. The moments $$\E[(\field{e^{i\alpha\phi}}, f)^n(\field{e^{-i\alpha\phi}}, f)^m]$$ exist for all $n,m\geq0$ and determine the distribution of the field as a random linear functional acting on continuous, bounded functions in $\C$.
\end{prop}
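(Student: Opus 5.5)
The plan is the classical Carleman/Stieltjes route, as in \cite[Theorem 3.12]{IGMC}, with Proposition \ref{prop:onsager} replacing the compactly supported Onsager bound and the dimension hypothesis controlling the boundary contribution. I read $\dim_M(D)$ as $\dim_M(\partial D)$, which is clearly what is intended.

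\textbf{Step 1: existence of the moments.} Each mixed moment $\E[(\field{e^{i\alpha\phi}}, f)^n(\field{e^{-i\alpha\phi}}, f)^m]$ is bounded by $\E[|(\field{e^{i\alpha\phi}}, f)|^{n+m}]$. By Hölder this is at most an even moment, so it suffices to show that the right-hand side of \eqref{eq:onsager} is finite. Using $r(z_n)\geq \frac12(\min_{m\neq n}|z_n-z_m|)\wedge \frac12 d(z_n,\partial D)$, I bound
\[
r(z_n)^{-\alpha^2/2}\leq 2^{\alpha^2/2}\Big(\min_{m\neq n}|z_n-z_m|^{-\alpha^2/2}+d(z_n,\partial D)^{-\alpha^2/2}\Big).
\]
Expanding the product over $n$ gives $2^{2k}$ terms. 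In each term a subset of the points carries boundary weights and the rest carry nearest-neighbour weights.

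The boundary weights are integrable: $\int_D d(z,\partial D)^{-\alpha^2/2}dz<\infty$. This follows from the dyadic-shell computation in the proof of Proposition \ref{prop:bdy-expect}, applied to $A=\partial D$ and using $\dim_M(\partial D)<2-\alpha^2/2$.

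The nearest-neighbour weights are handled exactly as in \cite{IGMC}. Assign each point to its nearest neighbour, which gives a functional digraph, and integrate out points along the resulting forest. Each integration costs at most $\sup_w\int_D|z-w|^{-\alpha^2/2}dz<\infty$, since $\alpha^2/2<1$.

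To treat mixed terms, I would first integrate the nearest-neighbour variables with the boundary-weighted variables frozen. The remaining boundary integrals then factor.

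\textbf{Step 2: growth of the moments.} The key is to track the $k$-dependence. Following \cite[Theorem 3.12]{IGMC}, the nearest-neighbour part contributes a combinatorial factor of order $(2k)!^{\alpha^2/4}$ times $C^{2k}$. This is obtained by splitting configurations according to the scale of the minimal distances and using the Onsager-type counting: the integral of $\prod_n \min_{m\neq n}|z_n-z_m|^{-\alpha^2/2}$ over $D^{2k}$ is at most $C^{2k}(2k)^{\alpha^2 k/2}$. The boundary part contributes only $C^{2k}$, because these integrals factor and there are at most $2^{2k}$ such terms. The result is
\[
\E[|(\field{e^{i\alpha\phi}},f)|^{2k}]\leq \norm{f}_\infty^{2k}C^{2k}(2k)^{\alpha^2k/2}.
\]
Since $\alpha^2/2<1$, this growth is subfactorial (and is below the $(2k)!$ threshold for the real and imaginary parts). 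Carleman's condition $\sum_k M_{2k}^{-1/(2k)}=\infty$ is therefore satisfied. Consequently the joint law of the real and imaginary parts, which are a real pair whose moments are linear combinations of the given mixed moments, is determined by those moments. This uses the multivariate Carleman criterion, or equivalently analyticity of the characteristic function.

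\textbf{Step 3: from one test function to the field.} By linearity, the same bound holds for any finite linear combination $\sum_j t_j f_j$, with constants depending on $\sup\|f_j\|$. The Cramér–Wold device then identifies all finite-dimensional distributions of the linear functional on continuous bounded functions, and these determine its law.

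\textbf{Main obstacle.} The delicate step is Step 2 in the mixed regime, where some points are close to $\partial D$ and close to each other simultaneously. There the minimum in $r(z_n)$ genuinely couples the two scales. My first attempt would be to keep the crude splitting above, check that the nearest-neighbour graph restricted to the non-boundary-weighted points still yields the \cite{IGMC} counting bound, and verify that frozen variables do not spoil the forest integration. The forest integration only needs uniform bounds in the frozen variables, so I expect it to go through.
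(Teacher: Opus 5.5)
Your proposal is correct and follows the same route as the paper's (much terser) proof. Both reduce to even moments, split the Onsager integral of Proposition \ref{prop:onsager} according to whether the minimum in $r(z_n)$ is the nearest-neighbour distance or the boundary distance, and treat the boundary weights by the dyadic-shell/Minkowski computation of Proposition \ref{prop:bdy-expect} and the bulk as in \cite{IGMC}, before concluding by moment determinacy.

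The mixed-regime obstacle you flag is milder than you fear. The crude frozen-variable integration along the nearest-neighbour digraph already gives $M_{2k}\le (Ck)^{2k}$, which satisfies Carleman's condition, so the refined \cite{IGMC} counting is not needed there. The mutual-nearest-neighbour $2$-cycles, which you did not mention, cost $\sup_w\int_D|z-w|^{-\alpha^2}dz<\infty$ because $\alpha<\sqrt{2}$.
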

\begin{proof}
     As in {\cite[Lemma A.2]{IGMC}}, the moments above are bounded by $\E[|(\field{e^{i\alpha\phi}}, f)|^{2N}]$ for $N=n\wedge m$. In the bulk, the necessary bounds thereafter are in {\cite[Lemma 3.10]{IGMC}}. Here, we split the integral \eqref{eq:onsager} into different regions depending on the minimum attained in \eqref{eq:min-onsager}, and argue similarly to the expectation bound in Proposition \ref{prop:bdy-expect}.
\end{proof}

\subsection{Thinness of subcritical TVS}\label{sec:thin-TVS}

Let us now explain how to prove that small enough two-valued sets are thin for $\field{e^{i\alpha\phi}}$. Since we have already identified the ``correct'' assumptions on the dimension of the domain boundary $\partial D$, we only consider \emph{valid} domains, as per the following definition, for the remainder of the paper.

\begin{defn}\label{def:valid}
    A bounded, simply connected domain $D$ is \emph{valid} if
    \begin{enumerate}[label=(\roman*)]
        \item $\dim_M(\partial D)<2-\alpha^2/2$, when considering $\field{e^{i\alpha\phi}}$ or $\cos(\alpha\phi)$;
        \item $\Leb(\partial D)=0$, when considering $\field{\sin(\alpha\phi)}$.
    \end{enumerate}
\end{defn}

The following result is implicit in {\cite[Proposition 3.1]{dim-TVS}}, we write the proof here for completeness. Observe that we must now restrict ourselves to $\alpha\in(0,1)$.

\begin{prop}\label{prop:thin-chaos}
    Let $\alpha\in(0,1)$. For any $a<a_c(\alpha)$, let $\A_{-a,a}$ be a subcritical two-valued set in a valid domain $D$. For any continuous, bounded  function $f:\C\to\R$, 
    \begin{align}
    \lim_{n\to \infty} (\field{e^{i\alpha \phi}}, f\1_{[\A_{-a, a}]_n}) =0, 
    \end{align}
    where the limit is in probability. 
\end{prop}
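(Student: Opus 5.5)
We condition on $\F_{\A}$ with $\A=\A_{-a,a}$ and use the local set property. Conditionally on $\F_\A$, the field splits as $\phi=h_\A+\phi^\A$, where $\phi^\A$ is a union of independent GFFs in the loops $\mathcal O(\ell)$ and $h_\A=c(\ell)a$ is constant on each loop. So, off $\A$, the chaos should factor as
\begin{equation}
\field{e^{i\alpha\phi}}\big|_{D\setminus\A}=\sum_{\ell\in\La_{-a,a}}e^{i\alpha c(\ell)a}\,\field{e^{i\alpha\phi^{\ell}}}.
\end{equation}
The first step is to justify this identity rigorously, together with the fact that the right-hand side is the full chaos. To do so, approximate by circle averages and note that for $z$ at distance $\gg\eps$ from $\A$ we have $\phi_\eps(z)=c(\ell)a+\phi^\A_\eps(z)$. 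The renormalisation $\eps^{-\alpha^2/2}$ is then the same in both domains, which is consistent with the normalisation convention of \eqref{eq:IGMC-def}. Write $X_n:=(\field{e^{i\alpha\phi}},f\1_{[\A]_n})$. We show $\E[|X_n|^2]\to0$, which gives convergence in probability (and even in $L^2$).

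The natural route is to compute the second moment of $X_n$ as the difference between the total field tested against $f$ and the field tested against $f\1_{D\setminus[\A]_n}$. Equivalently, we bound
\begin{equation}
\E\big[|(\field{e^{i\alpha\phi}},f)-(\field{e^{i\alpha\phi}},f\1_{D\setminus[\A]_n})|^2\big].
\end{equation}
By the local decomposition, the second term is $\sum_\ell e^{i\alpha c(\ell)a}(\field{e^{i\alpha\phi^\ell}},f\1_{\mathcal O(\ell)\setminus[\A]_n})$. Its conditional expectation given $\F_\A$ is
\begin{equation}
\int_{D\setminus[\A]_n}e^{i\alpha h_\A(z)}\CR(z,D\setminus\A)^{-\alpha^2/2}f(z)\,dz,
\end{equation}
which converges to the full conditional expectation of Proposition \ref{prop:subcrit-CE} as $n\to\infty$. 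The convergence holds by dominated convergence, since subcriticality $\alpha<\alpha_c(a)$ gives $\E\int_D\CR(z,D\setminus\A)^{-\alpha^2/2}dz<\infty$, which is the content behind Proposition \ref{prop:subcrit-CE}.

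It remains to control the conditional fluctuations. Given $\F_\A$, the fluctuation of the second term is a sum of independent centered terms over loops. Its conditional variance is bounded by the truncated parts of $\field{\cos}$ and $\field{\sin}$ in each loop. By Proposition \ref{prop:funct-ineq}\ref{prop-2}–\ref{prop-3} these are dominated by the sine two-point function. By Theorem \ref{thm:domain-mon}(ii), valid for $\alpha\le1$, the sine two-point function is in turn dominated by its full-plane counterpart $\sinh(\alpha^2G_\C)\lesssim|z-w|^{-\alpha^2}$, which is locally integrable. So $L^2$ convergence of the restricted fields to the full field follows as in Proposition \ref{prop:bdy-sin}, with $\Leb([\A]_n)\to0$ because $\Leb(\A)=0$ (Proposition \ref{prop:TVS-one-point}). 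A second mechanism enters here as well. The total field minus its conditional expectation is the conditional fluctuation, and these must match. This is exactly the identity $\E[(\field{e^{i\alpha\phi}},f)\mid\F_\A]=$ the limit above, so the parts of $X_n$ coming from conditional means vanish in $L^1$.

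The main obstacle is justifying that there is no hidden mass on $\A$. Equivalently, we must show that the total second moment equals the second moment of the loop-wise sum. I would do this by computing $\E|(\field{e^{i\alpha\phi}},f)|^2$ directly and $\E|\sum_\ell\cdots|^2$ via conditioning. The latter is $\E\big[|\int e^{i\alpha h_\A}\CR^{-\alpha^2/2}f|^2\big]$ plus the expected conditional variances. The aim is to check that these agree. This requires matching the two-point kernels $\E[e^{i\alpha(h_\A(z)\mp h_\A(w))}\CR(z,D\setminus\A)^{-\alpha^2/2}\CR(w,D\setminus\A)^{-\alpha^2/2}e^{\pm\alpha^2G_{D\setminus\A}(z,w)}]$ with $\langle\field{e^{i\alpha\phi(z)}}\field{e^{\pm i\alpha\phi(w)}}\rangle$ pointwise for $z\neq w$. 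That pointwise equality follows from the Markov property applied to the regularised fields, via $\eps\to0$ with uniform integrability supplied by subcriticality. Once the kernels match, $\E|X_n|^2$ is a double integral over $[\A]_n\times D$ of an integrable kernel, which tends to $0$.
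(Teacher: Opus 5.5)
There is a genuine gap at exactly the step you flag as the main obstacle. Your earlier steps are fine: the factorisation off $\A$, the conditional means via Proposition~\ref{prop:subcrit-CE}, and the fluctuations via Proposition~\ref{prop:funct-ineq} and Theorem~\ref{thm:domain-mon}.

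Write $Y=(\field{e^{i\alpha\phi}},f)$ and $Z=\sum_{\ell}e^{i\alpha c(\ell)a}(\field{e^{i\alpha\phi^{\ell}}},f\1_{\mathcal O(\ell)})$. Showing $\E|Y|^2=\E|Z|^2$ is \emph{not} equivalent to $Y=Z$. Given that equality, $\E|Y-Z|^2=-2\Re\E[(Y-Z)\bar Z]$, so you also need the putative mass $Y-Z$ on $\A$ to be orthogonal to $Z$. You only know $\E[Y-Z\mid\F_\A]=0$. That gives orthogonality to $\E[Z\mid\F_\A]$, but says nothing about the conditional fluctuation $Z-\E[Z\mid\F_\A]$, and you never compute the cross term $\E[Y\bar Z]$.

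Your last sentence implicitly uses either $Y=Z$, which is what is unproved, or the kernel identity applied to the random test function $f\1_{[\A]_n}$. An unconditional identity $\langle\field{e^{i\alpha\phi(z)}}\field{e^{-i\alpha\phi(w)}}\rangle=\E[K_\A(z,w)]$, with $K_\A$ the renewed kernel, does not give the latter. Moreover, dominating the $\eps\to0$ limit in that unconditional identity needs two-point bounds such as $\E[\CR(z,D\setminus\A)^{-\alpha^2/2}\CR(w,D\setminus\A)^{-\alpha^2/2}]<\infty$. The one-point subcriticality bound yields these, via Cauchy--Schwarz, only for $\alpha<\alpha_c/\sqrt2$.

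What is missing is the \emph{conditional} version of the kernel identity: for every bounded $\F_\A$-measurable $g$,
\begin{equation*}
\E\big[|(\field{e^{i\alpha\phi}},g)|^2\mid\F_\A\big]=\int_{D\setminus\A}\int_{D\setminus\A}e^{i\alpha(h_\A(z)-h_\A(w))}\langle\field{e^{i\alpha\phi^\A(z)}}\field{e^{-i\alpha\phi^\A(w)}}\rangle\, g(z)g(w)\,dz\,dw .
\end{equation*}
This is \eqref{eq:exp-2nd-mom}, which the paper imports from \cite[Proposition 3.1]{dim-TVS}; it \emph{is} the no-hidden-mass statement. Your circle-average Markov computation proves it if you keep it conditional on $\F_\A$ instead of averaging over $\A$. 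The part of $(\eps^{-\alpha^2/2}e^{i\alpha\phi_\eps},g)$ coming from $\{d(\cdot,\A)\le 2\eps\}$ has conditional second moment at most $\norm{g}_\infty^2\,\eps^{-\alpha^2}\Leb(\{d(\cdot,\A)\le2\eps\})^2$. This tends to $0$ almost surely, since $\dim_M(\A)\le 2-\alpha_c^2/2$ (Proposition~\ref{prop:TVS-one-point}), $\alpha<\alpha_c$, and $D$ is valid. Elsewhere the regularised kernel equals the renewed one up to a negligible near-diagonal error, and cross terms go by Cauchy--Schwarz. Everything is pathwise, so no two-point moment bounds are needed.

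With this formula the proof is the paper's. Apply it to $g=f\1_{[\A]_n}$. The fluctuation part is $\lesssim\Leb([\A]_n)^{2-\alpha^2/2}$ by Theorem~\ref{thm:domain-mon}. The squared conditional mean is at most $\big(\norm{f}_\infty\int_{[\A]_n}d(z,\A\cup\partial D)^{-\alpha^2/2}dz\big)^2$, which tends to $0$ a.s.\ as in Proposition~\ref{prop:bdy-expect}. Hence $\E[|X_n|^2\mid\F_\A]\to0$ in probability, which is all the statement requires. Your argument as written does not deliver the stronger $\E|X_n|^2\to0$ you announce, since your mean parts only vanish in $L^1$.
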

\begin{proof}
We write $\A\equiv\A_{-a,a}$. It suffices to show that\footnote{For any sequence $X_n\geq0$ of random variables, one can easily check that if $\E[X_n\mid\F]\to0$ in probability for some $\sigma$-algebra $\F$, then $X_n\to0$ in probability.} 
\begin{equation}
    \E[|(\field{e^{i\alpha\phi}}, f\1_{[\A]_n})|^2\mid\F_\A] \longrightarrow0
\end{equation}
in probability as $n\to\infty$. The direct computation in {\cite[Proposition 3.1]{dim-TVS}} yields that
\begin{align}\label{eq:exp-2nd-mom}
    \E[&|(\field{e^{i\alpha\phi}}, g)|^2\mid\F_\A]=\int_{\D\setminus\A}\int_{\D\setminus\A}e^{i\alpha(h_\A(z)-h_\A(w))}\langle\field{e^{i\alpha\phi^\A(z)}}\field{e^{-i\alpha\phi^\A(w)}}\rangle g(z)g(w)dzdw, 
\end{align}
for any continuous, bounded function $g:\C\to\R$. Moreover, this result can easily be seen to hold whenever $g$ is a measurable function of $\F_\A$, and thus we can apply it to $g=f\1_{[\A]_n}$. The claim follows as in Proposition \ref{prop:bdy-expect}, recalling Proposition \ref{prop:TVS-one-point}.
\end{proof}

\begin{rem}
An alternative proof of thinness can be obtained by following the same argument as the one given in \cite{thin-GFF} for the GFF. Indeed, it suffices to control the size of the field in each dyadic box, which can be done using {\cite[Theorem 1.4]{IGMC}} to obtain exponential tails for the mass in each box. See
{\cite[Corollary A.2]{plasma}} and {\cite[Appendix B]{noise-IGMC}} for the derivation of such tails in the full plane case.  
\end{rem}

\begin{rem}
    Any deterministic closed set $A\subseteq D$ with $\Leb(A)=0$ is a.s. thin for $\field{e^{i\alpha \phi}}$. Indeed, this follows from the same argument in the proof of Proposition \ref{prop:bdy-expect}, using the comparison in Proposition \ref{prop:funct-ineq}\ref{prop-2} and that the conformal radius is bounded away from zero on $A$. \end{rem}
    
\subsection{Strong thinness for $\A_{-2\lambda,2\lambda}$}\label{ss:strong_thinnes}
We conclude the section by showing that, even for $\alpha\geq 1/2$, the two-valued set $\A_{-2\lambda, 2\lambda}$ is thin for $\field{\sin(\alpha\phi)}$. In fact, we prove a stronger version of thinness as stated in property (\textbf{III}) of Section \ref{sec:decomp}. Namely, that the convergence of the infinite sums holds in $L^2(\P)$. The following simple observation should be seen as the key motivation to look for such a statement. 

\begin{prop}\label{prop:sign-permute}
    Let $(\xi_k)_{k\geq1}$ be a sequence of i.i.d. symmetric signs. Let $(X_k)_{k\geq1}$ be any sequence of random variables independent of $(\xi_k)_{k\geq 1}$ such that 
    \begin{equation}\label{eq:L2-bound-std}
        \sum_{k=1}^\infty \E[X_k^2] <\infty.
    \end{equation}
    Then, 
    \begin{equation}\label{eq:L2-conv-std}
        S = \lim_{N\to\infty}\sum_{k=1}^N\xi_kX_k
    \end{equation}
    exists as a limit in $L^2(\P)$, and also
    \begin{equation}\label{eq:L2-conv-order}
        S = \lim_{N\to\infty}\sum_{k=1}^N\xi_{\pi(k)}X_{\pi(k)}
    \end{equation}
    for any (random) permutation $\pi:\N\to\N$ independent of the signs $(\xi_k)_{k\geq1}$, that may possibly depend on the values of $(X_k)_{k\geq1}$.
\end{prop}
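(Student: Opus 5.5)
The plan is an orthogonality argument in $L^2(\P)$, carried out conditionally on $\mathcal X:=\sigma\big((X_k)_{k\ge1},\pi\big)$, which is independent of the signs.

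\textbf{Step 1: orthogonality.} For any finite set $F\subset\N$ that is $\mathcal X$-measurable (possibly random), I will use
\begin{equation}
\E\Big[\Big(\sum_{k\in F}\xi_kX_k\Big)^2\,\Big|\,\mathcal X\Big]=\sum_{k\in F}X_k^2 .
\end{equation}
This holds because the signs are independent of $\mathcal X$, so $\E[\xi_j\xi_k\mid\mathcal X]=\1_{\{j=k\}}$. To justify the conditional cross terms rigorously, I would first truncate $X_k$, or simply note that $\E[|\xi_j\xi_kX_jX_k|]\le \E[X_j^2]^{1/2}\E[X_k^2]^{1/2}<\infty$. Taking expectations and applying \eqref{eq:L2-bound-std}, the partial sums $S_N=\sum_{k\le N}\xi_kX_k$ satisfy $\E[(S_N-S_M)^2]=\sum_{M<k\le N}\E[X_k^2]\to0$. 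So $(S_N)$ is Cauchy in $L^2(\P)$ and converges to some $S$, which proves \eqref{eq:L2-conv-std}.

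\textbf{Step 2: the permuted sums.} Let $T_N=\sum_{k\le N}\xi_{\pi(k)}X_{\pi(k)}$ and $I_N=\pi(\{1,\dots,N\})$, which is an $\mathcal X$-measurable random finite set. I compare $T_N$ directly with $S_M$ for $M$ large. The difference $T_N-S_M$ equals a signed sum over the symmetric difference $I_N\triangle\{1,\dots,M\}$, with coefficients $\pm$. By Step 1 applied conditionally,
\begin{equation}
\E[(T_N-S_M)^2]=\E\Big[\sum_{k\in I_N\triangle[M]}X_k^2\Big]\le \E\Big[\sum_{k>M}X_k^2\Big]+\E\Big[\sum_{k\le M,\,k\notin I_N}X_k^2\Big].
\end{equation}
The first term is small once $M$ is large, uniformly in $N$, by \eqref{eq:L2-bound-std}.

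For fixed $M$, the second term tends to $0$ as $N\to\infty$. Indeed, since $\pi$ is a bijection, $\1_{\{k\notin I_N\}}\to0$ almost surely for each $k\le M$. Dominated convergence applies because $X_k^2$ is integrable. Choosing first $M$ and then $N$ gives $T_N\to S$ in $L^2(\P)$, using $S_M\to S$ from Step 1.

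\textbf{Main obstacle.} The only delicate point is that $\pi$ may depend on the $X_k$. This is why everything is done conditionally on $\mathcal X$ and why I avoid any pathwise rearrangement argument. Independence of $\pi$ from the signs is exactly what keeps the cross terms vanishing once the permuted index set is frozen.
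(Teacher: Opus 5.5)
Your proof is correct and follows essentially the same route as the paper. Both use orthogonality of the signs conditionally on $(X_k)_{k\ge1}$ and $\pi$, bound the second moment of the difference by $\E\bigl[\sum X_k^2\bigr]$ over a symmetric difference of index sets, and conclude by dominated convergence. The only cosmetic difference is that you compare $T_N$ with $S_M$ for a separate index $M$, whereas the paper compares $S_N$ and $S_N^\pi$ directly through $\{1,\dots,N\}\triangle\{\pi(1),\dots,\pi(N)\}$.
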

\begin{proof}
    The convergence in \eqref{eq:L2-conv-std} is elementary. Indeed, denoting by $S_N$ the partial sums,
    \begin{equation}
        \E[(S_N-S_M)^2] = \sum_{k=N+1}^M\E[X_k^2]
    \end{equation}
    for any $M\geq N$, which converges to zero as $M\to\infty$ and then $N\to \infty$ by \eqref{eq:L2-bound-std}. Writing $S_N^\pi$ for the partial sum under the $\pi$-rearrangement,
    \begin{equation}
        S_N-S_N^\pi = \sum_{k=1}^N\xi_kX_k\1_{\{k\not\in\{\pi(1),\ldots\pi(N)\}\}} - \sum_{k=N+1}^\infty \xi_kX_k\1_{\{k\in\{\pi(1),\ldots\pi(N)\}\}}, 
    \end{equation}
   Crucially using the independence between $\pi$ and $\xi$, it follows that
    \begin{equation}
        \E[(S_N-S_N^\pi)^2] = \sum_{k=1}^\infty\E[X_k^2\1_{\{k\in\{1,\ldots,N\}\Delta\{\pi(1), \ldots, \pi(N)\}}],
    \end{equation}
    and we conclude the proof invoking  \eqref{eq:L2-bound-std} once more.
\end{proof}

The following result is the key input in the proofs of Theorems \ref{thm:cont-XOR}-\ref{thm:cont-general}. This strong notion of thinness is already non-trivial for the easier case $\alpha\in(0,1/2)$, where we have the standard thinness of Proposition \ref{prop:thin-chaos}. We stress once more that convergence in $L^2(\P)$ is absolutely crucial for us to freely rearrange the sums, and as such the proof of the upcoming result must deal carefully with different types of convergence. For $\alpha\in[1/2, 1)$, we remark once more that our approach is based on adapting a new technique developed in \cite{CGS}.

\begin{lemma}\label{lem:CLE4-thin}
    Let $\alpha\in(0, 1)$. Consider $\A_{-2\lambda, 2\lambda}$ in a valid domain $D$ and index its loops by decreasing size of diameter. For any continuous, bounded  function $f:\C\to\R$,
    \begin{equation}\label{eq:CLE4-thin}
        (\field{\sin(\alpha\phi)}, f) = \lim_{N\to\infty}\sum_{k=1}^N\xi_k(\field{\cos(\alpha(\phi_k^\A+\xi_k(\pi-a_c)))}, f),
    \end{equation}
    where the limit is in $L^2(\P)$,  $(\xi_k)_{k\geq1}$ are the i.i.d. labels of the loops, and $(\phi^\A_k)_{k\geq1}$ are independent GFFs inside each loop.
\end{lemma}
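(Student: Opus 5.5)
We will reduce the statement to three ingredients: the local set decomposition of $\phi$ given $\A:=\A_{-2\lambda,2\lambda}$, the $L^2$ criterion of Proposition~\ref{prop:sign-permute}, and an identification of the limit via a cutoff argument.

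\textbf{Step 1: the summands.} Conditionally on $\F_\A$, write $\phi=\phi^\A+h_\A$, where $h_\A=2\lambda\xi_k=\xi_k\pi$ inside the $k$-th loop $\ell_k$. Inside $\ell_k$ we have, formally,
\begin{equation*}
\field{\sin(\alpha(\phi_k^\A+\xi_k\pi))}=\xi_k\sin(\alpha\pi)\field{\cos(\alpha\phi_k^\A)}+\cos(\alpha\pi)\field{\sin(\alpha\phi_k^\A)}.
\end{equation*}
Since $\alpha a_c=\pi/2$, this equals $\xi_k\field{\cos(\alpha(\phi_k^\A+\xi_k(\pi-a_c)))}$. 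Set $X_k:=(\field{\cos(\alpha(\phi^\A_k+\xi_k(\pi-a_c)))},f)$ inside $\mathcal O(\ell_k)$. The law of $X_k$ given $\A$ does not depend on $\xi_k$, because $\cos(\alpha\cdot)$ is even and $\phi^\A_k$ is symmetric. Hence the pairs $(\xi_k,X_k)$ satisfy the hypotheses of Proposition~\ref{prop:sign-permute}: we can realise $X_k$ with $\xi_k$ replaced by $+1$ and $\phi^\A_k$ replaced by $\xi_k\phi^\A_k$. One point needs care. The test function $f\1_{\mathcal O(\ell_k)}$ touches the boundary of a $\CLE_4$ loop, whose dimension is $3/2>2-\alpha^2/2$ when $\alpha\geq 1/2$. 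So $\field{\cos}$ in $\mathcal O(\ell_k)$ is not obviously defined up to the boundary. We handle this by working with $\field{\cos(\alpha(\cdot)+u)}$ with $u=\pm\alpha(\pi-a_c)\in(-\pi/2,\pi/2)$, and splitting it into the mean $\cos(u)\CR^{-\alpha^2/2}$ plus the truncated part, which is controlled by the sine via Proposition~\ref{prop:funct-ineq}\ref{prop-3} and Theorem~\ref{thm:domain-mon}.

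\textbf{Step 2: the $L^2$ bound $\sum_k\E[X_k^2]<\infty$.} We have
\begin{equation*}
\E[X_k^2\mid \A]\le 2\cos(u)^2\Big(\int_{\mathcal O(\ell_k)}\CR(z,\mathcal O(\ell_k))^{-\alpha^2/2}|f|\Big)^2+2\iint_{\mathcal O(\ell_k)^2}|z-w|^{-\alpha^2}|f||f|.
\end{equation*}
The second term is at most $C\,\mathrm{Leb}(\mathcal O(\ell_k))^{2-\alpha^2/2}$, and these terms sum to something finite. The first term is the main obstacle. By Cauchy--Schwarz it is at most $\mathrm{Leb}(\mathcal O_k)\int_{\mathcal O_k}\CR^{-\alpha^2}$. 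Summing over $k$, we must show that $\E\big[\sum_k \mathrm{Leb}(\mathcal O_k)\int_{\mathcal O_k}\CR(z,\mathcal O_k)^{-\alpha^2}dz\big]<\infty$ for $\alpha<1$. Here we would use the explicit $\CLE_4$ law of $\log\CR(z,\mathcal O(z))$, i.e.\ the conformal radius of the loop around $z$. Its exponential moments $\E[\CR^{-s}]$ are finite precisely for $s<s_c$, with the threshold at $\alpha_c=1$ for the relevant quantity. The extra factor $\mathrm{Leb}(\mathcal O(z))\lesssim \mathrm{diam}^2\lesssim \CR(z,\mathcal O(z))^{2}\cdot$(a distortion factor) gives room. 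To make this rigorous for $\alpha\in[1/2,1)$ we would adapt the argument from \cite{CGS}: condition on a coarse scale, use Koebe distortion to compare $\CR(z,\mathcal O_k)$ with $d(z,\ell_k)$, and use the one-point estimate $\P(d(z,\A)<\eps)\asymp \eps^{1/2}$ together with a two-point estimate for $z,w$ lying in the same loop.

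\textbf{Step 3: identifying the limit.} Proposition~\ref{prop:sign-permute} gives a limit $S$ in $L^2$ that is independent of the ordering. To show $S=(\field{\sin(\alpha\phi)},f)$, fix $\rho>0$ and let $K_\rho$ be the finite set of loops of diameter at least $\rho$ (Proposition~\ref{prop:TVS-local-finite}). Take $f_\rho=f\1_{\bigcup_{k\in K_\rho}\mathcal O_k}$ smoothed inside. For loops in $K_\rho$ the identity in Step 1 holds rigorously, by the Markov decomposition of the approximations $\eps^{-\alpha^2/2}e^{i\alpha\phi_\eps}$ tested against functions compactly supported in the loops. Then let the support tend to the whole loop, using Proposition~\ref{prop:bdy-sin} for the sine. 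The remainder is $(\field{\sin(\alpha\phi)},f\1_{R_\rho})$, where $R_\rho$ is the union of the small loops together with the carpet $\A$. Its conditional second moment given $\A$ equals the tail $\sum_{k\notin K_\rho}\E[X_k^2\mid\A]$, using the independence of the signs to kill cross terms. This tends to $0$ by Step 2, and the contribution on the carpet vanishes because $\mathrm{Leb}(\A)=0$. This yields \eqref{eq:CLE4-thin}.
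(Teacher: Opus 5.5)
Your skeleton is sound: the Markov decomposition in each loop, the symmetrisation $\phi_k^\A\mapsto\xi_k\phi_k^\A$ that makes $X_k$ independent of $\xi_k$, and then Proposition~\ref{prop:sign-permute}. But both substantive steps have gaps, and you have located the difficulty in the wrong place. A single $\CLE_4$ loop has dimension $3/2<2-\alpha^2/2$ for every $\alpha<1$, so each $\mathcal O(\ell_k)$ is a valid domain. What fails for $\alpha\geq 1/2$ is the carpet, of dimension $15/8$.

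In Step~2 the Cauchy--Schwarz reduction is fatal. Since $\CR(z,\mathcal O_k)\asymp d(z,\ell_k)$ and the inner $\eps$-neighbourhood of an $\SLE_4$-type loop has area of order $\eps^{1/2}$, the integral $\int_{\mathcal O_k}\CR(z,\mathcal O_k)^{-\alpha^2}dz$ diverges for every loop once $\alpha\geq 1/\sqrt2$. This range includes the XOR-Ising case. More generally, $\log(\CR(z,D)/\CR(z,\mathcal O(z)))$ is the exit time of a Brownian motion from $(-\pi,\pi)$, so $\E[\CR(z,\mathcal O(z))^{-\alpha^2/2}]=\infty$ already for $\alpha\geq 1/2$. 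Any direct $\CLE_4$ bound would therefore have to exploit the constraint that $z$ and $w$ lie in the same loop. Your one-point input $\eps^{1/2}$ is the exponent of a single loop, not of the carpet, which is $\eps^{1/8}$.

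The paper does no such computation. On a thin approximation, conditionally on the unlabelled set, orthogonality of the labels splits $\E[(\field{\sin(\alpha\phi)},f)^2\mid\A]$ into $\sum_k(\langle\field{\cos(\alpha(\phi_k^\A+\xi_k(\pi-a_c)))}\rangle,f)^2$ plus truncated and pure-sine parts, which are controlled by \eqref{eq:sin-mon-leb}. Monotone convergence then gives the summability for free from $\E[(\field{\sin(\alpha\phi)},f)^2]<\infty$.

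Step~3 contains the decisive gap. That the carpet carries no mass is exactly property (\textbf{III}), and $\Leb(\A)=0$ does not imply it for a random set correlated with the field. For instance, $\A_{-a_c,a_c}$ is Lebesgue-null yet charged by $\field{\cos(\alpha\phi)}$ (Proposition~\ref{prop:cos-TVS}). Your claim that the remainder's conditional second moment equals $\sum_{k\notin K_\rho}\E[X_k^2\mid\A]$ already presupposes this conclusion.

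The missing mechanism is the paper's local set $\A^\eps$, formed by the loops hitting $\eps\Z^2$, whose harmonic function vanishes on the complement $\mathbb O^\eps$ of the selected loops. Your selection by diameter comes with no such Markov structure. Testing against $f_\delta=f\1_{\{d(\cdot,\eps\Z^2)\geq\delta\}}$, only finitely many loops of $\A^\eps$ meet the support, so $\A^\eps$ is thin there. The leftover field on $\mathbb O^\eps$ is the pure sine $\field{\sin(\alpha\phi^{\mathbb O^\eps})}$, whose second moment is $\lesssim\Leb(\mathbb O^\eps)^{2-\alpha^2/2}\to0$ by the domain monotonicity of Theorem~\ref{thm:domain-mon}. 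This is the only place where Lebesgue-nullity of the carpet genuinely enters, and it uses that $\A_{-2\lambda,2\lambda}$ is the two-valued set with zero boundary value off its loops. One then lets $\delta\to0$, using monotone convergence for the mean terms, and finally $\eps\to0$.
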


\begin{proof} We write $\A\equiv \A_{-2\lambda, 2\lambda}$.

\noindent \textbf{Case 1 -- $\alpha\in(0,1/2)$:} On sampling $\A$, we can consider the split
\begin{equation}\label{eq:main-split}
     (\field{\sin(\alpha\phi)}, f) =  (\field{\sin(\alpha\phi)}, f\1_{[\A]_n}) +  (\field{\sin(\alpha\phi)}, f\1_{[\A]_n^c}).
\end{equation}
By (the proof of) Proposition \ref{prop:thin-chaos}, the first term is such that
\begin{equation}\label{eq:first-term-zero}
    \E[ (\field{\sin(\alpha\phi)}, f\1_{[\A]_n})^2\mid\A]\longrightarrow0
\end{equation}
in probability as $n\to\infty$. We focus then on the second term in \eqref{eq:main-split}. For each $n\geq1$, we are far enough from the boundary of $\A$ so that
\begin{align*}
     (\field{\sin(\alpha\phi)}, f\1_{[\A]_n^c}) & = (\field{\cos(\alpha(\phi^\A+h_\A-a_c))}, f\1_{[\A]_n^c})\\
    & = (\field{\barcos(\alpha(\phi^\A+h_\A-a_c))}, f\1_{[\A]_n^c}) + (\langle\field{\cos(\alpha(\phi^\A+h_\A-a_c))}\rangle, f\1_{[\A]_n^c}).
\end{align*}
Observe both terms on the right-hand side can be written as sums over macroscopic loops of $\A$, e.g.
\begin{align}
    (\field{\cos(\alpha(\phi^\A+h_\A-a_c))}, f\1_{[\A]_n^c}) & = \sum_{k=1}^N(\field{\cos(\alpha(\phi^\A_k+\xi_k\pi-a_c))}, f\1_{[\A]_n^c}) \\ \label{eq:expand-sum-labels}
    & = \sum_{k=1}^N\xi_k(\field{\cos(\alpha(\phi^\A_k+\xi_k(\pi-a_c)))}, f\1_{[\A]_n^c}),
\end{align}
where $N=N(n)$ is some cuttoff such that $N\to\infty$ almost surely as $n\to\infty$. By Proposition \ref{prop:bdy-sin}, and in particular the bound \eqref{eq:sin-mon-leb}, we have that
\begin{equation}
    (\field{\barcos(\alpha(\phi^\A+h_\A-a_c))}, f\1_{[\A]_n^c}) \longrightarrow (\field{\barcos(\alpha(\phi^\A+h_\A-a_c))}, f)
\end{equation}
in $L^2(\P)$ as $n\to\infty$. Combining this with \eqref{eq:first-term-zero}, it follows that
\begin{align}\label{eq:key-thin-proof}
    \E[(\langle\field{\cos(\alpha(\phi^\A+h_\A-a_c))}\rangle, f\1_{[\A]_n^c})^2\mid\A] \longrightarrow\ & \E[ (\field{\sin(\alpha\phi)}, f)^2\mid\A] \\
    &\ - \E[(\field{\barcos(\alpha(\phi^\A+h_\A-a_c))}, f)^2\mid \A]
\end{align}
in probability as $n\to\infty$. The key observation is that, due to the orthogonality of the signs and the fact that we are only conditioning on $\A$, the left-hand side above can be written as 
\begin{equation}
    \E[(\langle\field{\cos(\alpha(\phi^\A+h_\A-a_c))}\rangle, f\1_{[\A]_n^c})^2\mid\A] = \sum_{k=1}^N(\langle\field{\cos(\alpha(\phi_k^\A+\xi_k(\pi-a_c)))}\rangle, f\1_{[\A]_n^c})^2,
\end{equation}
which is non-negative and increasing in $n$. Namely, it has an a.s. limit given by 
\begin{equation}
    \sum_{k=1}^\infty(\langle\field{\cos(\alpha(\phi_k^\A+\xi_k(\pi-a_c)))}\rangle, f)^2,
\end{equation}
which by \eqref{eq:key-thin-proof} must have 
\begin{equation}
    \sum_{k=1}^\infty\E[(\langle\field{\cos(\alpha(\phi_k^\A+\xi_k(\pi-a_c)))}\rangle, f)^2] 
    < \infty.
\end{equation}
By Proposition \ref{prop:sign-permute}, the sum
\begin{equation}
    \sum_{k=1}^\infty\xi_k(\langle\field{\cos(\alpha(\phi_k^\A+\xi_k(\pi-a_c)))}\rangle, f)
\end{equation}
exists as a limit in $L^2(\P)$ under any ordering independent of the signs $(\xi_k)_{k\geq1}$. Combining this with the expressions of the form \eqref{eq:expand-sum-labels}, the claimed convergence in \eqref{eq:CLE4-thin} holds.

\noindent \textbf{Case 2 -- $\alpha\in[1/2, 1)$:} Since $\A$ is no longer thin, we must choose an appropriate approximation. Fix some $\eps>0$ and let $\A^\eps$ be the closure of the union of all the loops of $\A$ that intersect the grid $\eps\Z^2$. By e.g. {\cite[Proposition 12]{excursion-GFF}}, this is a local set where the harmonic function $h_{\A^\eps}$ is equal to $\pm2\lambda$ in the interior of any loop intersecting $\eps\Z^2$ and equal to zero in the complement of the closure of the union of such interiors, which we denote by $\mathbb{O}^\eps$. Moreover, all loops with diameter bigger than $\sqrt{2}\eps$ have to appear in $\A^\eps$. See Figure \ref{fig:grid}.

\begin{figure}[h]
    \centering
    \includegraphics[width=0.4\linewidth]{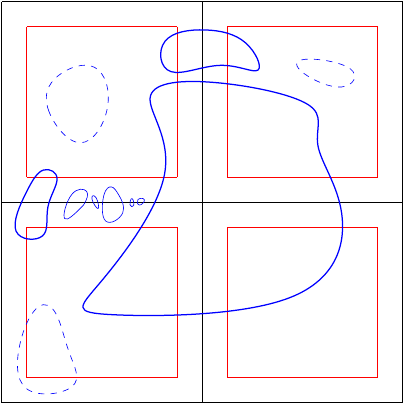}
    \caption{The $\eps$-grid in black, where the interior of each red square marks the region in which $f_\delta$ is supported. The loops of $\A_{-2\lambda, 2\lambda}$ are drawn in blue. The dashed loops are those \emph{not} in $\A_{-2\lambda, 2\lambda}^\eps$, thus they all have diameter smaller than $\sqrt{2}\epsilon$. The solid loops can have any diameter, but those intersecting the support of $f_\delta$, marked in bold, must have diameter greater than $\delta$.}
    \label{fig:grid}
\end{figure}

\noindent Let $(f_\delta)_{\delta>0}$ be the sequence of continuous functions, compactly supported in $D\setminus \eps\Z^2$, given by
\begin{equation}
    f_\delta(z):=f(z)\1_{\{d(z, \eps\Z^2)\geq \delta\}}. 
\end{equation}
By Proposition \ref{prop:bdy-sin}, a direct computations gives that
\begin{equation}\label{eq:delta-sin}
    (\field{\sin(\alpha\phi)}, f-f_\delta) \longrightarrow 0
\end{equation}
in $L^2(\P)$ as $\de\to0$, and similarly
\begin{equation}\label{eq:delta-barcos}
    (\field{\barcos(\alpha\phi^{\A^\eps})}, f-f_\delta) \longrightarrow 0.
\end{equation}

\noindent On sampling $\A^\eps$, just as in the previous case, we consider the split
\begin{equation}\label{eq:split-2-case}
    (\field{\sin(\alpha\phi)}, f_\delta) = (\field{\sin(\alpha\phi)}, f_\delta\1_{[\A^\eps]_n}) + (\field{\sin(\alpha\phi)}, f_\delta\1_{[\A^\eps]_n^c}).
\end{equation}
The key upshot of this new approximation that
\begin{equation}\label{eq:A_epsilon_thin}
    \E[(\field{\sin(\alpha\phi)}, f_\delta\1_{[\A^\eps]_n})^2\mid \A]\longrightarrow0
\end{equation}
in probability as $n\to\infty$, i.e. the approximation is thin. To see this, one argues as in the proof of Proposition \ref{prop:thin-chaos}, effectively reducing the claim to proving that
\begin{equation}
    \int_{[\A^\eps]_n} \CR(z, D\setminus\A^\eps)^{-\alpha^2/2}f_\de(z)dz\longrightarrow 0
\end{equation}
in probability as $n\to\infty$, with $\eps>\de>0$ fixed. The dyadic boxes tiling $\A^\eps$ and intersecting the support of $f_\delta$ cover only portions of loops all of which have diameter greater than $\de>0$. By the local finiteness of $\A$, this is a finite collection of loops whose Minkowski dimension is only $3/2$, and thus thinness follows as now standard.

\noindent As in the case $\alpha\in(0,1/2)$, but recalling that the harmonic function of $\A^\eps$ is now three-valued, we continue by expanding the second term in \eqref{eq:split-2-case} as
\begin{align}
    (\field{\sin(\alpha\phi)}, f_\delta\1_{[\A^\eps]_n^c}) & = (\field{\sin(\alpha\phi^{\mathbb{O}^\eps})}, f_\delta\1_{[\A^\eps]_n^c})\\ 
    & \quad + \sum_{k=1}^N\1_{\{\ell_k\in\A^\eps\}}\xi_k(\field{\barcos(\alpha(\phi^{\A^\eps}_k+\xi_k(\pi-a_c)))},f_\delta\1_{[\A^\eps]_n^c}) \\
    & \quad + \sum_{k=1}^N\1_{\{\ell_k\in\A^\eps\}}\xi_k(\langle\field{\cos(\alpha(\phi^{\A^\eps}_k+\xi_k(\pi-a_c)))}\rangle,f_\delta\1_{[\A^\eps]_n^c}),
\end{align}
where $\ell_k\in\A^\eps$ means that the $k$th largest loop of $\A$ intersects the $\eps$-grid. As before, we can use \eqref{eq:sin-mon-leb} to handle the first two terms in $L^2(\P)$. Moreover, this control can be combined with \eqref{eq:A_epsilon_thin} to see that
\begin{align}
    \E\bigg[\bigg(\sum_{k=1}^\infty&\1_{\{\ell_k\in\A^\eps\}}\xi_k(\langle\field{\cos(\alpha(\phi^{\A^\eps}_k+\xi_k(\pi-a_c)))}\rangle,f_\delta\1_{[\A^\eps]_n^c})\bigg)^2\Big\lvert \A\bigg] \\ 
    &\longrightarrow \E\left[\field{\sin(\alpha\phi)}, f_\delta)^2\mid\A\right]
    - \E\left[(\field{\sin(\alpha\phi^{\mathbb{O}^\eps})}, f_\delta)^2\mid\A\right] \\
    &\hspace{6mm} - \E\left[\left(\sum_{k=1}^N\1_{\{\ell_k\in\A^\eps\}}\xi_k(\field{\barcos(\alpha(\phi^{\A^\eps}_k+\xi_k(\pi-a_c)))},f_\delta)\right)^2\Big\lvert\A\right].
\end{align}
This is the analogue of \eqref{eq:key-thin-proof} in the previous case, which readily implies that 
\begin{align}
     (\field{\sin(\alpha\phi)}, f_\delta) & = (\field{\sin(\alpha\phi^{\mathbb{O}^\eps})}, f_\delta)\\ 
    & \quad + \sum_{k=1}^\infty\1_{\{\ell_k\in\A^\eps\}}\xi_k(\field{\barcos(\alpha(\phi^{\A^\eps}_k+\xi_k(\pi-a_c)))},f_\delta) \\ \label{eq:sum-with-delta}
    & \quad + \sum_{k=1}^\infty\1_{\{\ell_k\in\A^\eps\}}\xi_k(\langle\field{\cos(\alpha(\phi^{\A^\eps}_k+\xi_k(\pi-a_c)))}\rangle,f_\delta),
\end{align}
where both sums converge in $L^2(\P)$.

\noindent By the convergence in \eqref{eq:delta-sin}-\eqref{eq:delta-barcos}, we can take $\delta\to0$ and obtain that
\begin{equation}
    \sum_{k=1}^\infty\E[\1_{\{\ell_k\in\A^\eps\}}(\langle\field{\cos(\alpha(\phi^{\A^\eps}_k+\xi_k(\pi-a_c)))}\rangle,f) ^2]<\infty,
\end{equation}
which in turn implies that
\begin{equation}
    \sum_{k=1}^\infty\1_{\{\ell_k\in\A^\eps\}}\xi_k(\langle\field{\cos(\alpha(\phi^{\A^\eps}_k+\xi_k(\pi-a_c)))}\rangle,f_\delta) \longrightarrow \sum_{k=1}^\infty\1_{\{\ell_k\in\A^\eps\}}\xi_k(\langle\field{\cos(\alpha(\phi^{\A^\eps}_k+\xi_k(\pi-a_c)))}\rangle,f) 
\end{equation}
in $L^2(\P)$ as $\de\to0$, noting that each term on the left-hand side is monotone in $\delta$ and converges to the corresponding term on the right-hand side. Altogether, we have shown that
\begin{align}
     (\field{\sin(\alpha\phi)}, f) & = (\field{\sin(\alpha\phi^{\mathbb{O}^\eps})}, f)\\ 
    & \quad + \sum_{k=1}^\infty\1_{\{\ell_k\in\A^\eps\}}\xi_k(\field{\barcos(\alpha(\phi^{\A^\eps}_k+\xi_k(\pi-a_c)))},f) \\
    & \quad + \sum_{k=1}^\infty\1_{\{\ell_k\in\A^\eps\}}\xi_k(\langle\field{\cos(\alpha(\phi^{\A^\eps}_k+\xi_k(\pi-a_c)))}\rangle,f),
\end{align}
and both sums converge in $L^2(\P)$.

\noindent The proof is completed following the very same argument as $\eps\to0$, but now using that
\begin{align}
    \sum_{k=1}^\infty\1_{\{\ell_k\in\A^\eps\}}\xi_k(\field{\barcos(\alpha(\phi^{\A^\eps}_k+\xi_k(\pi-a_c)))},f) \longrightarrow \sum_{k=1}^\infty\xi_k(\field{\barcos(\alpha(\phi^{\A}_k+\xi_k(\pi-a_c)))},f)
\end{align}
and that
\begin{align}
     (\field{\sin(\alpha\phi^{\mathbb{O}^\eps})}, f)\longrightarrow 0
\end{align}
both in $L^2(\P)$, which follow as usual from the bound \eqref{eq:sin-mon-leb}. 
\end{proof}

\begin{rem}
    While the orthogonality of the labels of $\A_{-2\lambda, 2\lambda}$ has been used extensively, we have also crucially relied on the fact that this is the \emph{only} two-valued set for which the boundary condition outside the loops is zero. Indeed, this makes it so that we only see the \emph{pure} sine $\field{\sin(\alpha\phi^{\mathbb{O}^\eps)}}$ outside, and we can handle this in $L^2(\P)$ with no complications. Interestingly, this boundary condition feature is precisely the reason why the labels are i.i.d., as explained in \cite{TVS}.
\end{rem}


\section{Critical TVS: Measures} \label{sec:cos-TVS}

The purpose of this section is to prove the remaining property (\textbf{II}) from Section \ref{sec:decomp}. The proof is almost immediate given Proposition \ref{prop:bdy-sin}. One can understand the following result as ``taking the conditional expectation out'' in the  measures of Proposition \ref{prop:TVS-meas}. 

\begin{prop}\label{prop:cos-TVS}
Let $\alpha\in(0,1)$. For $a_c=a_c(\alpha)=\lambda/\alpha$, let $\A_{-a_c, a_c}$ be the critical two-valued set in a valid domain $D$. For any continuous, bounded  function $f:\C\to\R$,
\begin{align}\label{eq:cos-TVS}
    \lim_{n\to \infty}(\field{\cos( \alpha \phi )}, f\1_{[\A_{-a_c, a_c}]_n}) = \E\left[ (\field{\cos( \alpha \phi )}, f) \mid \F_{\A_{-a_c, a_c}} \right ], 
\end{align}
where the limit is in probability. In particular, the left-hand side converges to $(\mu, f)$ where $\mu\equiv\mu(a_c)\equiv\mu(\alpha)$ is a random measure which is measurable with respect to $\A_{-a_c, a_c}$.
\end{prop}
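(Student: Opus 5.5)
Write $\A\equiv\A_{-a_c,a_c}$. The plan is to split the field according to whether we are inside the loops of $\A$ or on $\A$ itself, and to show that the part living inside the loops is exactly the mean-zero fluctuation. Concretely, conditionally on $\F_\A$, inside each loop $\ell$ we have $\phi=\phi^\A+c(\ell)a_c$. Since $\alpha a_c=\lambda=\pi/2$, away from $\A$ this gives
\begin{equation*}
\field{\cos(\alpha\phi)}\ =\ \field{\cos(\alpha\phi^\A+c(\ell)\pi/2)}\ =\ -c(\ell)\field{\sin(\alpha\phi^\A)}.
\end{equation*}
Note that there is no renormalisation issue, because the normalisation $\eps^{-\alpha^2/2}$ is the same in both expressions. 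Hence, for each $n$ (with $[\A]_n$ taking finitely many values, as in \eqref{eq:def-restr-gff}),
\begin{equation*}
(\field{\cos(\alpha\phi)},f\1_{[\A]_n}) = (\field{\cos(\alpha\phi)},f) + \sum_{\ell}c(\ell)(\field{\sin(\alpha\phi^\ell)},f\1_{[\A]_n^c}),
\end{equation*}
where $\phi^\ell$ are the independent GFFs inside the loops. This identity needs a small justification: one tests against $f\1_{[\A]_n^c}$, which is supported at positive distance from $\A$ but not compactly inside each loop. I would justify it by approximating with compactly supported functions and using local finiteness (Proposition~\ref{prop:TVS-local-finite}), in the same way as in the proof of Lemma~\ref{lem:CLE4-thin}.

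The key step is then to show that
\begin{equation*}
\sum_\ell c(\ell)(\field{\sin(\alpha\phi^\ell)},f\1_{[\A]_n^c})\ \longrightarrow\ \sum_\ell c(\ell)(\field{\sin(\alpha\phi^\ell)},f)
\end{equation*}
in $L^2(\P)$. Here the limit is the sine field of $\phi^\A$ in $D\setminus\A$, which is well defined by Proposition~\ref{prop:bdy-sin}, because its two-point function is dominated by the full-plane one (Theorem~\ref{thm:domain-mon}, valid since $\alpha\le 1$). Conditionally on $\F_\A$, the fields $\field{\sin(\alpha\phi^\ell)}$ in different loops are independent and centred, so cross terms vanish. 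The conditional second moment of the difference is therefore
\begin{equation*}
\sum_\ell\int_{\O(\ell)\cap[\A]_n}\int_{\O(\ell)\cap[\A]_n} f(z)f(w)\,\langle\field{\sin(\alpha\phi^\ell(z))}\field{\sin(\alpha\phi^\ell(w))}\rangle\,dz\,dw .
\end{equation*}
By Theorem~\ref{thm:domain-mon}(ii), this is bounded by
\begin{equation*}
\norm{f}_\infty^2\int_{[\A]_n}\int_{[\A]_n}|z-w|^{-\alpha^2}\,dz\,dw\ \lesssim\ \Leb([\A]_n)^{2-\alpha^2/2},
\end{equation*}
exactly as in \eqref{eq:sin-mon-leb}. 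Since $\Leb(\A)=0$ almost surely (for instance from Proposition~\ref{prop:TVS-one-point}, which gives $\dim_M\A<2$), we have $\Leb([\A]_n)\to0$ almost surely. Taking expectations via dominated convergence, using the deterministic bound $\Leb(D)^{2-\alpha^2/2}$, gives the $L^2(\P)$ convergence.

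It remains to identify the limit. The conditional expectation of $\sum_\ell c(\ell)(\field{\sin(\alpha\phi^\ell)},f)$ given $\F_\A$ is zero, and the same holds with $f$ replaced by $f\1_{[\A]_n^c}$. Taking conditional expectations in the identity above therefore shows that the limit of $(\field{\cos(\alpha\phi)},f\1_{[\A]_n})$ equals
\begin{equation*}
(\field{\cos(\alpha\phi)},f)-(\text{fluctuation}) = \E[(\field{\cos(\alpha\phi)},f)\mid\F_\A].
\end{equation*}
Indeed, the fluctuation term is $\F_\A$-conditionally centred, and the remainder is exactly the conditional expectation. This uses the observation that, given $\F_\A$, all randomness of $\field{\cos(\alpha\phi)}$ beyond $\F_\A$ comes from the inner sine fields. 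Proposition~\ref{prop:TVS-meas} then identifies this conditional expectation with $a_c(\mu,f)$, a measure measurable with respect to $\A$; the constant $a_c$ is absorbed into the normalisation of $\mu$. In the valid-domain setting, one also needs $f$ merely bounded and continuous up to $\partial D$, which is handled by the validity assumption as in Proposition~\ref{prop:bdy-expect}.

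The main obstacle I expect is making rigorous the pathwise identity between $\field{\cos(\alpha\phi)}$ restricted to $[\A]_n^c$ and the sum of inner sine fields. This requires combining the local-set Markov decomposition with the chaos renormalisation for test functions reaching the fractal loop boundaries. I would first prove it for test functions compactly supported in $D\setminus\A$, using circle averages, which equal $\phi^\A_\eps+h_\A$ for small $\eps$. I would then extend to $f\1_{[\A]_n^c}$ by the $L^2$ control above.
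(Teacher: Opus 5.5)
Your first two steps are sound. Off $\A$ the field is $-c(\ell)\field{\sin(\alpha\phi^\ell)}$, and this renewed sine is conditionally centred. That is exactly the paper's observation that $\E[X_n\mid\F_\A]=\E[(\field{\cos(\alpha\phi)},f)\mid\F_\A]$, where $X_n:=(\field{\cos(\alpha\phi)},f\1_{[\A]_n})$. Your domain-monotonicity bound then shows that $Y_n:=\sum_\ell c(\ell)(\field{\sin(\alpha\phi^\ell)},f\1_{[\A]_n^c})$ converges in $L^2(\P)$ to $Y:=\sum_\ell c(\ell)(\field{\sin(\alpha\phi^\ell)},f)$, so $X_n\to L:=(\field{\cos(\alpha\phi)},f)+Y$.

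The gap is the identification of $L$. Conditioning only yields $\E[L\mid\F_\A]=\E[(\field{\cos(\alpha\phi)},f)\mid\F_\A]$. To conclude $L=\E[L\mid\F_\A]$ you need $\Var(L\mid\F_\A)=0$, i.e.\ that the cosine carries no conditional fluctuation on $\A$ itself. Your ``observation'' that all randomness beyond $\F_\A$ comes from the inner sine fields is precisely the identity $(\field{\cos(\alpha\phi)},f)-\E[(\field{\cos(\alpha\phi)},f)\mid\F_\A]=-Y$. That identity is equivalent to the statement being proved (the action (\textbf{II})), so the argument is circular at this point.

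None of your estimates addresses this. Your only bound controls $Y-Y_n=\sum_\ell c(\ell)(\field{\sin(\alpha\phi^\ell)},f\1_{[\A]_n})$, i.e.\ the field on $[\A]_n\setminus\A$, and it is blind to $\A$. Yet $\A$ is exactly where the conditional mean concentrates (producing $\mu$), and a priori a fluctuation could concentrate there too. The pathwise identity on $[\A]_n^c$ that you single out as the main obstacle is in fact the routine part.

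The paper avoids this by reducing directly to $\Var(X_n\mid\F_\A)\to0$, obtained from Proposition~\ref{prop:bdy-sin} via the $\barcos$ trick. Conditionally on $\F_\A$, the field is a cosine of $\alpha\phi^\A$ shifted by the phase $\alpha h_\A\in[-\pi/2,\pi/2]$; at the level of circle averages this phase takes intermediate values near $\A$. The Gaussian computation behind Proposition~\ref{prop:funct-ineq}\ref{prop-3} works for arbitrary phases at the two points, because the covariance of $\phi^\A$ is nonnegative. It dominates the truncated two-point function by the sine two-point function of $\phi^\A$, hence by the full-plane one through Theorem~\ref{thm:domain-mon}.

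This gives $\Var(X_n\mid\F_\A)\lesssim\Leb([\A]_n)^{2-\alpha^2/2}$, as in \eqref{eq:sin-mon-leb}--\eqref{eq:correlacion_cos_thin}, and this bound does account for $\A$. Together with the centring it finishes the proof; neither $L$ nor the decomposition on $[\A]_n^c$ is needed.

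If you prefer to keep your route, you can close it with a germ argument as in the proof of Theorem~\ref{thm:local}:
\begin{itemize}
\item[] $L$ is a function of $\F_\A$ and of $\phi^\A$ restricted to arbitrarily small neighbourhoods $U_m\downarrow\A$.
\item[] Hence $L$ is conditionally independent of the GFF $(\phi^\A)^{\overline{U}_m}$ given $\F_\A$.
\item[] Since $(\phi^\A)^{\overline{U}_m}\to\phi^\A$, $L$ is conditionally independent of $\phi^\A$ given $\F_\A$.
\item[] Being measurable with respect to $\F_\A\vee\sigma(\phi^\A)$, $L$ must then be $\F_\A$-measurable.
\end{itemize}
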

\begin{proof} The second part of the claim follows from\footnote{Note, however, that the measures differ by a factor of $a_c$.} Proposition \ref{prop:TVS-meas}. We write $\A\equiv\A_{-a_c, a_c}$. As before, it suffices to show that
    \begin{equation}\label{eq:cos-TVS-goal}
        \E\left[\left((\field{\cos( \alpha \phi )}, f\1_{[\A]_n})-\E\left[ (\field{\cos( \alpha \phi )}, f) \mid \F_\A \right ]\right)^2\mid\F_\A\right]\longrightarrow 0
    \end{equation}
    in probability. For each $n\geq1$, by the symmetry of the renewed sine, we have that
    \begin{equation}
        \E\left[ (\field{\cos( \alpha \phi )}, f\1_{[\A]_n}) \mid \F_\A \right ]=\E\left[ (\field{\cos( \alpha \phi )}, f) \mid \F_\A \right ] \quad\text{a.s.}
    \end{equation}
    The claim follows from Proposition \ref{prop:bdy-sin} (cf. the $\field{\barcos}$  trick in Proposition \ref{prop:bdy-expect} and Lemma \ref{lem:CLE4-thin}).
\end{proof}

\begin{rem}\label{rem:crit-second-mom}
    Using very similar arguments, one can show that
    \begin{equation}\label{eq:crit-second-mom}
        \E[(\field{\cos(\alpha\phi)}, f)^2 \mid \F_{\A_{-a_c, a_c}}] = (\mu, f)^2 + \E[(\field{\sin(\alpha\phi^\A)}, f)^2 \mid \F_{\A_{-a_c, a_c}}].
    \end{equation}
    Indeed, the usual trick yields
    \begin{equation}
        \E[(\field{\cos(\alpha\phi)}, f)^2 \mid \F_\A] = (\mu, f)^2 + \E[\big((\field{\cos(\alpha\phi)}, f)-\E[(\field{\cos(\alpha\phi)}, f)\mid\F_\A]\big)^2\mid \F_\A].
    \end{equation}
    The claim follows noting that
    \begin{equation*}
        (\field{\cos(\alpha\phi)}, f\1_{[\A]_n^c})-\E[(\field{\cos(\alpha\phi)}, f\1_{[\A]_n^c})\mid\F_\A] = (\field{\sin(\alpha\phi^\A)}, f\1_{[\A]_n^c}) \longrightarrow(\field{\sin(\alpha\phi^\A)}, f)
    \end{equation*}
    in $L^2(\P)$ and that 
    \begin{equation*}
        (\field{\cos(\alpha\phi)}, f\1_{[\A]_n})-\E[(\field{\cos(\alpha\phi)}, f\1_{[\A]_n})\mid\F_\A] \longrightarrow0
    \end{equation*}
    also in  $L^2(\P)$.
\end{rem}


\section{Proof of Theorems \ref{thm:cont-XOR}-\ref{thm:cont-general}} \label{sec:main-proofs}

\subsection{Local finiteness of the decomposition}
Before proving the main theorems, we handle one leftover result we need in the proofs. Namely, we must show that the iterative steps described in Section \ref{sec:decomp} define a locally finite collection of loops. Note that, while each two-valued set is locally finite, \emph{a priori} the successive sampling of them need not be.

\begin{prop}    
\label{prop:local-finite-tree}
    Consider the iteration of two-valued sets given in Section \ref{sec:decomp}. For any $\rho>0$, the number of loops with diameter greater than $\rho$ is finite almost surely. In particular, the iteration stops after an almost surely finite number $N\equiv N(\rho)$ of steps.
\end{prop}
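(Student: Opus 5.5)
Fix $\rho>0$; the plan is to show that the tree of loops produced by the iteration of Section \ref{sec:decomp} has only finitely many nodes of diameter at least $\rho$. Two facts do the work. First, each single step, conditionally on the past, samples a two-valued set inside a given loop. By Proposition \ref{prop:TVS-local-finite}, each such set has only finitely many loops of diameter $\geq\rho$. So every node has finitely many macroscopic children, and the tree is locally finite. By König's lemma, the only way infinitely many macroscopic loops can occur is along an infinite nested sequence $\ell_1\supset\ell_2\supset\cdots$ of loops, each of diameter $\geq\rho$. The second fact is that such an infinite sequence has probability zero.

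To rule out infinite nested chains, I would use a conformal radius argument. Fix a point $z\in D$. Each step of the iteration explores a two-valued set $\A_{-a,b}$ with $a+b\geq 2\lambda$ and $a,b$ ranging over a finite set of values:
\begin{itemize}
\item $\A_{-2\lambda,2\lambda}$;
\item $\A_{-2\lambda,2a_c-2\lambda}$ and its mirror image;
\item $\A_{-a_c,a_c}$.
\end{itemize}
By the Markov property and conformal invariance, the ratios $\CR(z,\O(\ell_{n+1}))/\CR(z,\O(\ell_n))$ along the nested loops containing $z$ are conditionally distributed according to finitely many laws. Each law is that of the conformal radius of the loop surrounding $0$ in a two-valued set of $\D$, possibly with the boundary value shifted by the current label. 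Each of these laws is supported in $(0,1)$, and hence satisfies $\E[\log \CR]\leq -c<0$ for some uniform $c>0$. It is a law of large numbers argument (or a direct supermartingale argument using $\E[\CR^{s}]\leq q<1$ uniformly) that the conformal radius of the $n$-th nested loop around $z$ then tends to $0$ exponentially fast almost surely.

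The step I expect to be the main obstacle is passing from a fixed point $z$ to all chains at once, since a chain of diameter-$\geq\rho$ loops need not shrink around any fixed point. The key observation is that every loop of diameter $\geq\rho$ contains a point of a deterministic finite grid $\eps\Z^2\cap D$ in its interior, or at least contains a ball of radius comparable to $\eps$. This step is the delicate part: a loop of large diameter could still be thin. I would first try the following. An infinite nested chain of loops of diameter $\geq\rho$ has a limit set $\bigcap_n\overline{\O(\ell_n)}$ that is a connected compact set of diameter $\geq\rho$, so it meets a vertical or horizontal line through grid points. Then I would use the Koebe quarter theorem with a countable dense set of points: if for every rational point $z$ the conformal radius of the nested loops around $z$ tends to $0$, then each nested chain eventually avoids a ball around every rational point. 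By compactness, the chain then shrinks to a set with empty interior contained in a union of arbitrarily thin neighbourhoods.

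To close the argument, I would combine this with the fact that the chain's boundaries are carried by the two-valued sets, whose union has Lebesgue measure zero. This should force the diameters to tend to zero, contradicting the assumption that every loop in the chain has diameter $\geq\rho$. If this fails, the fallback is a uniform estimate. Given a loop $\ell$ of diameter $\geq\rho$, the probability that the next layer produces a child loop of diameter $\geq\rho/2$ should be bounded away from $1$ uniformly, by a Beurling-type estimate and scale invariance of the two-valued set laws. A geometric series then yields finiteness. The finiteness of the number of steps $N(\rho)$ follows immediately, since every step refines some macroscopic loop.
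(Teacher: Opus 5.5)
Your reduction is sound as far as it goes. Local finiteness of each two-valued set (Proposition \ref{prop:TVS-local-finite}) plus K\"onig's lemma does reduce the claim to excluding an infinite nested chain of loops of diameter $\geq\rho$. The fixed-point statement $\CR(z,\O(\ell_n))\to0$ is also correct.

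The gap is the passage from points to chains. The statement $\CR(z,\O(\ell_n))\to 0$ only says that $\ell_n$ comes close to $z$, not that $\O(\ell_n)$ shrinks. Running it over rational $z$ shows at best that $K=\bigcap_n\overline{\O(\ell_n)}$ has empty interior, and a continuum with empty interior (e.g.\ a segment) can have diameter $\geq\rho$. The Lebesgue-nullity of the two-valued sets does not help either: almost every point is surrounded by an infinite chain, so it gives no constraint on the shape of $K$. Your sentence ``this should force the diameters to tend to zero'' is precisely the statement to be proved (it is of the same nature as local finiteness of nested $\CLE$), so the main route does not close.

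Your fallback points in the direction the paper takes, but it is missing the two ingredients that make it work.

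\textbf{Uniformity in the domain.} This cannot come from scale invariance and a Beurling estimate, since $\O(\ell)$ is an arbitrary simply connected domain. The paper obtains it from the monotonicity of $\CLE_4$ given by the loop-soup construction. For $U\subseteq\D$ one can couple so that every loop of $\CLE_4(U)$ is surrounded by a loop of $\CLE_4(\D)$. Hence the probability that all loops of $\CLE_4(U)$ have diameter $\leq\rho$ is at least the same probability for $\CLE_4(\D)$. That this last probability is positive for \emph{every} $\rho>0$ is itself non-trivial. Lemma \ref{lem:uniform_Bound_diam} proves it by showing that the set of $\rho$ where it vanishes is both open and closed, using the restriction property, the fact that no loop touches $\partial\D$, and scale invariance.

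\textbf{A whole generation at once.} The bound has to be applied to all current loops simultaneously, not loop by loop. An estimate of the form ``each macroscopic loop has no macroscopic child with probability $\geq c$'' does not prevent the tree from being infinite. A loop may have several macroscopic children (think of a supercritical Galton--Watson tree), and the product of such bounds over the loops of a generation is not bounded below. The paper takes $U$ to be the disjoint union of all current macroscopic loops; this is why the lemma takes the infimum over countable unions of disjoint simply connected domains.

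With this choice, at every $\A_{-2\lambda,2\lambda}$ step, conditionally on the past, with probability $\geq c$ \emph{all} new loops have diameter $\leq\rho$. The iteration then stops, since all subsequent two-valued sets are nested inside these loops. So the number of steps is dominated by a geometric variable. Finiteness of the number of macroscopic loops then follows from local finiteness of each two-valued set over these finitely many steps. Note that only the $\CLE_4$ steps are used; no estimate on the other two-valued sets is needed.
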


The key observation to prove this result is that in each iterative sampling of $\A_{-2\lambda, 2\lambda}=\CLE_4$ there is a positive chance that all loops are microscopic, i.e. have diameter smaller than $\rho$.

\begin{lemma}\label{lem:uniform_Bound_diam}
    For any $\rho>0$ there exists $c>0$ such that
    \begin{align*}
\inf_D\P(\text{every loop of } \CLE_4(D) \text{ has diameter less than or equal to }  \rho) \geq c,    \end{align*}
where the infimum is taken over all open sets $D\subseteq \D$ such that $D$ is a countable union of pairwise disjoint simply connected domains. 
\end{lemma}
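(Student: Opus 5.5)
Here is the plan: reduce to finitely many components, use monotonicity of CLE$_4$ in the domain via the GFF coupling, and obtain a uniform positive bound by a scaling and comparison argument.

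\textbf{Step 1: only finitely many components matter.} Since $D\subseteq\D$ and the components $D_j$ of $D$ are disjoint, only finitely many $D_j$ have diameter greater than $\rho$; in fact their number is bounded by a constant depending only on $\rho$ once we restrict to components containing a ball of radius comparable to $\rho$. Components of diameter at most $\rho$ contribute nothing, since every loop in them trivially has diameter at most $\rho$. The $\CLE_4$ in different components are independent. A product bound will therefore only work if large components are few and each succeeds with a uniform positive probability.

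\textbf{Step 2: monotonicity via the GFF coupling.} Instead of counting components, I would use a comparison that treats all of $D$ at once. Let $\phi$ be a GFF in $\D$ and let $\A^{\D}=\A_{-2\lambda,2\lambda}(\phi)$. Let $\phi^D$ be the GFF in $D$ given by the Markov decomposition $\phi=\phi^D+h$, with $h$ harmonic in $D$. The event that all loops of $\CLE_4(D)$ are small is not directly monotone in $D$. However, the loops of $\CLE_4(D)$ are level loops of $\phi^D$ at heights $\pm2\lambda$, and by Proposition \ref{prop:TVS-unique}-type arguments they are nested inside loops of a coarser set.

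The cleaner route I would actually commit to is the following. Fix a deterministic grid of squares of side $\rho/4$ covering $\D$. Explore $\CLE_4(D)$ via the local set $\A^{D}\cup\bigcup_{Q}\partial Q$. Equivalently, sample the GFF on the grid lines first. Then $\CLE_4(D)$ loops crossing a grid line must exit a square, and we bound the probability that some loop crosses the grid at all.

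\textbf{Step 3: uniform lower bound via an absolutely continuous comparison.} This is the step I expect to be the hardest, because domains $D$ with very rough or thin components make the exploration non-uniform. The first tool I would try is the known fact that $\CLE_4$ has a positive-probability event of local triviality. Concretely, with positive probability a $\CLE_4$ in $\D$ produces, inside any fixed grid of $\rho$-boxes, a configuration in which every loop has diameter at most $\rho$. Such a configuration can be forced by the two-valued set $\A_{-2\lambda,2\lambda}$ of a GFF on a domain containing $D$, whose harmonic extension is close to zero.

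To transfer this to an arbitrary $D$, I would use the restriction/monotonicity property. Loops of $\A_{-2\lambda,2\lambda}$ in $D$ are contained in the closure of the union of loops of $\A_{-2\lambda,2\lambda}(\phi^{\D})$ that intersect $D$, when the boundary values $h$ on $D$ are small. This holds by Proposition \ref{prop:TVS-unique} applied componentwise. More precisely, I would compare with $\A_{-2\lambda-\eta,2\lambda+\eta}$ in $\D$. Conditioned on the event $\{|h|\le \eta \text{ where relevant}\}$, each loop of $\CLE_4(D)$ lies inside a loop of the comparison set. Hence it suffices that all loops of $\A_{-2\lambda-\eta,2\lambda+\eta}(\phi)$ in $\D$ have diameter at most $\rho$. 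This is a single event in $\D$, independent of $D$, and it has positive probability by Proposition \ref{prop:TVS-local-finite} combined with a Cameron--Martin shift. The shift adds a smooth function that is large near a $\rho/2$-net of points, which forces $\pm$ level loops to close quickly.

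The main gap to check is making the containment ``loops of $\A(\phi^D)$ lie inside loops of the larger TVS of $\phi$'' rigorous uniformly over $D$. The harmonic correction $h$ is not small near $\partial D$, so I would instead nest: first sample the big TVS in $\D$, then note that $D$ intersected with each of its small loops yields domains of diameter at most $\rho$. The constant $c$ is then the probability of the single $\D$-event.
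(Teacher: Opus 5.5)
Your proposal has a genuine gap, exactly where you flag ``the main gap to check'': you never get a comparison between $\CLE_4(D)$ and $\CLE_4(\D)$ that is uniform in $D$.

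\begin{itemize}
\item The GFF Markov decomposition $\phi=\phi^D+h$ cannot give it. The function $h$ carries the rough, unbounded GFF values on $\partial D$, so Proposition~\ref{prop:TVS-unique} and any $\eta$-comparison never apply.
\item Your fallback (sample a TVS in $\D$, then intersect $D$ with its small loops) assumes the loops of $\CLE_4(D)$ lie inside loops of the set sampled in $\D$. That is precisely the missing statement.
\end{itemize}

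The missing idea is that, contrary to your remark in Step~2, the event \emph{is} monotone in $D$. In the Brownian loop-soup construction \cite{ShW}, the soup in $D$ is the restriction of the soup in $\D$, and it splits into independent soups on the components. So every cluster in $D$ sits inside a cluster in $\D$, and every loop of $\CLE_4(D)$ is surrounded by a loop of $\CLE_4(\D)$. This gives $\P(\text{all loops of }\CLE_4(D)\text{ have diameter}\le\rho)\geq\P(\text{all loops of }\CLE_4(\D)\text{ have diameter}\le\rho)$ in one line, and it is what makes the infimum over $D$ harmless.

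Your Steps~1 and~2 could not have worked anyway. $D$ may have infinitely many components of diameter $>\rho$ (e.g.\ thin disjoint strips in $\D$), so the claim that only finitely many do is false. The grid route fails because, $\CLE_4$ loops being dense, almost surely loops meet every grid segment inside $D$.

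The second gap is positivity in $\D$ itself, which is the real content of the lemma. You call it a ``known fact'' and sketch a Cameron--Martin argument, but this is only a heuristic. For a non-harmonic, compactly supported shift $g$ you have no description of $\A_{-2\lambda,2\lambda}(\phi+g)$ showing that bumps on a net force the loops to be small. Moreover $g$ has no effect near $\partial\D$.

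The paper proves positivity by a connectedness argument on the set $B$ of $\rho\in(0,1]$ for which the probability vanishes:
\begin{itemize}
\item $B$ is a down-set, and it is closed by local finiteness and monotone convergence.
\item $B$ is open. If $\rho\in B$, then with positive probability all loops of diameter $>\rho$ lie in $(1-\delta)\D$, by local finiteness and the fact that no loop touches $\partial\D$.
\item On that event, the restriction property forces $\CLE_4(D^*)$, with $D^*\subseteq(1-\delta)\D$, to contain a loop of diameter $\geq\rho$ almost surely.
\item Monotonicity together with scale invariance then yields $\rho/(1-\delta)\in B$, so $B$ is both open and closed and hence empty.
\end{itemize}
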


\noindent To prove this statement, we assume the reader is familiar with the loop soup construction of CLE$_4$ in \cite{ShW}. In fact, we only need these four properties:
\begin{enumerate}
    \item[$\bullet$]\textbf{Monotonicity:} If $D\subseteq D'$ are two domains as in Lemma \ref{lem:uniform_Bound_diam}, then CLE$_4(D')$ and CLE$_4(D)$ can be coupled such that every loop of CLE$_4(D)$ is surrounded by a loop of CLE$_4(D')$. 
    \item[$\bullet$] \textbf{Restriction property:} Let $D\subseteq D'$. Define $D^*$ to be the (random) domain obtained by removing from $D$ all the loops of CLE$_4(D')$ not contained in $D$ together with their interiors. Then, conditionally on $D^*$, the law of all the loops of CLE$_4(D')$ contained $D$ is that of CLE$_4(D^*)$. See Figure \ref{fig:local-finite}.
    \item[$\bullet$] \textbf{No loop hits the boundary:} No loop of CLE$_4(D)$ intersects $\partial D$.
    \item[$\bullet$] \textbf{Scale invariance:} For any $\lambda>0$ and $D\subseteq \C$, the law of CLE$_4(\lambda D)$ is that of $\lambda$CLE$_4(D)$. 
\end{enumerate}
We highlight that the uniformity in the domain claimed in Lemma \ref{lem:uniform_Bound_diam} is obtained as a direct consequence of the monotonicity property. This uniformity is crucial in the proof of Proposition~\ref{prop:local-finite-tree}.

\begin{proof}[Proof of Lemma \ref{lem:uniform_Bound_diam}]
By monotonicity, it suffices to show that for any $\rho>0$ there exists $c>0$ such that
\begin{align}\label{eq:pos-prob}
    \P(\text{every loop of } \CLE_4(\D) \text{ has diameter less than or equal to }\rho) \geq c.
\end{align}
Define
\begin{align*}
B:=\{\rho>0: \P(\text{every loop of } \CLE_4(\D) \text{ has diameter less than or equal to } \rho)=0\}.
\end{align*}
We want to show that $B=\emptyset$. Note that $B$ must be an interval, as if $\rho\in B$ then $\rho'\in B$ for all $\rho'<\rho$. Since we know that $1\notin B$, it is enough to show that $B$ is both open and closed in $(0,1]$.

\noindent We first prove that it is closed. 
Since CLE$_4(\D)$ is locally finite, if $\rho_n\nearrow \rho$ then, up to an event of zero probability,
\begin{align*}&
\{\text{every loop of } \CLE_4(\D) \text{ has diameter less than or equal to } \rho_n\} \\
&\hspace{0.2\textwidth}\nearrow  \{\text{every loop of } \CLE_4(\D) \text{ has diameter less than or equal to } \rho\},
\end{align*}
which yields the claim by the monotone convergence theorem.

\noindent To prove that $B$ is open, assume that $\rho \in B$. We need to find $r>0$ such that $\rho+r \in B$. 
By local finiteness and the fact that no loop of $\mathrm{CLE}_4(\mathbb D)$
intersects the boundary, there exists $\delta>0$ such that with positive probability all loops
of $\mathrm{CLE}_4(\mathbb D)$ with diameter larger than $\rho$ are contained in $(1-\delta)\D$. See Figure \ref{fig:local-finite}.

\noindent We now use the restriction property with $D = (1-\delta) \D $ and $D' = \mathbb D$.
The fact that $\rho \in B$ implies that, with probability one, $\mathrm{CLE}_4(D^*)$ has a loop
of diameter greater than or equal to $\rho$. This holds almost surely for all domains $D^*$ arising
from realizations in which no loop of diameter larger than $\rho$ intersects
$\D\setminus (1-\delta)\D$. Since $D^* \subseteq (1-\delta)\D$, monotonicity and the fact that
$(1-\delta)^{-1}\mathrm{CLE}_4((1-\delta)\D)$ is equal in law to $\mathrm{CLE}_4(\mathbb D)$
imply that $\rho/(1-\delta) \in B$, from which the claim follows since $\rho>0$.
\end{proof}

\begin{figure}
    \centering 
    \includegraphics[width=0.4\linewidth]{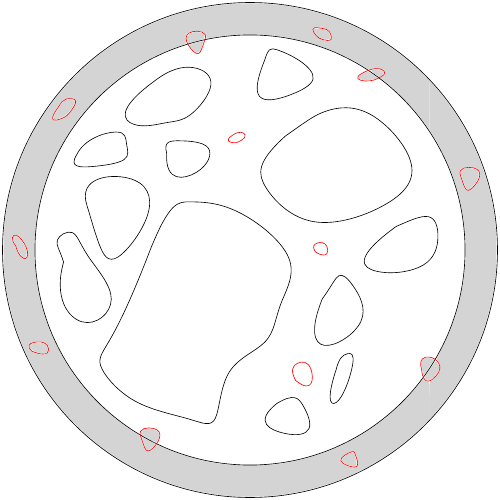}
    \caption{Diagram of the restriction property used in the proof of Lemma \ref{lem:uniform_Bound_diam}. The unshaded region corresponds to a realization of $D^*$ when $D=(1-\de)\D$ and $D'=\D$. The red loops correspond to loops of diameter smaller than $\rho$. The collection of loops satisfies the (positive probability) event that all loops of $\CLE_4(\D)$ with diameter greater than $\rho$ are contained in $(1-\de)\D$.}
    \label{fig:local-finite}
\end{figure}

\begin{proof}[Proof of Proposition \ref{prop:local-finite-tree}] We work in the setup of the algorithm described Section \ref{sec:sin1}.  The result directly follows from these two observations:
\begin{enumerate}
    \item There exists $c>0$ such that for every $n\in \N$, given that the algorithm is still iterating for some $A_{n-1}$, the probability that it stops iterating at level $n$ is at least $c$.
    \item For every $n\in \N$, a.s. the number of excursions discovered up to iteration $n$ of diameter bigger than or equal to $\rho$ is finite.
\end{enumerate}
The first point is a direct consequence of Lemma \ref{lem:uniform_Bound_diam}. The second point directly follows from the local finiteness of $\A_{-2\lambda,2\lambda}$ and $\A_{-2\lambda,2a_c-2\lambda}$.
\end{proof}

\begin{rem}
    For any $\kappa\in(8/3, 4]$, the argument above gives a new proof of the local finiteness of the \emph{nested} $\CLE_\kappa$, as found in {\cite[Lemma 2.1]{aru-papon-powell}}. Indeed, by the standard monotonicity properties of $\CLE_\kappa$ coming from the Brownian loop-soup interpretation, it is clear that \eqref{eq:pos-prob} also holds, and thus the proof remains unchanged.
\end{rem}

\subsection{ Proof of Theorem \ref{thm:cont-XOR} and Theorem \ref{thm:cont-general}} 

We are finally ready to prove our main results in the continuum. We first deal with the proof of the decomposition for $\field{\sin(\alpha\phi)}$ as this is the more general case. Fundamentally, up to iterations, the proof consists in invoking Lemma \ref{lem:CLE4-thin} and then taking the conditional expectation given the collection of critical two-valued sets within each loop. Recall the notion of valid domain from Definition \ref{def:valid}.

\begin{proof}[Proof of Theorem \ref{thm:cont-general} for $\field{\sin (\alpha \phi)}$]
    Fix a valid domain $D\subset\C$ and a continuous, bounded  function $f:\C\to\R$. We begin by sampling $A^{(1)}:=\A_{-2\lambda, 2\lambda}$. Recall that, for concreteness, we choose to index the collection $(\ell_k^{(1)})_{k\geq1}$ of loops of $A^{(1)}$ by decreasing order of diameter, and we denote the corresponding i.i.d. labels by $(\xi_k^{(1)})_{k\geq1}$. By Lemma \ref{lem:CLE4-thin}, the action $A^{(1)}\acts\ \field{\sin(\alpha\phi)}$ gives that
    \begin{equation}\label{eq:main-proof-start}
        (\field{\sin(\alpha\phi)}, f) = \sum_{k=1}^\infty\xi_k^{(1)}(\field{\cos(\alpha(\phi_k^{A^{(1)}}+\xi_k^{(1)}(\pi-a_c)))}, f),
    \end{equation}
    where the sum converges in $L^2(\P)$. For each $k\geq1$, we consider the actions
    \begin{equation}\label{eq:second-action}
         \begin{cases}
             \A_{-2\lambda, 2a_c-2\lambda}\acts\ \field{\cos(\alpha(\phi_k^{A^{(1)}}+\pi-a_c))} \quad\ \text{if}\ \ \xi_k^{(1)}=+1, \\
             \A_{-(2a_c-2\lambda), 2\lambda}\acts\ \field{\cos(\alpha(\phi_k^{A^{(1)}}-\pi+a_c))}\quad\ \text{if}\  \xi_k^{(1)}=-1,
         \end{cases}
    \end{equation}
    and denote the corresponding two-valued set by $A_k^{(2)}$. The now standard bound \eqref{eq:sin-mon-leb} yields that
    \begin{equation}
        (\field{\sin(\alpha\phi^{A^{(2)}_k})}, f) = \sum_{j=1}^\infty c^{(2)}_{k,j}(\field{\sin(\alpha\phi^{A^{(2)}_k}_j)}, f)
    \end{equation}
    is a well-defined limit in $L^2(\P)$, where the index $j\geq1$ runs over the loops of $A^{(2)}_k$ and $c^{(2)}_{k,j}$ is the label of each such loop. In fact, the sum
    \begin{equation}\label{eq:sin-A2}
        (\field{\sin(\alpha\phi^{A^{(2)}})}, f) = \sum_{k=1}^\infty\xi_k^{(1)}(\field{\sin(\alpha\phi^{A^{(2)}_k})}, f)
    \end{equation}
    is also well-defined as a limit in $L^2(\P)$, where $A^{(2)}:= (\cup_{k\geq1}A^{(2)}_k)\cup A^{(1)}$. By Proposition \ref{prop:cos-TVS}, the action \eqref{eq:second-action} is given by
    \begin{align}
        (\field{\cos(\alpha(\phi_k^{A^{(1)}}+\xi_k^{(1)}(\pi-a_c)))}, f)  = (\mu_k^{(1)}, f) + (\field{\sin(\alpha\phi^{A^{(2)}_k})}, f),
    \end{align}
    where the measure $\mu_k^{(1)}$ is supported on $A_k^{(2)}$. Arguing as in Remark \ref{rem:crit-second-mom}, we also see that 
     \begin{equation*}
         \E[(\field{\cos(\alpha(\phi_k^{A^{(1)}}+\xi_k^{(1)}(\pi-a_c)))}, f)^2\mid A^{(1)}, A^{(2)}_k] = (\mu_k^{(1)}, f)^2 + \E[(\field{\sin(\alpha\phi^{A_k^{(2)}})}, f)^2\mid A^{(1)}, A^{(2)}_k].
     \end{equation*} 
     Using the $L^2(\P)$-convergence in \eqref{eq:main-proof-start}, it is immediate that
    \begin{equation}\label{eq:main-sin}
        \E[(\field{\sin(\alpha\phi)}, f)^2 \mid A^{(1)}, A^{(2)}_k] = \sum_{k=1}^\infty (\mu_k^{(1)}, f)^2 + \sum_{k=1}^\infty \E[(\field{\sin(\alpha\phi^{A_k^{(2)}})}, f)^2\mid A^{(1)}, A^{(2)}_k],
    \end{equation}
    where the sums now converge in $L^1(\P)$. In particular, taking expectations and recalling \eqref{eq:sin-A2} ,
    \begin{equation}\label{eq:iterate-bound}
        \sum_{k=1}^\infty\E[(\mu_k^{(1)}, f)^2] = \E[(\field{\sin(\alpha\phi)}, f)^2] -  \E[(\field{\sin(\alpha\phi^{A^{(2)}})}, f)^2].
    \end{equation}
    By Proposition \ref{prop:sign-permute}, putting everything together,
    \begin{equation}\label{eq:key}
         (\field{\sin(\alpha\phi)}, f) = \sum_{k=1}^\infty\xi_k^{(1)}(\mu_k^{(1)}, f) + \sum_{k=1}^\infty\xi_k^{(1)}\left(\sum_{j=1}^\infty c_{k,j}^{(2)}(\field{\sin(\alpha\phi^{A^{(2)}_k}_j)}, f)\right),
    \end{equation}
    where convergence is in $L^2(\P)$ for both sums. Moreover, note that the collection $(\xi_k^{(1)}c_{k,j}^{(2)})_{k, j\geq1}$ still has the law of i.i.d. and symmetric coin tosses. 
    
    \noindent Now, we iterate the above two steps inside every loop of $A^{(2)}$ that has diameter greater than some fixed $\rho>0$. By Proposition \ref{prop:local-finite-tree}, this iteration will stop after a finite number $N\equiv N(\rho)$ of steps. By Proposition \ref{prop:sign-permute}, we can rearrange and relabel the obtained sums after every iteration and successively apply \eqref{eq:iterate-bound} to obtain that
    \begin{equation}
        \sum_{k=1}^\infty\E[(\mu_k^{(N)}, f)^2] = \E[(\field{\sin(\alpha\phi)}, f)^2] -  \E[(\field{\sin(\alpha\phi^{A^{(N+1)}})}, f)^2].
    \end{equation}
    Here we write $(\mu_k^{(N)})_{k\geq1}$ for the collection of measures appearing after the $N$ iterations, indexed under the new relabelling. As we take the diameter cuttoff $\rho\to0$, or equivalently $N\to\infty$, the usual argument using \eqref{eq:sin-mon-leb} gives that
    \begin{equation}
         \E[(\field{\sin(\alpha\phi^{A^{(N+1)}})}, f)^2]\longrightarrow0.
    \end{equation}
    This concludes the proof of the decomposition for a fixed test function $f$, as we have shown that
    \begin{equation}
        \sum_{k=1}^\infty\E[(\mu_k^{(\infty)},f)^2] <\infty,
    \end{equation}
    and thus 
    \begin{equation}
        (\field{\sin(\alpha\phi)}, f) = \sum_{k=1}^\infty\xi_k^{(\infty)}(\mu_k^{(\infty)},f)
    \end{equation}
    converges in $L^2(\P)$.
    
    \noindent It remains to raise the weak convergence above to strong convergence in $H^s(\C)$ for $s<-1$. Noting that $H^{-s}(\C)$ is a subset of the space of continuous and bounded functions $f:\C\to\R$, this is a standard application of the uniform boundedness principle and the Rellich-Kondrachov embedding theorem (since all the fields are supported in a \emph{bounded} domain).
\end{proof}
    
\begin{rem}
    One can directly arrive at \eqref{eq:main-sin} or \eqref{eq:key} by conditioning on both $A_1$ and $A_2$ simultaneously from the start. We prefer to do so in two steps, and state Lemma \ref{lem:CLE4-thin} separately, for the sake of exposition.
\end{rem}

With the decomposition of $\field{\sin(\alpha\phi)}$ at hand, that of $\field{\cos(\alpha\phi)}$ follows readily since the only difference is that one must first explore the boundary cluster.

\begin{proof}[Proof of Theorem \ref{thm:cont-general} for $\field{\cos(\alpha\phi)}$]
    Fix a valid domain $D\subset\C$ and a continuous, bounded  function $f:\C\to\R$. By Proposition \ref{prop:cos-TVS}, we can start by exploring the action $\A_{-a_c, a_c}\acts\cos(\alpha\phi)$ to obtain that
    \begin{align}
        (\field{\cos(\alpha\phi}), f) = (\mu_0, f) + (\field{\sin(\alpha\phi^{\A})}, f).
    \end{align}
    From here, since $(\field{\sin(\alpha\phi^\A)}, f)$ is well-defined and in $L^2(\P)$, we continue as in the proof above.

    \noindent If $D$ is not a valid domain, one still obtains convergence in the local Sobolev space $H^s_{loc}(D)$.
\end{proof}

\begin{proof}[Proof of Theorem \ref{thm:cont-XOR}]  
    Recall that the continuum XOR-Ising field in $D$ is defined as the (appropriately renormalized) scaling limit $\delta\to0$ of the discrete XOR-Ising field on a discrete domain approximation $D_\delta\subset\delta\Z^2$. See Theorem \ref{thm:conv-fields} for a precise statement. Moreover, under $+$ boundary conditions (say), the law of the continuum XOR-Ising field is that of $\mathcal{C}\sqrt{2}\field{\cos(\alpha\phi)}$, for some explicit constant $\mathcal{C}$ and $\alpha=1/\sqrt{2}$. Hence the result follows from Theorem \ref{thm:cont-general}, minding that the measures need to be normalized differently to account for the prefactor $\mathcal{C}\sqrt{2}$.
\end{proof}


\section{Local sets of imaginary chaos and GFF}\label{sec:local}

This section is devoted to proving that any ``reasonable'' local set for $\field{e^{i\alpha\phi}}$ must be a local set for the underlying GFF $\phi$. The heart of the proof is the measurability of the latter with respect to the former as proved in \cite{GFF-IGMC}. This is a result of independent interest that, although purely in the continuum, will be crucial in the proof of the convergence of the discrete decompositions as stated in Theorem \ref{thm:discrete-conv} and Corollary \ref{cor:joint-conv}. 

\noindent \textbf{Notation}: For any set $U\subseteq D$, we write $\restr{(\field{e^{i\alpha\phi}})}{U}$ to mean the collection $(\field{e^{i\alpha\phi}}, f)$ indexed by $f\in C^\infty_c(U)$, i.e. smooth functions compactly supported in $U$. The partial derivative $\partial_x\phi$ in $D$ is defined as the distribution $(\partial_x\phi, f):=-(\phi, \partial_xf)$ for any $f\in C^\infty_c(D)$. The gradient 
$\nabla\phi$ is the vector of distributions given by $(\nabla\phi, F) := -(\phi, \nabla\cdot F)$ for any smooth vector field $F\in C^\infty_c(D, \R^2)$.

\begin{thm}\label{thm:local}
    Let $D\subset\C$ be a proper, simply connected domain. Let $(\phi, A, \phi^A, h_A)$ be a coupling of a GFF $\phi$ with zero boundary conditions in $D$ and a random closed set $A\subset \bar D$ connected to $\partial D$ such that, given $A$, $\phi^A$ is a GFF with zero boundary conditions in $D\setminus A$ and $h_A$ is a harmonic function in $D\setminus A$. Suppose that 
    \begin{enumerate}[label=(\roman*)]
        \item Given $A$, 
        \begin{equation}
            \restr{(\field{e^{i\alpha\phi}})}{D\setminus A} =\ \big(e^{i\alpha h_A}\restr{\field{e^{i\alpha\phi^A}}\big)}{D\setminus A}\quad  \text{a.s.}
        \end{equation}
        \item For every connected component $\mathcal O$ of $D\setminus A$ and any $f\in C^\infty_c(D)$, $$(\field{e^{i\alpha\phi}}, f\1_{[\partial\mathcal O]_n}) \longrightarrow 0$$ in probability as $n\to\infty$.
        \item  For any $N\in\N$, the random variables
        \begin{equation}
            \restr{\big(\field{e^{i\alpha\phi^A}}\big)}{\cup_{i=1}^N\mathcal O_i} \text{ and } \ \big(h_A, \restr{\big(\field{e^{i\alpha\phi}}\big)}{D\setminus\cup_{i=1}^N\mathcal O_i}\big) 
        \end{equation}
        are conditionally independent given $A$.
    \end{enumerate}
    Then, $(\phi,A)$ is a local set coupling. That is, when restricted to $D\backslash A$,
    \begin{equation}
        \phi=\phi^A + h_A + \frac{2\pi}{\alpha} k_{\mathcal O},
    \end{equation}
    where $\phi^A$ is conditionally independent of $h_A+(2k_{\mathcal O}/\alpha)\pi$ given $A$, and $k_{\mathcal O}$ is some integer that may depend on the connected component $\mathcal O$ of $D\setminus A$. 
\end{thm}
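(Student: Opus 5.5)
The plan is to combine the measurability of the GFF with respect to its imaginary chaos \cite{GFF-IGMC} with the three hypotheses. We first show that, inside each component $\mathcal O$ of $D\setminus A$, the field $\phi-h_A$ differs from $\phi^A$ only by a constant in $\frac{2\pi}{\alpha}\Z$. We then show that $\phi^A$ is independent of everything else given $A$.

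\textbf{Step 1 (local recovery of gradients).} The result of \cite{GFF-IGMC} states that $\phi$ is a measurable function of $\field{e^{i\alpha\phi}}$. We will use a localised version: for an open $U\subseteq D$, the gradient $\restr{(\nabla\phi)}{U}$ is measurable with respect to $\restr{(\field{e^{i\alpha\phi}})}{U}$, through a measurable map $\Psi_U$ that commutes with restriction to smaller open sets. We expect this to follow from the local nature of the reconstruction in \cite{GFF-IGMC}, which mollifies the chaos and reads off phase increments. Only $\nabla\phi$ is local, since the field itself is determined only modulo $2\pi/\alpha$ locally.

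Apply this on $U=\mathcal O$, using hypothesis (i). Given $A$, $\field{e^{i\alpha\phi}}=e^{i\alpha h_A}\field{e^{i\alpha\phi^A}}$ on $\mathcal O$, and $h_A$ is harmonic there, so $e^{i\alpha h_A}\field{e^{i\alpha\phi^A}}=\field{e^{i\alpha(\phi^A+h_A)}}$ on $\mathcal O$. Since $\phi^A+h_A$ is locally absolutely continuous with respect to a GFF on small discs, $\Psi$ applies to it as well, and we get $\nabla\phi=\nabla\phi^A+\nabla h_A$ in $\mathcal O$. Hence $\phi-\phi^A-h_A$ is a constant $c_{\mathcal O}$ on each component. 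Comparing $e^{i\alpha\phi}$ on both sides, again in the mollified sense through (i), gives $e^{i\alpha c_{\mathcal O}}=1$, so $c_{\mathcal O}=\frac{2\pi}{\alpha}k_{\mathcal O}$.

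\textbf{Step 2 (independence).} Set $\phi_A:=\phi-\phi^A$. On $D\setminus A$ this equals $h_A+\frac{2\pi}{\alpha}k_{\mathcal O}$, which is harmonic. It remains to show that $\phi^A$ is conditionally independent of $(h_A,(k_{\mathcal O}))$ given $A$. Fix components $\mathcal O_1,\dots,\mathcal O_N$. The integer $k_{\mathcal O_i}$ is determined by $\phi$ and $h_A$ on $\mathcal O_i$. We want to express it through data outside $\cup_{i\le N}\mathcal O_i$ together with $h_A$; this is where hypothesis (ii) enters. Since $A$ is connected to $\partial D$ and the chaos puts no mass near $\partial\mathcal O_i$, the global field $\phi$ (a GFF on all of $D$) is recovered by \cite{GFF-IGMC} from $\field{e^{i\alpha\phi}}$ tested against $f$ with $f\1_{\cup\mathcal O_i}$ split off. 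Thus $(\phi,f)=(\phi,f\1_{D\setminus\cup\mathcal O_i})+(\phi,f\1_{\cup\mathcal O_i})$, and the first piece is a function of $\restr{(\field{e^{i\alpha\phi}})}{D\setminus\cup\mathcal O_i}$ once boundary mass is ruled out by (ii). Combining this with (iii), the integers $k_{\mathcal O_i}$, being determined by $h_A$ and the outside data, are conditionally independent of $\restr{(\field{e^{i\alpha\phi^A}})}{\cup\mathcal O_i}$, and hence, by Step 1, of $\restr{\phi^A}{\cup\mathcal O_i}$. Letting $N\to\infty$ through a monotone class argument gives the full conditional independence. It then follows that $\phi^A$, given $\F_A=\sigma(A,\phi_A)$, is a zero-boundary GFF in $D\setminus A$, which is exactly the local set property.

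\textbf{Main obstacle.} The hard part is Step 2: identifying $k_{\mathcal O}$ measurably from data outside $\mathcal O$ and $h_A$. The value of $\phi$ near $\partial\mathcal O$ is not pointwise defined, and $\partial\mathcal O$ may be fractal. Our first attempt will use circle averages, or the dyadic approximations $[\partial\mathcal O]_n$, to bridge between the two sides of $\partial\mathcal O$, using (ii) to ensure that no information is hidden on $A$ itself. If this fails, we will instead show that the joint law of $(\phi,A)$ determines $\phi_A$ uniquely, as in \cite{BTLS}, and verify the local set property through its characterisation by conditional expectations of $(\phi,f)$ given $A$.
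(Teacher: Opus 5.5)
Your Step 1 is essentially the paper's argument. You apply the local gradient reconstruction to both sides of (i) on each component, obtain $\phi=\phi^A+h_A+c_{\mathcal O}$, and use (i) once more to force $c_{\mathcal O}\in\frac{2\pi}{\alpha}\Z$.

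The gap is in Step 2, at the claim that $k_{\mathcal O_i}$ is a function of $A$, $h_A$ and $\restr{(\field{e^{i\alpha\phi}})}{D\setminus\cup_{i\le N}\mathcal O_i}$. None of (i)--(iii) gives this. The function $h_A$ only fixes $\phi_A$ on $\mathcal O_i$ modulo $\frac{2\pi}{\alpha}$. The leftover integer is boundary information of $\phi$ on $\partial\mathcal O_i$, and in general this is encoded only in the germ of $\phi$ at $\partial\mathcal O_i$ seen from both sides, in particular from inside $\mathcal O_i$. Hypothesis (ii) says the chaos charges no mass near $\partial\mathcal O_i$; it does not carry information across $\partial\mathcal O_i$.

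Outside data can genuinely fail to see such boundary information. Take $A=\A_{-2\lambda,2\lambda}$: given $A$, the value $\pm2\lambda$ of $\phi_A$ in a loop $\mathcal O$ is independent of $\phi$ restricted to $D\setminus\overline{\mathcal O}$, because the labels are i.i.d.\ and $A$ is thin for $\phi$.

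Reconstructing $\phi$ on all of $D$ does not close the gap. In your splitting, $k_{\mathcal O_i}$ sits in the inside piece, so it gets expressed through the outside data, $h_A$, and the renewed chaos $\field{e^{i\alpha\phi^A}}$ inside $\mathcal O_i$. That last ingredient is exactly what is correlated with $\phi^A$ on $\mathcal O_i$, so (iii) cannot be applied. Your fallback, uniqueness of $\phi_A$ as for bounded-type thin local sets, presupposes that $(\phi,A)$ is already local, so it is circular.

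The missing idea is to allow the inside contribution and show that it enters only through a boundary germ that carries no information about $\phi^A$.

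\begin{itemize}
\item A harmonic function is determined by its values arbitrarily close to the boundary. Together with the reconstruction theorem, this makes $h_A+c_{\mathcal O}$ measurable with respect to $\mathcal F:=\bigcap_n\sigma\big(A,\restr{(\field{e^{i\alpha\phi}})}{[A]_n}\big)$.
\item For $n\ge n_0$, hypotheses (ii) and (i) give $\mathcal F\subseteq\sigma(A,X,Y_n,Z_n)$. Here $X$ is the chaos on $D\setminus\cup_{i\le N}\mathcal O_i$, and $Y_n$, $Z_n$ are the restrictions of $h_A$ and of $\field{e^{i\alpha\phi^A}}$ to $[A]_n\cap\cup_{i\le N}\mathcal O_i$.
\item The inner piece $Z_n$ is correlated with $\phi^A$. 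However, by the Markov property of $\phi^A$ with respect to the local set $[A]_{n_0}$, it is conditionally independent of $W_{n_0}:=\restr{\big((\phi^A)^{[A]_{n_0}}\big)}{\cup_{i\le N}\mathcal O_i}$.
\item Combining this with (iii) gives the factorization $\E[r(X)s(Y_n)t(Z_n)u(W_{n_0})\mid A]=\E[r(X)s(Y_n)t(Z_n)\mid A]\,\E[u(W_{n_0})\mid A]$. Hence $\mathcal F$ is conditionally independent of $W_{n_0}$ for every $n_0$.
\item Finally, let $n_0\to\infty$. Then $W_{n_0}\to\phi^A$ on $\cup_{i\le N}\mathcal O_i$, which gives the required conditional independence of $h_A+\frac{2\pi}{\alpha}k_{\mathcal O}$ and $\phi^A$.
\end{itemize}
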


\def\O{\mathcal O}
\begin{proof}
    By the assumption $(i)$, given $A$, we have that 
      \begin{equation}\label{eq:local-assumption}
        (\field{e^{i\alpha\phi}}, f) = (e^{i\alpha h_A}\field{e^{i\alpha\phi^A}}, f)
    \end{equation}
    for any $f\in C_c^\infty(D\setminus A)$. By {\cite[Theorem 1]{GFF-IGMC}}, the \emph{same} local\footnote{To mean that the partial derivatives of the underlying fields are measurable with respect to their chaos restricted to any open neighbourhood of the support of the test function. Here, it is crucial that 
    given $A$, the field $e^{i\alpha h_A}\field{e^{i\alpha\phi^A}}$ is absolutely continuous with respect to the imaginary chaos of the GFF in $D\backslash A$. } procedure can be applied to both sides of \eqref{eq:local-assumption} to recover the respective gradients, which must then be equal. As this defines the fields up to a global additive constants in each connected component of $D\backslash A$, it follows that 
    \begin{equation}
        (\phi, f) = (\phi^A+h_A+c_{\mathcal O}, f),
    \end{equation}
    for some constant $c_\O$ that may depend on the connected component $\O$ of $D\setminus A$. Moreover, for \eqref{eq:local-assumption} to hold, it must be the case that
    \begin{equation}
        c_{\mathcal O}=\frac{2\pi}{\alpha}k_{\mathcal O},
    \end{equation}
    for some integer $k_{\mathcal O}$. It remains to show that $\phi^A$ is conditionally independent independent of $h^A+c_\O$ given $A$. Since any harmonic function can be recovered from its values arbitrarily close to the boundary,
    \begin{equation}
        h^A + c_\O \in \bigcap_{n\geq 1}\sigma\left(A, \restr{\phi}{[A]_n}\right) = \bigcap_{n\geq 1}\sigma\left(A, \restr{(\field{e^{i\alpha\phi}})}{[A]_n}\right) =:\F.
    \end{equation}
    \sloppy It thus suffices to show that $\F$ is conditionally independent of the finite-dimensional marginals $(\phi^{\O_1}, \ldots, \phi^{\O_N})$ for any $N\in\N$, where $\phi^{\O}$ denotes a GFF with zero boundary conditions in $\O$. Now, for any fixed $n_0\in\N$ and again thanks to \cite{GFF-IGMC},
    \begin{align}
        \F & \subseteq \bigcap_{n\geq n_0}\sigma\left(A, \restr{(\field{e^{i\alpha\phi}})}{[A]_n\cup(\bigcup_{i=N+1}^\infty \O_i)}\right)\\
        & = \bigcap_{n\geq n_0}\sigma\left(A, \restr{(\field{e^{i\alpha\phi}})}{D\setminus\bigcup_{i=1}^N \O_i}, \restr{\big(e^{i\alpha h_A}\field{e^{i\alpha\phi^A}}\big)}{[A]_n\cap(\bigcup_{i=1}^N \O_i)}\right) \\\label{eq:sigma-algebra}
        & \subseteq \bigcap_{n\geq n_0}\sigma\left(A, \restr{(\field{e^{i\alpha\phi}})}{D\setminus\bigcup_{i=1}^N \O_i}, \restr{\big(h_A\big)}{[A]_n\cap(\bigcup_{i=1}^N \O_i)}, \restr{\big(\field{e^{i\alpha\phi^A}}\big)}{[A]_n\cap(\bigcup_{i=1}^N \O_i)}\right),
    \end{align}
    where the equality follows by the thinness assumption $(ii)$.  For the sake of simplicity, we write $X, Y_n, Z_n$ for the three restrictions of fields appearing in \eqref{eq:sigma-algebra}, respectively. Moreover, we set
    \begin{equation}
        W_{n_0} = \big(\restr{(\phi^A)^{[A]_{n_0}}\big)}{\bigcup_{i=1}^N\O_i},
    \end{equation}
    where we note that $[A]_{n_0}$ is also a local set for $\phi^A$.
    Note that 
    \begin{equation}
        Z_n, W_{n_0} \in \sigma\left(\restr{(\field{e^{i\alpha\phi^A}})}{\cup_{i=1}^N\O_i}\right),
    \end{equation}
    and they are conditionally independent of each other given $A$ whenever $n\geq n_0$. Using these two observations, along with the assumption $(iii)$, we see that for any any continuous, bounded functionals $r, s, t, u$ in the appropriate spaces,
    \begin{align*}
        \E\left[r(X)s(Y_n)t(Z_n)u(W_{n_0})\mid A\right] & = \E\left[r(X)s(Y_n)\ \E[t(Z_n)u(W_{n_0})\mid A, X, Y_n]\mid A\right] \\
        & = \E[r(X)s(Y_n)\mid A]\ \E[t(Z_n)u(W_{n_0})\mid A] \\
        & = \E[r(X)s(Y_n)\mid A]\ \E[t(Z_n)\mid A]\ \E[u(W_{n_0})\mid A] \\
        & = \E[r(X)s(Y_n)t(Z_n)\mid A]\ \E[u(W_{n_0})\mid A].
    \end{align*}
    Since 
    \begin{equation}
        W_{n_0}\longrightarrow \phi^{\cup_{i=1}^N\O_i}\equiv (\phi^{\O_1}, \ldots, \phi^{\O_N})
    \end{equation}
    almost surely as $n_0\to\infty$, and all functionals are bounded, the claimed conditional independence follows.
\end{proof}


\section{Preliminaries in the discrete: XOR-Ising and double random currents} \label{sec:prelim-discrete}
We introduce the (nearest-neighbor, ferromagnetic) XOR-Ising model, focusing on its representation in terms of double random currents. We also present the main results established in \cite{DRC-1, DRC-2} concerning the scaling limit of this percolation model, which will be key for us to be able to show the convergence of the discrete XOR-Ising model towards the one we have defined in Theorem \ref{thm:cont-XOR}. We provide only the necessary facts to prove Theorem \ref{thm:discrete-conv} and Corollary \ref{cor:joint-conv}, referring to \cite{ising-GFF, DRC-1} for the full structure of the coupling\footnote{Note however, that in the former the double random current with wired boundary conditions is defined on the primal graph, while in the latter it is defined on the dual graph. Here, we choose the first of these conventions.}.

\subsection{XOR-Ising model} Let us start with the very basic definition of the XOR-Ising model, to set some notation and conventions.

Let $D\subset \C$ be a Jordan domain. For concreteness, we define a discrete domain approximation $D_\delta\subset\delta\Z^2$ to be the induced subgraph where every vertex corresponds to a dual face intersecting $D$. The vertices corresponding to a dual face not contained in $D$ are denoted by $\partial D_\delta$. See Figure~\ref{fig:discrete-domain}. The weak dual graph $D_\delta^\dagger$ is simply the (strong) dual graph where the vertex corresponding to the unbounded face has been removed.

\begin{figure}[h]
    \centering
    \includegraphics[width=0.7\linewidth]{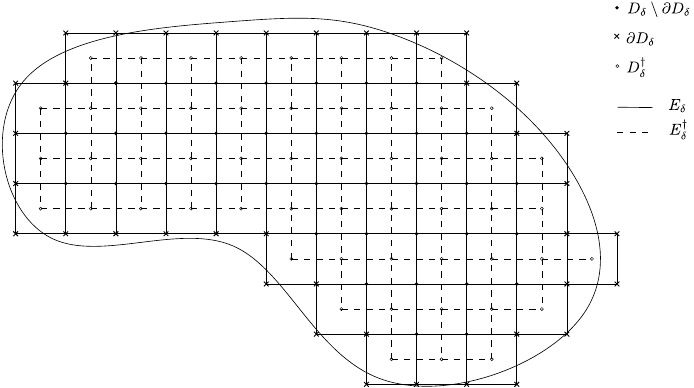}
    \caption{The discrete domain approximation $D_\delta$ chosen above.}
    \label{fig:discrete-domain}
\end{figure}

The XOR-Ising model with $+$ boundary conditions on $D_\delta$, at inverse temperature $\beta>0$, is defined to be the product $$(\tau_\delta(v))_{v\in D_\delta}:=(\sigma_\delta(v)\tilde\sigma_\delta(v))_{v\in D_\delta}\in\{\pm1\}^{D_\delta}$$ of two independent Ising models, i.e. sampled from the probability measure
\begin{equation}
    \P^{+}_{\delta,\beta}((\sigma_\delta, \tilde\sigma_\delta)) := \frac{1}{Z_{\delta,\beta}^+} \exp\left\{\sum_{v\sim v'}\beta(\sigma_\delta(v)\sigma_\delta(v')+\tilde\sigma_\delta(v)\tilde\sigma_\delta(v'))\right\}\1_{\big\{\restr{\sigma_\delta}{\partial D_\delta} \equiv \restr{\tilde\sigma_\delta}{\partial D_\delta} \equiv 1\big\}}
\end{equation}
on $\{\pm1\}^{D_\delta}\times\{\pm1\}^{D_\delta}$, where ${Z_{\delta,\beta}^+}$ is a normalization constant. The XOR-Ising model $\tau_\delta^\dagger$ with free boundary conditions on $D_\delta^\dagger$ is defined analogously, now under the measure
\begin{equation}
    \mathbb\P^{\varnothing}_{\delta,\beta}((\sigma_\delta^\dagger, \tilde\sigma_\delta^\dagger)) := \frac{1}{Z_{\delta,\beta}^\varnothing}  \exp\left\{\sum_{u\sim u'}\beta^\dagger(\sigma_\delta^\dagger(u)\sigma_\delta^\dagger(u')+\tilde\sigma_\delta^\dagger(u)\tilde\sigma_\delta^\dagger(u'))\right\},
\end{equation} 
where $\beta^\dagger$ is the dual inverse temperature satisfying
\begin{equation}
    \tanh(\beta^\dagger) = \exp(-2\beta).
\end{equation}
The critical inverse temperature $\beta_c$ is self-dual, and given by $$\beta_c=\frac{1}{2}\log(1+\sqrt{2}).$$ When considering $\beta_c$ we drop it from the notation.

To take the scaling limit $\delta\to0$ of the critical XOR-Ising models, one must view the spin systems as distributions and renormalize them appropriately. Namely, we set
\begin{equation}\label{eq:XOR-field}
    \tau_\delta(z) :=\delta^{-1/4}\tau_\delta(v(z)),
\end{equation}
where $v(z)$ is the vertex of $D_\delta$ corresponding to the dual face containing $z$. For each $\delta>0$ fixed, this is a well-defined function, and in particular a distribution. The following result identifies the limit of the critical XOR-Ising with $+$ boundary conditions (resp. free boundary conditions) as the real (resp. imaginary) part of the complex multiplicative chaos with parameter $\alpha=1/\sqrt{2}$. We later explain why this identification holds at the discrete level already -- see the identity \eqref{eq:discrete-cos-sin}. Moreover, in Lemma \ref{lem:conv-fields-fractal}, we extend the statement to joint convergence (and drop any assumptions on the compact support of the test functions).

\begin{thm}[\cite{CHI, CHI-2, IGMC}] \label{thm:conv-fields}
Let $D$ be a Jordan domain. For every continuous function $f:D\to\R$ with compact support in $D$, and any $k\geq1$, 
\begin{enumerate}
    \item[(i)] $\E_{\delta}^+[(\tau_\delta, f)^k]\to\mathcal{C}^{k}\ 2^{k/2}\ \E[(\field{\cos(\frac{1}{\sqrt{2}}\phi)}, f)^k]$,
    \item[(ii)] $\E_{\delta}^\varnothing[(\tau_\delta^\dagger, f)^k]\to\mathcal{C}^{k}\ 2^{k/2}\ \E[(\field{\sin(\frac{1}{\sqrt{2}}\phi)}, f)^k]$,
\end{enumerate}
as $\delta\to0$, where $\mathcal{C}$ is an explicit (lattice-dependent) constant. In particular, the fields converge in distribution with respect to the topology of the Sobolev space $H^{s}_{loc}(D)$ for any $s<-1$.
\end{thm}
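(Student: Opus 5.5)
The statement is attributed to \cite{CHI, CHI-2, IGMC}, so the plan is to assemble known ingredients rather than prove anything from scratch. There are three steps: convergence of the discrete spin correlations, identification of the continuum correlations with those of the imaginary chaos, and passage from moments to convergence in law.

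\textbf{Step 1: discrete correlations.} Since $\tau_\delta=\sigma_\delta\tilde\sigma_\delta$ with $\sigma_\delta,\tilde\sigma_\delta$ independent, the $k$-point functions factor:
\[
\E_\delta^+\Big[\prod_j\tau_\delta(v_j)\Big]=\E_\delta^+\Big[\prod_j\sigma_\delta(v_j)\Big]^2,
\]
and the same holds for free boundary conditions. By \cite{CHI, CHI-2}, for distinct bulk points $z_1,\dots,z_k$ we have $\delta^{-k/8}\E_\delta^{+/\varnothing}[\prod_j\sigma_\delta(z_j)]\to\mathcal C_\sigma^k\langle\prod_j\sigma(z_j)\rangle_D^{+/\varnothing}$. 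Squaring gives the pointwise limit of $\delta^{-k/4}$ times the XOR correlations, and we set $\mathcal C=\mathcal C_\sigma^2$.

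\textbf{Step 2: bosonisation identity.} We must check that the squared continuum Ising correlations equal $2^{k/2}$ times the correlations of $\field{\cos(\phi/\sqrt2)}$ (resp.\ $\field{\sin(\phi/\sqrt2)}$) with our normalisation. By conformal covariance, reduce to the upper half-plane. There the explicit formula is
\[
\Big(\langle\prod_j\sigma(z_j)\rangle^{+}\Big)^2=\prod_j(2\,\mathrm{Im}\, z_j)^{-1/4}\;2^{-k/2}\sum_{\varepsilon\in\{\pm1\}^k}\prod_{i<j}\Big|\frac{z_i-z_j}{z_i-\bar z_j}\Big|^{\varepsilon_i\varepsilon_j/2}.
\]
Expanding $\cos=\tfrac12(e^{i\cdot}+e^{-i\cdot})$ and using Gaussian computations with $G_{\mathbb H}(z,w)=\log|(z-\bar w)/(z-w)|$ and $\alpha^2=1/2$ matches this term by term: each $\varepsilon$ corresponds to a choice of charges. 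Keeping track of $\CR(z,\mathbb H)=2\,\mathrm{Im}\,z$ gives the factors $2^{k/2}$ and the prefactor. For the free case, the sine carries the factor $\prod_j\varepsilon_j$ (up to $i^{-k}$), which matches the free-boundary formula where the product is inserted. This is the IGMC bosonisation identity of \cite{IGMC}.

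\textbf{Step 3: from points to test functions, then to law.} The main obstacle is integrating the pointwise convergence against $f^{\otimes k}$. This requires a uniform domination of the discrete $k$-point functions near the diagonal, which is what \cite{CHI-2} provides as uniform bounds on compacts away from the boundary. Concretely, we need
\[
\delta^{-k/4}\E\Big[\prod_j\tau_\delta\Big]\lesssim\prod_j\min_{m\neq j}|z_j-z_m|^{-1/4},
\]
which is integrable because $\alpha^2/2=1/4<2$ (compare the Onsager bound, Proposition~\ref{prop:onsager}). Dominated convergence then gives (i) and (ii). For convergence in law, we use that the limiting moments determine the law \cite[Theorem 3.12]{IGMC}, which holds thanks to their slow growth, so the method of moments applies to each $(\tau_\delta,f)$. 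Joint moments for finitely many test functions follow by polarisation. Tightness in $H^s_{loc}(D)$ for $s<-1$ follows from uniform second moment bounds on Fourier coefficients of the localised fields. Combining tightness with the identification of finite-dimensional laws yields the stated convergence.
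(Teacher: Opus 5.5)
Your route is the one the paper relies on. It cites \cite{CHI, CHI-2, IGMC} for exactly this chain: convergence of the squared Ising correlations, identification with the chaos correlations via the explicit half-plane formula, domination, and moment determinacy. It then re-runs the chain in Lemma~\ref{lem:conv-fields-fractal}. Your bookkeeping of the factor $2^{k/2}$, of $\CR(z,\mathbb H)=2\,\mathrm{Im}\,z$ and of the sign $i^{-k}$ for the sine is correct.

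The one step not justified as written is the near-diagonal domination. You correctly identify it as the main obstacle, but you attribute it to \cite{CHI-2}. That paper only gives convergence uniformly over configurations whose points stay $\epsilon$-apart from each other and from $\partial D$. It gives no control as points collide. What you need is a separate a priori estimate, uniform in $\delta$:
\begin{equation*}
\delta^{-k/8}\,\E^{+}_{\delta}\Big[\prod_{j=1}^k\sigma_\delta(v_j)\Big]\leq C\prod_{j=1}^k\Big(\min_{m\neq j}|v_j-v_m|\wedge d(v_j,\partial D)\Big)^{-1/8}.
\end{equation*}
This is \cite[Proposition 3.10]{tightness-fields} or \cite[Lemma 4.4]{IGMC}, and it is exactly what the paper invokes in \eqref{eq:onsager-discrete}. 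You also need its free-boundary analogue. That one follows because, by Griffiths' inequality, free-boundary correlations are dominated by the full-plane ones. Squaring the bound gives your dominating function, and the rest of Step 3 then goes through.

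A minor point: integrability of $\prod_j\min_{m\neq j}|z_j-z_m|^{-\beta}$ requires $\beta<1$, since two colliding points contribute $|z_1-z_2|^{-2\beta}$. Here $\beta=1/4$, equivalently $\alpha^2<2$. The condition $1/4<2$ you wrote is not the relevant threshold.
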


\begin{rem}
    Both statements are a direct consequence of the results in \cite{CHI}, which determine explicitly the scaling limit of the $k$-point correlation functions of the critical Ising model. Under free boundary conditions, the formula for $k\geq3$ can be found after a short computation using {\cite[Theorem 7.1]{CHI-2}}. The identification of the limiting field under $+$ boundary conditions first appeared in \cite{IGMC}. The optimal Sobolev regularity is $s=-1/4$, as proved in \cite{tightness-fields, IGMC}. 
\end{rem}

\subsection{Double random currents} To introduce the discrete excursion decomposition of the XOR-Ising model, we need to introduce the double random current (DRC) model. A current $\nb_\delta$ on $D_\delta$ is a function $\nb_\delta:E_\delta\to\{0,1,\ldots\}$ defined on the edges of the graph. The set of sources of $\nb_\delta$ is
\begin{align}
    \partial\nb_\delta := \big\{v\in D_\delta : \sum_{v'\sim v}\nb_\delta(vv')\ \text{is odd}\ \big\},
\end{align}
where in order to impose ``wired'' boundary conditions we take the convention that all the vertices in $\partial D_\delta$ are identified into a single ``ghost'' vertex and all edges between two vertices in $\partial D_\delta$ are disregarded (or, equivalently, set to have $\nb_\de(e)=2$). On $D_\delta^\dagger$, the definition is analogous, without needing to specify any way of enforcing boundary conditions. We denote the set of currents $\nb_\delta$ with $\partial\nb_\delta=\emptyset$ by $\Omega_\delta^\emptyset$.

The random current model with wired boundary conditions on $D_\delta$ , at inverse temperature $\beta$, is given by the probability measure
\begin{align}
    \Pb^{\textrm{w}, \curr}_{\delta, \beta}(\nb_\delta) := \cfrac{1}{Z_{\delta}^{\textrm{w}, \curr}}\ \prod_{e\in E}\frac{\beta^{\nb_\delta(e)}}{\nb_\delta(e)!} 
\end{align}
on $\Omega_\delta^\emptyset$, where $Z_{\delta}^{\textrm{w}, \curr}$ is a normalization constant. The double random current model on $D_\delta$ with wired boundary conditions is defined as the sum of two independent random currents with wired boundary conditions. That is, 
\begin{align}
    \Pb^{\textrm{w}, \DRC}_{\delta, \beta}(\nb_\delta) :=  \Pb^{\textrm{w}, \curr}_{\delta, \beta} \otimes \Pb^{\textrm{w}, \curr}_{\delta, \beta}(\{(\nb_\delta^1, \nb_\delta^2)\in\Omega_\delta^\emptyset\times\Omega^\emptyset_\delta : \nb_\delta = \nb_\delta^1+\nb_\delta^2\}).
\end{align}
The respective models with free boundary conditions on $D_\delta^\dagger$ are defined in complete analogy, where the only difference is that we do not identify any set of vertices together. The corresponding measure is denoted by $$\Pb^{\textrm{f}, \DRC}_{\delta, \beta^\dagger}(\nb_\delta^\dagger).$$ 
The trace of a current $\nb_\delta$ is defined to be
\begin{equation}
    \hat\nb_\delta := \{e\in E_\delta: \nb_\delta(e)>0\}.
\end{equation}
It is precisely the trace of a DRC that yields the excursion decomposition of the XOR-Ising model, as first noted in \cite{spin-perc-height, DRC-1} -- see {\cite[Remark 9]{spin-perc-height}}. This readily follows from (a corollary to) the classical switching lemma introduced in \cite{switching}. 
\begin{prop}[Switching Lemma\label{prop:switching}]
    For any set of vertices $A\subset D_\delta\backslash \partial D_{\delta}$,
    \begin{equation}\label{eq:switching}
        \E^+_{\delta, \beta}\left[\prod_{v\in A}\tau_\delta(v)\right] = \Pb^{\textup{w},\DRC}_{\delta, \beta}\left(\hat{\nb}_\delta\in\mathfrak F_A\right)
    \end{equation}
    where $\mathfrak{F}_A$ is the event that every cluster of $\hat\nb_\delta$ intersects $A$ (resp. $A\cup\{\partial D_\delta\}$) an even number of times if $|A|$ is even (resp. $|A|$ is odd). The dual statement with free boundary conditions also holds.
\end{prop}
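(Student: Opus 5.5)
The plan is to derive \eqref{eq:switching} from the classical switching lemma for a single Ising model, using that $\tau_\delta=\sigma_\delta\tilde\sigma_\delta$ is a product of two independent Ising models. By independence,
\[
\E^+_{\delta,\beta}\Big[\prod_{v\in A}\tau_\delta(v)\Big]=\langle\sigma_A\rangle^+_{\delta,\beta}\,\langle\sigma_A\rangle^+_{\delta,\beta},\qquad \sigma_A:=\prod_{v\in A}\sigma_\delta(v).
\]
So it suffices to write $\langle\sigma_A\rangle^2$ as a double-current probability.

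First, I would recall the random current expansion with wired boundary conditions. Expanding $e^{\beta\sigma_v\sigma_{v'}}=\sum_{n\ge0}\beta^n(\sigma_v\sigma_{v'})^n/n!$ and summing over spins, with the boundary collapsed to the ghost vertex $\mathfrak g$ carrying spin $+1$, gives
\[
\langle\sigma_A\rangle^+=\frac{Z(\partial\nb=A')}{Z(\partial\nb=\emptyset)},
\]
where $Z(\partial\nb=B)=\sum_{\partial\nb=B}\prod_e\beta^{\nb(e)}/\nb(e)!$. Here $A'=A$ if $|A|$ is even and $A'=A\cup\{\mathfrak g\}$ if $|A|$ is odd, since the ghost absorbs a leftover parity. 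Squaring then gives
\[
\langle\sigma_A\rangle^2=\frac{\sum_{\partial\nb_1=A',\,\partial\nb_2=A'}w(\nb_1)w(\nb_2)}{Z(\emptyset)^2}.
\]

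Next, I would apply the switching lemma: for any $B$ and any function $F$ of the sum $\nb_1+\nb_2$,
\[
\sum_{\partial\nb_1=A',\,\partial\nb_2=B}w(\nb_1)w(\nb_2)F(\nb_1+\nb_2)=\sum_{\partial\nb_1=\emptyset,\,\partial\nb_2=A'\Delta B}w(\nb_1)w(\nb_2)F(\nb_1+\nb_2)\,\1_{\{\nb_1+\nb_2\in\mathcal F_{A'}\}}.
\]
Here $\mathcal F_{A'}$ is the event that every cluster of the trace contains an even number of points of $A'$. With $B=A'$ we get $A'\Delta B=\emptyset$, so the right side equals $Z(\emptyset)^2\,\Pb^{\mathrm w,\DRC}_{\delta,\beta}(\hat\nb_\delta\in\mathcal F_{A'})$.

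It remains to translate $\mathcal F_{A'}$ into $\mathfrak F_A$. With the ghost included this is exactly the stated event: $A$ for $|A|$ even, and $A\cup\{\partial D_\delta\}$ for $|A|$ odd. Clusters touching $\partial D_\delta$ are merged through the ghost, which is the convention fixed in the definition of sources.

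The dual free statement follows verbatim on $D^\dagger_\delta$ at $\beta^\dagger$, without the ghost, with $\mathcal F_A$ and with both sides vanishing when $|A|$ is odd. The only delicate point is the bookkeeping of the ghost vertex and the disregarded boundary edges. One has to check that the wired Ising measure coincides with the graph obtained by contracting $\partial D_\delta$, and that the switching lemma's combinatorial proof (a bijection via sub-currents on a multigraph) applies to that contracted multigraph. I would not reprove the switching lemma itself but cite \cite{switching}.
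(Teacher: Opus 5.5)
Your proof is correct and is the derivation the paper has in mind. The paper gives no separate proof and only says the identity follows from (a corollary to) the classical switching lemma of \cite{switching}. Your steps fill in that corollary: independence of $\sigma_\delta$ and $\tilde\sigma_\delta$ gives $\langle\sigma_A\rangle^2$, the wired current expansion uses the ghost vertex to absorb the parity of $|A|$, switching with $B=A'$ finishes, and the free dual case correctly has both sides vanish for odd $|A|$.
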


\noindent The key observation to find the excursion decomposition goes as follows. We define a spin model by assigning symmetric $\{\pm1\}$-valued spins to each cluster of $\hat\nb_\delta$ independently, except for the boundary cluster (i.e. the cluster containing the \emph{vertex} $\{\partial D_\delta\}$) which is always assigned spin $+1$. For any $A\subset D_\delta$, the correlation functions of this model are given by the right-hand side of \eqref{eq:switching}. Since the collection of all such functions determines the law of any $\{\pm1\}$-valued spin model uniquely, it must have the law of the XOR-Ising model. 

Before stating this decomposition rigorously, let us introduce some helpful notation, following \cite{DRC-1}. We write $(\mathcal C^\delta_{k})_{k\geq0}$ to denote the collection of clusters of $\hat\nb_\de$ viewed as \emph{subgraphs} of $D_\delta$, and ordered by decreasing size of diameter. To every cluster $\mathcal {C}^\delta$ we associate a loop configuration made of the dual edges $e^\dagger\in E_\delta^\dagger$ where $e=\{v, v'\}$ is such that $v\in\mathcal{C}^\delta$ and $v\notin\mathcal{C}^\delta$. This procedure gives:
\begin{enumerate}
    \item[(i)] A single loop corresponding to the the unbounded component of $D_\delta \setminus \mathcal C^\delta$, called the \emph{outer boundary of $\mathcal C^\delta$}.
    \item[(ii)] A collection of loops corresponding to the bounded components, called the \emph{inner boundaries of $\mathcal C^\delta$}. 
\end{enumerate}
In particular, to each $\mathcal{C}^\delta$ we can associate a closed set $C^\delta$ given by the region enclosed between the outer boundary and the inner boundaries (equivalently, by gluing together all the $\delta$-squares centred at points in $\mathcal{C}^\delta$).

\begin{cor}[\cite{DRC-1, spin-perc-height}]\label{cor:discrete-exc}
   Let $\tau_\delta$ be the XOR-Ising field with $+$ boundary conditions on $D_\delta$. Let $(C_k^\delta)_{k=0}^N$ be the collection of clusters of the trace $\hat\nb_\delta$ of a DRC with wired boundary conditions on $D_\delta$. Then,
    \begin{align*}
        \tau_\delta(z) = \mu^\delta_0(z)+\sum_{k=1}^N\xi_k^\delta\mu^\delta_k(z), 
    \end{align*}
    where $(\xi_k^\delta)_{k\geq1}^N$ are the i.i.d. symmetric signs and $(\mu_k^\delta)_{k\geq1}^N$ are the renormalized discrete area measures given by 
    \begin{equation}
        \mu^\delta_k(dz):=\delta^{-1/4}\mathbf{1}_{\{z\in C^\delta_k\}}dz.
    \end{equation}
    The dual statement  also holds, without the measure $\mu_0^\delta$ of the boundary cluster.
\end{cor}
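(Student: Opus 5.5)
The approach is to build the spin model from the DRC trace and identify its law through correlation functions. Proposition \ref{prop:switching} then reduces the identification to a combinatorial parity check.

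Sample $\hat\nb_\delta$ under $\Pb^{\textup{w},\DRC}_{\delta}$ and enumerate its clusters. Let $\mathcal C_0^\delta$ be the boundary cluster, i.e. the one containing the ghost vertex $\{\partial D_\delta\}$ (possibly just that vertex). Give $\mathcal C_0^\delta$ spin $+1$, and give every other cluster $\mathcal C_k^\delta$, $k\ge1$, an independent symmetric sign $\xi_k^\delta$. This defines a spin field $\tilde\tau_\delta(v)$ on the vertices, which equals the sign of the cluster containing $v$.

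Fix $A\subset D_\delta\setminus\partial D_\delta$. Conditioning on $\hat\nb_\delta$ and averaging over the signs gives
\begin{equation*}
\E\Big[\prod_{v\in A}\tilde\tau_\delta(v)\,\Big|\,\hat\nb_\delta\Big]=\prod_{k\ge1}\E\big[(\xi_k^\delta)^{|A\cap\mathcal C_k^\delta|}\big].
\end{equation*}
Each factor is $1$ if $|A\cap \mathcal C_k^\delta|$ is even and $0$ otherwise. So the conditional correlation is the indicator that every non-boundary cluster meets $A$ an even number of times.

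The parity of $|A\cap\mathcal C_0^\delta|$ is then forced, because the total $|A|$ is the sum over all clusters. If $|A|$ is even, every cluster meets $A$ evenly. If $|A|$ is odd, the boundary cluster meets $A$ an odd number of times, so it meets $A\cup\{\partial D_\delta\}$ evenly. This is exactly the event $\mathfrak F_A$. Taking expectations and applying \eqref{eq:switching} shows that $\tilde\tau_\delta$ and $\tau_\delta$ have the same correlation function $\E[\prod_{v\in A}\cdot]$ for every $A$.

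For $\{\pm1\}$-valued fields on a finite set, the law is determined by these correlations, by Fourier inversion on $\{\pm1\}^{D_\delta}$. Boundary spins are $+1$ in both models, so $\tilde\tau_\delta\overset{(d)}{=}\tau_\delta$. We may therefore realise $\tau_\delta$ as $\tilde\tau_\delta$.

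It remains to pass from vertex spins to the renormalised field \eqref{eq:XOR-field}. The field $\tau_\delta(z)=\delta^{-1/4}\tau_\delta(v(z))$ is constant on the $\delta$-square around each vertex, and these squares tile $\bigcup_k C_k^\delta$ disjointly. Summing over clusters gives $\tau_\delta(z)=\mu_0^\delta(z)+\sum_k\xi_k^\delta\mu_k^\delta(z)$. The supports $C_k^\delta$ are disjoint up to boundaries of measure zero, and the decomposition holds as densities.

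For the free boundary version on $D_\delta^\dagger$, I would repeat the argument with the free DRC and the dual switching lemma. There is no ghost vertex, so the condition becomes that all clusters meet $A$ evenly. All clusters then receive i.i.d. signs and no $\mu_0^\delta$ appears.

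The main obstacle is the bookkeeping around the ghost vertex in the odd case. One has to check that, in the switching lemma, the boundary cluster is precisely the one whose spin is frozen, and that edges inside $\partial D_\delta$ are treated consistently. Everything else is routine.
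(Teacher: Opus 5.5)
Your proposal is correct and follows essentially the same route as the paper: you assign independent symmetric signs to the non-boundary clusters of $\hat\nb_\delta$ and $+1$ to the boundary cluster, match the resulting correlation functions with the switching lemma (Proposition \ref{prop:switching}), and conclude because all correlations of a $\{\pm1\}$-valued field determine its law. Your explicit parity check at the ghost vertex, and the passage from vertex spins to the area measures $\mu_k^\delta$, spell out details that the paper leaves implicit.
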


\subsection{Master coupling} Finally, let us present the (almost) complete structure of the joint coupling introduced in \cite{DRC-1}, referred to as the \emph{master coupling}, which allows one to couple all the relevant models both in the primal graph and the dual graph in a consistent manner. As we use extensively this coupling in the sequel, we collect the most relevant facts for our purposes. 

 We say that 
\begin{enumerate}
    \item[(i)] A cluster $\mathcal C^\delta$ of $\hat\nb_\delta$ is odd around a face  $u\in D_\delta^\dagger$ if the total current of the cluster flowing across any dual path connecting $u$ to the unbounded face is odd.
     \item[(ii)] An inner boundary $\ell^\de$ of a cluster $\mathcal C^\de$ is odd  if the cluster $\mathcal C^\de$ is odd around every face encircled by $\ell^\de$. We set the parity label to be
     \begin{equation}
         c_\delta(\ell^\delta) := \begin{cases}
             + 1\quad \text{if}\ \ell^\delta\ \text{is odd} \\
             -1\quad \text{otherwise.}
         \end{cases}
     \end{equation}
\end{enumerate}
The dual definitions are analogous. In the next statement, all primal models are at inverse temperature $\beta$, while all dual models are at inverse temperature $\beta^\dagger$.

\begin{thm}[{\cite[Theorem 3.1]{DRC-1}}]\label{thm:master}
    There exists a coupling $\P_{\delta}$ of the following objects
    \begin{enumerate}[label=(\roman*)]
        \item A double random current $\nb_\delta$ with wired boundary conditions on $D_\delta$ and a double random current $\nb_\delta^\dagger$ with free boundary conditions on $D_\delta^\dagger$
        \item A height function $h_\delta$ taking integer values on $D_\delta$ and half-integer values on $D_\delta^\dagger$
        \item A XOR-Ising model $\tau_\delta$ with $+$ boundary conditions on $D_\delta$ and a XOR-Ising model $\tau_\delta^\dagger$ with free boundary conditions on $D_\delta^\dagger$
    \end{enumerate}
    in such a way that
    \begin{enumerate}[label=(\arabic*)]
        \item There exists no pair $(e, e^\dagger)$ of dual edges such that $\nb_\delta(e)>0$ and $\nb_\delta^\dagger(e^\dagger)>0$.
        \item\label{labels} The set
        \begin{equation}
            \nb_\delta^\odd :=\{e\in E_\de: \nb_\delta(e)\ \text{is odd}\}
        \end{equation}
        is equal to the collection of interfaces of $\tau_\delta^\dagger$. In particular, the spin $\tau_\delta^\dagger(u)$ is given by a product of  $-c_\delta$ over the inner boundaries surrounding $u$.
        The same holds for the dual models.
        \item\label{eq:cos-sin-height} The spins are given by
        \begin{align}
            &\tau_\delta(v) = \cos(\pi h_\delta(v)), \\ \label{eq:discrete-cos-sin}
            &\tau_\delta^\dagger(u) = \sin(\pi h_\delta(u)).
        \end{align}
        In particular, 
        \begin{equation}\label{eq:d-grad}
            h_\de(u)-h_\de(v) = \frac{1}{2}\ \tau_\de(v)\tau_\de^\dagger(u).
        \end{equation}
        \item The conditional law of $\tau_\delta$ given $\nb_\delta$ is that of i.i.d. symmetric sign assignments $(\xi_k^\delta)_{k\geq1}$ to the clusters of $\nb_\delta$. The same holds for the dual models.
        \item\label{boson} \cite{ dimer-boson, nesting-field} The bosonisation identity holds, i.e.
        \begin{equation}\label{eq:bosonisation}
            \E_\delta\Big[\prod_{i=1}^n\cos(\pi h_\delta(v_i))\prod_{j=1}^k\sin(\pi h_\delta(u_j))\Big] = \E_{\delta, \beta}^+\Big[ \prod_{i=1}^n\sigma_\delta(v_i)\prod_{j=1}^k\mu_\delta(u_j)\Big]^2
        \end{equation}
        for any primal vertices $v_1, \ldots, v_n\in D_\delta$ and any dual vertices $u_1, \ldots, u_k\in D_\delta^\dagger$. Here, the so-called disorder operators are such that
        \begin{align}\label{eq:disorder}
            \E_{\delta, \beta}^+\Big[\prod_{j=1}^k \mu_\delta(u_j)\Big] := \E_{\delta, \beta}^+\Big[\exp\big\{-2\beta\sum_{xx'\in E_\delta^\gamma}\sigma_\delta(x)\sigma_\delta(x')\big\}\Big] = \E_{\delta, \beta^\dagger}^\varnothing\Big[\prod_{j=1}^k\sigma_\delta^\dagger(u_j)\Big],
        \end{align}
        where $\gamma$ is any\footnote{The second equality is the well-known Kramers--Wannier duality of the Ising model. In particular, the expression on the left-hand side of \eqref{eq:disorder} does not depend on the choice of the disorder line $\gamma$, but only on the choice of dual vertices $u_1, \ldots, u_k$. The expectation on the right-hand side of \eqref{eq:bosonisation} does depend on the homology class of $\gamma$ in $D_\delta\setminus\{v_1, \ldots, v_n\}$, but only up to a sign that is cancelled by the square.} collection of dual edges such that $\{u_1, \ldots, u_k\}$ is the set of dual vertices incident to an odd number of edges in $\gamma$,  and $E_\delta^\gamma$ is the subset of edges of $E_\delta$ crossed by $\gamma$.
        \item\label{MP-free} When exploring a cluster of $\nb_\delta$ from the outside, the law of $\nb_\delta$ inside each inner boundary is that of a DRC with free boundary conditions. The same holds for $\nb_\delta^\dagger$.
        \item\label{MP-wired} When exploring a cluster of $\nb_\delta$ from the outside, the law of $\nb_\delta^\dagger$ inside each inner boundary is that of a DRC with wired boundary conditions (and thus the inner boundary itself is identified into a single vertex). The dual statement, swapping the roles of $\nb_\delta$ and $\nb_\delta^\dagger$, also holds. 
    \end{enumerate}
\end{thm}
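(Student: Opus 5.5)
The master coupling is quoted from \cite[Theorem 3.1]{DRC-1}, so the plan is to assemble it from known ingredients rather than reprove each piece. First I would build the joint law of the two currents on the planar pair $(D_\delta, D_\delta^\dagger)$: sample $\nb_\delta$ as a wired DRC on $D_\delta$, then sample $\nb_\delta^\dagger$ inside the complement of the trace $\hat\nb_\delta$. Concretely, inside each inner boundary of a cluster of $\hat\nb_\delta$ I place an independent wired-on-the-loop DRC on the dual, and on the dual faces not separated by any primal cluster a free DRC. This gives (1) by construction. It also gives the Markov properties \ref{MP-free}--\ref{MP-wired}, provided the law of $\nb_\delta$ inside an inner boundary, given the exploration from outside, is free DRC. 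I would prove that by a direct weight computation: the DRC weight factorises over clusters, and the constraint that the explored cluster is maximal only forbids current on edges adjacent to it. The consistency of the dual description, i.e.\ that the marginal of $\nb^\dagger_\delta$ is free DRC on $D^\dagger_\delta$, follows from the same factorisation read in the other direction.

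Second, I would define the height function $h_\delta$ by integrating the increments \eqref{eq:d-grad}. The needed closedness comes from the parity structure. The odd part $\nb_\delta^{\odd}$ is sourceless, hence a collection of dual loops, so it defines the domain walls of a dual spin configuration. Assign $\tau_\delta^\dagger$ via the product of $-c_\delta$ over surrounding odd inner boundaries as in \ref{labels}, and assign $\tau_\delta$ via i.i.d. signs on clusters with the boundary cluster set to $+$. Around any face, the product of the increments $\frac12\tau_\delta(v)\tau_\delta^\dagger(u)$ then sums to zero, because crossing an edge changes $\tau^\dagger$ exactly when the edge carries odd current. This gives \ref{eq:cos-sin-height} with integer/half-integer values. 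Property (4) is then immediate from the sign assignment, and its identification with the XOR-Ising law is Proposition \ref{prop:switching} and Corollary \ref{cor:discrete-exc}.

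The main obstacle is \ref{labels} together with \ref{boson}. I need that $\tau^\dagger_\delta$ built from odd interfaces with the inner-boundary labels really has the free XOR-Ising law, jointly consistent with the dual sign assignment of (4). I would try to verify this by computing correlations $\E[\prod\sin(\pi h_\delta(u_j))\prod\cos(\pi h_\delta(v_i))]$ through the switching lemma with sources, which expresses them as squares of mixed order--disorder Ising correlations. The disorder insertions arise because a sine at $u$ is $\pm$ according to the parity of current crossing a path from $u$ to the boundary. That parity is exactly $\exp\{-2\beta\sum_{E^\gamma}\sigma\sigma\}$ after the standard high-temperature expansion, which yields \eqref{eq:bosonisation} as in \cite{dimer-boson, nesting-field}. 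Since correlations of $\pm1$ spins determine the law, matching them for $k=0$ or $n=0$ with the XOR-Ising correlations (via Kramers--Wannier \eqref{eq:disorder}) identifies the laws in (3) and (4) simultaneously.
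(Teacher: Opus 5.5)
The paper does not prove this statement. It imports it from \cite[Theorem 3.1]{DRC-1}, with (5) from \cite{dimer-boson, nesting-field}, so your sketch has to stand on its own.

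Several pieces do stand. Property (6) follows from the factorisation of the pair $(\nb^1_\delta,\nb^2_\delta)$ across the zero-current edges surrounding an explored cluster. Once the primal halves of (2) and (4) hold, $h_\delta$ is well defined: an edge with odd current has both endpoints in one cluster, so $\tau_\delta$ and $\tau^\dagger_\delta$ never jump across the same edge. And (5) follows from the switching lemma applied to $(-1)^{\nb_\delta(E^\gamma_\delta)}$, which depends only on the sum of the two currents.

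The genuine gap is everything that ties $\nb^\dagger_\delta$ to the other objects. In your construction $\nb^\dagger_\delta$ is sampled from $\nb_\delta$ alone, and $\tau_\delta$ is an independent sign assignment on the clusters of $\hat\nb_\delta$. So $\tau_\delta$ and $\nb^\dagger_\delta$ are conditionally independent given $\nb_\delta$. The dual half of (2) demands that $(\nb^\dagger_\delta)^{\odd}$ be the interface set of $\tau_\delta$. With $+$ boundary conditions, $\tau_\delta$ is determined by its interfaces, so this would force $\tau_\delta$ to be $\nb_\delta$-measurable. That contradicts (4) whenever a non-boundary cluster exists (e.g.\ an isolated vertex, which has positive probability). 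The dual current must therefore be sampled using the spins, and nothing in your plan does that.

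The dual marginal and properties (1) and (7) are not settled either. An independent wired DRC on the dual inside an inner boundary $\ell_\delta$ will, with positive probability, cross primal clusters nested inside $\ell_\delta$ (by (6) the primal current there is a free DRC, typically non-empty), violating (1). Confining it to the complement of $\hat\nb_\delta$ restores (1). But then (7) becomes the non-trivial claim that averaging over the primal current inside $\ell_\delta$ reproduces a wired DRC on $\mathcal O(\ell_\delta)$, which is not ``by construction''.

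Nor does the marginal of $\nb^\dagger_\delta$ come out as a free DRC ``by reading the factorisation in the other direction''. Unlike FK, a DRC trace is not mapped to a dual DRC trace by complementation. At criticality both traces have the same bulk edge density, strictly less than $1/2$, so they cannot be complementary. Hence the factorisation over primal clusters says nothing about the dual weights. This primal--dual identity is the actual content of the master coupling. It needs Ising-specific input, at the very least the high/low-temperature (Kramers--Wannier) identification of the odd part of each DRC with the interface set of the XOR-Ising model on the other lattice.

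Your last step cannot fill this in. Matching correlation functions identifies only the \emph{marginal} laws of $\tau_\delta$ and $\tau^\dagger_\delta$. The dual half of (4) concerns the conditional law of $\tau^\dagger_\delta$ given $\nb^\dagger_\delta$, and (1), (2), (7) are pathwise or conditional statements.

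A smaller point: $\tau^\dagger_\delta(u)$ should be a global sign times $(-1)$ to the power of the current crossed by a dual path from $u$ to the outside. A product of labels over inner boundaries misses odd cycles of $\nb^{\odd}_\delta$ that enclose no vertex outside their own cluster. It therefore breaks the interface property you use for closedness.
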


We are now ready to state the complete convergence results from \cite{DRC-1, DRC-2}. 
The precise definition of the topology for the convergence of the inner and outer boundaries of the currents can be found in \cite[Section 1.1]{DRC-1}, while for the topology of convergence of the height function one may take $H^s(\C)$ with $s<-1$. Here, we abuse notation and identify $\hat\nb$ or $\hat\nb^\dagger$ with a collection of inner and outer boundaries.

\begin{thm}\label{thm:DRC}
    Let $D$ be a Jordan domain and $D_\delta$ be as above. For the (critical) coupling $\P_\delta$, 
    \begin{equation}
        (h_\delta, \hat\nb_\delta, \hat\nb_\delta^\dagger, c_\delta, c_\delta^\dagger, \xi_\delta, \xi^\dagger_\delta) \overset{(d)}{\longrightarrow} (\frac{1}{2\sqrt{2}\lambda}\ h, \hat\nb, \hat\nb^\dagger, c, c^\dagger, \xi, \xi^\dagger),
    \end{equation}
    where
    \begin{enumerate}[label=(\roman*)]
        \item h is a GFF with zero boundary conditions in $D$, 
        \item The inner boundaries of the boundary cluster of $\hat\nb_\de$ converge to $\mathcal L_{-\sqrt{2}\lambda, \sqrt{2}\lambda}$. The label $c(\ell)$ of an inner boundary $\ell$ is such that the boundary values of $h$ in $\mathcal O(\ell)$ are $c(\ell)\sqrt{2}\lambda$.
        \item The outer boundaries of the outermost clusters of $\hat\nb^\dagger_\delta$ converge to $\mathcal L_{-2\lambda, 2\lambda}$. The sign $\xi^\dagger(\gamma)$ of an outer boundary $\gamma$ is such that the boundary values of $h$ in $\mathcal O(\gamma)$ are $\xi^\dagger(\gamma)2\lambda$. Moreover, the inner boundaries of the cluster corresponding to $\gamma$ converge to
        \begin{enumerate}
            \item [(iii.a)] $\mathcal L_{-2\lambda, (2\sqrt{2}-2)\lambda}$ if $\xi^\dagger(\gamma)=+1$,
            \item [(iii.b)] $\mathcal L_{-(2\sqrt{2}-2)\lambda, 2\lambda}$ if $\xi^\dagger(\gamma)=-1$,
            \item [(iii.c)] where the label $c^\dagger(\ell)$ of an inner boundary $\ell$ is such that the boundary values of $h$ in $\mathcal O(\ell)$ are $\xi(\gamma)(c(\ell)+1)\sqrt{2}\lambda$. 
        \end{enumerate}
        \item The structure repeats iteratively by applying the Markov property \ref{MP-free} of Theorem \ref{thm:master}, which gives the nesting of two-valued sets described in Section \ref{sec:decomp}.
    \end{enumerate}
\end{thm}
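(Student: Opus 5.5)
This statement is essentially a repackaging of the main results of \cite{DRC-1, DRC-2}, so the plan is to assemble it from those results rather than reprove anything, checking that the normalisations and labels match the conventions of this paper. First, I would invoke \cite[Theorem 1.4]{DRC-1} for the convergence of $h_\delta$ to a multiple of a GFF. With $\lambda=\pi/2$, the factor $1/(2\sqrt2\lambda)=1/(\pi\sqrt2)$ agrees with the normalisation in the introduction. A consistency check is that the discrete height gap $1/2$ across a cluster boundary in \eqref{eq:d-grad} should correspond, after rescaling by $2\sqrt2\lambda$, to the continuum gaps $\sqrt2\lambda$ and $2\lambda$ appearing in items (ii)--(iii). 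Second, I would invoke the joint convergence in \cite{DRC-1, DRC-2} of the outer and inner boundaries of the (primal wired and dual free) DRC clusters, jointly with $h_\delta$, in the loop topology of \cite[Section 1.1]{DRC-1}. These results identify the limits as the two-valued sets $\mathcal L_{-\sqrt2\lambda,\sqrt2\lambda}$, $\mathcal L_{-2\lambda,2\lambda}$ and $\mathcal L_{-2\lambda,(2\sqrt2-2)\lambda}$ (respectively $\mathcal L_{-(2\sqrt2-2)\lambda,2\lambda}$) of the limiting field $h$.

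Third, I would deal with the labels. The parity labels $c_\delta$ and $c_\delta^\dagger$ are determined by the height function through \ref{eq:cos-sin-height} and \ref{labels} of Theorem \ref{thm:master}: the height inside an inner boundary differs from the height on the cluster by a fixed sign times $1/2$. So the labels are continuous functionals of the pair (loops, $h_\delta$ restricted to the loop interiors). Their limits are therefore forced once joint convergence of $(h_\delta,\text{loops})$ is known and the limiting harmonic function $h_{\A}$ is shown to be the limit of the discrete boundary values, which is exactly the content of the cited local-set identification. The signs $\xi_\delta$ and $\xi_\delta^\dagger$ are i.i.d. symmetric conditionally on the currents, by (4) of Theorem \ref{thm:master}. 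For the dual outermost clusters they coincide with the sign of $h$ inside the outer boundary, which gives the identification $\xi^\dagger(\gamma)2\lambda$ in (iii). For the remaining signs, joint convergence follows by a Skorokhod embedding together with the conditional independence: the ordering by diameter is a.s. stable in the limit because the limiting loops have a.s. distinct diameters.

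Finally, item (iv) follows by iteration. The Markov properties \ref{MP-free} and \ref{MP-wired} of Theorem \ref{thm:master} say that inside each discovered inner boundary the configuration is again a DRC with free or wired boundary conditions, so the previous steps apply inside each macroscopic loop. The continuum counterpart is the Markov property of the GFF for the local set $\A$. Local finiteness (Proposition \ref{prop:local-finite-tree}) lets one restrict to finitely many loops of diameter at least $\rho$ and hence finitely many iterations. I expect the main obstacle to be this iterated step: the inner domains are random, non-smooth limits, and one needs convergence of DRCs in domains that converge only in the Carathéodory/loop sense, uniformly enough to pass to the limit jointly. I would handle it as \cite{DRC-2} does, by using that the discrete inner boundaries converge to $\SLE_4$-type loops with enough regularity to apply the convergence theorem conditionally, combined with a Skorokhod coupling at each stage.
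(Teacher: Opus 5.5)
Your plan matches the paper's treatment. Theorem~\ref{thm:DRC} is not proved in the paper; it is quoted as a restatement of the main results of \cite{DRC-1, DRC-2}, so the citation alone is what the paper offers. The supporting arguments you add around the citation, however, contain errors that would matter if you relied on them.

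First, your ``consistency check'' fails for item (iii). The product $\tfrac12\cdot 2\sqrt2\lambda=\sqrt2\lambda$ does match item (ii). But $2\lambda$ corresponds to $1/\sqrt2$ in discrete units, which is neither a value nor a difference of values of $h_\delta\in\tfrac12\Z$. The inner boundaries in (iii.c) carry the rescaled discrete heights $0$ and $\pm2\sqrt2\lambda$ (discrete $0$ and $\pm1$). The boundary value $\pm2\lambda$ of the $\CLE_4$ loops exists only in the limit: it is the harmonic function of the local set formed by the outer boundaries alone, once the labels of the inner boundaries are averaged out. This is the ``non-geometric'' nature of $\A_{-2\lambda,2\lambda}$ stressed in Remark~\ref{rem:labels}. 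So the value $2\lambda$ in (iii) is the substantive content of the $\CLE_4$ identification in \cite{DRC-1}. Reading off ``the sign of $h$ inside the outer boundary'' can match the sign, but it cannot produce that value.

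Second, your paragraph on labels and signs does not work as an argument, for two reasons.
\begin{enumerate}
\item The labels are not continuous functionals of $(\text{loops},h_\delta)$ in the topology in which $h_\delta$ converges ($H^s$, $s<-1$). They are determined by point values of $h_\delta$ next to the loop, which are not continuous there, so nothing is forced by continuity.
\item A Skorokhod embedding plus conditional independence given the currents only shows that the limiting signs are i.i.d.\ symmetric given the limiting clusters. It says nothing about their joint law with $h$. The statement asserts that the limiting signs are the labels of the nested two-valued sets of $h$, hence $h$-measurable. Remark~\ref{rem:strong-conv} contrasts exactly this with the Ising decompositions, where the coin tosses are independent of everything and their convergence is trivial.
\end{enumerate}

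What actually identifies labels and signs is a local-set argument at each level of nesting.
\begin{enumerate}
\item By \eqref{eq:MP-height}, inside a discovered loop $h_\delta$ equals the constant $c_\delta/2$ plus a centred renewed height that is conditionally independent of $c_\delta$.
\item Hence in any subsequential limit the label is read off from the conditional mean of $(h,f)$ for $f$ supported inside the loop, i.e.\ from the harmonic function of the limiting local set.
\item Uniqueness of two-valued sets then pins down the loops and their labels; compare Lemma~\ref{lem:local-set-1} and Theorem~\ref{thm:full-joint}.
\end{enumerate}
Either cite \cite{DRC-1} for the full joint statement including labels and signs, as the paper implicitly does, or replace your heuristics with this argument.
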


\begin{rem}\label{rem:labels}
    We stress that the signs $\xi_\de$ are a measurable function of $h_\de$, and only their conditional law given $\hat\nb_\de$ is that of independent coin tosses. This is completely analogous to the structure in the continuum. In particular, the above result shows that our choice of the labels of $\A_{-2\lambda, 2\lambda}$ as the coin tosses in the continuum decomposition is the correct one from the discrete point of view, and that one should \emph{not} take them to be independent of everything else. While the two-valued sets with gap $2\sqrt{2}\lambda$ corresponds to jumps of $1/2$ in the discrete height function, and as such are \emph{purely geometrical}, one should think of $\A_{-2\lambda, 2\lambda}$ \emph{not} as a geometric object, but rather as generating the independence properties of the decomposition.
\end{rem}



\section{Convergence of the discrete decomposition: Proof of Theorem \ref{thm:discrete-conv}} \label{sec:conv}

The goal of this final section is to prove the convergence of the discrete excursion decomposition of the XOR-Ising model. We recall that, as discussed in Section \ref{sec:intro-conv}, the first step is to identify the limit of the XOR-Ising fields as the real and imaginary parts of the chaos of the limit of the discrete height function. We do so by translating the strong Markov properties of the discrete models to the continuum and invoking Theorem \ref{thm:local}. We believe that this approach should generalize nicely to AT models, as opposed to proving convergence of the joint moments of the height function and the polarisation fields. It also allows us to prove Corollary \ref{cor:joint-conv} readily, without computing the joint moments along with the four coupled Ising fields.

\subsection{Convergence of renewed fields and local set couplings}

We start by using the bosonisation equation \eqref{eq:bosonisation} to upgrade Theorem \ref{thm:conv-fields} into a joint convergence statement, while also allowing test functions to intersect the boundary of the domain. The identification of joint moments was already noted in {\cite[Remark 7.3]{CHI-2}}, and further bosonisation identities in the continuum were established in {\cite[Theorem 1.3]{cont-boson}}. 

\begin{lemma}\label{lem:conv-fields-fractal}
    Let $D$ be a Jordan domain such that $\dim_M(\partial D)<2-1/4$. Let $\E_\de$ denote the expectation under the (critical) coupling of Theorem \ref{thm:master} in the discrete approximation $D_\delta$. For any continuous, bounded function $f:\C\to\R$ and any $n,k\geq1$, 
    \begin{equation}
        \E_{\delta}\big[(\tau_\delta, f)^n\ (\tau_\delta^\dagger, f)^k\big]\longrightarrow\mathcal{C}^{n+k}\ 2^{(n+k)/2}\ \E\big[(\field{\cos(\frac{1}{\sqrt{2}}\phi)}, f)^n\ (\field{\sin(\frac{1}{\sqrt{2}}\phi)}, f)^k\big]
    \end{equation}
    as $\delta\to0$. In particular, the fields converge jointly in distribution with respect to the (product) topology of the Sobolev space $H^{s}(\C)$ for any $s<-1$.    
\end{lemma}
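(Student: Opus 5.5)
The plan is to expand the mixed moment as an integral of discrete correlation functions, pass to the limit pointwise, and control both integrability and convergence uniformly, including near $\partial D$. Writing $(\tau_\delta,f)^n(\tau_\delta^\dagger,f)^k$ as an $(n+k)$-fold integral, we get
\begin{equation}
\E_\delta\big[(\tau_\delta,f)^n(\tau_\delta^\dagger,f)^k\big]=\delta^{-(n+k)/4}\int f(z_1)\cdots f(z_{n+k})\,\E_\delta\Big[\prod_{i\le n}\cos(\pi h_\delta(v(z_i)))\prod_{j\le k}\sin(\pi h_\delta(u(z_{n+j})))\Big]\,dz .
\end{equation}
By the bosonisation identity \eqref{eq:bosonisation} of Theorem \ref{thm:master}, the integrand equals the square of a mixed spin--disorder correlation of a single critical Ising model with $+$ boundary conditions. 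For distinct points at positive distance from each other and from $\partial D$, \cite{CHI, CHI-2} (see {\cite[Remark 7.3]{CHI-2}}) give the convergence of $\delta^{-(n+k)/8}$ times such correlations to explicit continuum spin--disorder correlations. Their squares are identified, as in \cite{cont-boson}, with $\mathcal C^{n+k}2^{(n+k)/2}$ times the kernel $\langle\prod\field{\cos(\phi(z_i)/\sqrt2)}\prod\field{\sin(\phi(z_{n+j})/\sqrt2)}\rangle$. This identification can also be checked indirectly: the pure cases $k=0$ and $n=0$ are Theorem \ref{thm:conv-fields}, and the mixed kernel is a polynomial combination of the exponential kernels, which are determined by the known formulas. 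So locally uniform pointwise convergence away from diagonals and boundary holds.

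It then remains to prove uniform integrability, i.e.\ a $\delta$-uniform domination of the rescaled discrete correlations by an integrable function on $D^{n+k}$, with $D$ not compactly avoided. I would dominate everything by the continuum Onsager-type bound of Proposition \ref{prop:onsager}. First, using Griffiths/GKS-type inequalities for the squared correlations (the discrete analogue of Proposition \ref{prop:funct-ineq}\ref{prop-1}: $|\E[\prod\cos\prod\sin]|\le\E[\prod\cos(\pi h_\delta)]$ over all points, i.e.\ the $+$ boundary XOR correlation), reduce to the pure $+$ boundary XOR-Ising correlations. These equal squared $+$ boundary Ising correlations. Next, use standard a priori bounds for critical Ising spin correlations, e.g.\ the multi-point bound $\E^+[\prod\sigma(v_i)]\lesssim C^{m}\prod_i (r_\delta(v_i)\vee\delta)^{-1/8}$, with $r$ as in \eqref{eq:min-onsager}. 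This follows from RSW/FK-Ising arm estimates by requiring one-arm events to distance $r(v_i)$ around each point, using quasi-multiplicativity. After squaring and multiplying by $\delta^{-(n+k)/4}$, we get domination by $C^{n+k}\prod (r(z_i)\vee\delta)^{-1/4}$. The analogue of the computation in Proposition \ref{prop:unique} shows this is integrable uniformly in $\delta$ precisely when $\dim_M(\partial D)<2-1/4$. The boundary regions are handled by the dyadic splitting used in Proposition \ref{prop:bdy-expect}, and the collision regions by the bulk computation of {\cite[Lemma 3.10]{IGMC}}. Dominated convergence then gives the moment convergence; here $f$ need only be bounded and continuous, because the domination is in terms of $\|f\|_\infty$.

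For the final claim, moment convergence for every $(n,k)$ and every $f$ (via polarisation, mixed moments for pairs $f,g$ follow the same way) identifies the limit by Proposition \ref{prop:unique}, since the limiting moments determine the law. Tightness in $H^s(\C)$, $s<-1$, follows from the second moment bound applied to Fourier modes $e^{i\xi\cdot z}$ restricted to bounded $D$, which gives $\E\|\cdot\|_{H^s}^2<\infty$ uniformly in $\delta$. The main obstacle is the uniform boundary domination: the multi-point arm bound near a fractal $\partial D$ under $+$ boundary conditions, and the comparison of the mixed cos/sin correlations with pure ones at the discrete level. If the discrete Griffiths reduction fails, I would instead bound each spin--disorder correlation by Cauchy–Schwarz and Kramers–Wannier duality, $|\E^+[\prod\sigma\prod\mu]|\le\E^+[\prod\sigma^2\ldots]$, reducing to separate spin and dual spin moments, each controlled by arm estimates.
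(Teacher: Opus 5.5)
Your architecture matches the paper's. Bosonisation turns the mixed moment into an integral of squared spin--disorder correlations, \cite{CHI-2} gives convergence away from diagonals and $\partial D$, uniform integrability comes from Onsager-type bounds plus $\dim_M(\partial D)<2-1/4$, and Proposition~\ref{prop:unique} identifies the limit. The gap is in how you reduce the mixed correlations to pure ones.

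Your discrete ``Griffiths'' inequality $|\E_\delta[\prod\cos(\pi h_\delta(v_i))\prod\sin(\pi h_\delta(u_j))]|\le\E_\delta[\prod\cos(\pi h_\delta)]$ ``over all points'' is not meaningful. The $u_j$ are dual vertices, where $h_\delta$ is half-integer valued, so $\cos(\pi h_\delta(u_j))=0$ and the right-hand side vanishes once $k\ge1$. The continuum inequality works because $\cos$ and $\sin$ of the same field live at the same points and expand into positive exponential correlators. On the lattice only one of them is nonzero at each point. In Ising language you would have to compare $\E^+_\delta[\prod\sigma_\delta\prod\mu_\delta]$ with a spin correlation at displaced points. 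No GKS-type inequality gives this, since the disorder insertion flips the couplings along $\gamma$ to antiferromagnetic ones.

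Your fallback does not repair this, because a Cauchy--Schwarz bound applied to the correlation function pointwise does not see the decay from the disorder points. Examples are the trivial bound $|\E_\delta[\prod_i\tau_\delta(v_i)\prod_j\tau_\delta^\dagger(u_j)]|\le 1$, or $|\E^+_\delta[\prod\sigma_\delta(v_i)\prod\mu_\delta(u_j)]|\le\E^+_\delta[\prod\sigma_\delta(v_i)]$, obtained by expanding the flipped-coupling partition function in currents. After multiplying by $\delta^{-(n+k)/4}$, such bounds are not uniform in $\delta$; the second one is off by the factor $\delta^{k/4}$.

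The missing idea, which is what the paper does, is to apply Cauchy--Schwarz to the random variables rather than to the integrand:
\begin{equation*}
\E_\delta\big[(\tau_\delta,f)^n(\tau_\delta^\dagger,f)^k\big]^2\le\E_\delta\big[(\tau_\delta,f)^{2n}\big]\,\E_\delta\big[(\tau_\delta^\dagger,f)^{2k}\big].
\end{equation*}
The two factors on the right are integrals of pure squared spin correlations: with $+$ boundary conditions on $D_\delta$, and, by Kramers--Wannier, with free boundary conditions on $D_\delta^\dagger$. Both are dominated pointwise by \eqref{eq:onsager-discrete}, so no discrete mixed-correlation inequality is needed.

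To upgrade this to uniform integrability of the mixed integrand, use that the integrand is a square, hence nonnegative. By multilinearity, its integral over a near-diagonal region or a boundary strip is bounded by mixed moments with indicator test functions of small boxes or strips. H\"older then bounds these by pure moments of $(\tau_\delta,\mathbf 1_Q)$ and $(\tau_\delta^\dagger,\mathbf 1_Q)$, and the Onsager bound together with the dimension assumption makes them small uniformly in $\delta$. With this replacement, the rest of your argument goes through: the pointwise limits, moment determinacy and the $H^s$ tightness via Fourier modes.
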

\begin{proof} 
    By Properties \ref{eq:cos-sin-height} and \ref{boson} of Theorem \ref{thm:master}, 
    \begin{equation}\label{eq:nk-functions}
        \E_{\delta}[(\tau_\delta, f)^n\ (\tau_\delta^\dagger, f)^k] = \int_D\ldots\int_D\ \E_\delta^+\Big[ \prod_{i=1}^n\sigma_\delta(v_i)\prod_{j=1}^k\mu_\delta(u_j)\Big]^2 f(v_1)\cdots f(u_j)\ dv_1\cdots du_j.
    \end{equation}
    By {\cite[Theorems 1.1-7.1]{CHI-2}}, for any $\eps>0$,
    \begin{equation}
        \E_\delta^+\Big[ \prod_{i=1}^n\sigma_\delta(v_i)\prod_{j=1}^k\mu_\delta(u_j)\Big]^2 \longrightarrow \mathcal{C}^{k+n}\ 2^{(k+n)/2}\ \left\langle\prod_{i=1}^n\field{\cos\left(\frac{1}{\sqrt{2}}\phi(v_i)\right)}\prod_{j=1}^k\field{\sin\left(\frac{1}{\sqrt{2}}\phi(u_j)\right)}\right\rangle
    \end{equation}
     as $\de\to0$, uniformly over all points at distance at least $\eps$ away from each other and the boundary. Thus it suffices to show that \eqref{eq:nk-functions} is uniformly integrable.
    
    \noindent Using the Kramers-Wannier duality in \eqref{eq:disorder},
    \begin{align}
         \E_{\delta}[(\tau_\delta, f)^n\ (\tau_\delta^\dagger, f)^k]^2 &\leq \E_{\delta}[(\tau_\delta, f)^{2n}]\ \E_{\delta}[(\tau_\delta^\dagger, f)^{2k}] \\
         &\leq \left(\int_D\ldots\int_D\ \E_\delta^+\Big[ \prod_{i=1}^{2n}\sigma_\delta(v_i)\Big] f(v_1)\cdots f(v_{2n})\ dv_1\cdots dv_{2n}\right) \\\label{eq:DOM}
         & \hspace{4mm} \times \left(\int_D\ldots\int_D\ \E_\delta^\varnothing\Big[ \prod_{j=1}^{2k}\sigma_\delta^\dagger(u_j)\Big] f(u_1)\cdots f(u_{2k})\ du_1\cdots du_{2k}\right).
     \end{align}
     Now, one can use e.g. {\cite[Proposition 3.10]{tightness-fields}} or {\cite[Lemma 4.4]{IGMC}} to see that
     \begin{equation}\label{eq:onsager-discrete}
        \E_\delta^+\Big[ \prod_{i=1}^{2n}\sigma_\delta(v_i)\Big] \leq C_D\prod_{i=1}^{2n} \left(\min_{j\neq i}|v_i-v_j|\wedge d(v_i, \partial D)\right)^{-1/8},
    \end{equation}
    where $C_D$ is a constant that only depends on the domain (and hence the right-hand side is uniform in $\de$). A similar bound holds under  $\E_\delta^\varnothing$. Using our assumption on the dimension of $\partial D$, a standard computation shows that \eqref{eq:DOM}-\eqref{eq:onsager-discrete} give the uniform integrability required.

    \noindent Convergence in distribution follows from the fact that the moments uniquely characterize the joint law by Proposition \ref{prop:unique}. 
\end{proof}

\begin{rem}\label{rem:boson}
    A natural question is whether one can prove Theorem \ref{thm:conv-fields} without the results of \cite{CHI-2}, but instead use the dimer correspondence and the convergence of the dimer height function. This has indeed been done in the infinite volume case in \cite{dimer-electric}. A further natural question is whether one can identify the limit of $\cos(\pi h_\de)$ and $\sin(\pi h_\de)$ without a need for correlation functions. If such were the case, given the argument in this paper, the only missing ingredient in the proof of Conjecture \ref{conj:AT} would be the convergence of the AT currents.
\end{rem}

The next step in the proof is to argue that the local set coupling given by property \ref{MP-wired} of Theorem \ref{thm:master} converges to a local set coupling of the continuum XOR-Ising models. Similar results concerning the convergence of local set couplings of the GFF can be found in \cite{SchSh} and {\cite[Lemma 4.9]{ALS2}}. The discrete setup can be seen in Figure \ref{fig:MP}.
\medskip

\begin{figure}[b]
    \centering
    \includegraphics[width=0.55\linewidth]{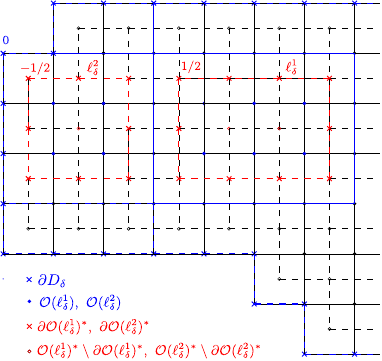}
    \caption{A portion of the boundary cluster of $\nb_\de$ in $D_\de$. Solid blue lines represent $\nb_\de^\odd$, dashed blue lines represent $\hat\nb_\de\setminus \nb_\de^\odd$. Only two inner boundaries $\ell_\de^1$ and $\ell_\de^2$ are highlighted with dashed red lines, with respective labels $c_\de^1=+1$ and $c_\de^2=-1$. The subgraphs of $D_\de$ and $D_\de^\dagger$ induced by these inner boundaries are as marked, where all vertices in an inner boundary are identified into a single one. The numbers represent the values of the height function $h_\de$, which must be constant across the boundary cluster of $\hat\nb_\de$ and across each of the inner boundaries.}
    \label{fig:MP}
\end{figure}

\begin{lemma}\label{lem:local-set-1}
    Let $D$ and $\P_\de$ be as in Lemma \ref{lem:conv-fields-fractal}. Let $\ell_\de$ be an inner boundary of the boundary cluster of $\hat\nb_\de$ on $D_\de$, with parity label $c_\de$. Let $\mathcal O(\ell_\de)$ be the induced subgraph of $D_\de$ whose vertices are enclosed by $\ell_\de$, and let $\mathcal{O}(\ell_\de)^*$ be its \emph{strong} dual graph, where all vertices of $D_\de^\dagger$ belonging to $\ell_\de$ are identified. Then,
    \begin{enumerate}
        \item[$\bullet$]  the law of the restriction of $\tau_\de$ to $\mathcal O(\ell_\de)$ is that of $-c_\de$ times a XOR-Ising model $\tau_\de^{\ell_\de}$ with free boundary conditions, 
        \item[$\bullet$]  the law of the restriction of $\tau_\de^\dagger$ to $\mathcal O(\ell_\de)^*$ is that of $c_\de$ times a XOR-Ising model  $(\tau_\de^{\ell_\de})^\dagger$  with $+$ boundary conditions.
    \end{enumerate}
    Moreover, every subsequential limit as $\de\to0$ of the joint law
    \begin{equation}
        \left( h_\de,\ \ell_\de,\ c_\de,\ \tau_\de,\ \tau_\de^\dagger,\ \tau_\de^{\ell_\de},\ (\tau_\de^{\ell_\de})^\dagger \right) 
    \end{equation}
    is given by 
    \begin{equation}
        \left(\frac{1}{2\sqrt{2}\lambda}\ h,\ \ell,\ c,\ \field{\cos(\frac{1}{\sqrt{2}}\phi)},\ \field{\sin(\frac{1}{\sqrt{2}}\phi)},\ \field{\sin(\frac{1}{\sqrt{2}}\phi^\ell)},\ \field{\cos(\frac{1}{\sqrt{2}}\phi^\ell)} \right),
    \end{equation}
    where 
    \begin{enumerate}
        \item[$\bullet$] the loop $\ell$ is a loop of the two-valued set $\A_{-\sqrt{2}\lambda, \sqrt{2}\lambda}$ of $h$,
        \item[$\bullet$] the label $c$ is the label of the above two-valued set,
        \item[$\bullet$] given $\ell$, the conditional law of $\phi^\ell$ is that of a GFF with zero boundary conditions in $\mathcal O(\ell)$,
        \item[$\bullet$] given $\ell$, the label $c$ and the pair $\big(\field{\cos(\frac{1}{\sqrt{2}}\phi^\ell)}, \ \field{\sin(\frac{1}{\sqrt{2}}\phi^\ell)} \big)$ are conditionally independent.
    \end{enumerate}
\end{lemma}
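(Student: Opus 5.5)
The plan has two parts: a discrete statement, then the identification of subsequential limits.

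\textbf{Discrete statement.} We use the master coupling of Theorem \ref{thm:master}. Condition on the exploration of the boundary cluster of $\hat\nb_\de$ from the outside, up to and including $\ell_\de$. By property \ref{MP-free}, $\nb_\de$ restricted to $\mathcal O(\ell_\de)$ is a DRC with free boundary conditions. By property \ref{MP-wired}, $\nb_\de^\dagger$ restricted to $\mathcal O(\ell_\de)^*$ is a DRC with wired boundary conditions, with $\ell_\de$ identified to a single vertex.

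The height function $h_\de$ is constant on the boundary cluster, equal to $0$, and is constant on the dual vertices of $\ell_\de$. That dual value equals $\pm 1/2$, and its sign is determined by $c_\de$ through property \ref{labels}. Inside $\ell_\de$, the pair $(\nb_\de,\nb_\de^\dagger)$ together with $h_\de-h_\de|_{\ell_\de}$ reproduces a master coupling on the smaller domain, with primal and dual roles swapped. Using \eqref{eq:cos-sin-height} and $h_\de=\pm1/2+\tilde h_\de$, the identities $\cos(\pi(\tilde h\pm 1/2))=\mp\sin(\pi\tilde h)$ and $\sin(\pi(\tilde h\pm1/2))=\pm\cos(\pi \tilde h)$ give both bullets, with the signs $-c_\de$ and $c_\de$. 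In particular, $\tau^{\ell_\de}_\de$ and $(\tau^{\ell_\de}_\de)^\dagger$ are functions of the field inside $\ell_\de$ and are independent of $c_\de$ given the exploration.

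\textbf{Tightness and marginal limits.} Tightness of the joint law follows from Theorem \ref{thm:DRC} and Lemma \ref{lem:conv-fields-fractal}, applied both in $D$ and, conditionally, in $\mathcal O(\ell_\de)$. Take a subsequential limit. By Theorem \ref{thm:DRC}, $\ell$ is a loop of $\A_{-\sqrt2\lambda,\sqrt2\lambda}$ of $h$, and $c$ is its label.

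Conditionally on $\ell_\de$, Lemma \ref{lem:conv-fields-fractal} in the domain $\mathcal O(\ell_\de)$ identifies the conditional moments of the renewed pair. To pass this to the limit, use the Carathéodory convergence $\mathcal O(\ell_\de)\to\mathcal O(\ell)$ and the continuity of the correlation functions under this convergence. We need uniform integrability near the fractal boundary $\ell$. This comes from the uniform Onsager-type bound \eqref{eq:onsager-discrete}, combined with $\dim_M(\ell)=3/2<2-1/4$. Since moments determine the law (Proposition \ref{prop:unique}), we conclude the following: given $\ell$, the renewed pair has the law of $(\field{\sin(\phi^\ell/\sqrt2)},\field{\cos(\phi^\ell/\sqrt2)})$ for a GFF $\phi^\ell$ in $\mathcal O(\ell)$, and this pair is conditionally independent of $c$. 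Conditional independence passes to the limit because it holds for every $\de$ and all functionals involved are bounded continuous. The pair $(\tau_\de,\tau_\de^\dagger)$ converges to $(\field{\cos},\field{\sin})$ of some GFF $\phi$ by Lemma \ref{lem:conv-fields-fractal}.

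\textbf{Identifying $\phi$ with $h$ (main obstacle).} The hard step is showing that the same GFF appears everywhere, namely $\phi=h$, which is equivalent to \eqref{eq:XOR-joint}. Here I would use Theorem \ref{thm:local}, applied to the set $A$ equal to the closure of the boundary cluster, meaning the union of $\partial D$ and the loops of $\A_{-\sqrt2\lambda,\sqrt2\lambda}(h)$.

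First, the discrete identity above passes to the limit and gives hypothesis (i) with $h_A=\pm\sqrt2\lambda$. Indeed, $e^{i\alpha(\pm\sqrt2\lambda)}=e^{\pm i\pi/2}=\pm i$, which matches $\cos\mapsto\mp\sin$.

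Second, hypothesis (ii), thinness of $[\partial\mathcal O]_n$, follows from Propositions \ref{prop:bdy-sin} and \ref{prop:bdy-expect} conditionally on $A$, since loop dimensions are $3/2$. To use this in the limit, one needs discrete uniform versions of these bounds so that the dyadic restrictions converge.

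Third, hypothesis (iii) is the limit of the discrete Markov property applied to finitely many loops.

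Theorem \ref{thm:local} then shows that $A$ is local for $\phi$, with $\phi=\phi^A+h_A+2\pi k_\O/\alpha$ in each component. Iterating through the nested exploration of Theorem \ref{thm:DRC}(iv), and using local finiteness (Proposition \ref{prop:local-finite-tree}), we find that $\phi$ and $h$ share the same nested two-valued sets with the same labels. The integer ambiguities $k_\O$ are forced to vanish because both fields have the same harmonic values on all nested loops. Since these harmonic functions determine the GFF in the limit, $\phi=h$.

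The delicate points I expect are uniform integrability up to the fractal loop, and making sure that the discrete thinness survives the limit.
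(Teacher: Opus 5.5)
Your first two parts follow the paper's proof. The discrete bullets come from properties \ref{labels} and \ref{MP-wired}, via $h_\de=h_\de^{\ell_\de}+c_\de/2$ and the two trigonometric identities. The limit is identified through Theorem \ref{thm:DRC}, Lemma \ref{lem:conv-fields-fractal} applied conditionally in $\mathcal O(\ell_\de)$ with $\dim_M(\ell)=3/2$, and Proposition \ref{prop:unique}.

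One justification needs repair. Conditional independence does \emph{not} survive weak limits merely because it holds for every $\de$, since the conditioning variable can lose information in the limit. What makes it work here is the ingredient you already have: the conditional law of the renewed pair given $\ell_\de$ converges to a law depending on $\ell$ alone. Writing $F(\ell):=\E[r(\field{\sin(\phi^\ell/\sqrt2)},\field{\cos(\phi^\ell/\sqrt2)})\mid\ell]$, you get
\[
\E_\de\big[r(\tau_\de^{\ell_\de},(\tau_\de^{\ell_\de})^\dagger)\, s(\ell_\de,c_\de)\big]=\E_\de\big[\E_\de[r(\tau_\de^{\ell_\de},(\tau_\de^{\ell_\de})^\dagger)\mid\ell_\de]\, s(\ell_\de,c_\de)\big]\longrightarrow\E\big[F(\ell)\, s(\ell,c)\big].
\]
Comparing this with the limit of the left-hand side gives $\E[r(\tau_*^\ell,(\tau_*^\ell)^\dagger)\mid\ell,c]=F(\ell)$. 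That single identity yields both the conditional law and the conditional independence, and it is exactly the paper's argument.

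Your third part is not part of this lemma. The statement deliberately leaves $\phi$ as an unidentified GFF; nothing relating $\phi$ to $h$ is claimed. In the paper, $\phi=h$ is proved afterwards, in Lemma \ref{lem:local-set-2} and Theorem \ref{thm:full-joint]}. That proof \emph{uses} the present lemma to verify the hypotheses of Theorem \ref{thm:local}, so calling it the main obstacle here misreads the statement.

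If you keep that part, its last step is circular as written. Theorem \ref{thm:local} only gives $\phi_A=(c(\ell)+4k_{\mathcal O})\sqrt2\lambda$, and saying the $k_{\mathcal O}$ vanish because both fields have the same harmonic values assumes what is to be shown. The missing idea is the following chain:
\begin{enumerate}[label=(\roman*)]
\item Since $|\phi_A|\ge\sqrt2\lambda$, Proposition \ref{prop:TVS-unique} gives $\A_{-\sqrt2\lambda,\sqrt2\lambda}(\phi)\subseteq A$.
\item The two sets have the same law, so they are equal almost surely.
\item Equality forces $k_{\mathcal O}=0$ and makes the labels agree.
\item Iterating this through the nested sets then gives $\phi=h$.
\end{enumerate}
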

\begin{proof}
    The claimed law of $\tau_\de^{\ell_\de}$ and $(\tau_\de^{\ell_\de})^\dagger$ is an immediate consequence of properties \ref{labels} and \ref{MP-wired} of Theorem \ref{thm:master}. Equivalently, this can be rephrased in terms of the height function $h_\de$ as follows: given $\ell_\de$, when restricted to both $\mathcal O(\ell_\de)$ and $\mathcal O(\ell_\de)^*$,
    \begin{equation}\label{eq:MP-height}
        h_\de = h_\de^{\ell_\de} + c_\de/2,
    \end{equation}
    where $h_\de^{\ell_\de}$ has the law of a renewed height function with zero boundary conditions on $\mathcal O(\ell_\de)^*$ and is conditionally independent of the label $c_\de$. In particular, when restricted to $\mathcal O(\ell_\de)$ and $\mathcal O(\ell_\de)^*$ respectively,
    \begin{align}\label{eq:XOR-MP1}
        & \tau_\de = \cos(\pi (h_\de^{\ell_\de}+\frac{c_\de}{2})) = -c_\de\sin(\pi h_\de^{\ell_\de})\overset{(d)}{=}-c_\de\tau_\de^{\ell_\de}, \\ \label{eq:XOR-MP2}
        & \tau_\de^\dagger = \sin(\pi (h_\de^{\ell_\de}+\frac{c_\de}{2})) = c_\de\cos(\pi h_\de^{\ell_\de})\overset{(d)}{=}c_\de(\tau_\de^{\ell_\de})^\dagger.
    \end{align}

    \noindent For the second part, note first that the existence of a subsequential limit is immediate from tightness of each marginal law. The convergence of the triple $(h_\de, \ell_\de, c_\de)$ to the described limit is the content of Theorem \ref{thm:DRC}. The convergence of the pair $(\tau_\de, \tau_\de^\dagger)$  is the content of Lemma \ref{lem:conv-fields-fractal}, and in particular implies tightness of the pair $(\tau_\de^{\ell_\de}, (\tau_\de^{\ell_\de})^\dagger)$. Any subsequential limit is thus of the form
    \begin{equation}
        \Big(\frac{1}{2\sqrt{2}\lambda}\ h,\ \ell,\ c,\ \field{\cos(\frac{1}{\sqrt{2}}\phi)},\ \field{\sin(\frac{1}{\sqrt{2}}\phi)},\ \tau_*^{\ell},\  (\tau_*^{\ell})^\dagger \Big).
    \end{equation}
    To show the remaining two bullet points, we argue as in {\cite[Lemma 4.9]{ALS2}}, see also {\cite[Lemma B.2]{quant-SLE}} for the same result in terms of sequences of arbitrary random variables.  By Lemma \ref{lem:conv-fields-fractal}, noting that $\dim_M(\ell)=3/2$, we have that
    \begin{equation*}
        \E_{\delta}[(\tau_\delta^{\ell_\de}, f)^n\ ((\tau_\delta^{\ell_\de})^\dagger, f)^k \mid \ell_\de\ ]\longrightarrow\mathcal{C}^{n+k}\ 2^{(n+k)/2}\ \E[(\field{\sin(\frac{1}{\sqrt{2}}\phi^\ell)}, f)^n\ (\field{\cos(\frac{1}{\sqrt{2}}\phi^\ell)}, f)^k\mid \ell\ ]
    \end{equation*}
    almost surely for any $n,k\geq1$ and any continuous, bounded function $f:\C\to\R$. Equivalently, for any continuous, bounded functional $r$,
    \begin{equation}\label{eq:conv-given-loop}
        \E_\de[\ r(\tau_\delta^{\ell_\de},(\tau_\delta^{\ell_\de})^\dagger) \mid \ell_\de\ ]\longrightarrow  \E[\ r(\field{\sin(\frac{1}{\sqrt{2}}\phi^\ell)}, \field{\cos(\frac{1}{\sqrt{2}}\phi^\ell)})\mid \ell\ ].
    \end{equation}
    Moreover, for any continuous, bounded function $s$, 
    \begin{equation}\label{eq:CE-lim1}
        \E_\de[\ r(\tau_\delta^{\ell_\de},(\tau_\delta^{\ell_\de})^\dagger)\ s(\ell_\de, c_\de)\ ]\longrightarrow  \E[\ r(\tau_*^{\ell},\  (\tau_*^{\ell})^\dagger)\ s(\ell, c)\ ].
    \end{equation}
    But using the conditional independence of $(\tau_\de^{\ell_\de}, (\tau_\de^{\ell_\de})^\dagger)$ and $c_\de$ given $\ell_\de$, the left-hand side above can be written as
    \begin{align}
        \E_\de[\ r(\tau_\delta^{\ell_\de},(\tau_\delta^{\ell_\de})^\dagger)\ s(\ell_\de, c_\de)\ ]  &= \E_\de[\ \E_\de[\ r(\tau_\delta^{\ell_\de},(\tau_\delta^{\ell_\de})^\dagger) \mid \ell_\de\ ]\  \ s(\ell_\de, c_\de)\ ]  \\ \label{eq:CE-lim2}
        &\longrightarrow \E[\ \E[\ r(\field{\sin(\frac{1}{\sqrt{2}}\phi^\ell)}, \field{\cos(\frac{1}{\sqrt{2}}\phi^\ell)})\mid \ell\ ]\ s(\ell, c)\ ],
    \end{align}
    where the convergence follows from \eqref{eq:conv-given-loop} and the bounded convergence theorem. Comparing \eqref{eq:CE-lim1} and \eqref{eq:CE-lim2}, the last two bullet points follow.
\end{proof}

We explain now how to extend this result beyond the case of a single inner boundary. Let $A_\de$ be the union of the inner boundaries of the boundary cluster of $\hat\nb_\de$. Using the same notation, it holds that
\begin{enumerate}
    \item[$\bullet$] the law of the restriction of $\tau_\de$ to $\mathcal O(A_\de)$ is that of 
        \begin{equation}
            \sum_{\ell_\de\in A_\de}-c_\de(\ell_\de)\tau_\de^{\ell_\de} =: -c_\de\tau_\de^{A_\de},
        \end{equation}
    \item[$\bullet$]  the law of the restriction of $\tau_\de^\dagger$ to $\mathcal O(A_\de)^*$ is that of 
         \begin{equation}
            \sum_{\ell_\de\in A_\de}c_\de(\ell_\de)(\tau_\de^{\ell_\de})^\dagger =: c_\de(\tau_\de^{A_\de})^\dagger.
        \end{equation}
\end{enumerate}
Moreover, the tightness of $(\tau_\de, \tau_\de^\dagger)$ implies the tightness\footnote{Note, however, that it does not imply tightness of the unsigned sum $\tau_\de^{A_\de}$ or $(\tau_\de^{A_\de})^\dagger$.} of $(-c_\de\tau_\de^{A_\de}, c_\de(\tau_\de^{A_\de})^\dagger)$. Hence there exists a subsequential limit as $\de\to0$ of the joint law
    \begin{equation}\label{eq:subseq-joint}
        \left( h_\de,\ A_\de,\ c_\de,\ \tau_\de,\ \tau_\de^\dagger,\ -c_\de\tau_\de^{A_\de},\ c_\de(\tau_\de^{A_\de})^\dagger \right)
    \end{equation}
given by
  \begin{equation}
        \Big(\frac{1}{2\sqrt{2}\lambda}\ h,\ A,\ c,\ \field{\cos(\frac{1}{\sqrt{2}}\phi)},\ \field{\sin(\frac{1}{\sqrt{2}}\phi)},\ \tau_*^{A},\ (\tau_*^{A})^\dagger \Big),
    \end{equation}
where $(h, A, c)$ are as in Theorem \ref{thm:DRC} and $\phi$ is a GFF with zero boundary conditions in $D$. By Lemma \ref{lem:local-set-1}, we also know that for each $\ell_\de\in A_\de$ we have
    \begin{equation}
          \left(\tau_\de^{\ell_\de},\ (\tau_\de^{\ell_\de})^\dagger \right) \overset{(d)}{\longrightarrow}\left(\field{\sin(\frac{1}{\sqrt{2}}\phi^\ell)},\ \field{\cos(\frac{1}{\sqrt{2}}\phi^\ell)} \right),
    \end{equation}
where $\phi^\ell$ is a GFF with zero boundary conditions in $\mathcal{O}(\ell)$. In particular, we can include the convergence of \emph{all} renewed fields in the joint subsequential limit of \eqref{eq:subseq-joint}. Applying Skorokhod's representation theorem, let us pass to a common probability space in which convergence holds almost surely.

In the continuum coupling, we first note that, given $A$, $$\phi^A := \sum_{\ell\in A}\phi^\ell$$ has the law of a GFF with zero boundary conditions in $D\setminus A$, since every GFF appearing in the sum is (conditionally) independent of the others. We can also set $$h_A := c(\ell)\sqrt{2}\ \frac{\pi}{2}
=c(\ell)\sqrt{2}\lambda$$
to be a harmonic function in $D\setminus A$. This defines a coupling
\begin{equation}\label{eq:local-couple}
    (\phi, A, \phi^A, h_A)
\end{equation}
where the marginals laws are as in Theorem \ref{thm:local}. 

\begin{lemma}\label{lem:local-set-2}
    For any subsequential limit, the coupling $(\phi, A, \phi^A, h_A)$ above satisfies the hypotheses of Theorem \ref{thm:local}.
\end{lemma}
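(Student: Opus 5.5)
We check the three hypotheses of Theorem \ref{thm:local} one at a time. Each will be obtained from a discrete identity in the master coupling, pushed to the almost sure limit given by Skorokhod's representation. Throughout we use $\alpha=1/\sqrt{2}$. The set $A$ is the union of the loops of $\A_{-\sqrt2\lambda,\sqrt2\lambda}$ of $h$, and by Theorem \ref{thm:DRC} it is connected to $\partial D$.

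\textbf{Hypothesis (i).} Start from \eqref{eq:XOR-MP1}--\eqref{eq:XOR-MP2} summed over all inner boundaries. Restricted to $\mathcal O(A_\de)$, these give the discrete identity
\[
\tau_\de+i\tau_\de^\dagger=-c_\de\tau_\de^{A_\de}+ic_\de(\tau_\de^{A_\de})^\dagger .
\]
Now test against $f\in C_c^\infty(D\setminus A)$. Since $\ell_\de\to\ell$ almost surely and only finitely many loops meet $\operatorname{supp} f$ (Proposition \ref{prop:TVS-local-finite}), the support of $f$ eventually lies inside finitely many discrete loops. The dual vertex set $\mathcal O(\ell_\de)^*$ agrees with $\mathcal O(\ell_\de)$ up to a boundary strip, and the strip does not meet $\operatorname{supp} f$.

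Passing to the limit gives, on $D\setminus A$,
\[
\field{e^{i\alpha\phi}}=\sum_\ell\Big(-c(\ell)\field{\sin(\alpha\phi^\ell)}+ic(\ell)\field{\cos(\alpha\phi^\ell)}\Big).
\]
Since $\alpha h_A=c(\ell)\pi/2$, we have $e^{i\alpha h_A}=ic(\ell)$. The right-hand side is therefore exactly $e^{i\alpha h_A}\field{e^{i\alpha\phi^A}}$, which is (i).

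\textbf{Hypothesis (ii).} Condition on $A$ and fix a component $\mathcal O=\mathcal O(\ell)$. The boundary $\partial\mathcal O=\ell$ is an $\SLE_4$-type loop of dimension $3/2<2-1/4$. Split $f\1_{[\ell]_n}$ into its pieces inside and outside $\mathcal O$.

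For the inside piece, the field equals $ic(\ell)\field{e^{-i\alpha\phi^\ell}}$ up to conjugation signs. Propositions \ref{prop:bdy-sin} and \ref{prop:bdy-expect}, applied in the domain $\mathcal O$ to the boundary set $\ell$, then give convergence to $0$ in $L^2$ conditionally on $\ell$.

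For the outside piece, the same argument applies in each of the other components. We also need that $A$ itself carries no mass. For this I would use the thinness of the subcritical set $\A_{-\sqrt2\lambda,\sqrt2\lambda}$ in Proposition \ref{prop:thin-chaos}: it applies because $\sqrt2\lambda<a_c=\sqrt2\pi$. Combining the pieces, $[\ell]_n$ is thin, which is (ii).

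\textbf{Hypothesis (iii).} Fix finitely many loops $\ell_1,\dots,\ell_N$. In the discrete, conditionally on $A_\de$, the renewed fields inside these loops are independent of everything determined outside them. This covers $c_\de$ and $\tau_\de$ restricted off $\bigcup\mathcal O(\ell_{i,\de})$, by the Markov property \ref{MP-wired} applied to the outside region. I would pass this conditional independence to the limit exactly as in the proof of Lemma \ref{lem:local-set-1}. Write $\E_\de[r(\text{inside})s(\text{outside},A_\de)]$ by conditioning on the outside data, use the almost sure convergence of the conditional law of the inside fields given the loops, and finish with bounded convergence.

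The main obstacle is that the finitely many loops $\ell_i$ must be selected measurably and stably in $\de$. I would index them by decreasing diameter, which is continuous along the almost sure convergence. I would also apply test functions at positive distance from the loops and then remove that restriction using (ii). Here the uniform $L^2$ bound \eqref{eq:sin-mon-leb} is used to control the strips near $\ell_i$ uniformly in $\de$.
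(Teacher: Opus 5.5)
Your arguments for hypotheses (i) and (iii) are essentially the paper's. For (i) you use the discrete identities \eqref{eq:XOR-MP1}--\eqref{eq:XOR-MP2} together with $e^{i\alpha h_A}=ic(\ell)$. For (iii) you use the discrete conditional independence from exploring the boundary cluster from the outside, transferred to the limit as in Lemma \ref{lem:local-set-1}.

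The gap is in (ii): the step ``$A$ itself carries no mass'' is false. With $\alpha=1/\sqrt2$ and $\lambda=\pi/2$ one has $a_c(\alpha)=\lambda/\alpha=\sqrt2\lambda$, not $\sqrt2\pi$. So $A=\A_{-\sqrt2\lambda,\sqrt2\lambda}$ is exactly the \emph{critical} two-valued set for this $\alpha$. This is consistent with Theorem \ref{thm:DRC}: its second-level sets $\A_{-2\lambda,(2\sqrt2-2)\lambda}$ are the sets $\A_{-2\lambda,2a_c-2\lambda}$ of Section \ref{sec:decomp}. Hence Proposition \ref{prop:thin-chaos} does not apply, and Proposition \ref{prop:cos-TVS} says the opposite: $(\field{\cos(\alpha\phi)},f\1_{[A]_n})\to(\mu_0,f)\neq0$. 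Here $\mu_0$ is the symmetry-breaking measure, the limit of the boundary-cluster measure $\mu_0^\de$ of Corollary \ref{cor:discrete-exc}.

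Independently of this, invoking Proposition \ref{prop:thin-chaos} here is circular. It presupposes that $A$ is a two-valued set of the same $\phi$ that generates the chaos, but at this stage $A$ is only known to be a TVS of $h$. Your inside piece has a related problem: it needs the identity $\field{e^{i\alpha\phi}}=e^{i\alpha h_A}\field{e^{i\alpha\phi^A}}$ against test functions touching $\ell$. Hypothesis (i) only supplies it on $C_c^\infty(D\setminus A)$. Also, $f\1_{[\ell]_n\cap\mathcal O}$ with $\mathcal O$ random is not a priori a legitimate test function, unlike $f\1_{[\ell]_n}$, which takes finitely many values.

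The repair is not to split at all. Hypothesis (ii) concerns only the single loop $\partial\mathcal O=\ell$, and $\dim_M(\ell)=3/2<7/4=2-\alpha^2/2$; this is all the paper invokes. A set of this dimension is thin for $\field{e^{i\alpha\phi}}$ however it is coupled to $\phi$, by the following argument:
\begin{enumerate}
\item[(a)] Bound $|(\field{e^{i\alpha\phi}},f\1_{[\ell]_n})|$ by the sum of $|(\field{e^{i\alpha\phi}},f\1_Q)|$ over the $O(2^{n(3/2+\eps)})$ level-$n$ boxes $Q$ meeting $\ell$.
\item[(b)] Each such term has typical size $2^{-n(2-\alpha^2/2)}$. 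Control the maximum over \emph{all} level-$n$ boxes with the per-box tail bounds mentioned in the remark after Proposition \ref{prop:thin-chaos}. Validity of $D$ handles boxes near $\partial D$.
\item[(c)] The total is then $\lesssim n\,2^{-n(1/4-\eps)}\to0$.
\end{enumerate}
In particular, the mass $\mu_0$ puts near $\ell$ vanishes because $\ell$ is small, not because $A$ is thin.
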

\begin{proof}
We check the hypotheses (i)-(iii) of Theorem \ref{thm:local} separately:
\begin{enumerate}[label=(\roman*)]
    \item Given a loop $\ell$ of $A$, for any continuous, bounded function $f:\C\to\R$,
        \begin{align}
            & (\field{\cos(\frac{1}{\sqrt{2}}\phi)}, f) = -c(\ell)\  (\field{\sin(\frac{1}{\sqrt{2}}\phi^\ell)}, f), \\
            & (\field{\sin(\frac{1}{\sqrt{2}}\phi)}, f) = c(\ell)\ (\field{\cos(\frac{1}{\sqrt{2}}\phi^\ell)}, f).
        \end{align}
    Indeed, this follows from Lemma \ref{lem:local-set-1} recalling \eqref{eq:XOR-MP1}-\eqref{eq:XOR-MP2}. By restricting to $f\in C_c^\infty(D\setminus A)$, which in particular removes any boundary issues, this is equivalent to
        \begin{equation}
            (\field{\exp(i\frac{1}{\sqrt{2}}\phi)}, f) =  (\exp(i\frac{1}{\sqrt{2}}h_A) \field{\exp(i\frac{1}{\sqrt{2}}\phi^A)}, f)
        \end{equation}
    by our choice of $h_A$, as required. 
    \item Immediate from the fact that $\dim_M(\ell)=3/2$ for every loop $\ell$ of $A$. 
    \item Fix $N\in\N$ and any collection $(\ell^1, \ldots, \ell^N)$ of loops of $A$. By Lemma \ref{lem:local-set-1}, the conditional law of 
        \begin{equation}
           \big(\field{\exp({i\pi h_\de^{\ell_\de^1}})}\ , \ldots,\ \field{\exp({i\pi h_\de^{\ell_\de^N}})}\big)
        \end{equation}
    given $A_\de$ converges to the conditional law of
        \begin{equation}
           \big(\field{\exp({i\frac{1}{\sqrt{2}}\phi^{\ell^1}})}\ , \ldots,\ \field{\exp({i\frac{1}{\sqrt{2}}\phi^{\ell^N}})} \big)
        \end{equation}
    given $A$. Using the same argument as in the proof of Lemma \ref{lem:local-set-1}, and noting that the information encoded by the harmonic function is equivalent to that of the labels, it is enough to show that, for any $\de>0$, the laws
        \begin{equation}
             \big(\field{\exp({i\pi h_\de^{\ell_\de^1}})}\ , \ldots,\ \field{\exp({i\pi h_\de^{\ell_\de^N}})}\big), \quad \big( (c_\de^k)_{k\geq1},\ \big(\field{\exp({i\pi h_\de^{\ell^k_\de}})}\big)_{k>N}\big)
        \end{equation}
        are conditionally independent given $A_\de$. This is a slight generalization of the Markov property stated in \eqref{eq:MP-height}, which follows readily by noting that it is always possible to explore the boundary cluster of $\nb_\de$ in such a way that the loops $\ell_\de^1,\ldots, \ell_\de^N$ are discovered from the outside.
    \end{enumerate}
\end{proof}

\subsection{Convergence of the decomposition}

We are now ready to put everything together and use Theorem \ref{thm:local} to identify the limiting XOR-Ising fields as the multiplicative chaos of the limiting height function.

\begin{thm}\label{thm:full-joint}
    Let $D$ and $\P_\de$ be as in Lemma \ref{lem:conv-fields-fractal}. As $\de\to0$, 
    \begin{equation}
        \left( h_\de,\ \tau_\de,\ \tau_\de^\dagger \right)  \overset{(d)}{\longrightarrow}\left(\frac{1}{2\sqrt{2}\lambda}\ h,\ \field{\cos(\frac{1}{\sqrt{2}}h)},\ \field{\sin(\frac{1}{\sqrt{2}}h)} \right).
    \end{equation}
\end{thm}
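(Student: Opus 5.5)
The plan is to work with a subsequential limit and identify it, using Theorem \ref{thm:local} to show that the chaos base field $\phi$ coincides with $h$. Tightness of the triple $(h_\de,\tau_\de,\tau_\de^\dagger)$ holds because each marginal converges: $h_\de$ by Theorem \ref{thm:DRC} and the pair $(\tau_\de,\tau_\de^\dagger)$ by Lemma \ref{lem:conv-fields-fractal}. Along any subsequence we extract, jointly with the boundary-cluster data $(A_\de,c_\de)$ and the renewed fields as in \eqref{eq:subseq-joint}, a limit
\[
\Big(\tfrac{1}{2\sqrt2\lambda}h,\ A,\ c,\ \field{\cos(\tfrac{1}{\sqrt2}\phi)},\ \field{\sin(\tfrac{1}{\sqrt2}\phi)},\ \dots\Big).
\]
Here $\phi$ is some GFF whose joint law with $h$ is a priori unknown, and $A=\A_{-\sqrt2\lambda,\sqrt2\lambda}(h)$. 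We pass to an almost sure coupling by Skorokhod. It then suffices to prove $\phi=h$ almost surely. The limit would then be the claimed one, and uniqueness of the limit would give convergence of the full sequence.

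\textbf{Step 1.} By Lemma \ref{lem:local-set-2}, the coupling $(\phi,A,\phi^A,h_A)$ satisfies the hypotheses of Theorem \ref{thm:local}. Therefore $(\phi,A)$ is a local set coupling of $\phi$. On each component $\mathcal O$ of $D\setminus A$ it has the form
\[
\phi=\phi^A+h_A+\tfrac{2\pi}{\alpha}k_{\mathcal O},\qquad \alpha=1/\sqrt2,
\]
where $h_A=c(\ell)\sqrt2\lambda$ takes values in $\{\pm\sqrt2\lambda\}$. The additive constant $2\pi\sqrt2\,k_{\mathcal O}=4\sqrt2\lambda k_{\mathcal O}$ is the main obstacle. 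I would first show $k_{\mathcal O}=0$ using Proposition \ref{prop:TVS-unique} and uniqueness of two-valued sets. With $k\equiv0$, the harmonic part $\phi_A$ satisfies $|\phi_A|=\sqrt2\lambda$ off $A$, so $A$ is a two-valued set $\A_{-\sqrt2\lambda,\sqrt2\lambda}$ of $\phi$. To remove the integers in general, I would argue as follows. A local set of $\phi$ whose harmonic function takes values in $\{\pm\sqrt2\lambda+4\sqrt2\lambda k\}$ with $k\neq0$ on a component is thin, being of dimension $3/2$. Such a set cannot arise, because the expectation $\E[\phi_A]=0$ and the conditional law force the structure of the inner boundaries of a connected set touching $\partial D$. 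Alternatively, I would use the discrete input directly: the limit of $h_\de$ is constant on the boundary cluster and equals $c/2$ inside each inner loop, via \eqref{eq:MP-height}. This identifies the harmonic part $h_A$ of $h$ relative to $A$, and $A$ is a local set for both $h$ and $\phi$ with the same labels.

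\textbf{Step 2.} Inside each loop $\ell$ we have $\phi^\ell$ and the renewed discrete height function $h_\de^{\ell_\de}$. The situation repeats with the roles of the fields swapped: the renewed cosine and sine become the sine and cosine of a free-boundary problem. The Markov structure of Theorem \ref{thm:DRC}(iii)--(iv) and Lemma \ref{lem:local-set-1} provide the next layers, namely nested two-valued sets of gaps $2\lambda$ and $2\sqrt2\lambda$. Iterating Step 1 along the nested exploration shows that, on each step, $\phi$ and $h$ have the same local set and the same harmonic functions $h_{A^{(n)}}$.

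\textbf{Step 3.} By Proposition \ref{prop:local-finite-tree}, the nested sets $A^{(n)}$ exhaust the domain: all loops eventually become microscopic. Hence $h_{A^{(n)}}\to\phi$ and $h_{A^{(n)}}\to h$ in $H^s$, since the variance of $\phi^{A^{(n)}}$ tested against $f$ goes to zero. This gives $\phi=h$.

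The hardest point is ruling out the integer shifts $k_{\mathcal O}$ in Step 1. It has to be resolved either by the discrete identification of the harmonic part through $h_\de$ or by a uniqueness argument for two-valued sets.
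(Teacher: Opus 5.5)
Your overall architecture is the paper's: subsequential limit and Skorokhod, Lemma \ref{lem:local-set-2} with Theorem \ref{thm:local} to make $A$ a local set of $\phi$, iteration through the nested two-valued sets, and the conclusion $h_n=\phi_n$, $h_n\to h$, $\phi_n\to\phi$. However, the step that carries the proof, ruling out the shifts $k_{\mathcal O}$, is left open. Where you invoke Proposition \ref{prop:TVS-unique}, the use is circular: you assume $k\equiv 0$ in order to conclude that $A$ is a two-valued set of $\phi$. Your fallbacks do not close the gap either. The ``thin set with $\E[\phi_A]=0$'' remark is not an argument: a symmetric law of $k_{\mathcal O}$ is compatible with zero mean, and thinness says nothing about the values of the harmonic function. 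The discrete route via \eqref{eq:MP-height} only identifies the harmonic part of $h$ relative to $A$, which Theorem \ref{thm:DRC} already gives you. Your only link to $\phi$ is through $\cos(\pi h_\de)$ and $\sin(\pi h_\de)$. These are blind to $h_\de\mapsto h_\de+2k$, which is precisely the limiting ambiguity $\frac{2\pi}{\alpha}k=4\sqrt2\lambda k$, so local identifications of this kind do not remove it.

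The missing observation is that Proposition \ref{prop:TVS-unique} applies \emph{before} you know $k=0$. On each component $\phi_A=(c(\ell)+4k_{\mathcal O})\sqrt2\lambda$, and $|c(\ell)+4k|\ge 1$ for every integer $k$, so $|\phi_A|\ge\sqrt2\lambda$ on $D\setminus A$ regardless. Hence $\tilde A:=\A_{-\sqrt2\lambda,\sqrt2\lambda}(\phi)\subseteq A$ a.s. Both $\tilde A$ and $A$ are the two-valued set $\A_{-\sqrt2\lambda,\sqrt2\lambda}$ of a zero-boundary GFF in $D$ (one of $\phi$, one of $h$), so they have the same law. Almost sure containment plus equality in law then gives $\tilde A=A$ a.s.; to see this, compare the events $\{\cdot\cap U\neq\emptyset\}$ over a countable basis of open sets $U$. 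Since the harmonic function of a local set is determined by the pair (field, set), you get $(c(\ell)+4k_{\mathcal O})\sqrt2\lambda\in\{\pm\sqrt2\lambda\}$. So $k_{\mathcal O}=0$ and the labels of $\phi$ and $h$ coincide. The same observation is what you need at every later level of your Step 2. After centering, the values are $\pm a$ plus multiples of $4\sqrt2\lambda$, and $|{\pm a}+4\sqrt2\lambda k|\ge a$ because $a\le 2\sqrt2\lambda$. (Note that $\A_{-2\lambda,2\lambda}$ has gap $4\lambda$, not $2\lambda$.) With this, your Step 3 concludes as in the paper.
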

\begin{proof}
    Take any subsequential limit as described in Lemma \ref{lem:local-set-2}. By Theorem \ref{thm:local}, we know that the two-valued set $A$ obtained from $h$ must also be a local set for $\phi$. Moreover, the boundary values of $\phi$ on a loop $\ell$ of $A$ are given by
    \begin{equation}
       (c(\ell) + 4k(\ell)) \sqrt{2}\lambda
    \end{equation}
    for some integer $k(\ell)$. Let $\tilde A$ be the two-valued set $\A_{-\sqrt{2}\lambda, \sqrt{2}\lambda}$ now obtained from $\phi$. By Proposition \ref{prop:TVS-unique}, we must have that
    \begin{equation}
        \tilde A \subseteq A\quad \text{a.s.}
    \end{equation}
    Since they have the same distribution, it must be the case that
    \begin{equation}
        \tilde A = A\quad \text{a.s.}
    \end{equation}
    By iterating this argument, we obtain a.s. equality of all nested two-valued sets. Since the labels under $h$ and under $\phi$ agree, we conclude that
    \begin{equation}
        h= \phi\quad \text{a.s.}
    \end{equation}
  Indeed, if we define $h_n$ and $\phi_n$ to be the harmonic functions obtained at level $n$, we have that $h_n=\phi_n$, but also $h_n\to h$ and $\phi_n \to \phi$ as $n\to \infty$. This concludes the proof, as we have uniquely characterized any subsequential limit in terms of $h$ and measurable functions of it.
\end{proof}

Once the joint convergence in Theorem \ref{thm:full-joint} has been established, identifying any subsequential limit of the discrete excursion decomposition as the (unique) excursion decomposition in the continuum is almost immediate. We stress that the proof below corresponds to a stronger statement than that in Theorem \ref{thm:discrete-conv}, see Remark \ref{rem:strong-conv}.

\begin{proof}[Proof of Theorem \ref{thm:discrete-conv}]
    We prove the claim under free boundary conditions. For a valid domain $D$, let 
   \begin{equation}
       (h_\delta,\ \hat\nb_\delta^\dagger,\ \xi^\dagger_\delta,\ \tau_\de^\dagger)
   \end{equation}
   be the joint law under the coupling $\P_\de$ of Theorem \ref{thm:master}. We identify $\hat\nb_\de^\dagger$ with the collection $((C_k^\de)^\dagger)_{k\geq1}$ of its clusters, which we order by decreasing size of diameter. By Corollary \ref{cor:discrete-exc}, we know that
    \begin{align*}
        (\tau_\delta^\dagger,f) = \sum_{k\geq 1}(\xi_k^\delta)^\dagger((\mu^\delta_k)^\dagger,f) 
    \end{align*}
    for any continuous, bounded function $f:\C\to\R$. Tightness of the collection of measures, with respect to the topology of weak convergence, readily follows from tightness of the field $\tau_\de^\dagger$, since e.g.
    \begin{equation}
        \sum_{k\in I}\E[((\mu^\delta_k)^\dagger, f)^2] \leq \E_\de[(\tau_\de^\dagger, f)^2] 
    \end{equation}
    for any (possibly infinite) index set $I\subset\N$.  This is an immediate consequence of the orthogonality of the signs, see also {\cite[Lemma 29]{excursion-GFF}}. 

    \noindent Combining the above tightness with Theorem \ref{thm:DRC} and Theorem \ref{thm:full-joint}, we have that there exists a subsequence of $\de\to0$, which denote the same way, along which\footnote{One can check that convergence of the inner and outer boundaries of $\hat\nb_\de^\dagger$ implies convergence of its clusters with respect to the standard Hausdorff metric on closed subsets of $D$.} 
    \begin{align}\label{eq:joint-conv-proof}
       \big(h_\delta,\ \tau_\de^\dagger,\ ((\mu^\delta_k)^\dagger)_{k\geq1},&\ ((C_k^\de)^\dagger)_{k\geq1},\ ((\xi_k^\de)^\dagger)_{k\geq1}\big)\\
       & \longrightarrow \left(\frac{1}{2\sqrt{2}\lambda}\ h,\ \field{\sin(\frac{1}{\sqrt{2}}h)}, \  ((\mu_k^*)^\dagger)_{k\geq1},\ (C_k^\dagger)_{k\geq1},\ (\xi_k^\dagger)_{k\geq1} \right).
   \end{align}
    Moreover, with the exception of the collection $((\mu_k^*)^\dagger)_{k\geq1}$, every element of the limiting coupling is measurable with respect to $h$. As usual, we pass to a common probability space in which convergence holds almost surely.

    \noindent It is now a standard argument, see e.g. {\cite[Section 4]{CME-review}}, to show that a decomposition of $\field{\sin((1/\sqrt{2})h)}$ holds in terms of $((\mu_k^*)^\dagger)_{k\geq1}$. To do so, fix some diameter cutoff $\rho>0$ and its associated number $N=N(\rho)$ of clusters with diameter greater than $\rho$. Recall that this number is a.s. finite by Proposition \ref{prop:local-finite-tree}. Since
   \begin{equation}
       (\tau_\delta^\dagger,f) = \sum_{k=1}^N (\xi_k^\delta)^\dagger((\mu^\delta_k)^\dagger,f) + (R_\de^\rho,f)
   \end{equation}
   for some remainder term $R_\de^\rho$, and as a finite sum
   \begin{equation}
       \sum_{k=1}^N (\xi_k^\delta)^\dagger((\mu^\delta_k)^\dagger,f) \longrightarrow \sum_{k=1}^N \xi_k^\dagger((\mu^*_k)^\dagger,f)
   \end{equation}
  almost surely and in $L^2(\P)$ as $\de\to0$, it follows that 
    \begin{equation}
        R_\de^\rho\longrightarrow\ R^\rho_*
    \end{equation}
  exists as an almost sure limit. A simple computation using \eqref{eq:onsager-discrete} yields that
  \begin{align}
      \E[(R^\rho_*, f)^2] &\leq \limsup_{\de\to0}\E_\de\Big[\sum_{k=N+1}^\infty((\mu^\delta_k)^\dagger, f)^2\Big],\\
      & \leq \norm{f}_\infty^2\limsup_{\de\to0}\E_\de\Big[\sum_{\substack{ u, u'\in D_\de^\dagger\\ |u-u'|\leq\rho}} \tau_\de^\dagger(u)\tau_\de^\dagger(u')\Big] \\
      & \leq \norm{f}_\infty^2\rho^{2-1/2} \longrightarrow0
  \end{align}
  as $\rho\to0$ (or equivalently $N\to\infty$). By Proposition \ref{prop:sign-permute}, 
  \begin{equation}\label{eq:decomp-star}
      (\field{\sin((1/\sqrt{2})h)},f)  = \sum_{k=1}^\infty\xi_k^\dagger((\mu^*_k)^\dagger,f)
  \end{equation}
    exists as a limit in $L^2(\P)$.
  
   \noindent We can now compare this to the continuum decomposition we have built. Namely, by Theorem \ref{thm:cont-general}, we have that
    \begin{equation}\label{eq:decomp-good}
        (\field{\sin((1/\sqrt{2})h)},f) = \sum_{k=1}^\infty \xi_k^\dagger(\mu_k^\dagger,f)
    \end{equation}
    We stress that it is crucial that, by Theorem \ref{thm:DRC} and Theorem \ref{thm:full-joint}, we know that
    \begin{enumerate}[label=(\roman*)]
        \item Both decompositions \eqref{eq:decomp-star}-\eqref{eq:decomp-good} are of the chaos of the \emph{same} underlying GFF.
        \item The supports of both $((\mu^*_k)^\dagger)_{k\geq1}$ and $(\mu_k^\dagger)_{k\geq 1}$ are the \emph{same} clusters $(C_k^\dagger)_{k\geq1}$, which in particular are coupled to $h$.
        \item The limit of the discrete signs $((\xi_k^\delta)^\dagger)_{k\geq1}$ are the \emph{same} labels $(\xi_k^\dagger)_{k\geq1}$ of the outer boundaries of the clusters that we use to build the continuum decomposition.
    \end{enumerate}
    For convenience, we rearrange the sums in \eqref{eq:decomp-star}-\eqref{eq:decomp-good} by decreasing level of nesting and then by decreasing size of diameter within each level of nesting. We proceed to identify the measures $(\mu^*_k)^\dagger$ as $\mu_k^\dagger$ one by one. By taking conditional expectation in both \eqref{eq:decomp-star} and \eqref{eq:decomp-good}, 
    \begin{equation}\label{eq:CE-meas}
        \xi_1^\dagger\ (\mu_1^\dagger,f) = \xi_1^\dagger\ \E\big[ ((\mu_1^*)^\dagger,f) \mid \xi_1^\dagger, C_1^\dagger\big],
    \end{equation}
    where we have used that $\mu_1^\dagger$ is measurable with respect to $C_1^\dagger$. Similarly, by taking conditional variances, 
    \begin{equation}
        \Var\Big[ \sum_{k=1}^\infty\xi_k^\dagger\mu_k^\dagger \mid \xi_1^\dagger, C_1^\dagger \Big] =   \Var\Big[ \sum_{k=1}^\infty\xi_k^\dagger(\mu^*_k)^\dagger \mid \xi_1^\dagger, C_1^\dagger \Big].
    \end{equation}
    On the one hand,
    \begin{equation}\label{eq:CV-good}
         \Var\Big[ \sum_{k=1}^\infty\xi_k^\dagger\mu_k^\dagger \mid \xi_1^\dagger, C_1^\dagger \Big] = \Var\Big[ \sum_{k=2}^\infty\xi_k^\dagger\mu_k^\dagger \mid \xi_1^\dagger, C_1^\dagger \Big],
    \end{equation}
    since the sum over $k\geq2$ is conditionally independent of the term $k=1$ and $\mu_1^\dagger$ is measurable with respect to $C_1^\dagger$. On the other hand, we have that
    \begin{equation}\label{eq:CV-star}
        \Var\Big[ \sum_{k=1}^\infty\xi_k^\dagger(\mu^*_k)^\dagger \mid \xi_1^\dagger, C_1^\dagger \Big] =  \Var[((\mu_1^*)^\dagger,f) \mid \xi_1^\dagger, C_1^\dagger] + \Var\Big[ \sum_{k=2}^\infty\xi_k^\dagger(\mu_k^*)^\dagger \mid \xi_1^\dagger, C_1^\dagger \Big].
    \end{equation}
    Here, we have used that the conditional independence between the sum over $k\geq2$ and the term $k=1$ holds for their discrete counterparts, by the Markov property \ref{MP-free} of Theorem \ref{thm:master}, and we can argue as in the proof of Lemmas \ref{lem:local-set-1}-\ref{lem:local-set-2} to show that the conditional independence carries over to the continuum. Moreover, this same Markov property and its continuum counterpart show that the sums over $k\geq2$ in \eqref{eq:CV-good} and \eqref{eq:CV-star} have the same conditional law, from which it follows that
    \begin{equation}\label{eq:CV-meas}
        \Var[((\mu_1^*)^\dagger,f) \mid \xi_1^\dagger, C_1^\dagger] = 0.
    \end{equation}
    Combining \eqref{eq:CV-meas} with \eqref{eq:CE-meas}, we see that
    \begin{equation}
        \mu_1^\dagger = (\mu_1^*)^\dagger\quad \text{a.s.}
    \end{equation}
    Iterating this argument for each subsequent term in the sums \eqref{eq:decomp-star}-\eqref{eq:decomp-good} concludes the proof.
\end{proof}

\begin{rem}\label{rem:strong-conv}
    The proof above has shown a stronger statement than the one in Theorem \ref{thm:discrete-conv}. For one, the convergence in \eqref{eq:joint-conv-proof} is jointly with $h_\de$ and identifies the limit of the XOR-Ising field as the chaos of its limit $h$. Moreover, thanks to Theorem \ref{thm:DRC}, it also follows that the discrete signs signs $((\xi_k^\delta)^\dagger)_{k\geq1}$, which are measurable functions of $h_\de$, converge to the labels $(\xi_k^\dagger)_{k\geq1}$, which are measurable functions of $h$. This differs from other known excursion decompositions, like those for the Ising model in \cite{mag-field, CME, ising-GFF}, where the coin tosses are independent of everything else in the coupling and their ``convergence'' is thus trivial.
\end{rem}

\begin{proof}[Proof of Corollary \ref{cor:joint-conv}]
    For joint convergence, consider the coupling in \cite{ising-GFF} and, up to taking a subsequence, add the joint convergence of both XOR-Ising models. The same local set argument as in Theorem \ref{thm:full-joint} characterizes any such subsequential limit uniquely. 

    \noindent It remains to prove \eqref{eq:wick-prod}. Let $\rho^\eps$ be a standard $\eps$-mollifier given by
    \begin{equation}
        \rho^\eps(w) := \eps^{-2}\rho(w/\eps),
    \end{equation}
    for a non-negative, radially symmetric function $\rho$ with unit mass. We define
    \begin{equation}
        \sigma^\eps(z) := (\sigma, \rho^\eps(z-\cdot)),
    \end{equation}
    and note this is a smooth function. 
    
    \noindent To see that $\field{\sigma\tilde\sigma}$ is well-defined, one may check via a direct computation that
    \begin{equation}
        \E[((\sigma^\eps\tilde\sigma^\eps, f)-(\sigma^{\eps'}\tilde\sigma^{\eps'}))^2]\longrightarrow0
    \end{equation}
    as $\eps, \eps'\to0$, for any continuous, bounded $f:\C\to\R$ in a valid domain. Moreover, one can similarly see that all moments of $\field{\sigma\tilde\sigma}$ match those of the XOR-Ising, e.g. 
    \begin{align}
        \E[(\sigma_\eps\tilde\sigma_\eps, f)] &= \int_D \E[(\sigma, \rho^\eps(z-\cdot))]\ \E[(\tilde\sigma, \rho^\eps(z-\cdot))]f(z)dz \\
        &= \int_D \left(\int_D \CR(x, D)^{-1/8}\rho^\eps(z-x)dx\right)\left(\int_D\CR(y, D)^{-1/8}\rho^\eps(z-y)dy\right)f(z)dz\\
        & = \int_D \CR(x, D)^{-1/8} \left(\int_D \CR(y, D)^{-1/8}\left( \int_D \rho^\eps(z-x)\rho^\eps(z-y)f(z)dz\right)dy\right)dx\\
        & \longrightarrow \int_D \CR(x, D)^{-1/4}f(x)dx \label{eq:one-point}
    \end{align}
    as $\eps\to0$. Note that, crucially, the field $\field{\sigma\tilde\sigma}$ is measurable with respect to the pair $(\sigma, \tilde\sigma)$. 

    \noindent Now, it suffices to show that
    \begin{equation}\label{eq:triple-XOR}
        (\sigma_\de,\ \tilde\sigma_\de,\ \tau_\de) \overset{(d)}{\longrightarrow} (\sigma,\ \tilde\sigma,\ \field{\sigma\tilde\sigma})
    \end{equation}
    as $\de\to0$. Indeed, we already know that
    \begin{equation}
        \big(h_\de,\ \sigma_\de,\ \tilde\sigma_\de,\ \tau_\delta,\ \sigma_\de^\dagger,\ \tilde\sigma_\de^\dagger,\ \tau_\delta^\dagger\big)
    \end{equation}
    converges in law as $\delta\to0$ to
    \begin{equation}\label{eq:full-joint-law}
        \big(1/(\pi\sqrt{2})h,\ \sigma,\ \tilde\sigma,\ \field{\cos((1/\sqrt{2})h)},\ \sigma^\dagger,\ \tilde\sigma^\dagger,\ \field{\sin((1/\sqrt{2})h)}\big).
    \end{equation}
    But then by \eqref{eq:triple-XOR}, and its free boundary condition counterpart, we know that the marginal laws in the coupling \eqref{eq:full-joint-law} are such that
    \begin{equation}
        \field{\cos((1/\sqrt{2})h)}\ \overset{(d)}{=}\ \field{\sigma\tilde\sigma}\ , \quad  \ \field{\sin((1/\sqrt{2})h)}\ \overset{(d)}{=}\ \field{\sigma^\dagger\tilde\sigma^\dagger}\ .
    \end{equation}
    But, in particular, it follows that
    \begin{equation}
        \field{\cos((1/\sqrt{2})h)}\ , \quad   \field{\sin((1/\sqrt{2})h)}
    \end{equation}
    are measurable functions of the pairs $(\sigma, \tilde\sigma)$ and $(\sigma^\dagger, \tilde\sigma^\dagger)$, respectively, and a.s. equality must hold. 

    \noindent The proof of \eqref{eq:triple-XOR} relies on a simple (especially using the independence of the two Ising models) joint moment computation. Firstly, by expanding into the integral forms, we see that
    \begin{equation}
        \E_\de[(\sigma_\de, f)^n(\tilde\sigma_\de, f)^m(\tau_\de,f)^k] = \E_\de[(\sigma_\de, f)^{n+k}]\ \E[(\tilde\sigma_\de, f)^{m+k}].
    \end{equation}
    On the other hand, since $(\sigma^\eps\tilde\sigma^\eps, f)$ has uniformly bounded moments of all orders, direct computations similar to that in \eqref{eq:one-point} show that 
    \begin{equation}
         \E[(\sigma, f)^n(\sigma, f)^m(\field{\sigma\tilde\sigma},f)^k] = \E[(\sigma, f)^{n+k}]\ \E[(\tilde\sigma, f)^{m+k}].
    \end{equation}
    The claim now follows from the convergence of Ising correlation functions \cite[Theorem~1.1]{CHI}.
\end{proof}

\begin{rem}\label{rem:meas-relation}
    As already explained in the introduction, the identification \eqref{eq:wick-prod} combined with \cite[Theorem 1.1]{GFF-IGMC} implies the measurability of the limit of the height function with respect to the four Ising magnetisation fields. Interestingly enough, both in the discrete and the continuum, given the XOR-Ising fields one only recovers the \emph{gradient} of the height function (recall \eqref{eq:d-grad}), and then imposes boundary conditions to fix the additive constant. In \cite{GFF-IGMC}, it is discussed whether the GFF should be measurable with respect to \emph{only} its sine or \emph{only} its cosine. The correspondence with discrete models suggests that the answer should be negative, and perhaps one can even use the continuum excursion decomposition to prove this claim.
\end{rem}
\bigskip


\printbibliography

@article{dim-TVS,
author = {Schoug, Lukas and Sep\'ulveda, Avelio and Viklund, Fredrik},
    title = {Dimensions of Two-Valued Sets via Imaginary Chaos},
    journal = {International Mathematics Research Notices},
    volume = {2022},
    number = {5},
    pages = {3219--3261},
    year = {2020},
}

@phdthesis{Aru,
  title={{The geometry of the Gaussian free field combined with SLE processes and the KPZ relation}},
  author={Aru, Juhan},
  year={2015},
  school={{\'E}cole Normale Sup{\'e}rieure de lyon}
}

@article{TVS,
  TITLE = {{Two-valued local sets of the 2D continuum Gaussian free field: connectivity, labels, and induced metrics}},
  AUTHOR = {Aru, Juhan and Sep{\'u}lveda, Avelio},
  JOURNAL = {{Electronic Journal of Probability}},
  VOLUME = {23},
  YEAR = {2018},
}

@article{CGS,
  title={{The critical Ising magnetisation field can be reconstructed from its {\(+/-\)} interfaces}},
  author={Cahen, Paul and Garban, Christophe and Sep{\'u}lveda, Avelio},
  year={2026+},
notes={To appear}, 
}

@article{ASZ,
  title={{Non-existence of the level sets of the Gaussian free field in higher dimensions}},
  author={Araya, Pablo and Sep{\'u}lveda, Avelio and Z{\'u}{\~n}iga, Pablo},
  year={2026+},
note={To appear}
}

@article{BTLS,
  title={{On bounded-type thin local sets of the two-dimensional Gaussian free field}},
  author={Aru, Juhan and Sep{\'u}lveda, Avelio and Werner, Wendelin},
  journal={Journal of the Institute of Mathematics of Jussieu},
  volume={18},
  number={3},
  pages={591--618},
  year={2019},
  publisher={Cambridge University Press}
}

@article{thin-GFF,
author = {Avelio Sep{\'u}lveda},
title = {{On thin local sets of the Gaussian free field}},
volume = {55},
journal = {Annales de l'Institut Henri Poincar\'e, Probabilit\'es et Statistiques},
number = {3},
pages = {1797 -- 1813},
year = {2019},
}

@article{DRC-1,
  title={{Conformal invariance of double random currents I: identification of the limit}},
  author={Duminil-Copin, Hugo and Lis, Marcin and Qian, Wei},
  journal={Proceedings of the London Mathematical Society},
  volume={130},
  number={1},
  pages={e70022},
  year={2025},
  publisher={Wiley Online Library}
}

@article{DRC-2,
  title={{Conformal invariance of double random currents II: tightness and properties in the discrete}},
  author={Duminil-Copin, Hugo and Lis, Marcin and Qian, Wei},
  journal={ArXiv preprint arXiv:2107.12880},
  year={2021}
}

@article{IGMC,
  title={{Imaginary Multiplicative Chaos}},
  author={Junnila, Janne and Saksman, Eero and Webb, Christian},
  journal={The Annals of Applied Probability},
  volume={30},
  number={5},
  pages={2099--2164},
  year={2020},
  publisher={JSTOR}
}

@article{fields-loo-soups,
  title={{Conformally invariant fields out of Brownian loop soups}},
  author={Jego, Antoine and Lupu, Titus and Qian, Wei},
  journal={ArXiv preprint arXiv:2307.10740},
  year={2023}
}

@article{Lupu,
  title={{From loop clusters and random interlacements to the free field}},
  author={Lupu, Titus},
  journal={Annals of Probability},
  volume={44},
  number={3},
  pages={2117--2146},
  year={2016}
}

@article{CHI-2,
  title={{Correlations of primary fields in the critical Ising model}},
  author={Chelkak, Dmitry and Hongler, Cl{\'e}ment and Izyurov, Konstantin},
  journal={ArXiv preprint arXiv:2103.10263},
  year={2021}
}

@article{CHI,
  title={{Conformal invariance of spin correlations in the planar Ising model}},
  author={Chelkak, Dmitry and Hongler, Cl{\'e}ment and Izyurov, Konstantin},
  journal={Annals of Mathematics},
  pages={1087--1138},
  year={2015},
  publisher={JSTOR}
}

@article{spin-perc-height,
  title={{Spins, percolation and height functions}},
  author={Lis, Marcin},
  journal={Electronic Journal of Probability},
  volume={27},
  pages={1--21},
  year={2022},
  publisher={The Institute of Mathematical Statistics and the Bernoulli Society}
}

@article{mag-field,
  title={{Planar Ising magnetization field I. Uniqueness of the critical scaling limit}},
  author={Camia, Federico and Garban, Christophe and Newman, Charles M},
  journal={The Annals of Probability},
  pages={528--571},
  year={2015},
  publisher={JSTOR}
}

@inproceedings{CME,
  title={{Conformal Measure Ensembles for Percolation and the FK--Ising Model}},
  author={Camia, Federico and Conijn, Ren{\'e} and Kiss, Demeter},
  booktitle={{Sojourns in Probability Theory and Statistical Physics-II}},
  pages={44--89},
  year={2019},
  organization={Springer}
}

@article{CME-review,
  title={{Conformal measure ensembles and planar Ising magnetization: A review}},
  author={Camia, Federico and Jiang, Jianping and Newman, Charles M},
  journal={ArXiv preprint arXiv:2009.08129},
  year={2020}
}

@article{switching,
  title={{Concavity of magnetization of an Ising ferromagnet in a positive external field}},
  author={Griffiths, Robert and Hurst, Charles and Sherman, Seymour},
  journal={Journal of Mathematical Physics},
  volume={11},
  number={3},
  pages={790--795},
  year={1970},
  publisher={AIP Publishing}
}

@article{notes-GFF,
  title={{Lecture notes on the Gaussian free field}},
  author={Werner, Wendelin and Powell, Ellen},
  journal={ArXiv preprint arXiv:2004.04720},
  year={2020}
}

@article{quant-SLE,
  title={{An elementary approach to quantum length of SLE}},
  author={Powell, Ellen and Sep{\'u}lveda, Avelio},
  journal={ArXiv preprint arXiv:2403.03902},
  year={2024}
}

@article{curr-prob,
  title={{The planar Ising model and total positivity}},
  author={Lis, Marcin},
  journal={Journal of Statistical Physics},
  volume={166},
  pages={72--89},
  year={2017},
  publisher={Springer}
}

@article{tightness-fields,
  author = {Marco Furlan and Jean-Christophe Mourrat},
title = {{A tightness criterion for random fields, with application to the Ising model}},
volume = {22},
journal = {Electronic Journal of Probability},
pages = {1 -- 29},
year = {2017},
}

@article{excursion-GFF,
  title={{Excursion decomposition of the 2D continuum GFF}},
  author={Aru, Juhan and Lupu, Titus and Sep{\'u}lveda, Avelio},
  journal={ArXiv preprint arXiv:2304.03150},
  year={2023}
}

@article{ALS2,
  title={{The first passage sets of the 2D Gaussian free field: convergence and isomorphisms}},
  author={Aru, Juhan and Lupu, Titus and Sep{\'u}lveda, Avelio},
  journal={Communications in Mathematical Physics},
  volume={375},
  number={3},
  pages={1885--1929},
  year={2020},
  publisher={Springer}
}

@article{ALS1,
  title={{First passage sets of the 2D continuum Gaussian free field}},
  author={Aru, Juhan and Lupu, Titus and Sep{\'u}lveda, Avelio},
  journal={Probability Theory and Related Fields},
  volume={176},
  number={3},
  pages={1303--1355},
  year={2020},
  publisher={Springer}
}

@article{nesting-field,
  title={{On the double random current nesting field}},
  author={Duminil-Copin, Hugo and Lis, Marcin},
  journal={Probability Theory and Related Fields},
  volume={175},
  pages={937--955},
  year={2019},
  publisher={Springer}
}

@article{dimer-boson,
  title={{Exact bosonization of the Ising model}},
  author={Dub{\'e}dat, Julien},
  journal={ArXiv preprint arXiv:1112.4399},
  year={2011}
}

@article{dimer-electric,
  title={{Dimers and families of Cauchy-Riemann operators I}},
  author={Dub{\'e}dat, Julien},
  journal={Journal of the American Mathematical Society},
  volume={28},
  number={4},
  pages={1063--1167},
  year={2015}
}

@article{cont-boson,
  title={{Bosonization of Primary Fields for the Critical Ising Model on Multiply Connected Planar Domains}},
  author={Bayraktaroglu, Baran and Izyurov, Konstantin and Virtanen, Tuomas and Webb, Christian},
  journal={Communications in Mathematical Physics},
  volume={406},
  number={9},
  pages={222},
  year={2025},
  publisher={Springer}
}

@article{IGMC2,
  title={{Complex Gaussian multiplicative chaos}},
  author={Lacoin, Hubert and Rhodes, R{\'e}mi and Vargas, Vincent},
  journal={Communications in Mathematical Physics},
  volume={337},
  pages={569--632},
  year={2015},
  publisher={Springer}
}

@article{noise-IGMC,
  title={{Noise-like analytic properties of imaginary chaos}},
  author={Aru, Juhan and Baverez, Guillaume and Jego, Antoine and Junnila, Janne},
  journal={Electronic Journal of Probability},
  volume={30},
  pages={1--43},
  year={2025},
  publisher={The Institute of Mathematical Statistics and the Bernoulli Society}
}

@article{plasma,
  title={{Large deviations for the two-dimensional two-component plasma}},
  author={Lebl{\'e}, Thomas and Serfaty, Sylvia and Zeitouni, Ofer},
  journal={Communications in Mathematical Physics},
  volume={350},
  pages={301--360},
  year={2017},
  publisher={Springer}
}

@article{LK-hadamard,
  title={{The Loewner--Kufarev energy and foliations by Weil--Petersson quasicircles}},
  author={Viklund, Fredrik and Wang, Yilin},
  journal={Proceedings of the London Mathematical Society},
  volume={128},
  number={2},
  year={2024},
  publisher={Wiley Online Library}
}

@article{other-hadamard,
  title={{The Loewner and Hadamard variations}},
  author={Roth, Oliver and Schippers, Eric},
  journal={Illinois Journal of Mathematics},
  volume={52},
  number={4},
  pages={1399--1415},
  year={2008},
  publisher={Duke University Press}
}

@article{miller-schoug,
  author = {Jason Miller and Lukas Schoug},
title = {{Existence and uniqueness of the conformally covariant volume measure on conformal loop ensembles}},
volume = {60},
journal = {Annales de l'Institut Henri Poincar\'e, Probabilit\'es et Statistiques},
number = {4},
pages = {2267 -- 2296},
year = {2024},
}

@article{ising-GFF,
  title={{The Ising magnetisation field and the Gaussian free field}},
  author={Alcalde L{\'o}pez, Tom{\'a}s and Heeney, Lorca and Lis, Marcin},
  journal={ArXiv preprint  	arXiv:2602.05886},
  year={2026}
}

@article{kada-brown,
  title={{Correlation functions on the critical lines of the Baxter and Ashkin-Teller models}},
  author={Kadanoff, Leo and Brown, Alan},
  journal={Annals of Physics},
  volume={121},
  number={1-2},
  pages={318--342},
  year={1979},
  publisher={Elsevier}
}

@article{GFF-IGMC,
  title={{Reconstructing the base field from imaginary multiplicative chaos}},
  author={Aru, Juhan and Junnila, Janne},
  journal={Bulletin of the London Mathematical Society},
  volume={53},
  number={3},
  pages={861--870},
  year={2021},
  publisher={Wiley Online Library}
}

@article{camia-excur,
  title={{Conformal covariance of connection probabilities and fields in 2D critical percolation}},
  author={Camia, Federico},
  journal={Communications on Pure and Applied Mathematics},
  volume={77},
  number={3},
  pages={2138--2176},
  year={2024},
  publisher={Wiley Online Library}
}

@inproceedings{ito,
  title={{Poisson point processes attached to Markov processes}},
  author={It{\^o}, Kiyosi},
  booktitle={{Proceedings of the Sixth Berkeley Symposium on Mathematical Statistics and Probability}},
  volume={3},
  pages={225--239},
  year={1972}
}

@article{ES,
  title={{Generalization of the Fortuin-Kasteleyn-Swendsen-Wang representation and Monte Carlo algorithm}},
  author={Edwards, Robert and Sokal, Alan },
  journal={Physical review D},
  volume={38},
  number={6},
  pages={2009},
  year={1988},
  publisher={APS}
}

@article{aru-papon-powell,
  title={{Thick points of the planar GFF are totally disconnected for all {\(\gamma\ne 0\)}}},
  author={Aru, Juhan and Papon, L{\'e}onie and Powell, Ellen},
  journal={Electronic Journal of Probability},
  volume={28},
  pages={1--24},
  year={2023},
  publisher={The Institute of Mathematical Statistics and the Bernoulli Society}
}

@article{AT-crit,
  title={{Dual Transformations in Many-Component Ising Models}},
  author={Mittag, L and Stephen, MJ},
  journal={Journal of Mathematical Physics},
  volume={12},
  number={3},
  pages={441--450},
  year={1971},
  publisher={American Institute of Physics}
}

@article{AT-curr,
  title={{On Boundary Correlations in Planar Ashkin--Teller Models}},
  author={Lis, Marcin},
  journal={International Mathematics Research Notices},
  volume={2022},
  number={13},
  pages={9909--9940},
  year={2022},
  publisher={Oxford University Press}
}

@article{DSS,
  title={{A new correlation inequality for Ising models with external fields}},
  author={Ding, Jian and Song, Jian and Sun, Rongfeng},
  journal={Probability Theory and Related Fields},
  volume={186},
  number={1},
  pages={477--492},
  year={2023},
  publisher={Springer}
}

@book{lawler,
  title={{Conformally invariant processes in the plane}},
  author={Lawler, Gregory F},
  number={114},
  year={2008},
  publisher={American Mathematical Soc.}
}

@book{evans,
  title={{Partial differential equations}},
  author={Evans, Lawrence },
  volume={19},
  year={2022},
  publisher={American Mathematical Society}
}

@article{ber-SLE,
  title={{Lectures on {Schramm--Loewner} evolution}},
  author={Berestycki, Nathanaël and Norris, James},
  journal={Available on the webpages of the authors},
  year={2023}
}

@book{BP, place={Cambridge}, series={Cambridge Studies in Advanced Mathematics}, title={{Gaussian free field and Liouville quantum gravity}},
publisher={Cambridge University Press}, author={Berestycki, Nathanaël and Powell, Ellen}, year={2025}, collection={Cambridge Studies in Advanced Mathematics}}

@article{Wil,
  title={{XOR-Ising loops and the Gaussian free field}},
  author={Wilson, David B},
  journal={arXiv:1102.3782},
  year={2011}
}

@article{Cara,
  title={{Uniform convergence of Green's functions}},
  author={Kalmykov, Sergei and Kovalev, Leonid V},
  journal={Complex Variables and Elliptic Equations},
  volume={64},
  number={4},
  pages={557--562},
  year={2019},
  publisher={Taylor \& Francis}
}

@article{CS,
  title={{Discrete complex analysis on isoradial graphs}},
  author={Chelkak, Dmitry and Smirnov, Stanislav},
  journal={Advances in Mathematics},
  volume={228},
  number={3},
  pages={1590--1630},
  year={2011},
  publisher={Elsevier}
}

@article{AT,
  title={{Statistics of two-dimensional lattices with four components}},
  author={Ashkin, Julius and Teller, Edward},
  journal={Physical Review},
  volume={64},
  number={5-6},
  pages={178},
  year={1943},
  publisher={APS}
}

@book{doob,
  title={{Classical potential theory and its probabilistic counterpart}},
  author={Doob, Joseph L and Doob, JI},
  volume={262},
  year={1984},
  publisher={Springer}
}

@article{She,
  title={{Gaussian free fields for mathematicians}},
  author={Sheffield, Scott},
  journal={Probability theory and related fields},
  volume={139},
  number={3},
  pages={521--541},
  year={2007},
  publisher={Springer}
}

@article{SchSh,
  title={{A contour line of the continuum Gaussian free field}},
  author={Schramm, Oded and Sheffield, Scott},
  journal={Probability Theory and Related Fields},
  volume={157},
  number={1},
  pages={47--80},
  year={2013},
  publisher={Springer}
}

@article{AL26,
  title={{CLE{\(_4\)} two-point correlation functions in the unit disk}},
  author={Aru, Juhan and Lupu, Titus},
  year={2026+},
  note={To appear}, 
}

@article{FK,
title = {{On the random-cluster model: I. Introduction and relation to other models}},
journal = {Physica},
volume = {57},
number = {4},
pages = {536-564},
year = {1972},
issn = {0031-8914},
author = {Cees Fortuin and Pieter Kasteleyn},
}

@article{XOR-dimer,
 author = {C{\'e}dric Boutillier and B{\'e}atrice de Tili{\`e}re},
title = {{Height representation of XOR-Ising loops via bipartite dimers}},
volume = {19},
journal = {Electronic Journal of Probability},
pages = {1--33},
year = {2014},
}

@article{boson-phys,
  title = {{Quantum field theory and the two-dimensional Ising model}},
  author = {Zuber, Jean-Bernard and Itzykson, Claude},
  journal = {Phys. Rev. D},
  volume = {15},
  issue = {10},
  pages = {2875--2884},
  numpages = {0},
  year = {1977},
  publisher = {American Physical Society}
}

@article{nienhuis,
  title={{Critical behavior of two-dimensional spin models and charge asymmetry in the Coulomb gas}},
  author={Nienhuis, Bernard},
  journal={Journal of Statistical Physics},
  volume={34},
  number={5},
  pages={731--761},
  year={1984},
  publisher={Springer}
}

@article{c1,
  title={{c{\(=\)}1 conformal field theories on Riemann surfaces}},
  author={Dijkgraaf, Robbert and Verlinde, Erik and Verlinde, Herman},
  journal={Communications in Mathematical Physics},
  volume={115},
  number={4},
  pages={649--690},
  year={1988},
  publisher={Springer}
}

@article{ShW,
  title={{Conformal loop ensembles: the Markovian characterization and the loop-soup construction}},
  author={Sheffield, Scott and Werner, Wendelin},
  journal={Annals of Mathematics},
  pages={1827--1917},
  year={2012},
  publisher={JSTOR}
}
\end{document}